\def\mod{\mathrm{mod}\ }
     \newcommand{\BB}{{\mathbb {B}}}
    \newcommand{\BC}{{\mathbb {C}}} 
     \newcommand{\BF}{{\mathbb {F}}}
     \newcommand{\BH}{{\mathbb {H}}}
    \newcommand{\BK}{{\mathbb {K}}} \newcommand{\BL}{{\mathbb {L}}}
     \newcommand{\BN}{{\mathbb {N}}}
    \newcommand{\BQ}{{\mathbb {Q}}} \newcommand{\BR}{{\mathbb {R}}}
    \newcommand{\BS}{{\mathbb {S}}} 
    \newcommand{\BU}{{\mathbb {U}}} 
     \newcommand{\BX}{{\mathbb {X}}}
     \newcommand{\BZ}{{\mathbb {Z}}}
     \newcommand{\fC}{{\mathfrak{C}}}
    \newcommand{\GL}{{\mathrm{GL}}}
    \renewcommand{\Im}{{\mathfrak{Im}\,}}
    \newcommand{\PGL}{{\mathrm{PGL}}} 
    \renewcommand{\Re}{{\mathfrak{Re}\,}}
    \newcommand{\PSL}{{\mathrm{PSL}}}
\def\-{^{-1}}
\def\Ssis{\mathscr S_{\mathrm {sis}}}
\def\Ssiss{\mathscr T_{\mathrm {sis}}}
\def\Msis{\mathscr M_{\mathrm {sis}}}
\def\Nsis{\mathscr N_{\mathrm {sis}}}
\def\SS{\mathscr S }
\def\Hrd {\mathscr H_{\mathrm {rd}}}
\def\hh{{ \text{\usefont{T1}{pzc}{m}{it}{h}}} }
\def\hld{{ \text{\usefont{T1}{pzc}{m}{it}{h}}} _{(\umu, \udelta)}}
\def\hmld{{ \text{\usefont{T1}{pzc}{m}{it}{h}}} _{(-\umu, \udelta)}}
\def\hmum{{ \text{\usefont{T1}{pzc}{m}{it}{h}}} _{(\umu, \um)}}
\def\hmmum{{ \text{\usefont{T1}{pzc}{m}{it}{h}}} _{(-\umu, \um)}}
\def\Hsl{\EuScript H_{(\usigma, \ulambda)}}
\def\Hld{\EuScript H_{(\umu, \udelta)}}
\def\Hmum{\EuScript H_{(\umu, \um)}}
\def\Hmld{\EuScript H_{(-\umu, \udelta)}}
\def\Hmmum{\EuScript H_{(-\umu, -\um)}}
\def\BRx{\mathbb R^\times}
\def\BCx{\mathbb C^\times}
\def\BFx{\mathbb F^\times}
\def\ux{\boldsymbol x}
\def\uy{\boldsymbol y}
\def\udelta{\boldsymbol {\delta}}
\def\ulambda{\boldsymbol \lambda}
\def\usigma{\boldsymbol \varsigma}
\def\unu{\boldsymbol \nu}
\def\umu{\boldsymbol \mu}
\def\urho{\boldsymbol \varrho}
\def\ue{\boldsymbol e}
\def\um{\boldsymbol m}
\def\utheta{\boldsymbol \theta}
\def\-{^{-1}}
\def\ukappa{\boldsymbol \kappa}
\def\lp {\left (}
\def\rp {\right )}
\def\lpp {\left \{ }
\def\rpp {\right \} }
\def\EC{\EuScript C}
\def\EJ{\EuScript J}
\def\EM{\EuScript M}
\def\EF{\EuScript F}
\def\ET{\EuScript T}
\def\ES{\EuScript S}
\def\EH{\EuScript H}
\def\Voronoi{Vorono\" \i \hskip 3 pt}
\def\BZT{\BZ/2 \BZ}
\def\uk{\boldsymbol k}
\newcommand{\delete}[1]{}
    \newcommand{\SL}{{\mathrm{SL}}}
    \newcommand{\SO}{{\mathrm{SO}}}
    \newcommand{\SU}{{\mathrm{SU}}}
    \newcommand{\sgn}{{\mathrm{sgn}}} \newcommand{\Tr}{{\mathrm{Tr}}}
    \newcommand{\ds}{\displaystyle}
    \newcommand{\sstyle}{\scriptstyle}
    \newcommand{\ra}{\rightarrow}
    \theoremstyle{plain}
    \newtheorem{thm}{Theorem}[section] \newtheorem{cor}[thm]{Corollary}
    \newtheorem{lem}[thm]{Lemma}  \newtheorem{prop}[thm]{Proposition}
     \newtheorem{defn}[thm]{Definition}
    \newtheorem {rem}[thm]{Remark} \newtheorem {example}[thm]{Example}
    \newtheorem {notation}[thm]{Notation}
    \newtheorem{observation}{Observation}[section]
    \renewcommand{\theequation}{\arabic{equation}}
    \numberwithin{equation}{section}
    \newtheorem{acknowledgement}{Acknowledgements}
\newsavebox\myboxA
\newsavebox\myboxB
\newlength\mylenA
\newcommand*\xoverline[2][0.75]{%
	\sbox{\myboxA}{$\m@th#2$}%
	\setbox\myboxB\null
	\ht\myboxB=\ht\myboxA%
	\dp\myboxB=\dp\myboxA%
	\wd\myboxB=#1\wd\myboxA
	\sbox\myboxB{$\m@th\overline{\copy\myboxB}$}
	\setlength\mylenA{\the\wd\myboxA}
	\addtolength\mylenA{-\the\wd\myboxB}%
	\ifdim\wd\myboxB<\wd\myboxA%
	\rlap{\hskip 0.5\mylenA\usebox\myboxB}{\usebox\myboxA}%
	\else
	\hskip -0.5\mylenA\rlap{\usebox\myboxA}{\hskip 0.5\mylenA\usebox\myboxB}%
	\fi}
\begin{document}

\title[Theory of Bessel Functions of High Rank - II]{Theory of Bessel Functions of HighRank - 
II: \\ Hankel Transforms and Fundamental Bessel Kernels}

\author{ Zhi Qi}

\subjclass[2010]{44A20, 33E20}
\keywords{Hankel transforms, fundamental Bessel kernels}

\address{Department of Mathematics\\ The Ohio State University\\100 Math Tower\\231 West 18th Avenue\\Columbus, OH 43210\\USA}
\email{qi.91@buckeyemail.osu.edu}


\begin{abstract}
In this article we shall study the analytic theory and the representation theoretic interpretations of Hankel transforms and fundamental Bessel kernels of an arbitrary rank over an archimedean field.
\end{abstract}

\maketitle

\begin{footnotesize}
\tableofcontents
\end{footnotesize}

\section{Introduction}

$ $ In this article, we shall study \textit{Hankel transforms}\footnote{They are called Bessel transforms in some literatures, for instance, \cite{Ichino-Templier}. However, this type of integral transforms should actually be attributed to Hermann Hankel. Moreover, we shall reserve the term {\it Bessel transforms} for the transforms shown in the Kuznetsov trace formula.} as well as their integral kernels, called {\it  fundamental Bessel kernels}\footnote{The adjective \textit{fundamental} is added for the distinction from the Bessel functions for $\GL_n$ in the Kuznetsov formula, and will be dropped when no confusion occurs.}, over an archimedean field. These Hankel transforms are the archimedean constituent of the \Voronoi summation formula over a number field.

\addtocontents{toc}{\protect\setcounter{tocdepth}{1}}

\subsection{Analytic theory}

Let $n$ be a positive integer. 
In the case $n \geq 3$ Hankel transforms of rank $n$ over $\BR$ have been investigated in the work of Miller and Schmid \cite{Miller-Schmid-2004, Miller-Schmid-2006, Miller-Schmid-2009} on the \Voronoi summation formula for $\GL_n (\BZ)$. The notion of {\it automorphic distributions}\footnote{According to Stephen Miller, the origin of automorphic distributions can be traced back to the 19th century in the work of Sim\'eon Poisson on Poisson's integral for harmonic functions on either the unit disk or the upper half-plane.} is used for their proof of this formula, and is also used to derive the analytic continuation and the functional equation of the $L$-function of a cuspidal $\GL_n(\BZ)$-automorphic representation of $\GL_n (\BR)$.
As the foundation of automorphic distributions, the harmonic analysis over $\BR$ is studied in \cite{Miller-Schmid-2006} from the viewpoint of the signed Mellin transforms.
As explained in \cite{Miller-Schmid-2004-1}, the cases $n = 1, 2$ can also be incorporated into their framework.
Furthermore, it is shown in the author's previous paper \cite{Qi} that all  Hankel transforms  over $\BR$ admit integral kernels, which can be partitioned into combinations of the so-called {\it fundamental Bessel functions}. These Bessel functions 
are studied from two approaches via their {\it formal integral representations} and {\it Bessel differential equations}.

In \S \ref{sec: notations} - \ref{sec: asymptotics}, we shall establish 
the analytic theory of Hankel transforms %
and their Bessel kernels  over $\BC$. The study of Hankel transforms  for $\GL_n(\BC)$ from the perspective of \cite{Miller-Schmid-2006}  is complete to some extent.  On the other hand, Bessel functions in \cite{Qi} play a fundamental role in our study of Bessel kernels over $\BC$, for instance, in finding their asymptotic expansions.
Although our main focus is on the theory over $\BC$, the theory of Hankel transforms over $\BR$ extracted from \cite{Miller-Schmid-2006} as well as some treatments of Bessel kernels over $\BR$ will also be included for the sake of comparison.


The sections \S \ref{sec: notations} - \ref{sec: asymptotics} are outlined  as follows.




In the preliminary section \S \ref{sec: notations}, some basic notions are introduced, such as gamma factors, Schwartz spaces, the Fourier transform and Mellin transforms. The three  kinds of Mellin transforms  $\EM$, $\EM_{\BR}$ and $\EM_{\BC}$ are first defined over the Schwartz spaces over $\BR_+ = (0, \infty)$, $\BRx = \BR \smallsetminus \{0\}$ and $\BCx = \BC \smallsetminus \{0\}$ respectively.

In \S \ref{sec: all Ssis}, the definitions of the Mellin transforms  $\EM$, $\EM_{\BR}$ and $\EM_{\BC}$ are extended onto certain function spaces  $\Ssis (\BR_+)$, $\Ssis (\BR^\times)$ and $\Ssis (\BCx)$ respectively. We shall precisely characterize their image spaces $\Msis$, $\Msis^\BR$ and  $\Msis^\BC$ under their corresponding Mellin transforms. In spite of their similar constructions, the  analysis of the Mellin transform $\EM_{\BC}$ is much more elaborate than that of   $\EM_{\BR}$ or  $\EM$.


In \S \ref{sec: Hankel transforms}, based on gamma factors and Mellin transforms, we shall construct Hankel transforms upon suitable subspaces of the $\Ssis$ function spaces just introduced in \S \ref{sec: all Ssis} and study their Bessel kernels. It turns out that all these Bessel kernels can be formulated in terms of the Bessel functions in \cite{Qi}. 

In \S \ref{sec: Fourier type transforms}, we  shall first introduce the Schmid-Miller transforms in companion with the Fourier transform and then use them to establish  a Fourier type integral transform expression of a Hankel transform.

In \S \ref{sec: integral representations}, we shall introduce   certain integrals, derived from the Fourier type integral transforms given in \S \ref{sec: Fourier type transforms}, that represents Bessel kernels. When the field is real, these integrals never absolutely converge and are closely connected to the formal integrals studied in \cite{Qi}. In the complex case, however,  some range of index can be found where such integrals are absolutely convergent.

The last two sections \S \ref{sec: two connection formulae for J mu m} and \S \ref{sec: asymptotics} are devoted to Bessel kernels over $\BC$.
In \S \ref{sec: two connection formulae for J mu m}, we shall prove two connection formulae  that relate a Bessel kernel over $\BC$ to the two kinds of Bessel functions of {\it positive} sign. These kinds of Bessel functions arise in the study of  Bessel equations in \cite[\S 7]{Qi}. In  \S \ref{sec: asymptotics}, as a consequence of  the second connection formula above, we shall derive the asymptotic expansion of a Bessel kernel over $\BC$ from \cite[Theorem 7.27]{Qi}.

\subsection{Representation theory}\label{sec: intro, representation theory} The work of Miller and Schmid is extended by Ichino and Templier \cite{Ichino-Templier} to any irreducible cuspidal automorphic representation
of $\GL_n$, $n \geq 2$, over an arbitrary number field $\BK$.
The \Voronoi summation formula for $\GL_n$  follows from the global theory of $\GL_n \times \GL_1$-Rankin-Selberg $L$-functions. For an archimedean completion $\BK_\varv$ of $\BK$, the defining identities of the Hankel transform associated with an infinite dimensional irreducible unitary generic representation of $\GL_n (\BK_\varv)$ are reformulations of the corresponding local functional equations for $\GL_n \times \GL_1$-Rankin-Selberg zeta integrals over $\BK_\varv$.

In \S  \ref{sec: Hankel, Ichino-Templier}, we shall first recollect the definition of the Hankel transform associated with an infinite dimensional irreducible admissible generic representation of $\GL_n( \BF)$  for an archimedean field $\BF$ 
in \cite{Ichino-Templier}. We stress that  this definition actually works for any irreducible admissible representation  of $\GL_n(\BF)$, including $n = 1$. We shall then give a detailed discussion on   Hankel transforms of rank $n$ over $\BF$ using the Langlands classification for $\GL_n(\BF)$.

In   \S \ref{sec: Bessel, GL2(F)}, according to the theory of local functional equations for $\GL_2 \times \GL_1$-Rankin-Selberg zeta integrals over $\BF$, we shall show  that the action of the long Weyl element on the Kirillov model of an infinite dimensional irreducible admissible representation of $\GL_2(\BF)$ is essentially a Hankel transform over $\BF$.
It follows the consensus that for $\GL_2 (\BF)$ the Bessel functions occurring in the Kuznetsov  trace formula should coincide with those in the \Voronoi summation formula. 
This will let us prove and generalize the Kuznetsov trace formula for $\PSL_2(\BZ[i])\backslash \PSL_2( \BC)$ in \cite{B-Mo}\footnote{In \cite{M-W-Kuz}, Miatello and Wallach gave the spherical Kuznetsov trace formula for real semisimple groups of real rank one, which include both $\SL_2 (\BR)$ and $\SL_2 (\BC)$. The first instance of Bessel functions for  spherical principal series representations of $\SL_2(\BC)$ can be found in their work. More generally, in generalizing the Kuznetsov formula to the non-spherical case, the formula and the integral representation of the Bessel functions associated with  principal series representations of $\SL_2(\BC)$ are discovered by Bruggeman, Motohashi and Lokvenec-Guleska \cite{B-Mo, B-Mo2}. Their approach is however entirely different from ours.}, in the same way that \cite{CPS} does for the Kuznetsov trace formula for $\PSL_2(\BZ )\backslash \PSL_2( \BR)$ in \cite{Kuznetsov}.

In \cite[\S 3.2]{Qi-Thesis}, the author found a formula of Bessel functions for $\GL_3(\BF)$ in terms of fundamental Bessel kernels, formally derived from Rankin-Selberg $\GL_3 \times \GL_2$ local functional equations and the $\GL_2$ Bessel-Plancherel formula. The Bessel functions for $\GL_3 (\BF)$ are two-variable special functions arising in the Kuznetsov trace formula for $\GL_3 (\BF)$. To be precise, the fundamental Bessel kernels occurring in the formula are for $\GL_3 \times \GL_2$ and $\GL_2 $, where the involved representations of $\GL_2$ are all tempered. This is the first instance of Bessel functions for groups of higher rank.

\subsection{Distribution theory} Although such a theory can be formulated, we shall not touch in this article the theory of Hankel transforms over $\BC$ from the perspective of distributions as in \cite{Miller-Schmid-2006}. It is very likely that this will lead to the theory of automorphic distributions on $\GL_n (\BC)$ with respect to congruence subgroups, as well as the \Voronoi summation formula for cuspidal automorphic representations of $\GL_n (\BC)$. The \Voronoi summation formula in this generality is already covered by \cite{Ichino-Templier}, but this approach would still be of its own interest.

\subsection{Applications} When $n=2$, there are numerous applications in analytic number theory of the \Voronoi summation formula and the Kuznetsov trace formula over $\BQ$, which include subconvexity, non-vanishing of automorphic $L$-functions and estimates for shifted convolution sums. The \Voronoi summation formula for $\GL_3(\BZ)$ is used to establish subconvexity in \cite{XLi2011} as well.
In order to work over an arbitrary number field, one  also needs to understand   Hankel transforms and   Bessel kernels at least for $\GL_2(\BC)$. We hope that the present work and its sequel will make these problems over a  number field more approachable from the analytic perspective.

\begin{acknowledgement}
	The author is grateful to his advisor, Roman Holowinsky, who brought him to the area of analytic number theory, gave him much enlightenment, guidance and encouragement. The author would like to thank James Cogdell, Ovidiu Costin, Stephen Miller, Pengyu Yang and Cheng Zheng for valuable comments and helpful discussions.
\end{acknowledgement}

\section{Notations and preliminaries}\label{sec: notations}

\addtocontents{toc}{\protect\setcounter{tocdepth}{2}}

\subsection{General notations} \begin{itemize}
\item[-] Denote $\BN=\{0,1,2...\}$ and $\BN_+ = \{1, 2, 3, ...\}$. 
\item[-] The group $\BZ/2\BZ$ is usually identified with the two-element set $\{0, 1 \} $.
\item[-] Denote $\BR_+ = (0, \infty)$, $\overline \BR_+ =[0, \infty)$, $\BR^\times = \BR \smallsetminus \{ 0 \}$ and $\BC^\times = \BC \smallsetminus \{ 0 \}$.
\item[-] Denote by $\BU \cong \BR _+ \times \BR$ the universal cover of $\BC \smallsetminus \{ 0 \}$. Each element $\zeta \in \BU$ is denoted by $\zeta = x e^{i \omega}$, with $(x, \omega) \in \BR _+ \times \BR$.
\item[-] For $m \in \BZ$ define $\delta (m) \in \BZT$ by $\delta(m) = m (\mod 2)$.
\item[-] For $s \in \BC$ and $\alpha \in \BN$, let
$[s]_{\alpha} =  \textstyle \prod_{\kappa = 0}^{\alpha-1} (s - \alpha)$ and $ (s)_{\alpha} =  \textstyle \prod_{\kappa = 0}^{\alpha-1} (s + \alpha)$  if  $\alpha\geq 1$, and let $[s]_{0} = (s)_0 = 1$.
\item[-] For $s \in \BC$ let $e(s) = e^{2 \pi i s}$.
\item[-] For a finite  closed interval $[a, b] \subset \BR$ define the closed vertical strip $\BS [a, b] = \{ s \in \BC : \Re s \in [a, b]  \}.$ The open vertical strip $\BS (a, b)$ for a finite open interval $(a, b)$ is similarly defined.
\item[-] For  $ \lambda \in \BC$ and $r > 0$, define $ \BB_{r} (\lambda) = \left\{ s \in \BC : |s - \lambda| < r \right \} $ to be the disc of  radius $r$ centered at $s = \lambda$.
\item[-] For $\ulambda = (\lambda_1, ..., \lambda_n) \in \BC^n$  denote $|\ulambda| = \sum_{l      = 1}^n \lambda_{l     }$ (this notation works for subsets of $\BC^n$, for instance, $(\BZT)^n = \{0, 1\}^n$ and $\BZ^n$).
\item[-] Define the hyperplane $\BL^{n-1} = \left \{ \ulambda \in \BC^n : |\ulambda|  = \sum_{l      = 1}^n \lambda_{l     } = 0 \right\} $.
\item[-] Denote by $\ue^n$ the $n$-tuple $(1, ..., 1)$.
\item[-]  For $\um = (m_1, ..., m_n)\in \BZ^n$ define $ \|\um\| = (|m_1|, ..., |m_n|)$.
\end{itemize}

\subsection{Gamma factors}


\subsubsection{} We define  the gamma factor
\begin{equation}\label{1def: G pm (s)}
G (s, \pm) = \Gamma (s) e\lp \pm \frac s 4\rp.
\end{equation}
For $( \usigma, \ulambda) = (\varsigma_1, ..., \varsigma_n, \lambda_1, ..., \lambda_n) \in  \{ +, - \}^n \times \BC^{n}$ let
\begin{equation}\label{1def: G(s; sigma; lambda)}
G (s; \usigma, \ulambda) = \prod_{l      = 1}^n G  (s - \lambda_l, \varsigma_l          ).
\end{equation}

\subsubsection{} For $\delta \in \BZ/ 2\BZ = \{0, 1\}$, we define the gamma factor
\begin{equation} \label{1def: G delta}
G_\delta (s) = i^\delta \pi^{ \frac 1 2 - s} \frac {\Gamma \lp \frac 1 2 ({s + \delta} ) \rp} {\Gamma \lp \frac 1 2 ({1 - s + \delta} ) \rp} = 
\left\{ \begin{split}
& 2(2 \pi)^{-s} \Gamma (s) \cos \left(\frac {\pi s} 2 \right), \hskip 10pt \text { if } \delta = 0,\\
& 2 i (2 \pi)^{-s} \Gamma (s) \sin  \left(\frac {\pi s} 2 \right), \hskip 9 pt \text { if } \delta = 1.
\end{split} \right.
\end{equation}
Here, we have used the duplication formula and Euler's reflection formula for the Gamma function,
\begin{equation*}
\Gamma (1-s) \Gamma (s) = \frac \pi {\sin (\pi s)}, \hskip 10 pt \Gamma (s) \Gamma \lp s + \frac 1 2 \rp = 2^{1-2s} \sqrt \pi \Gamma (2 s).
\end{equation*}
Let $(\umu, \udelta) = (\mu_1, ..., \mu_n, \delta_1, ..., \delta_n) \in \BC^{n} \times (\BZ/2 \BZ)^n$ and define 
\begin{equation}\label{1def: G (lambda, delta)}
G_{(\umu, \udelta) } (s) = \prod_{l      = 1}^n G_{\delta_{l     } } (s - \mu_l     ).
\end{equation}
One   observes the following simple functional relation
\begin{equation}\label{1eq: G mu delta (1-s) = G - mu delta (s)}
G_{(\umu, \udelta) } (1-s)  G_{(- \umu, \udelta) } (s) = 1.
\end{equation}

\subsubsection{}

For $m \in \BZ$, we define the gamma factor
\begin{equation}\label{1def: G m (s)}
G_m (s) = i^{|m| } (2\pi)^{1-2 s } \frac { \Gamma \lp s + \frac 1 2{|m|}   \rp} { \Gamma \lp 1 - s + \frac 1 2{|m|}   \rp }.
\end{equation}
Let $(\umu, \um) = (\mu_1, ..., \mu_n, m_1, ..., m_n) \in \BC^{n} \times \BZ^n$ and define 
\begin{equation}\label{1def: G (mu, m)}
G _{(\umu, \um)} (s) = \prod_{l      = 1}^n G_{m_{l     } } (s - \mu_l     ).
\end{equation}
We have the functional relation
\begin{equation}\label{1eq: G mu m (1-s) = G - mu m (s)}
G_{(\umu, \um) } (1-s)  G_{(- \umu, \um) } (s) = 1.
\end{equation}

\subsubsection{Relations between the three types of gamma factors} \label{sec: Gamma factor, R and C}

We first observe that
\begin{equation*}
G_{\delta} (s) = (2\pi)^{-s} \lp G  (s, +) + (-)^\delta G (s, -) \rp.
\end{equation*}
Hence
\begin{equation}\label{1eq: G (lambda, delta) = G (s; sigma, lambda)}
G_{(\umu, \udelta)} (s) = \sum_{\usigma \in \{+, -\}^n } \usigma ^{\udelta} (2\pi)^{|\umu| - ns}   G (s;  \usigma, \umu), \hskip 10 pt  \usigma ^{\udelta} = \prod_{l     =1}^n \varsigma _{l     }^{\delta_l     },\,  |\umu| = \sum_{l     =1}^n \mu_l     .
\end{equation}

Euler's reflection formula and certain trigonometric identities yield
\begin{equation}\label{1f: G m (s) = G 1 G delta(m)}
\begin{split}
i G_m (s) & = i^{|m|+1} 2 (2 \pi)^{-2s } \Gamma \lp s + \frac {|m|} 2 \rp \Gamma \lp s - \frac {|m|} 2 \rp \sin \lp \pi \lp  s - \frac {|m|} 2 \rp \rp\\
& = G_{\delta( m) + 1} \lp s - \frac {| m|} 2 \rp  G_{0} \lp s + \frac {|m |} 2 \rp\\
& = G_{\delta( m)} \lp s - \frac {| m|} 2 \rp  G_{1} \lp s + \frac {|m |} 2 \rp,
\end{split}
\end{equation}
with  $\delta (m) = m (\mod 2)$. Therefore,  $G _{(\umu, \um)} (s)$  may be viewed as a certain $  G_{(\boldsymbol \eta, \udelta) } (s)$ of doubled rank. 

\begin{lem}\label{1lem: complex and real gamma factors}

Suppose that $(\umu, \um) \in \BC^{n} \times \BZ^n$ and $(\boldsymbol \eta, \udelta) \in  \BC^{2n} \times (\BZ/2 \BZ)^{2n}$ are subjected to one of the following two sets of relations
\begin{align} \label{1eq: relation between (mu, m) and (lambda, delta), 1}
& \eta_{2 l      - 1} = \mu_l      + \frac {|m_l      |} 2, \   \eta_{2 l      } = \mu_l      - \frac {|m_l      |} 2,\ \delta_{2l      - 1} = \delta( m) + 1,\ \delta_{2l      } = 0; \\
\label{1eq: relation between (mu, m) and (lambda, delta), 2}
&  \eta_{2 l      - 1} = \mu_l      + \frac {|m_l      |} 2, \   \eta_{2 l      } = \mu_l      - \frac {|m_l      |} 2,\ \delta_{2l      - 1} =  \delta( m),\ \delta_{2l      } = 1.
\end{align}
Then  $i^n G _{(\umu, \um)} (s) = G_{(\boldsymbol \eta, \udelta) } (s)$.
\end{lem}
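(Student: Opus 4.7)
The plan is to prove the identity factor-by-factor, with the key ingredient being the two alternative factorizations of $i G_m(s)$ already established in equation \eqref{1f: G m (s) = G 1 G delta(m)}. Since both $G_{(\umu, \um)}(s)$ and $G_{(\boldsymbol\eta, \udelta)}(s)$ are defined as products over the index $l$, it suffices to show that for each $l \in \{1,\dots,n\}$ one has
\begin{equation*}
i\,G_{m_l}(s - \mu_l) = G_{\delta_{2l-1}}(s - \eta_{2l-1})\,G_{\delta_{2l}}(s - \eta_{2l}),
\end{equation*}
and then take the product over $l$ to obtain $i^n G_{(\umu,\um)}(s) = G_{(\boldsymbol\eta,\udelta)}(s)$.

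To establish this local identity, I would simply substitute $s \mapsto s - \mu_l$ into the middle line of \eqref{1f: G m (s) = G 1 G delta(m)}, which gives
\begin{equation*}
i\,G_{m_l}(s-\mu_l) = G_{\delta(m_l)+1}\!\left(s - \mu_l - \tfrac{|m_l|}{2}\right) G_{0}\!\left(s - \mu_l + \tfrac{|m_l|}{2}\right).
\end{equation*}
Under relation \eqref{1eq: relation between (mu, m) and (lambda, delta), 1}, the pair $(\eta_{2l-1}, \delta_{2l-1}) = (\mu_l + |m_l|/2,\ \delta(m_l)+1)$ and $(\eta_{2l}, \delta_{2l}) = (\mu_l - |m_l|/2,\ 0)$ makes the right-hand side equal to $G_{\delta_{2l-1}}(s - \eta_{2l-1})\,G_{\delta_{2l}}(s - \eta_{2l})$ directly. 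For relation \eqref{1eq: relation between (mu, m) and (lambda, delta), 2}, I would instead substitute into the third line of \eqref{1f: G m (s) = G 1 G delta(m)}, which gives the factorization $G_{\delta(m_l)}(s - \mu_l - |m_l|/2)\,G_{1}(s - \mu_l + |m_l|/2)$, matching $(\delta_{2l-1}, \delta_{2l}) = (\delta(m_l), 1)$.

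There is essentially no obstacle here: the lemma is a bookkeeping consequence of \eqref{1f: G m (s) = G 1 G delta(m)}, whose nontrivial content (the reflection and duplication formulas) has already been absorbed in deriving that identity. The only thing worth double-checking is the index convention, namely that in both cases the ``high'' shift $+|m_l|/2$ and ``low'' shift $-|m_l|/2$ are paired with the correct parity $\delta_{2l-1}$ or $\delta_{2l}$ so that the assignment is compatible with the ordering chosen in \eqref{1def: G (lambda, delta)}; once this matching is verified in each of the two cases, taking the product over $l$ concludes the proof.
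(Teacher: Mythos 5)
Your proof is correct and is exactly the computation the paper leaves implicit after deriving \eqref{1f: G m (s) = G 1 G delta(m)}; the lemma is stated as an immediate consequence of that identity, and your factor-by-factor substitution $s \mapsto s - \mu_l$ followed by the product over $l$ is the intended argument.
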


\subsubsection{Stirling's formula} Fix $s_0 \in \BC$, and let $|\arg s| < \pi - \epsilon$, $0 < \epsilon < \pi$. We have the following asymptotic as  $|s| \ra \infty$
\begin{equation*} 
\log \Gamma (s_0 + s) \sim \left( s_0 + s - \frac 1 2 \right) \log s - s + \frac 1 2 \log (2 \pi).
\end{equation*}
If one writes  $s_0 = \rho_0 + i t_0$ and $s = \rho + i t$, $\rho \geq 0$, then the right hand side is equal to
\begin{equation*}
\begin{split}
& \left(\rho_0 + \rho - \frac 1 2 \right) \log \sqrt {t^2 + \rho^2} - (t_0 + t) \arctan \left( \frac t \rho \right) - \rho + \frac 1 2 \log (2 \pi) \\
& + i (t_0 +  t) \log {\sqrt {t^2 + \rho^2}} - i t + i  \left(\rho_0 + \rho - \frac 1 2 \right) \arctan \left( \frac t \rho \right),
\end{split}
\end{equation*}
and therefore
\begin{equation}\label{1eq: Stirling's formula}
|\Gamma (s_0 + s)| \sim \sqrt {2 \pi} \lp t^2 + \rho^2 \rp^{\frac 1 2 \lp \rho_0 + \rho - \frac 1 2  \rp} e^{- (t_0 + t) \arctan \left(   t / \rho \right) - \rho}.
\end{equation}

\begin{lem} \label{1lem: vertical bound}
We have
\begin{equation}\label{1eq: vertical bound, G (s; sigma, lambda)}
\begin{split}
G(s; \usigma, \ulambda) \lll_{\ulambda, a, b, r} & (|\Im s| + 1)^{n \lp \Re s - \frac 1 2 \rp - \Re |\ulambda|},
\end{split}
\end{equation}
for all $s \in \BS [a, b] \smallsetminus \bigcup_{l      = 1}^n \bigcup_{\kappa \in \BN} \BB_r (\lambda_{l     } - \kappa)$, with small $r > 0$, 
\begin{equation}\label{1eq: vertical bound, G (lambda, delta) (s)}
\begin{split}
G_{(\umu, \udelta)} (s ) \lll_{\umu, a, b, r} & (|\Im s| + 1)^{n \lp \Re s - \frac 1 2 \rp - \Re |\umu|},
\end{split}
\end{equation}
for all $s \in \BS [a, b] \smallsetminus \bigcup_{l      = 1}^n \bigcup_{\kappa \in \BN} \BB_r (\mu_{l     } - \delta_{l     } - 2 \kappa)$, and
\begin{equation}\label{1eq: vertical bound, G (mu, m) (s)}
G_{(\umu, \um)} \lp   s   \rp \lll_{\umu, a, b, r} \prod_{l      = 1}^n (|\Im s| + |m_l     | + 1)^{2 \Re s - 2\Re \mu_{l     } - 1},
\end{equation}
for all $2 s \in \BS [a, b] \smallsetminus \bigcup_{l      = 1}^n \bigcup_{\kappa \in \BN} \BB_r (2 \mu_{l     } - |m_{l     }| - 2 \kappa)$.

In other words, if $\ulambda$ and $ \umu $ are given, then $G(s; \usigma, \ulambda)$, $G_{(\umu, \udelta)} (s )$ and $G_{(\umu, \um)} (s)$ are all of moderate growth with respect to $\Im s$,  uniformly on vertical strips {\rm(}with bounded width{\rm)}, and moreover  $G_{(\umu, \um)} (s )$ is also of uniform moderate growth with respect to $\um$.
\end{lem}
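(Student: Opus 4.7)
All three bounds follow from Stirling's asymptotic \eqref{1eq: Stirling's formula} combined with a cancellation between the $\arctan$ factor produced by Stirling and the exponential twists built into the definitions \eqref{1def: G pm (s)}, \eqref{1def: G delta}, \eqref{1def: G m (s)}; the excluded discs $\BB_r(\cdot)$ ensure that the asymptotic applies uniformly away from the poles.

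First I would prove \eqref{1eq: vertical bound, G (s; sigma, lambda)}. Writing $s = \varrho + it$ and applying \eqref{1eq: Stirling's formula} to each $\Gamma(s - \lambda_l)$ with $s_0 = -\lambda_l$ (after finitely many shifts $\Gamma(z) = \Gamma(z+1)/z$ to bring the argument into the right half-plane when $\Re s - \Re \lambda_l$ is negative, the excluded discs ensuring that the denominators are bounded below), the modulus $|\Gamma(s - \lambda_l)|$ is controlled by a polynomial $(\varrho^2 + t^2)^{(1/2)(\Re s - \Re\lambda_l - 1/2)}$ times $\exp\bigl(-(t - \Im\lambda_l)\arctan(t/\varrho) - \varrho\bigr)$. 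Since $|e(\varsigma_l (s - \lambda_l)/4)| = \exp(-\varsigma_l \pi(t - \Im\lambda_l)/2)$ and $\arctan(t/\varrho) \to \pm \pi/2$ as $|t| \to \infty$ with $\varrho$ bounded, the two exponentials either cancel or reinforce; in either case the product is polynomially bounded. Multiplying over $l$ and collecting the Stirling exponents gives $n(\Re s - 1/2) - \Re |\ulambda|$, as claimed.

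Next, \eqref{1eq: vertical bound, G (lambda, delta) (s)} would follow by applying the identity \eqref{1eq: G (lambda, delta) = G (s; sigma, lambda)} to express $G_{(\umu, \udelta)}(s)$ as $\sum_{\usigma} \usigma^{\udelta}(2\pi)^{|\umu| - ns} G(s; \usigma, \umu)$ and invoking the first bound on each summand; the $(2\pi)^{-ns}$ prefactor is bounded on any bounded strip. The poles of $G_{\delta_l}(s - \mu_l)$ lie exactly at $s = \mu_l - \delta_l - 2\kappa$, matching the excluded discs.

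For \eqref{1eq: vertical bound, G (mu, m) (s)} the main point is uniformity in $\um$. I would work directly from the definition \eqref{1def: G m (s)} and apply Stirling to both $\Gamma(s - \mu_l + |m_l|/2)$ and $\Gamma(1 - s + \mu_l + |m_l|/2)$. Since both arguments have real part $|m_l|/2 + O(1)$, Stirling is applied deep in the right half-plane and the leading $e^{-|m_l|/2}$ contributions from numerator and denominator cancel. The polynomial part of the ratio becomes
\[
\bigl((\Re s - \Re\mu_l + \tfrac{|m_l|}{2})^2 + (\Im s - \Im\mu_l)^2\bigr)^{\Re s - \Re\mu_l - 1/2},
\]
which is comparable to $(|\Im s| + |m_l| + 1)^{2\Re s - 2\Re\mu_l - 1}$ uniformly in $\um$, and the $(2\pi)^{1-2(s - \mu_l)}$ prefactor together with the residual exponential pieces combine to a function bounded on the strip. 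The main obstacle is the bookkeeping of the two $\arctan$ pieces from numerator and denominator, which carry opposite signs and whose cancellation must be shown to be uniform in $\um$; one needs to be careful because the shifts by $|m_l|/2$ move the $\arctan$ arguments but not symmetrically in $s$. As a cross-check, Lemma \ref{1lem: complex and real gamma factors} reduces the third bound to a rank-$2n$ instance of \eqref{1eq: vertical bound, G (lambda, delta) (s)} with parameters $\eta_{2l-1} = \mu_l + |m_l|/2$, $\eta_{2l} = \mu_l - |m_l|/2$; tracing the dependence on $\Re \eta_l$ through the exponent $n(\Re s - 1/2) - \Re|\ueta|$ produces the same answer, giving an independent verification.
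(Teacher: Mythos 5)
The paper states this lemma without proof (it is treated as a standard Stirling estimate), so there is no paper-proof to compare against, and your Stirling-based approach is the right one. Two points deserve attention, both concerning the bookkeeping rather than the main idea. For \eqref{1eq: vertical bound, G (lambda, delta) (s)}, the reduction via \eqref{1eq: G (lambda, delta) = G (s; sigma, lambda)} followed by a term-by-term application of \eqref{1eq: vertical bound, G (s; sigma, lambda)} gives a bound only outside the discs $\BB_r(\mu_l - \kappa)$ for \emph{all} $\kappa \in \BN$, whereas the statement excludes only the discs $\BB_r(\mu_l - \delta_l - 2\kappa)$. In the extra discs centered at $\mu_l - (1-\delta_l) - 2\kappa$ each summand $G_\pm(s-\mu_l)$ has a pole even though $G_{\delta_l}(s-\mu_l)$ does not, so your reduction is silent there. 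The gap is easily closed (the finitely many such centers in a bounded strip lie in a compact set on which $G_{(\umu,\udelta)}$ is continuous, so both sides of the inequality are bounded and the constant absorbs the discrepancy), or avoided entirely by applying Stirling directly to the quotient form of $G_\delta$ in \eqref{1def: G delta}, whose singular set is precisely $-\delta - 2\BN$. Second, your ``independent verification'' of \eqref{1eq: vertical bound, G (mu, m) (s)} via Lemma~\ref{1lem: complex and real gamma factors} and the rank-$2n$ instance of \eqref{1eq: vertical bound, G (lambda, delta) (s)} reproduces the correct total polynomial exponent but cannot, on its own, establish the claimed uniformity in $\um$: the implied constant in \eqref{1eq: vertical bound, G (lambda, delta) (s)} depends on the index (here $\ueta$, hence on $\um$). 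So the direct Stirling argument you sketch --- tracking the cancellation of the $e^{-|m_l|/2}$ factors and the uniform control of the two opposite-signed $\arctan$ terms as $|m_l|$ varies --- is in fact essential, not merely a cross-check; the reduction only validates the shape of the exponents. With those two refinements the proposal is sound.
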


\subsection{Basic notions for $\BR_+$, $\BR^\times$ and $\BC^\times$}\label{sec: R+, Rx and Cx}

Define $\BR_+ = (0, \infty)$, $\BR^\times = \BR \smallsetminus \{ 0 \}$ and $\BC^\times = \BC \smallsetminus \{ 0 \}$. We observe the isomorphisms
$\BR^\times \cong \BR_+ \times  \{+, -\}\ $($\cong \BR_+ \times \BZT$) and $\BC^\times \cong \BR_+ \times \BR/ 2\pi \BZ$, the latter being realized via the \textit{polar coordinates} $z = x e^{ i \phi}$.

\subsubsection{}
Let $|\ |$ denote the ordinary absolute value on either $\BR$ or $\BC$, and set $\|\, \|_\BR = |\  |$ for $\BR$ and $\|\, \| = \|\, \|_\BC = |\  |^2$ for $\BC$. Let $d x$ be the Lebesgue measure on $\BR$, and let  $d^\times x = |x|\- d x$ be the standard choice of the multiplicative Haar measure on $\BR^\times $.
Similarly, let $d z$ be \textit{twice} the ordinary Lebesgue measure on $\BC$, and choose the standard multiplicative Haar measure $d^\times z = \| z\|^{-1} d z$ on $\BC^\times $. Moreover, in the polar coordinates, one has $d^\times z = 2 d^\times x d \phi$.
For $x \in \BR^\times$ the sign function $\sgn(x)$ is equal to $ x /{|x|}$, whereas for $z \in \BC^\times $ we introduce the notation $[z] =  z /{|z|}$. 


Henceforth, we shall let $\BF$ be either $\BR$ or $\BC$, and occasionally let $x, y$ denote elements in $\BF$ even if $\BF=\BC$.

\subsubsection{}
For  $\delta\in \BZT$,  we define the space $C_\delta^\infty (\BRx)$ of all smooth functions $\varphi \in C ^\infty (\BRx)$ satisfying the parity condition 
\begin{equation} \label{1eq: delta condition, R}
\varphi (-x) = (-)^\delta \varphi (x). 
\end{equation} 
Observe that a function $\varphi \in C_\delta^\infty (\BRx)$ is determined by its restriction on $\BR _+$, namely, $\varphi (x) = \sgn (x)^\delta \varphi (|x|)$. Therefore, 
\begin{equation}\label{1eq: C delta = sgn delta C}
C_\delta^\infty (\BRx) = \sgn (x)^\delta C^\infty (\BR _+) = \left \{ \sgn (x)^\delta \varphi  (|x|) : \varphi \in C^\infty ( \BR_+) \right \}.
\end{equation}
For a smooth function $\varphi  \in C ^\infty (\BRx)$, we define $\varphi_{\delta} \in C^\infty (\BR _+)$ by
\begin{equation}\label{1eq: upsilon delta}
\varphi_{\delta} (x) = \frac 1 2 \lp \varphi (x) + (-)^\delta \varphi (-x) \rp, \hskip 10 pt x \in \BR _+.
\end{equation}
Clearly,
\begin{equation}\label{1eq: C = C0 + C1}
\varphi (x) = \varphi_0 (|x|) + \sgn(x) \varphi_1 (|x|).
\end{equation}

For $m \in \BZ$, we define the space $C_m^\infty (\BCx)$ of all smooth functions $\varphi \in C ^\infty (\BCx)$ satisfying
\begin{equation} \label{1eq: m condition, C}
\varphi \big( x e^{i\phi} \cdot e^{i \phi'} \big) = e^{i m \phi'} \varphi \lp x e^{i \phi}\rp.
\end{equation} 
A function $\varphi \in C_m^\infty (\BCx)$ is determined by its restriction on $\BR _+$, namely, $\varphi (z) = [z]^m  \varphi (|z|)$, or, in the polar coordinates, $\varphi (x e^{i\phi} ) = e^{i m \phi } \varphi (x)$. Therefore,
\begin{equation}\label{1eq: C m = [z] m C}
C_m^\infty (\BRx) = [z]^m C^\infty (\BR _+) = \left \{ [z]^m \varphi  (|z|) = e^{i m \phi } \varphi (x) : \varphi \in C^\infty ( \BR_+) \right \}.
\end{equation}
For a smooth function $\varphi  \in C ^\infty (\BCx)$, we let $\varphi_{ m} \in C^\infty (\BR _+)$ denote the $m$-th Fourier coefficient of $\varphi$ given by
\begin{equation}\label{1eq: Fourier coefficients of upsilon}
\varphi_m (x) = \frac 1 { {2 \pi}} \int_0^{2\pi} \varphi \lp x e^{i \phi} \rp  e^{- i m \phi} d \phi.
\end{equation}
One has the Fourier expansion of $\varphi$,
\begin{equation}\label{1eq: Fourier series expansion}
\varphi \lp x e^{i \phi}\rp = \sum_{m \in \BZ} \varphi_m (x) e^{i m \phi}.
\end{equation}

\subsubsection{}
Subsequently, we shall encounter various subspaces of $C^\infty (\BFx)$, with $\BF = \BR, \BC$, for instance, $\SS (\BF)$, $\SS (\BFx)$, $\Ssis (\BFx)$, $\Ssis^{(\umu, \udelta)} (\BRx)$ and $\Ssis^{(\umu, \um)} (\BCx)$. Here, we list three central questions that will be the guidelines of our investigations of these function spaces.

For now, we let $D $ be a subspace of $C^\infty (\BFx)$. For $\BF = \BR$ (respectively $\BF = \BC$), we shall add a superscript or subscript $\delta$ (respectively $m$) to the notation of $D$, say $D_\delta$ (respectively $D_m$), to denote the space of $\varphi \in D$ satisfying \eqref{1eq: delta condition, R} (respectively \eqref{1eq: m condition, C}). In view of \eqref{1eq: C delta = sgn delta C} (respectively \eqref{1eq: C m = [z] m C}), there is a subspace of $C^\infty (\BR _+)$, say $E_\delta$ (respectively $E_m$), such that 
$D_\delta = \sgn(x)^\delta E_\delta$ (respectively $D_m = [z]^m E_m$).

Firstly, we are interested in the question,
\begin{itemize}
\item[] `` How to characterize the space $E_\delta$ (respectively $E_m$)?''.
\end{itemize}

Moreover, the subspaces $D \subset C^\infty (\BFx)$ that we shall consider always satisfy the following two hypotheses, 
\begin{itemize}
\item[-] $\varphi \in D $ implies $\varphi_\delta \in E_\delta$ for $\BF = \BR$ (respectively, $\varphi \in D $ implies $\varphi_m \in E_m$ for $\BF = \BC$), and
\item[-] $D$ is closed under addition.
\end{itemize}
For $\BF = \BR$, under these two hypotheses, it follows from \eqref{1eq: C = C0 + C1} that
$$D = D_0 \oplus D_1 \cong  E_0 \times E_1.$$
For $\BF = \BC$,  in view of \eqref{1eq: Fourier series expansion},  the map that sends $\varphi$ to the sequence $\left\{\varphi_m \right \} $ of its Fourier coefficients is injective. The second question arises,
\begin{itemize}
\item[] `` What is the image of $D$ in $\prod_{m \in \BZ} E_m $ under this map?'', or equivalently,
\item[] `` What conditions should a sequence $\left\{\varphi_m \right \}  \in \prod_{m \in \BZ} E_m$ satisfy in order for the Fourier series defined by \eqref{1eq: Fourier series expansion} giving a function $\varphi \in D$?''.
\end{itemize}

Finally, after introducing the Mellin transform $\EM_\BF$, 
we shall focus on the question,
\begin{itemize}
\item[] `` What is the image of $D$ under the Mellin transform  $\EM_\BF$?''. 
\end{itemize}

\subsection{Schwartz spaces} \label{sec: Schwartz spaces}

We say that a function $\varphi \in C^\infty (\BR_+)$ is {\it smooth at zero} if all of its derivatives admit asymptotics as below,
\begin{equation}\label{1eq: asymptotics R+}
\varphi^{(\alpha)} (x) = \alpha!  a_{ \alpha}  + O_{ \alpha } \lp x \rp \text{ as } x \ra 0, \text{ for any } \alpha \in \BN, \text{ with } a_\alpha \in\BC.
\end{equation}

\begin{rem}
Consequently, one has the asymptotic expansion $\varphi (x) \sim \sum_{\kappa =0}^\infty a_\kappa  x^\kappa$, which means that 
$\varphi (x) = \sum_{\kappa  = 0}^{A } a_{\kappa } x^{\kappa } + O_{ A } \lp x^{A + 1 }\rp$ as $x \ra 0$ for any $A \in \BN$.
It is not required that the series $\sum_{\kappa =0}^\infty a_\kappa  x^\kappa $ be convergent for any $x \in \BR^\times$.

Actually, \eqref{1eq: asymptotics R+} is equivalent to the following
\begin{equation}\label{1eq: asymptotics R+, 2}
\varphi^{(\alpha)} (x) = \sum_{\kappa  = \alpha}^{\alpha + A} a_{\kappa } [\kappa ]_{\alpha} x^{\kappa - \alpha} + O_{ \alpha, A } \lp x^{A + 1 }\rp \text{ as } x \ra 0, \text{ for any } \alpha, A \in \BN.
\end{equation}

Another observation is that, for a given constant $1> \rho > 0$, \eqref{1eq: asymptotics R+} is equivalent to the following seemingly weaker statement,
\begin{equation}\label{1eq: asymptotics R+, 3}
\begin{split}
\varphi^{(\alpha)} (x) = \alpha!  a_{ \alpha}  + O_{ \alpha, \rho } \lp x^\rho \rp \text{ as } x \ra 0, \text{ for any } \alpha \in \BN, \text{ with } a_\alpha \in\BC.
\end{split}
\end{equation}
\end{rem}

Let $C^\infty (\overline \BR_+)$ denote the subspace of $C^\infty (\BR_+)$ consisting of smooth functions on $\BR_+$ that are also smooth at zero.

Let $\mathscr S (\overline \BR_+)$ denote the space of functions in $C^\infty (\overline \BR_+)$ that rapidly decay at infinity along with all of their derivatives. 
Let  $\mathscr S (\BF)$ denote the Schwartz space on $\BF$, with $\BF = \BR, \BC$.



Let $\mathscr S (\BR_+)$ denote the space of Schwartz functions on $ \BR_+ $, that is, smooth functions  on $ \BR_+ $ whose derivatives rapidly decay at {\it both} zero and infinity.
Similarly, we denote by $\mathscr S (\BF^\times )$ the space of Schwartz functions on $ \BF^\times$.


The following lemma provides  criteria for characterizing functions in these Schwartz spaces, especially functions in $\mathscr S (\BC)$ or $\mathscr S (\BCx )$ in the polar coordinates. Its proof is left as an easy excise in analysis for the reader.
\begin{lem}\label{lem: Schwartz}
Let notations be as above.

{\rm (1.1).} Let $\varphi \in C^\infty (\overline \BR _+)$ satisfy the asymptotics \eqref{1eq: asymptotics R+}. Then $\varphi \in \mathscr S (\overline \BR_+)$ if and only if $\varphi$ also satisfies
\begin{equation}\label{1eq: Schwartz R+}
x^{\alpha + \beta} \varphi ^{(\alpha)} (x) \lll_{\alpha, \beta} 1\, \text{ for all } \alpha, \beta \in \BN.
\end{equation}

{\rm (1.2).} A smooth function $\varphi$ on $\BR_+$ belongs to $ \mathscr S (\BR_+)$ if and only if $\varphi $ satisfies \eqref{1eq: Schwartz R+} with $\beta \in \BN$ replaced by $\beta \in \BZ$.

Let $\varphi \in \mathscr S (\overline \BR_+)$ and $a_\alpha$ be as in \eqref{1eq: asymptotics R+}. Then $\varphi \in \SS ( \BR _+)$ if and only if $a_\alpha = 0$ for all $ \alpha \in \BN$.

{\rm (2.1).}  A smooth function $\varphi $ on $\BRx$ extends to a function in $\mathscr S (\BR )$ if and only if
\begin{itemize}
\item[-] $\varphi $ satisfies \eqref{1eq: Schwartz R+} with  $x^{\alpha + \beta}$ replaced by $|x|^{\alpha + \beta}$, and
\item[-] all the derivatives of $\varphi $ admit asymptotics
\begin{equation} \label{1eq: asymptotics R}
\varphi^{(\alpha)} (x) = \alpha! a_{ \alpha} + O_{ \alpha } \lp |x| \rp \text{ as } x \ra 0, \text{ for any }  \alpha \in \BN, \text{ with } a_\alpha \in \BC.
\end{equation}

\end{itemize}

{\rm (2.2).} Let  $\varphi$ be a smooth function on $\BRx$. Then $ \varphi \in \mathscr S (\BRx )$ if and only if $\varphi $ satisfies \eqref{1eq: Schwartz R+} with $x^{\alpha + \beta}$ replaced by $|x|^{\alpha + \beta}$ and $\beta \in \BN$ by $\beta \in \BZ$.

Suppose $\varphi \in \mathscr S (\BR )$, then $\varphi \in \mathscr S (\BRx )$ if and only if $ \varphi ^{(\alpha)} (0) = 0$ for all $\alpha \in \BN$, or equivalently, $a_\alpha = 0$ for all $\alpha \in \BN$, with $a_\alpha $ given in \eqref{1eq: asymptotics R}.

{\rm (3.1).}  Write $\partial_x = \partial/ \partial x$ and $\partial_\phi = \partial/ \partial \phi$.  In the polar coordinates, a smooth function $\varphi \lp x e^{i \phi}\rp \in C^\infty (\BC ^\times)$ extends to a function in $\mathscr S (\BC )$ if and only if
\begin{itemize}
\item[-] $\varphi \lp x e^{i \phi}\rp $ satisfies
\begin{equation}\label{1eq: Schwartz Cx}
x^{ \alpha + \beta} \partial_x ^{\alpha }  \partial_\phi ^{\gamma} \varphi \lp x e^{i \phi}\rp \lll_{\alpha, \beta, \gamma} 1\, \text{ for all } \alpha, \beta, \gamma \in \BN,
\end{equation}
\item[-] all the partial derivatives of $\varphi $ admit asymptotics

\vskip 7 pt
\item[\noindent \refstepcounter{equation}(\theequation) \label{1eq: Schwartz Cx asymptotic} \hskip 6 pt]
$\ds x^\alpha \partial_x ^{\alpha }  \partial_\phi ^{\beta} \varphi \lp x e^{i \phi}\rp = \sum_{|m| \leq \alpha + \beta} \ \sum_{\sstyle |m| \leq \kappa \leq \alpha + \beta \atop {\sstyle \kappa \equiv m (\mod 2)}} a_{m, \kappa } [\kappa ]_{\alpha}   (im)^{\beta} x^{ \kappa }  e^{i m \phi} + O_{\alpha, \beta} \lp x^{\alpha + \beta+1} \rp$
\vskip 5 pt
\noindent as $  x \ra 0$, for any $ \alpha, \beta \in \BN$,
with $a_{m, \kappa } \in\BC$ for $\kappa \geq |m|$ and $\kappa \equiv m (\mod 2)$.

\end{itemize}

Let $\varphi \in \mathscr S (\BC )$ and $\varphi_m$ be the $m$-th Fourier coefficient of $ \varphi $ given by \eqref{1eq: Fourier coefficients of upsilon},
then it follows from {\rm(\ref{1eq: Schwartz Cx}, \ref{1eq: Schwartz Cx asymptotic})} that
\begin{itemize}
\item[-]  $ \varphi _m$ satisfies
\begin{equation}\label{1eq: bounds for Fourier coefficients}
\begin{split}
x^{ \alpha + \beta} \varphi^{(\alpha)}_m (x) & \lll_{\alpha, \beta, A} (|m| + 1)^{- A} \, \text{ for all } \alpha, \beta, A \in \BN,
\end{split}
\end{equation}
\item[-] all the derivatives of $\varphi_m$ admit asymptotics

\vskip 5 pt
\item[\noindent \refstepcounter{equation}(\theequation) \label{1eq: varphi m asymptotic, 0}  \hskip 6 pt]
{ \hfill  $
\ds \varphi^{(\alpha)} _m (x) = \sum_{\kappa  = \alpha}^{\alpha + A} a_{m, \kappa } [\kappa ]_{\alpha} x^{\kappa - \alpha}  +  O_{\alpha, A } \lp (|m| + 1)^{-A} x^{ A + 1} \rp 
$ \hfill}

\vskip 5 pt
\item[ ]  as   $x \ra 0$, for any given $ \alpha, A \in \BN$, with $a_{m, \kappa } \in \BC$ satisfying $a_{m, \kappa }= 0 $ if either $\kappa < |m| $ or $\kappa \notequiv m (\mod 2)$.

\end{itemize}
Observe that \eqref{1eq: varphi m asymptotic, 0} is equivalent to the following two conditions,
\begin{itemize}
\vskip 5 pt
\item[\noindent \refstepcounter{equation}(\theequation) \label{1eq: phi m, 1}  \hskip 6 pt]
$\ds \varphi^{(\alpha)} _m (x) = \alpha ! a_{m, \alpha } + O_{\alpha } \lp x \rp $ as $x \ra 0$,  
for any $ \alpha \geq |m|$,   with $ a_{m, \alpha } \in \BC $ satisfying $ a_{m, \alpha }= 0 $ if $ \alpha \notequiv m (\mod 2)$,

\vskip 7 pt
\item[\noindent \refstepcounter{equation}(\theequation) \label{1eq: phi m, 2}  \hskip 6 pt]
for any given $ \alpha, A \in \BN$, $\varphi^{(\alpha)} _m (x) =  O_{\alpha, A } \lp (|m| + 1)^{-A} x^{ A + 1} \rp $ as $  x \ra 0$,   if $|m| > \alpha + A$.

\end{itemize}
In particular, $\varphi_m \in \mathscr S (\overline \BR _+)$.

Conversely, if a sequence $\lpp \varphi_m \rpp $ of functions in $C^\infty (\BR _+)$ satisfies \eqref{1eq: bounds for Fourier coefficients}, \eqref{1eq: phi m, 1} and \eqref{1eq: phi m, 2}, then the Fourier series defined by $\lpp \varphi_m \rpp $, that is, the right hand side of \eqref{1eq: Fourier series expansion}, is a Schwartz function on $\BC$.

{\rm (3.2).} In the polar coordinates, a smooth function $\varphi \lp x e^{i \phi}\rp \in C^\infty (\BC ^\times)$ is a Schwartz function on $\BC^\times$ if and only if $\varphi$ satisfies  \eqref{1eq: Schwartz Cx} with $\beta \in \BN$ by $\beta \in \BZ$.

Let $\varphi \in \mathscr S (\BC^\times)$ and $\varphi_m$ be the $m$-th Fourier coefficient of $ \varphi $, then it is necessary that $\varphi _m$ satisfies  \eqref{1eq: bounds for Fourier coefficients}  with $\beta \in \BN$ replaced by $\beta \in \BZ$.
In particular, $\varphi_m \in \mathscr S (\BR _+)$.

Conversely, if a sequence $\left\{\varphi_m \right \} $ of functions in $C^\infty (\BR _+)$ satisfies the condition \eqref{1eq: bounds for Fourier coefficients}  with $\beta \in \BN$ replaced by $\beta \in \BZ$, then the Fourier series defined by $\left\{\varphi_m \right \}$ gives rise to a Schwartz function on $\BC ^\times$.

Let $\varphi \in \mathscr S (\BC)$ and $a_{m, \kappa}$ be given in \eqref{1eq: Schwartz Cx asymptotic}, \eqref{1eq: varphi m asymptotic, 0} or \eqref{1eq: phi m, 1}. $\varphi \in \SS ( \BCx)$ if and only  if  $a_{m, \kappa} = 0$ for all $m \in \BZ, \kappa \in \BN$.

\end{lem}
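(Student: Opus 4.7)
The plan is to organize the proof in three tiers corresponding to $\BR_+$, $\BR$ and $\BC$, leveraging each tier to handle the next.

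\textbf{Tier 1: parts (1.1) and (1.2).} For (1.1), the non-trivial implication is that (\ref{1eq: Schwartz R+}) together with the asymptotic (\ref{1eq: asymptotics R+}) implies membership in $\SS(\overline \BR_+)$. Rapid decay at infinity is exactly the content of (\ref{1eq: Schwartz R+}) with $\beta$ large; smoothness at zero is already part of the hypothesis. The converse is immediate from $x^\beta \lll 1$ on any neighborhood of zero and from the Schwartz decay at infinity. For (1.2), the same argument works but negative powers of $x$ near zero are now allowed, which is what $\beta \in \BZ$ encodes; the last sentence is a direct consequence of (\ref{1eq: asymptotics R+}), since $\varphi \in \SS(\BR_+)$ forces $\varphi^{(\alpha)}(x)=O(x)$ as $x \to 0$ for every $\alpha$, i.e. $a_\alpha=0$.

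\textbf{Tier 2: parts (2.1) and (2.2).} Working on $\BR^+$ and $\BR^-$ separately, a smooth function on $\BR^\times$ extends to an element of $C^\infty(\BR)$ if and only if the two one-sided Taylor expansions at $0$ agree to all orders; this is precisely the single two-sided asymptotic (\ref{1eq: asymptotics R}). Combining this with (1.1) applied to $|x|$ gives (2.1). For (2.2) one applies (1.2) on each half-line; the extension statement then amounts to noting that under the Schwartz decay at $0$ of all derivatives, the Taylor coefficients $a_\alpha$ must vanish, which is the stated equivalence.

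\textbf{Tier 3: parts (3.1) and (3.2).} This is where the real work lies. First I would verify that a function in $\mathscr S(\BC)$, expressed in polar coordinates, satisfies (\ref{1eq: Schwartz Cx}) and (\ref{1eq: Schwartz Cx asymptotic}). The bound (\ref{1eq: Schwartz Cx}) is routine: in Cartesian coordinates the partial derivatives $\partial_x,\partial_\phi$ translate to combinations of $\partial/\partial z, \partial/\partial \bar z$ with coefficients at most of size $O(x^{-\beta})$ applied to Schwartz derivatives, which are bounded after multiplication by any power of $|z|=x$. The asymptotic (\ref{1eq: Schwartz Cx asymptotic}) comes from the Taylor expansion of $\varphi$ at the origin in $(z,\bar z)$: since $z^j \bar z^k = x^{j+k} e^{i(j-k)\phi}$, a monomial of total degree $\kappa=j+k$ contributes $e^{im\phi}$ with $m=j-k$, forcing $|m|\leq \kappa$ and $m\equiv \kappa \pmod 2$. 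Differentiating this Taylor expansion in $x$ and $\phi$ produces exactly the claimed factors $[\kappa]_\alpha$ and $(im)^\beta$, and Taylor's theorem supplies the error term $O(x^{\alpha+\beta+1})$. Conversely, if a function in polar coordinates satisfies (\ref{1eq: Schwartz Cx}) and (\ref{1eq: Schwartz Cx asymptotic}), then reading the asymptotic expansion at $0$ as a polynomial in $z,\bar z$ (possible thanks to the parity and degree constraints) together with the uniform bounds on derivatives gives a Schwartz function on $\BC$.

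\textbf{Fourier coefficient direction.} The bound (\ref{1eq: bounds for Fourier coefficients}) follows from repeated integration by parts in $\phi$, converting $(-im)^A \varphi_m(x)$ into the $m$-th Fourier coefficient of $\partial_\phi^A \varphi$ and then invoking (\ref{1eq: Schwartz Cx}). For (\ref{1eq: varphi m asymptotic, 0}), one integrates (\ref{1eq: Schwartz Cx asymptotic}) against $e^{-im\phi}/2\pi d\phi$: the only surviving monomials in $e^{i m'\phi}$ are those with $m'=m$, and the imposed constraints $|m'|\leq \kappa$, $m'\equiv \kappa \pmod 2$ become the stated vanishing rules on $a_{m,\kappa}$. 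The equivalence with (\ref{1eq: phi m, 1}) and (\ref{1eq: phi m, 2}) is a bookkeeping exercise: (\ref{1eq: phi m, 1}) captures the ``large $\kappa$'' Taylor behavior while (\ref{1eq: phi m, 2}) captures the uniformly rapid decay in $m$ beyond the threshold $\alpha+A < |m|$. The converse direction, reconstructing $\varphi \in \SS(\BC)$ from a sequence $\{\varphi_m\}$ satisfying the three conditions, proceeds by showing the Fourier series (\ref{1eq: Fourier series expansion}) converges absolutely in every $C^k$ norm (using (\ref{1eq: bounds for Fourier coefficients}) with $A$ arbitrary), then assembling the asymptotic at the origin term by term from (\ref{1eq: phi m, 1}) and (\ref{1eq: phi m, 2}). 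Part (3.2) follows the same scheme with $\beta$ ranging over $\BZ$ and no boundary asymptotic required, exactly as in the transition from (1.1) to (1.2).

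The main obstacle is the asymptotic (\ref{1eq: Schwartz Cx asymptotic}) and its translation into the Fourier-side conditions (\ref{1eq: phi m, 1})--(\ref{1eq: phi m, 2}): proving that the parity and degree constraints on $a_{m,\kappa}$ are both necessary and sufficient for extendibility across the coordinate singularity of $(x,\phi)$ at the origin requires carefully matching the Taylor expansion in $(z,\bar z)$ with its expansion in polar variables, and keeping track of error terms uniformly in $m$.
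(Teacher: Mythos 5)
The paper itself gives no proof of Lemma \ref{lem: Schwartz}: it is explicitly ``left as an easy exercise in analysis for the reader,'' so there is no argument in the text against which to compare yours.

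That said, your outline is correct and well organized, and it identifies the genuine crux. Parts (1.1)--(2.2) are handled exactly as one should: the converse directions are tautological, and the forward directions amount to reading (\ref{1eq: Schwartz R+}) as rapid decay at $\infty$ plus the given boundary asymptotics; for (2.1)--(2.2) you correctly reduce to two half-lines and observe that extendibility past $0$ is equivalent to the two one-sided Taylor data agreeing, i.e.\ the single two-sided asymptotic (\ref{1eq: asymptotics R}). For (3.1)--(3.2) you make the right move of passing to the Cartesian Taylor expansion in $(z,\bar z)$, from which the monomial $z^j\bar z^k = x^{j+k}e^{i(j-k)\phi}$ explains at once the constraints $|m|\le\kappa$, $m\equiv\kappa\pmod 2$ in (\ref{1eq: Schwartz Cx asymptotic}); the Fourier-coefficient estimates (\ref{1eq: bounds for Fourier coefficients}), (\ref{1eq: varphi m asymptotic, 0}) then follow by integration by parts in $\phi$ and by extracting the $m$-th coefficient of (\ref{1eq: Schwartz Cx asymptotic}), and you correctly flag that the real work in the converse direction is matching polar asymptotics across the degenerate origin uniformly in $m$. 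One small amplification worth recording in a full write-up: for the converse reconstruction in (3.1), after summing (\ref{1eq: bounds for Fourier coefficients}) to get $C^k$-convergence away from $0$, one must show that the truncated Taylor polynomial reconstructed from (\ref{1eq: phi m, 1}), (\ref{1eq: phi m, 2}) is a genuine polynomial in $z, \bar z$ (not merely in $x, e^{i\phi}$), which is exactly what the parity and degree constraints on $a_{m,\kappa}$ guarantee; your closing paragraph acknowledges this, so the plan is complete.
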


\subsubsection{Some subspaces of $\SS (\overline \BR_+)$} \label{sec: Schwartz subspaces}

In the following, we introduce several subspaces of  $\SS (\overline \BR_+)$ which are closely related to $\SS (\BR)$ and $\SS (\BC)$.

We first define for $\delta \in \BZT $  the subspace $C^\infty_{\delta} (\overline \BR_+) \subset C^\infty (\overline \BR_+)$ of functions with an asymptotic expansion of the form $ \sum_{\kappa =0}^\infty a_{ \kappa} x^{\delta + 2 \kappa}$ at zero.

\begin{rem}\label{rem: C=C0+C1}
A question arises, ``whether $C^\infty (\overline \BR_+) = C^\infty_0 (\overline \BR_+) + C^\infty_1 (\overline \BR_+)${\rm ?}''. 

The answer is affirmative. 

To see this, we define the space $C_\delta^\infty (\BR)$ of smooth functions $\varphi$ on $\BR$ satisfying \eqref{1eq: delta condition, R}.
One has $\sgn (x)^\delta \varphi (|x|) \in C_\delta^\infty (\BR)$ if $\varphi  \in C^\infty_{\delta} (\overline \BR_+)$, and conversely, $\varphi \restriction_{\BR_+} \in  C^\infty_{\delta} (\overline \BR_+) $ if $\varphi \in C_\delta^\infty (\BR)$. Thus, with the simple observation $C^\infty ( \BR ) = C^\infty_0 ( \BR ) \oplus C^\infty_1 ( \BR )$, one sees that $C^\infty_0 (\overline \BR_+) + C^\infty_1 (\overline \BR_+)$ is the subspace  of $C^\infty (\overline \BR_+)$ consisting of functions on $\BR _+$ that admit a smooth extension onto $\BR$.

On the other hand, the Borel theorem {\rm ({\it \cite[1.5.4]{Nara}})}, which is a special case of the Whitney extension theorem  {\rm ({\it \cite[1.5.5, 1.5.6]{Nara}})}, states that for any sequence $\lpp a_\alpha\rpp $ of constants there exists a smooth function $\varphi \in C^\infty (\BR)$ such that $ \varphi^{(\alpha)} (0) = \alpha! a_\alpha $. Clearly, this theorem of Borel implies our assertion above.

In {\rm \S \ref{sec: refinements Msis Sis, R+}}, we shall give an alternative proof of this  using the Mellin transform. See Remark {\rm \ref{rem: S=S0+S1}}.
\end{rem}

We define $ \SS_\delta (\overline \BR_+) = \SS (\overline \BR_+) \cap C_\delta^\infty (\overline \BR_+)$. The following identity is obvious 
\begin{equation*}
\SS_\delta (\overline \BR_+) = x^\delta \SS_0 (\overline \BR_+).
\end{equation*}
In view of Lemma \ref{lem: Schwartz} (1.2), we have $\SS_0  (\overline \BR_+) \cap \SS_1 (\overline \BR_+) = \SS (\BR _+)$.

If we let $\SS_\delta (\BR) $ be the space of functions $\varphi \in \SS (\BR)$ satisfying \eqref{1eq: delta condition, R}, then 
$$ \SS_\delta (\BR) = \sgn (x)^\delta \SS_\delta (\overline \BR_+) = \left \{ \sgn (x)^\delta \varphi  (|x|) : \varphi \in \SS_\delta (\overline \BR_+) \right \}.$$ 
Clearly, $\SS (\BR) = \SS_0 (\BR ) \oplus \SS_1 (\BR )$.

We define the subspace $\SS_m (\overline \BR _+) \subset \SS_{\delta(m)}  (\overline \BR _+)$, with  $\delta (m) = m (\mod 2)$,  of functions with an asymptotic expansion of the form $ \sum_{\kappa = 0}^\infty a_{ \kappa} x^{|m| + 2 \kappa}$ at zero.  
We have
\begin{equation*}
\SS_m (\overline \BR_+) = x^{|m|} \SS_0 (\overline \BR_+).
\end{equation*}
If we define $\SS_m (\BC)$ to be the space of $\varphi \in \SS (\BC)$ satisfying \eqref{1eq: m condition, C}, then 
$$ \SS_m (\BC) = [z]^m \SS_m (\overline \BR_+) = \left \{ [z]^m \varphi  (|z|) = e^{im\phi} \varphi (x) : \varphi \in \SS_m (\overline \BR_+) \right \}.$$ 
The last two paragraphs in  Lemma  \ref{lem: Schwartz} (3.1) can be recapitulated as below
$$
\SS (\BC) \xrightarrow{\cong}  \left\{ \lpp \varphi_m\rpp \in \prod_{m \in \BZ} \SS_m (\overline \BR _+) : \varphi_m \text{ satisfies (\ref{1eq: bounds for Fourier coefficients}, \ref{1eq: phi m, 1}, \ref{1eq: phi m, 2})} \right \} \twoheadrightarrow  \SS_m (\overline \BR _+),
$$
where the first map sends $\varphi \in \SS (\BC)$ to the sequence $ \lpp \varphi _m \rpp_{m \in \BZ}$ of its Fourier coefficients, and the second is the $m$-th projection.
According to Lemma  \ref{lem: Schwartz} (3.1), the first map is an isomorphism, and the second projection is surjective.

\subsubsection{$\SS_\delta (\BRx)$ and $\SS_m (\BCx)$} \label{sec: S delta and Sm}
Let $\delta \in \BZ/2 \BZ$ and $m \in \BZ$. We define $\SS_\delta (\BRx) = \SS (\BRx) \cap \SS_\delta (\BR)$ and $\SS_m (\BCx) = \SS (\BCx) \cap \SS_m (\BC)$.
Clearly, $\SS_\delta (\BRx) = \sgn(x)^\delta \SS (\BR _+)$ and $\SS_m (\BCx) = [z]^m \SS (\BR _+)$.


\subsection{The Fourier transform}
According to the local theory in Tate's thesis 
for an archimedean local field $\BF$, the Fourier transform $\widehat \varphi = \EF \varphi$ of a Schwartz function $\varphi\in \SS(\BF)$ is defined by
\begin{equation}\label{1eq: Fourier, R}
\widehat \varphi(y) = \int_{\BF} \varphi(x) e(- \Lambda (xy)) d x,
\end{equation}
with
\begin{equation} \label{1eq: Lambda (x)}
  \Lambda (x) = 
\left\{ \begin{split}
  & x , \hskip 70 pt  \text{ if } \BF=\BR;\\
  & \Tr (x) = x + \overline x, \hskip 18 pt  \text{ if } \BF=\BC.
  \end{split} \right.
\end{equation}
The Schwartz space $\SS(\BF)$ is invariant under the Fourier transform. Moreover, with our choice of measure in \S \ref{sec: R+, Rx and Cx}, the following inversion formula holds
\begin{equation}\label{1eq: Fourier, C}
\widehat{\widehat {\varphi}} (x) = \varphi (-x), \hskip 10 pt x \in \BF.
\end{equation}

\subsection{The Mellin transforms $\EM $, $\EM_{\delta}$ and $\EM_m$}\label{sec: Mellin preliminaries}

Corresponding to $\BR_+$, $\BRx$ and $\BCx$, there are three kinds of Mellin transforms  $\EM $, $\EM_{\delta}$ and $\EM_m$.

\begin{defn}[Mellin transforms]
\ 

{\rm (1).} The {\rm Mellin transform} $\EM  \varphi$ of a Schwartz function $\varphi \in \mathscr S (\BR_+)$
is given by
\begin{equation} \label{1def: Mellin transform}
\EM  \varphi (s) = \int_{\BR_+} \varphi (x) x ^{s } d^\times x.
\end{equation}

{\rm (2).} For $\delta \in \BZ/ 2\BZ$, the {\rm (signed) Mellin transform $\EM _{\delta} \varphi$ with order $\delta$} of a Schwartz function $\varphi \in \mathscr S (\BR ^\times)$
is defined by
\begin{equation} \label{1def: signed Mellin transform}
\EM _{\delta} \varphi (s) = \int_{\BR^\times} \varphi (x) \sgn (x)^\delta |x|^{s } d^\times x.
\end{equation} 
Moreover, define $\EM_{\BR} = (\EM_0, \EM_1)$.

{\rm (3).} For $m \in \BZ$, the {\rm Mellin transform $\EM _m \varphi $ with order $m$} of a Schwartz function $\varphi \in \mathscr S (\BC^\times)$ is defined by
\begin{equation}\label{1def: Mellin transform over complex}
\EM _m \varphi (s) = \int_{\BC^\times} \varphi (z) [z]^{ m} \|z\| ^{\frac 12 s  }\ d^\times z = 2 \int_0^\infty \int_0^{2 \pi} \varphi \lp x e^{i \phi} \rp e^{ i m \phi} d\phi \cdot x^{ s } d^\times x.
\end{equation}
Moreover, define $\EM_{\BC} = \prod_{m \in \BZ} \EM_{- m} $.

\end{defn}

\begin{observation}\label{observ: Mellin} For $\varphi \in \mathscr S (\BRx)$, we have
\begin{equation}\label{1eq: M delta = M}
\EM _\delta \varphi (s) =  2 \EM  \varphi_{\delta}  (s), \hskip 10 pt \delta \in \BZT.
\end{equation}
Similarly, for $\varphi \in \mathscr S (\BCx)$, we have
\begin{equation}\label{1eq: Mm = M}
\EM _{-m} \varphi (s) = 4 \pi \EM  \varphi_{ m} ( s), \hskip 10 pt m \in \BZ.
\end{equation}
The relations  \eqref{1eq: M delta = M} and \eqref{1eq: Mm = M} reflect the identities $\BR^\times \cong \BR_+ \times  \{+, -\}  $ and $\BC^\times \cong \BR_+ \times \BR/ 2\pi \BZ$ respectively.
\end{observation}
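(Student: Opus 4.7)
The plan is to verify each identity by direct computation, using only the definitions \eqref{1def: Mellin transform}, \eqref{1def: signed Mellin transform}, \eqref{1def: Mellin transform over complex}, together with the definitions \eqref{1eq: upsilon delta} of $\varphi_\delta$ and \eqref{1eq: Fourier coefficients of upsilon} of $\varphi_m$. Convergence of every integral is automatic from $\varphi \in \SS(\BFx)$.

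For \eqref{1eq: M delta = M}, I would split the integral defining $\EM_\delta \varphi(s)$ along $\BR^\times = \BR_+ \sqcup (-\BR_+)$, change variables $x \mapsto -x$ on the negative part (so $d^\times x$ is preserved and $\sgn(x)^\delta = (-)^\delta$), and collect terms:
\begin{equation*}
\EM_\delta \varphi(s) = \int_{\BR_+}\!\bigl(\varphi(x) + (-)^\delta \varphi(-x)\bigr) x^s\, d^\times x = 2\int_{\BR_+}\! \varphi_\delta(x)\, x^s\, d^\times x = 2\, \EM \varphi_\delta(s),
\end{equation*}
where the middle equality is simply the definition of $\varphi_\delta$ in \eqref{1eq: upsilon delta}.

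For \eqref{1eq: Mm = M}, I would pass to polar coordinates $z = x e^{i\phi}$, recalling from \S \ref{sec: R+, Rx and Cx} that $[z] = e^{i\phi}$, $\|z\| = x^2$, and $d^\times z = 2\, d^\times x\, d\phi$. Substituting into \eqref{1def: Mellin transform over complex} with order $-m$ gives
\begin{equation*}
\EM_{-m}\varphi(s) = 2\int_{\BR_+}\! x^s \!\int_0^{2\pi}\! \varphi\bigl(x e^{i\phi}\bigr) e^{-im\phi}\, d\phi\, d^\times x = 4\pi \int_{\BR_+}\! \varphi_m(x)\, x^s\, d^\times x = 4\pi\, \EM \varphi_m(s),
\end{equation*}
where the inner integral equals $2\pi\, \varphi_m(x)$ by the definition \eqref{1eq: Fourier coefficients of upsilon} of the $m$-th Fourier coefficient. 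Fubini is justified by Schwartz decay in $x$ and the boundedness of $\varphi$ on the compact torus in $\phi$.

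There is no real obstacle; both statements are bookkeeping. The only point worth flagging is the sign bookkeeping in the index: the factor $[z]^{-m}$ in $\EM_{-m}$ combines with $e^{i m \phi'}$ in the covariance condition \eqref{1eq: m condition, C} to produce $e^{-i m \phi}$ inside the angular integral, which is exactly the kernel in \eqref{1eq: Fourier coefficients of upsilon}; this is the reason $\EM_{\BC}$ is defined as $\prod_{m \in \BZ} \EM_{-m}$ rather than $\prod_{m \in \BZ} \EM_{m}$. The closing remark that \eqref{1eq: M delta = M} and \eqref{1eq: Mm = M} reflect $\BR^\times \cong \BR_+ \times \{\pm\}$ and $\BC^\times \cong \BR_+ \times \BR/2\pi\BZ$ is then transparent: each identity is the Fubini decomposition of the Mellin integral along these product structures, with the factors $2$ and $4\pi = 2 \cdot 2\pi$ recording the measures of $\{\pm\}$ (counting measure) and $\BR/2\pi\BZ$ (length $2\pi$), together with the extra factor of $2$ built into $d^\times z$.
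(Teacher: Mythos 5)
Your computation is correct and is exactly the direct unwinding of the definitions that the paper implicitly relies on; the paper labels this an Observation and gives no proof, so there is no distinct ``paper approach'' to compare against. One small simplification you could have used: the polar form of $\EM_m$ is already supplied as the second equality in \eqref{1def: Mellin transform over complex}, so the complex case is just the substitution $m \mapsto -m$ there followed by recognizing the inner angular integral as $2\pi\,\varphi_m(x)$ from \eqref{1eq: Fourier coefficients of upsilon}.
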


\begin{lem}[Mellin inversions] \label{lem: Mellin inversions} Let $\sigma  $ be real. Denote by  $  (\sigma)  $ the vertical line from $\sigma - i \infty$ to $\sigma + i \infty$. 

{\rm (1).} For $\varphi \in \mathscr S (\BR_+)$, we have
\begin{equation}\label{1eq: Mellin inversion}
\varphi (x) =  \frac 1{2 \pi i} \int_{(\sigma)} \EuScript M \varphi (s) x^{-s} d s.
\end{equation}

{\rm (2).} For $\varphi \in \mathscr S (\BRx)$, we have
\begin{equation}\label{1eq: Mellin inversion, R}
\varphi (x) =  \frac { 1} {4 \pi i} \sum_{\delta \in \BZ/ 2\BZ } \sgn (x)^\delta \int_{(\sigma)} \EuScript M_\delta \varphi (s) |x|^{-s} d s.
\end{equation}

{\rm (3).} For $\varphi \in \mathscr S (\BCx)$, we have
\begin{equation}\label{1eq: Mellin inversion, C}
\varphi (z) =  \frac { 1} {8 \pi^2 i} \sum_{m \in \BZ}  [z]^{-m} \int_{(\sigma)} \EuScript M_{m} \varphi (s) \|z\|^{- \frac 1 2 s  } d s,
\end{equation}
or, in the polar coordinates,
\begin{equation}\label{1eq: Mellin inversion, C, polar}
\varphi \lp x e^{i \phi}\rp = \frac { 1} {8 \pi^2 i}  \sum_{m \in \BZ} e^{- i m \phi} \int_{(\sigma)} \EuScript M_{m} \varphi (s) x^{- s} d s.
\end{equation}
\end{lem}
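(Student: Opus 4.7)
The overall plan is to reduce (1) to classical Fourier inversion on $\BR$ via the exponential change of variables $x = e^{t}$, and then to deduce (2) and (3) from (1) by decomposing $\varphi$ according to the characters of the compact subgroups $\{\pm 1\} \subset \BRx$ and $\BR / 2\pi\BZ \subset \BCx$ respectively.

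For (1), I would set $\psi(t) = \varphi(e^{t})\, e^{\sigma t}$. The Schwartz criterion in Lemma \ref{lem: Schwartz}(1.2) translates directly into $\psi \in \SS(\BR)$: super-polynomial vanishing of $\varphi$ at $0$ gives rapid decay of $\psi$ at $-\infty$, rapid decay of $\varphi$ at $\infty$ gives rapid decay at $+\infty$, and the same persists for all derivatives since $\partial_{t}$ acts as the Euler operator $x\partial_{x}$ on the factor $\varphi(e^{t})$ and as multiplication by $\sigma$ on $e^{\sigma t}$. The substitution converts the Mellin integral on the line $(\sigma)$ into the Fourier transform of $\psi$:
\begin{equation*}
\EM\varphi(\sigma + i\tau) = \int_{\BR} \psi(t)\, e^{i\tau t}\, dt.
\end{equation*}
Classical Fourier inversion on $\BR$ then yields $\psi(t) = \tfrac{1}{2\pi}\int_{\BR} \EM\varphi(\sigma+i\tau)\, e^{-i\tau t}\, d\tau$, and reverting to $x = e^{t}$ and $ds = i\, d\tau$ produces \eqref{1eq: Mellin inversion}.

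For (2), split $\varphi(x) = \varphi_{0}(|x|) + \sgn(x)\,\varphi_{1}(|x|)$ via \eqref{1eq: C = C0 + C1}; each $\varphi_{\delta} \in \SS(\BR_{+})$, so (1) applies, and the relation \eqref{1eq: M delta = M}, $\EM_{\delta}\varphi = 2\, \EM\varphi_{\delta}$, supplies the factor $\tfrac{1}{4\pi i}$ in \eqref{1eq: Mellin inversion, R}. For (3), expand via the Fourier series \eqref{1eq: Fourier series expansion}; Lemma \ref{lem: Schwartz}(3.2) ensures each coefficient $\varphi_{m}$ lies in $\SS(\BR_{+})$. Apply (1) termwise, substitute via \eqref{1eq: Mm = M}, $\EM_{-m}\varphi = 4\pi\, \EM\varphi_{m}$, re-index $m \mapsto -m$, and use $[z]^{-m} = e^{-im\phi}$ and $\|z\|^{-s/2} = x^{-s}$ to recover \eqref{1eq: Mellin inversion, C} and \eqref{1eq: Mellin inversion, C, polar}.

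The main (and essentially only) obstacle is to justify the interchange of the sum over $m \in \BZ$ with the contour integral in (3). I would exploit the \emph{double} rapid decay of $\EM\varphi_{m}(\sigma + i\tau)$: the bound $x^{\alpha+\beta}\varphi_{m}^{(\alpha)}(x) \lll_{\alpha,\beta,A} (|m|+1)^{-A}$ from Lemma \ref{lem: Schwartz}(3.2), combined with repeated integration by parts in the variable $t = \log x$ (which extracts factors of $(i\tau)^{-N}$), gives
\begin{equation*}
|\EM\varphi_{m}(\sigma + i\tau)| \lll_{A,\,N,\,\sigma} (|m|+1)^{-A}\,(|\tau|+1)^{-N}
\end{equation*}
uniformly for $s$ on any fixed vertical line $(\sigma)$. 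This joint decay in $m$ and $\tau$ makes the double integral-series in \eqref{1eq: Mellin inversion, C, polar} absolutely convergent, legitimizes the Fubini exchange, and yields uniform convergence of the Fourier series on compacta of $\BCx$.
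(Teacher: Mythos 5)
The paper states this lemma without proof, treating it as a classical fact (indeed, only the subsequent Corollary \ref{cor: Mellin, Schwartz} is accompanied by an argument, and it cites the lemma as known). Your proof supplies the standard derivation and is correct: the substitution $x=e^{t}$ converts $\EM$ into a Fourier transform on $\BR$, parts (2) and (3) then follow by decomposing against the characters of $\{\pm 1\}$ and $\BR/2\pi\BZ$ using \eqref{1eq: M delta = M} and \eqref{1eq: Mm = M}, and the joint rapid decay of $\EM\varphi_m(\sigma+i\tau)$ in $(m,\tau)$ — extracted from \eqref{1eq: bounds for Fourier coefficients} with $\beta\in\BZ$ together with integration by parts in $t=\log x$ — is exactly what is needed to justify the Fubini exchange in (3). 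One small bookkeeping point worth making explicit: after applying (1) to each $\varphi_m$ and substituting $\EM_{-m}\varphi=4\pi\EM\varphi_m$, the re-indexing $m\mapsto -m$ is what turns $e^{im\phi}\EM_{-m}\varphi$ into the $e^{-im\phi}\EM_{m}\varphi$ appearing in \eqref{1eq: Mellin inversion, C, polar}; you note this, and it is the right move.
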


\begin{defn}\

{\rm (1).} Let $\mathscr H_{\mathrm {rd}}$ denote the space of all entire functions $H(s)$ on the complex plane that  rapidly decay along vertical lines, uniformly on vertical strips. 

{\rm (2).} Define $\mathscr H_{\mathrm {rd}}^{\BR} = \mathscr H_{\mathrm {rd}} \times \mathscr H_{\mathrm {rd}}$.

{\rm (3).} Let $\mathscr H_{\mathrm {rd}}^{\BC}$ be the subset of $\, \prod_{\BZ} \Hrd$ consisting of sequences $\lpp H_m  (s) \rpp $ of entire functions in $\Hrd$ satisfying the following condition,
\begin{itemize}
\item[\noindent \refstepcounter{equation}(\theequation) \label{1eq: rapid decay, C} \hskip 6 pt]
for any given $ \alpha, A \in \BN$ and vertical strip $ \BS[a, b]$,  

\vskip 5 pt
{\centering $\ds  H_m  (s) \lll_{ \alpha, A, a, b} (|m|+1)^{-A}  (|\Im s| + 1)^{-\alpha} \text{ for all } s \in \BS[a, b].$}
\end{itemize}
\end{defn}


\begin{cor}\label{cor: Mellin, Schwartz}\

{\rm (1).} The Mellin transform $\EM$ and its inversion establish an isomorphism between $\SS (\BR _+)$ and $\Hrd$.

{\rm (2).} For each $\delta \in \BZT$, $\EM_\delta$ establishes an isomorphism between $\SS_\delta (\BRx)$ and $\Hrd $. Hence, $\EM_{\BR} $ establishes an isomorphism between $\SS (\BRx)$ and $\Hrd^{\BR}$.

{\rm (3).} For each $m\in \BZ$, $\EM_{-m}$ establishes an isomorphism between $\SS_m (\BCx)$ and $\Hrd $. Moreover, $\EM_{\BC}$ establishes an isomorphism between $\SS (\BCx)$ and $\Hrd^{\BC}$.
\end{cor}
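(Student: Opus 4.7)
The strategy is to prove part (1) directly, from the classical integration-by-parts versus contour-shift dichotomy, and then to reduce parts (2) and (3) to it using the elementary identities \eqref{1eq: M delta = M} and \eqref{1eq: Mm = M} of Observation \ref{observ: Mellin}, together with the parity decomposition of $\SS(\BRx)$ and the Fourier characterization of $\SS(\BCx)$ from Lemma \ref{lem: Schwartz}. The main technical point, which I treat last, is propagating the uniform $m$-decay through $\EM_\BC$ in part (3).

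\textbf{Part (1).} For $\varphi \in \SS(\BR_+)$, the rapid decay of $\varphi$ at $0$ and $\infty$ gives absolute and locally uniform convergence of $\EM\varphi(s) = \int_0^\infty \varphi(x) x^s d^\times x$ for every $s \in \BC$, so $\EM\varphi$ is entire. Integration by parts $k$ times yields
\begin{equation*}
\EM\varphi(s) = \frac{(-1)^k}{s(s+1)\cdots(s+k-1)} \int_0^\infty \varphi^{(k)}(x) x^{s+k-1}\, dx,
\end{equation*}
from which one reads off $\EM\varphi(s) \lll_{\varphi, k, a, b} (|\Im s|+1)^{-k}$ uniformly on each vertical strip $\BS[a,b]$; hence $\EM\varphi \in \Hrd$. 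Conversely, for $H \in \Hrd$, set $\varphi(x) = \frac{1}{2\pi i}\int_{(\sigma)} H(s) x^{-s} ds$. Independence of $\sigma$ follows from Cauchy's theorem and rapid decay; differentiation under the integral (justified by the same rapid decay) produces $\varphi \in C^\infty(\BR_+)$. Shifting $\sigma$ arbitrarily far left (resp.\ right) produces an arbitrarily strong bound $|\varphi^{(\alpha)}(x)| \lll_{N,\alpha} x^N$ as $x \to 0$ (resp.\ $x^{-N}$ as $x \to \infty$), so $\varphi \in \SS(\BR_+)$; and $\EM\varphi = H$ by \eqref{1eq: Mellin inversion}.

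\textbf{Parts (2) and single-mode (3).} By \S \ref{sec: S delta and Sm}, $\SS_\delta(\BRx) = \sgn(x)^\delta \SS(\BR_+)$, and \eqref{1eq: M delta = M} identifies $\EM_\delta$ on $\SS_\delta(\BRx)$ with $2\EM$ composed with restriction to $\BR_+$; part (1) then yields the isomorphism onto $\Hrd$. For a general $\varphi \in \SS(\BRx)$, the parity decomposition $\varphi = \varphi_0(|\cdot|) + \sgn(\cdot)\varphi_1(|\cdot|)$ of \eqref{1eq: C = C0 + C1} exhibits $\SS(\BRx) = \SS_0(\BRx) \oplus \SS_1(\BRx)$ (the two parts remain Schwartz on $\BRx$ by Lemma \ref{lem: Schwartz} (2.2)), whence the $\EM_\BR$ statement. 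The single-mode half of (3) is identical: $\SS_m(\BCx) = [z]^m \SS(\BR_+)$ together with \eqref{1eq: Mm = M} realizes $\EM_{-m}$ restricted to $\SS_m(\BCx)$ as $4\pi\EM$ composed with restriction to $\BR_+$.

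\textbf{The full $\EM_\BC$ isomorphism, which I expect to be the main obstacle.} By Lemma \ref{lem: Schwartz} (3.2), a function $\varphi \in \SS(\BCx)$ is determined by and equivalent to its sequence of Fourier coefficients $\{\varphi_m\}_{m\in\BZ}$, each $\varphi_m \in \SS(\BR_+)$, satisfying \eqref{1eq: bounds for Fourier coefficients} with $\beta \in \BZ$. Applying the integration-by-parts estimate of part (1) to each $\varphi_m$, while carrying the factor $(|m|+1)^{-A}$ through the computation, yields the joint bound
\begin{equation*}
|\EM\varphi_m(s)| \lll_{k, A, a, b} (|m|+1)^{-A}(|\Im s|+1)^{-k}
\end{equation*}
uniformly on $\BS[a,b]$ for all $k, A \in \BN$; this is precisely \eqref{1eq: rapid decay, C}, so $\EM_\BC\varphi \in \Hrd^\BC$. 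Conversely, given $\{H_m\} \in \Hrd^\BC$, Mellin inversion produces $\varphi_m \in \SS(\BR_+)$, and the contour-shifting argument of part (1), now applied uniformly in $m$, converts the $(|m|+1)^{-A}$ factor in \eqref{1eq: rapid decay, C} into the corresponding factor required by \eqref{1eq: bounds for Fourier coefficients}. By Lemma \ref{lem: Schwartz} (3.2) the Fourier series $\sum_m \varphi_m(x) e^{im\phi}$ then converges absolutely to a function in $\SS(\BCx)$ whose $\EM_\BC$ image is $\{H_m\}$, completing the proof.
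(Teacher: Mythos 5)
Your proof is correct and follows essentially the same route as the paper: part (1) is the classical Mellin/Fourier inversion theorem on $\SS(\BR_+)$ (paper cites it as ``well-known''), part (2) reduces to (1) via the parity decomposition and Lemma \ref{lem: Mellin inversions} (2), and part (3) reduces to (1) via the Fourier-coefficient correspondence, with Lemma \ref{lem: Schwartz} (3.2) supplying exactly the equivalence between the uniform $m$-decay of $\{\varphi_m\}$ and membership of the Fourier series in $\SS(\BCx)$. The paper's proof is a terse citation of the same three lemmas; you have simply filled in the straightforward integration-by-parts and contour-shift details that it leaves implicit, including the one genuinely substantive point—propagating the $(|m|+1)^{-A}$ decay through the Mellin transform and its inverse—which you execute correctly.
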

\begin{proof}
(1) is a well-known consequence of Lemma \ref{lem: Mellin inversions} (1), whereas (2) directly follows from (1) and Lemma  \ref{lem: Mellin inversions} (2). As for (3), in addition to (1) and Lemma \ref{lem: Mellin inversions} (3),  Lemma \ref{lem: Schwartz} (3.2) is also required  for the rapid decay in $m$.
\end{proof}

\section{The function spaces $\Ssis (\BR_+)$, $\Ssis (\BR^\times)$ and $\Ssis (\BC^\times)$}\label{sec: all Ssis}



The goal of this section is to extend the definitions of the Mellin transforms $\EM$, $\EM_{\BF}$ and generalize the   settings in   \S \ref{sec: Mellin preliminaries} to the function spaces $\Ssis (\BR _+)$, $\Msis$, $\Ssis (\BFx)$ and $\Msis^{\BF}$. These spaces are much more sophisticated than $\SS (\BR_+)$,  $\Hrd$, $\SS (\BFx)$ and $\Hrd^\BF$ but most suitable for investigating Hankel transforms over $\BR_+$ and $\BFx$. 

We shall first construct  the function spaces $\Ssis (\BR_+)$, $\Msis$ and   establish an isomorphism between them using the Mellin transform $\EM$.
Based on these, we shall then turn to the spaces $\Ssis (\BFx)$, $\Msis^{\BF}$ and the Mellin transform $\EM_{\BF}$. 
The case $\BF = \BR$ has been worked out in \cite[\S 6]{Miller-Schmid-2006}. Since $\BRx \cong \BR_+ \times \{+, -\}$ is simply two copies of $\BR _+$, the properties of $\Ssis (\BRx)$ and $\Msis^\BR$ are in substance the same as those of $\Ssis (\BR_+)$ and $\Msis$.
In the case $\BF = \BC$, $\Ssis (\BCx)$ and $\Msis^\BC$ can be constructed in a parallel way. The study on $\BCx$ is however  much more elaborate, since $\BCx \cong \BR_+ \times \BR/ 2\pi \BZ$ and the analysis on the circle $ \BR/ 2\pi \BZ$ is also  taken into account.  


\subsection{The spaces $\Ssis (\BR_+)$  and $\Msis$}
\label{sec: Ssis and Msis, R+} \footnote{According to \cite[Definition 6.4]{Miller-Schmid-2006},  a function in $\Ssis (\BR_+)$ is said to have a \textit{simple singularity} at zero. Thus, the subscript ``{sis}'' stands for ``{simple singularity}''.}



\subsubsection{The spaces $ x^{-\lambda}  (\log x)^j \SS (\overline \BR _+)$ and $\Msis^{\lambda, j}$}
Let $\lambda \in \BC$ and $j \in \BN$. 

We define $$ x^{-\lambda}  (\log x)^j \SS (\overline \BR _+) = \left\{ x^{-\lambda}  (\log x)^j \varphi (x) : \varphi \in \SS (\overline \BR _+) \right\} .$$

We say that a meromorphic function $H(s)$ has a pole of {\it pure order} $j + 1$  at $s = \lambda $ if the principal part of $H(s)$ at $s = \lambda $ is $ a {(s - \lambda )^{-j-1}}$ for some constant $a \in \BC$.  Of course, $H (s) $ does not have a genuine pole at $s = \lambda $ if $a = 0$.
We define the space $\Msis^{\lambda, j}$ of all meromorphic functions $H(s)$ on the complex plane such that
\begin{itemize}
\item[-] the only possible singularities of $H  (s)$ are poles of pure order  $j+1$ at the points in $ \lambda - \BN = \left\{ \lambda - \kappa : \kappa \in \BN \right \}$, and
\item[-] $H (s)$ decays rapidly along vertical lines, uniformly  on vertical strips, 
that is,
\item[\noindent \refstepcounter{equation}(\theequation)\hskip 13 pt \label{2eq: Mellin rapid decay, 0}]
\hskip 5pt for any given $ \alpha \in \BN$,  vertical strip $ \BS [a, b] $ and  $ r > 0$,
$$H (s) \lll_{\lambda, j, \alpha, a, b, r} (|\Im s| + 1)^{-\alpha} \text{ for all } \textstyle s \in \BS [a, b] \smallsetminus \bigcup_{\kappa \in \BN} \BB_{r} (\lambda - \kappa).$$
\end{itemize}

The constructions of the Mellin transform $\EM$ and its inversion (\ref{1def: Mellin transform}, \ref{1eq: Mellin inversion}) identically extend from $\SS (\BR _+)$ onto $\Ssis^{\lambda, j} (\BR _+)$, except that the conditions $\Re s > \Re \lambda$ and $\sigma > \Re \lambda$ are required to guarantee convergence.

\begin{lem}\label{2lem: Mellin isomorphism, Msis lambda j}
Let $\lambda \in \BC$ and $j \in \BN$. The Mellin transform $\EM$ and its inversion establish an isomorphism of between $ x^{-\lambda}  (\log x)^j \SS (\overline \BR _+) $ and $\Msis^{\lambda, j}$.
\end{lem}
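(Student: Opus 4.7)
The plan is to reduce to the case $\lambda = 0$ via the translation $s \mapsto s + \lambda$, which defines an obvious isomorphism $\Msis^{\lambda, j} \xrightarrow{\sim} \Msis^{0, j}$ corresponding to the operation $f(x) \mapsto x^\lambda f(x)$ on the function side; it then suffices to show that $\EM$ is a bijection between $(\log x)^j \SS(\overline \BR_+)$ and $\Msis^{0, j}$. Injectivity is immediate from Mellin inversion, so the content is the image description and the inverse construction.

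For the forward direction, differentiating the Mellin integral $j$ times in $s$ under the integral sign (legal by the rapid decay of $\varphi$ at infinity) gives $\EM[(\log x)^j \varphi](s) = \partial_s^j \EM \varphi(s)$. The $j = 0$ case is classical: writing $\int_0^\infty = \int_0^1 + \int_1^\infty$, the second integral defines an entire function that decays rapidly on vertical lines by iterated integration by parts in $x$, while the first, after substituting the asymptotic expansion $\varphi(x) \sim \sum_\kappa a_\kappa x^\kappa$ at $0$, yields a meromorphic continuation with simple poles at $s = -\kappa$ of residue $a_\kappa$; hence $\EM\varphi \in \Msis^{0,0}$. Applying $\partial_s^j$ turns $a_\kappa/(s+\kappa) + \text{holomorphic}$ into $(-1)^j j!\,a_\kappa/(s+\kappa)^{j+1} + \text{holomorphic}$, a pole of pure order $j+1$, and rapid decay on vertical strips is preserved by Cauchy's integral formula for derivatives. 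Hence $\partial_s^j \EM\varphi \in \Msis^{0,j}$.

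For the inverse direction, given $H \in \Msis^{0, j}$, set $f(x) := \frac{1}{2\pi i} \int_{(\sigma)} H(s) x^{-s} \, ds$ for any $\sigma > 0$. Shifting the contour leftwards through the poles to $(\sigma')$ with $\sigma' \in (-N - 1 + r, -N - r)$ collects the residue $(-1)^j b_\kappa (\log x)^j x^\kappa / j!$ at each $s = -\kappa$, $\kappa \leq N$, obtained by pairing the pure principal part $b_\kappa/(s+\kappa)^{j+1}$ with the Taylor expansion $x^{-s} = x^\kappa \sum_\ell (-\log x)^\ell (s+\kappa)^\ell/\ell!$, while the shifted remainder is $\lll x^{N + 1 - \epsilon}$ by the rapid-decay bound on $H$. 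Differentiating under the integral sign yields the matching asymptotics for every $f^{(\alpha)}$, and pushing $\sigma$ arbitrarily far to the right yields $|f^{(\alpha)}(x)| \lll_{M, \alpha} x^{-M}$ for every $M$, i.e.\ Schwartz decay at infinity. Setting $\varphi(x) := f(x)/(\log x)^j$ on $\BR_+ \smallsetminus \{1\}$, the asymptotic at $0$ and the rapid decay at $\infty$ already put $\varphi$ in $\SS(\overline \BR_+)$ away from $x = 1$.

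The main obstacle I expect is to verify that $\varphi$ extends smoothly across $x = 1$, equivalently that $f$ vanishes to order $j$ at $x = 1$, i.e., that $f^{(n)}(1) = \frac{1}{2\pi i} \int_{(\sigma)} H(s) (-s)_n \, ds = 0$ for $n = 0, \ldots, j - 1$. The pure-order hypothesis forces the residue of $H(s)(-s)_n$ at each $-\kappa$ to vanish when $n < j$ --- expanding $(-s)_n$ around $-\kappa$ in powers of $(s + \kappa)$ of degree at most $n$, a $(s+\kappa)^{-1}$ contribution against $b_\kappa/(s+\kappa)^{j+1}$ would require a $(s+\kappa)^{j}$ term, which is absent --- so the period $\int_{(\sigma)} H(s)(-s)_n\,ds$ is independent of $\sigma \in \BR$. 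To upgrade contour-independence to actual vanishing I would proceed by induction on $j$: the inductive hypothesis lets one build a $j$-fold meromorphic antiderivative $G \in \Msis^{0, 0}$ of $H$, single-valued since all residues of $H$ vanish, normalized by requiring decay at $\Im s \to \pm \infty$ (a consistent normalization thanks to the period vanishings at the previous level). Then $j$-fold integration by parts gives $\int_{(\sigma)} H(s)(-s)_n\,ds = (-1)^j \int_{(\sigma)} G(s) \partial_s^j (-s)_n\,ds + \text{boundary terms} = 0$, the integrand being zero since $\deg (-s)_n = n < j$, and the boundary terms vanishing by the rapid decay of $G$ and its derivatives. This closes the induction and secures the smoothness of $\varphi$ at $x = 1$.
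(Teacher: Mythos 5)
Your routine for the two directions of the isomorphism---split the Mellin integral at $x=1$, expand $\varphi$ asymptotically at $0$ for the forward map, shift the contour left for the asymptotics at $0$ and right for the Schwartz decay at $\infty$ for the inverse map---matches the paper's Claims 1 and 2; your use of $\partial_s^j$ in the forward direction is only a minor repackaging of the paper's direct evaluation of $\int_0^1 (\log x)^j x^{s-1}\,dx = (-1)^j j!/s^{j+1}$. The problem is your final paragraph. You are right that the smoothness of $\varphi=x^\lambda(\log x)^{-j}\upsilon$ across $x=1$ is a real issue, one that the paper's own Claim 2 also leaves unaddressed (it asserts only the asymptotics at $0$ and rapid decay at $\infty$). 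But the vanishings $f^{(n)}(1)=0$ for $n<j$ that you try to establish are simply false. Take any $j\geq 1$ and any $\varphi_0\in C^\infty_c(0,\infty)$ with $\varphi_0(1)\neq 0$; then $H=\EM\varphi_0$ is entire and rapidly decaying on vertical strips, hence $H\in\Hrd\subset\Msis^{0,j}$ (every pole of $H$ is vacuous), while $f=\varphi_0$ gives $f(1)\neq 0$. The circularity in your antiderivative argument is precisely here: the obstruction to choosing a single-valued $j$-fold antiderivative $G$ of $H$ with $G\to 0$ as $\Im s\to\pm\infty$ is the very period you are trying to kill. Indeed if $G'=H$ along $\Re s=\sigma$, then $G(\sigma+it)$ has finite limits $G_\pm$ as $t\to\pm\infty$, and $G_+-G_- = i\int_{-\infty}^\infty H(\sigma+iu)\,du = 2\pi i\, f(1)$, so a $G$ decaying at both ends exists only if $f(1)=0$ was already known.

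What is actually true---and all that the paper uses downstream, where every statement is phrased modulo Schwartz spaces---is that $\EM$ restricts to a bijection $\SS(\BR_+)\leftrightarrow\Hrd$ and induces an isomorphism of the quotients $x^{-\lambda}(\log x)^j\SS(\overline\BR_+)/\SS(\BR_+) \cong \Msis^{\lambda,j}/\Hrd$; the entire $x=1$ obstruction is confined to the $\SS(\BR_+)\leftrightarrow\Hrd$ piece. Your asymptotics-plus-decay computation already puts $\upsilon$ in $x^{-\lambda}(\log x)^j\SS(\overline\BR_+)+\SS(\BR_+)$: pick $\chi\in C^\infty_c(0,\infty)$ with $\chi\equiv 1$ near $x=1$, write $\upsilon=(1-\chi)\upsilon+\chi\upsilon$, and note that $\chi\upsilon\in\SS(\BR_+)$ while $(1-\chi)\upsilon$ vanishes identically near $x=1$, so $x^\lambda(\log x)^{-j}(1-\chi)\upsilon$ is globally smooth with the right asymptotics at $0$ and decay at $\infty$, hence lies in $\SS(\overline\BR_+)$. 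Replace your last paragraph with this remark and the argument closes, in the same (slightly weaker but sufficient) form that the paper itself proves.
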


This lemma  is essentially  \cite[Lemma 6.13, Corollary 6.17]{Miller-Schmid-2006}.
Nevertheless, we shall include its proof as the reference for the constructions of $\Nsis^{\BC, \lambda, j}$ and $\Msis^{\BC}$ in \S \ref{sec: Msis C} as well as the proof of Lemma {\rm \ref{2lem: Ssis to Msis, C}}. 

\begin{proof}
Let $\upsilon (x) = x^{-\lambda} (\log x)^j \varphi (x)$ for some $\varphi \in \SS (\overline \BR _+)$. Suppose that the derivatives of $\varphi$ satisfy  \eqref{1eq: asymptotics R+, 2} and \eqref{1eq: Schwartz R+}, that is, asymptotic expansions at zero and the Schwartz condition at infinity.

\vskip 3 pt
{\textsc {Claim}} 1. Let  
\begin{equation*} 
 H (s) = \EM \upsilon (s) = \int_0^\infty \upsilon (s) x^{s-1} d x, \hskip 10 pt \Re s > \Re \lambda.
\end{equation*}
Then $H$ admits a meromorphic continuation onto the whole complex plane.
The only singularities of $H $ are poles of pure order $j+1$ at the points in $ \lambda - \BN$. More precisely, $H (s) $ has a pole at $s = \lambda - \kappa$ of principal part $ {(-)^j j! a_\kappa } {(s - \lambda + \kappa)^{-j-1}}$.
Moreover,  $H $ decays rapidly along vertical lines, uniformly on vertical strips. 
To be concrete, we have
\begin{itemize}
\item[\noindent \refstepcounter{equation}(\theequation)\hskip 13 pt \label{2eq: Mellin rapid decay}]
for any given $ \alpha, A \in \BN, b \geq a > \Re \lambda - \alpha - A - 1$   and   $ r > 0$,
\vskip 5 pt
{\centering $H (s) \lll_{\lambda, j, \alpha, A, a, b, r} (|\Im s| + 1)^{-\alpha} \text{ for all } \textstyle s \in \BS [a, b] \smallsetminus \bigcup_{\kappa = 0}^{\alpha + A} \BB_{r} (\lambda - \kappa).$}
\end{itemize}
We remark that \eqref{2eq: Mellin rapid decay, 0} and \eqref{2eq: Mellin rapid decay} are equivalent.

\vskip 5 pt

{\textsc {Proof of Claim}} 1. In view of $\EM \lp x^{-\lambda} (\log x)^j \varphi (x) \rp (s) = \EM \lp (\log x)^j \varphi (x) \rp (s - \lambda)$, one may assume $\lambda = 0$. As such, $\upsilon (x) = (\log x)^j \varphi (x)$.  

Let $A \in \BN$. We have for $\Re s > 0$
\begin{equation}\label{2eq: poles of Mellin}
\begin{split}
\EM \upsilon (s) = & \int_0^1 (\log x)^j \lp \varphi (x) - \sum_{\kappa = 0}^A a_{\kappa} x^{\kappa} \rp x^{s -1} d x + \sum_{\kappa = 0}^A \frac { (-)^j j!  a_\kappa } { \lp s + \kappa\rp^{j+1} }\\
& + \int_1^\infty (\log x)^j \varphi (x) x^{s -1} d x.
\end{split}
\end{equation}
Here, we have used $$\int_0^1 (\log x)^j x^{s -1} d x = \frac { (-)^j j! } { s^{j+1} }, \hskip 10 pt \Re s > 0.$$
In view of $\varphi (x) - \sum_{\kappa = 0}^A a_{\kappa} x^{\kappa} = O_A (x^{A+1})$, the first integral in \eqref{2eq: poles of Mellin} converges in the half-plane $\left \{ s : \Re s > - A - 1 \right\}$. The last integral converges for all $s$ on the whole complex plane due to the rapid decay  of $\varphi$. Thus $H (s) = \EM \upsilon (s)$ admits a  meromorphic extension onto $\left \{ s : \Re s > - A - 1 \right\}$ and, since $A$ was arbitrary, onto the whole complex plane, with poles of pure order  $j+1$ at the points in $- \BN$. 

For any given $\alpha \in \BN$, repeating partial integration  $\alpha $ times to the defining integral of $\EM \upsilon (s)$ yields
\begin{equation*}
(-)^\alpha (s )_{\alpha} \EM \upsilon (s) = \EM \upsilon^{(\alpha)} (s + \alpha).
\end{equation*}
In view of this, we first  expand  $\EM \upsilon^{(\alpha)} (s + \alpha)$ according to the expansion of $\upsilon^{(\alpha)} (x) = (d/dx)^\alpha \big( \lp\log x\rp^j \varphi (x) \big) $.  We then   write  each   term in the expansion of $\EM \upsilon^{(\alpha)} (s + \alpha)$ in the same fashion as \eqref{2eq: poles of Mellin} and apply  \eqref{1eq: asymptotics R+, 2} and \eqref{1eq: Schwartz R+} to estimate the first and the last integral respectively. We conclude that
\begin{equation*}
\EM \upsilon (s) \lll_{j, \alpha, A, a, b} \frac 1 { \left| (s)_{\alpha} \right|} \lp 1 + \sum_{\kappa = 0}^{\alpha + A} \lp \frac 1 {|s + \kappa|} + ... +  \frac 1 {|s + \kappa|^{j+1}} \rp \rp,
\end{equation*}
for all $s \in \BS [a, b]$, with $b \geq a > -\alpha - A - 1$. In particular, \eqref{2eq: Mellin rapid decay} is proven.

\vskip 7 pt


Let $H \in\Msis^{\lambda, j}$. Suppose that the  principal part of $H (s)$ at $s = \lambda - \kappa$ is equal to $ {(-)^j j! a_\kappa } {(s + \lambda + \kappa)^{-j-1}}$ and that $H (s)$ satisfies the condition \eqref{2eq: Mellin rapid decay}.

\vskip 3 pt

{\textsc {Claim}} 2. If we denote by $\upsilon (x)$ the following integral
\begin{equation*}
\upsilon (x) = \frac 1 {2\pi i}\int_{(\sigma)} H(s) x^{-s} d s, \hskip 10 pt \sigma > \Re \lambda,
\end{equation*}
then all the derivatives of $\varphi (x) = x^{\lambda} (\log x)^{- j}\upsilon (x)$ satisfy the asymptotics in \eqref{1eq: asymptotics R+, 3} at zero and rapidly decay at infinity.

\vskip 5 pt

{\textsc {Proof of Claim}} 2. Again, let us assume $\lambda = 0$. 

Let $1 > \rho > 0$. We left shift the  contour of integration from $  (\sigma) $ 
to $  (- \rho)  $. When moving across $s = 0$, we obtain
$a_{0} (\log x)^j$ in view of Cauchy's differentiation formula\footnote{Recall   Cauchy's differentiation formula,  $$f^{(j)} (\zeta) = \frac {j !} {2\pi i} \sideset{}{_{ \partial \BB_r (\zeta) } }\oint \frac {f(s)} {(s-\zeta)^{j+1}} d s,$$ where $f$ is a holomorphic function on a neighborhood of the closed disc $\overline \BB_r (\zeta)$ centered at $\zeta$, and the integral is  taken counter-clockwise on the circle $\partial \BB_r (\zeta) $. In the present situation, this formula is applied for $f(s) = x^{- s}$.}. It follows that
\begin{equation*}
\upsilon (x) = a_{0} (\log x)^j + \frac 1 {2\pi i} \int_{(- \rho )}  H(s) x^{-s} d s.
\end{equation*}
Using \eqref{2eq: Mellin rapid decay} with $r$ small, say $r < \rho $, to estimate the above integral, we arrive  at
\begin{equation*}
\upsilon (x) = a_{0} (\log x)^j + O \big(x^{\rho } \big) = (\log x)^j \lp a_{0} + O (x^{\rho}) \rp, \ \text{ as } x \ra 0.
\end{equation*}
Thus  $\varphi (x) = (\log x)^{- j} \upsilon (x)$ satisfies the asymptotic \eqref{1eq: asymptotics R+, 3} with $\alpha = 0$. For the general case $\alpha \in \BN$, we have
\begin{equation}\label{2eq: upsilon (alpha)}
\upsilon^{(\alpha)} (x) = (-)^\alpha \frac {1} {2\pi i} \int_{(\sigma)} (s)_{\alpha} H(s) x^{-s - \alpha} d s.
\end{equation}
Shifting the contour from  $  (\sigma) $ to  $ (- \alpha - \rho) $ and following the same lines of arguments as above, combined with some straightforward algebraic manipulations, one may show \eqref{1eq: asymptotics R+, 3} by an induction. 

We are left to show the Schwartz condition for $\varphi (x) = (\log x)^{- j} \upsilon (x)$, or equivalently, that for $\upsilon (x)$. Indeed, the bound \eqref{1eq: Schwartz R+} for $\upsilon^{(\alpha)} (x)$ follows from right shifting the contour of the integral in \eqref{2eq: upsilon (alpha)} to the vertical line $ (\beta) $ 
and applying the 
estimates in \eqref{2eq: Mellin rapid decay}. 
\end{proof}

\subsubsection{The spaces $\Ssis (\BR_+)$ and $\Msis $}

Let $\lambda , \lambda' \in \BC$. We write $\lambda \preccurlyeq_1 \lambda'$ if $\lambda'  - \lambda  \in \BN$ and  
 $\lambda \sim_1 \lambda'$ if $\lambda'  - \lambda  \in \BZ$. Observe that ``$\preccurlyeq_1$'' and ``$\sim_1$'' define an  order relation and an equivalence relation on $\BC$ respectively.

Define
\begin{equation*}
\Ssis (\BR _+) = \sum_{\lambda \in \BC} \sum_{j \in \BN} x^{-\lambda}  (\log x)^j \SS (\overline \BR _+),
\end{equation*}
where the sum $\sum_{\lambda \in \BC}  \sum_{j \in \BN}$ is in the {\it algebraic} sense.
It is clear that
$\lambda  \preccurlyeq_1 \lambda' $ if and only if $ x^{-\lambda}  (\log x)^j \SS (\overline \BR _+) \subseteq  x^{-\lambda'}  (\log x)^{j} \SS (\overline \BR _+)$. 
One also observes that $ x^{-\lambda}  (\log x)^j \SS (\overline \BR _+) \cap  x^{-\lambda'}  (\log x)^{j'} \SS (\overline \BR _+) = \SS (\BR_+)$ if either $j \neq j'$ or $\lambda \nsim_1 \lambda'$. Therefore,
\begin{equation}\label{2eq: decompose Ssis (R+)}
\begin{split}
\Ssis (\BR_+)/ \SS (\BR _+) = \bigoplus_{ \omega\, \in \BC/ {\scriptscriptstyle \sim_1} } \bigoplus_{j \in \BN} \ \varinjlim_{\lambda \in \omega}  \big( x^{-\lambda}  (\log x)^j \SS (\overline \BR _+) \big) \big/\SS (\BR _+).
\end{split}
\end{equation}
Here the direct limit $\varinjlim_{\lambda \in \omega} $ is taken on the totally ordered set $(\omega, \preccurlyeq_1)$ and may be simply viewed as the union $ \bigcup_{\lambda \in \omega}$.
More precisely, each function $\upsilon \in \Ssis (\BR_+)$ can be expressed as a sum
\begin{equation*}
\upsilon (x) = \upsilon^{0} (x) + \sum_{\lambda \in \Lambda} \sum_{j=0}^{N } x^{- \lambda } (\log x)^j \upsilon_{\lambda, j } (x),
\end{equation*}
with $\Lambda \subset \BC$ a finite set such that $\lambda  \nsim_1 \lambda'$ for any two distinct points $\lambda, \lambda' \in \Lambda$,  $ N \in \BN$, $\upsilon^{0} \in \SS (\BR _+)$ and $\upsilon_{\lambda, j } \in \SS (\overline \BR _+)$. 
This expression is unique up to addition of Schwartz functions in $\SS (\BR _+)$.

On the other hand, we define the space $\Msis$ of all meromorphic functions $H $   satisfying the following conditions,
\begin{itemize}
\item[-] the poles of $H$ lie in a finite number of sets $ \lambda - \BN $, 
\item[-] the orders of the poles of $H$ are uniformly bounded, and
\item[-] $H$ decays rapidly along vertical lines, uniformly on vertical strips. 
\end{itemize}
Appealing to certain Gamma identities for  the Gamma function in \cite[Lemma 6.24]{Miller-Schmid-2006}, one may show, in the same way as \cite[Lemma 6.35]{Miller-Schmid-2006}, that
\begin{equation*}
\Msis = \sum_{\lambda \in \BC} \sum_{j \in \BN} \Msis^{\lambda, j}.
\end{equation*}
We have $\Msis^{\lambda, j} \subseteq \Msis^{\lambda', j}$ if and only if $\lambda \preccurlyeq_1 \lambda'$, and $\Msis^{\lambda, j} \cap \Msis^{\lambda', j'} = \Hrd $ if either $j \neq j'$ or $\lambda \nsim \lambda'$. Therefore
\begin{equation}\label{2eq: decompose Msis}
\Msis / \Hrd
= \bigoplus_{\omega\, \in \BC/ {\scriptscriptstyle \sim_1} } \bigoplus_{j \in \BN} \ \varinjlim_{\lambda \in \omega}  \Msis^{\lambda, j} \big/ \Hrd.
\end{equation}

The following lemma is a direct consequence of Lemma \ref{2lem: Mellin isomorphism, Msis lambda j}.
\begin{lem}\label{2lem: Sis and Msis, R+}
The Mellin transform $\EM$ is an isomorphism between $\Ssis (\BR _+)$ and $\Msis$ which respects their decompositions \eqref{2eq: decompose Ssis (R+)} and \eqref{2eq: decompose Msis}.
\end{lem}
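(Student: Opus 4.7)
The plan is to derive the statement by bootstrapping Lemma \ref{2lem: Mellin isomorphism, Msis lambda j} via linearity, using the algebraic direct-sum structure that is already built into the definitions of $\Ssis(\BR_+)$ and $\Msis$. First I would verify that $\EM$ carries $\Ssis(\BR_+)$ into $\Msis$: any $\upsilon \in \Ssis(\BR_+)$ is by construction a finite sum of functions $x^{-\lambda_i}(\log x)^{j_i}\varphi_i(x)$ with $\varphi_i \in \SS(\overline\BR_+)$, so Lemma \ref{2lem: Mellin isomorphism, Msis lambda j} shows $\EM\upsilon$ is a finite sum of elements of $\Msis^{\lambda_i,j_i}$ and hence, by the cited identity $\Msis=\sum_{\lambda,j}\Msis^{\lambda,j}$, lies in $\Msis$. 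Conversely, given $H\in\Msis$, apply that same identity to write $H=\sum H_{\lambda_i,j_i}$ with $H_{\lambda_i,j_i}\in \Msis^{\lambda_i,j_i}$; Lemma \ref{2lem: Mellin isomorphism, Msis lambda j} gives an inverse Mellin transform $\upsilon_i\in x^{-\lambda_i}(\log x)^{j_i}\SS(\overline\BR_+)$ for each summand, and $\upsilon=\sum\upsilon_i \in \Ssis(\BR_+)$ satisfies $\EM\upsilon=H$.

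Injectivity follows the same way: if $\EM\upsilon=0$, write $\upsilon$ as a finite sum of the above form, project onto each $\Msis^{\lambda_i,j_i}$ (possible because poles in different $\sim_1$-classes or of different pure orders cannot cancel, and outside the poles the sum is of rapid decay), and conclude each $\EM(x^{-\lambda_i}(\log x)^{j_i}\varphi_i)=0$; Lemma \ref{2lem: Mellin isomorphism, Msis lambda j} then forces each $\varphi_i=0$. Combined with surjectivity this gives the bijection $\Ssis(\BR_+)\cong\Msis$.

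For compatibility with the decompositions \eqref{2eq: decompose Ssis (R+)} and \eqref{2eq: decompose Msis}, I would observe that $\EM$ sends $\SS(\BR_+)$ isomorphically onto $\Hrd$ (Corollary \ref{cor: Mellin, Schwartz}(1)) and $x^{-\lambda}(\log x)^j\SS(\overline\BR_+)$ isomorphically onto $\Msis^{\lambda,j}$ (Lemma \ref{2lem: Mellin isomorphism, Msis lambda j}), so $\EM$ descends to an isomorphism of the quotients $\Ssis(\BR_+)/\SS(\BR_+)$ and $\Msis/\Hrd$. Moreover, if $\lambda\preccurlyeq_1\lambda'$ then the inclusion $x^{-\lambda}(\log x)^j\SS(\overline\BR_+)\subseteq x^{-\lambda'}(\log x)^j\SS(\overline\BR_+)$ matches the inclusion $\Msis^{\lambda,j}\subseteq\Msis^{\lambda',j}$ under $\EM$, so the direct limits over $\lambda\in\omega$ on the two sides correspond. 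Since the direct sum decompositions are indexed by the same data $(\omega,j)\in(\BC/{\sim_1})\times\BN$, the induced map respects them.

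The only nontrivial step is the first—knowing that a given $H\in\Msis$ actually decomposes as a finite sum with each component in a single $\Msis^{\lambda_i,j_i}$; this is what \cite[Lemma 6.35]{Miller-Schmid-2006} (cited between the displays \eqref{2eq: decompose Msis} and the statement) provides, and is the place where one genuinely needs Gamma-function identities to rearrange the poles of different orders appearing in a single pole cluster. Once that ingredient is granted, the argument is entirely formal and reduces to Lemma \ref{2lem: Mellin isomorphism, Msis lambda j} plus bookkeeping of the algebraic sums and direct limits.
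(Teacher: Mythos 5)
Your proof is correct in outline and takes precisely the route the paper intends: the paper gives no argument beyond stating that Lemma \ref{2lem: Sis and Msis, R+} is ``a direct consequence of Lemma \ref{2lem: Mellin isomorphism, Msis lambda j},'' and what you have written is a careful unpacking of that assertion, built on the decomposition $\Msis=\sum_{\lambda,j}\Msis^{\lambda,j}$ (from Miller--Schmid) together with the rank-one isomorphism of Lemma \ref{2lem: Mellin isomorphism, Msis lambda j} and Corollary \ref{cor: Mellin, Schwartz}(1). One small slip in the injectivity paragraph: the projections you invoke are only defined in the quotients $\Msis/\Hrd$ and $\Ssis(\BR_+)/\SS(\BR_+)$, so from $\EM\upsilon=0$ you may conclude only that each $\EM\bigl(x^{-\lambda_i}(\log x)^{j_i}\varphi_i\bigr)\in\Hrd$, hence that each $x^{-\lambda_i}(\log x)^{j_i}\varphi_i\in\SS(\BR_+)$ — not that $\varphi_i=0$ — and you then need one more appeal to Corollary \ref{cor: Mellin, Schwartz}(1) to finish. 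In fact injectivity is immediate and needs none of this machinery: $\EM\upsilon$ is the genuine Mellin integral for $\Re s$ large, and the Mellin inversion formula (already proved inside Lemma \ref{2lem: Mellin isomorphism, Msis lambda j}) recovers $\upsilon$ from $\EM\upsilon$ directly. Modulo that cosmetic point, the proposal matches the paper's intent and is sound.
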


\subsubsection{More refined decompositions of $\Ssis (\BR _+)$ and $\Msis$}\label{sec: refinements Msis Sis, R+}

Alternatively, we define  an order relation on $\BC$,  $\lambda  \preccurlyeq_2 \lambda' $ if $\lambda'  - \lambda  \in 2 \BN$, as well as  an equivalence relation, $\lambda \sim_2 \lambda'$ if $\lambda'  - \lambda  \in 2 \BZ$. 


Define  $\Nsis^{\lambda, j}$ in the same way as $\Msis^{\lambda, j}$ with $ \lambda - \BN$ replaced by $ \lambda - 2 \BN$. Under the isomorphism via $\EM$ in Lemma \ref{2lem: Mellin isomorphism, Msis lambda j}, $\Nsis^{\lambda, j}$ is then isomorphic to $x^{-\lambda} (\log x)^j \SS_0 (\overline \BR _+)$.

According to \cite[Lemma 6.35]{Miller-Schmid-2006}, we have the following decomposition,
\begin{equation} \label{2eq: Msis = Nsis + Nsis}
\Msis^{\lambda, j} / \Hrd = \Nsis^{\lambda, j}  / \Hrd \oplus \Nsis^{\lambda-1, j}  / \Hrd.
\end{equation}
Inserting this into \eqref{2eq: decompose Msis}, we obtain   the following refined decomposition of $\Msis / \Hrd$
\begin{equation*}
\begin{split}
 \bigoplus_{\omega\, \in \BC/ {\scriptscriptstyle \sim_1} } \bigoplus_{j \in \BN} \ \varinjlim_{\lambda \in \omega} \lp \Nsis^{\lambda, j} \big/ \Hrd \oplus 
  \Nsis^{\lambda-1, j} \big/ \Hrd \rp 
= \bigoplus_{\omega\, \in \BC/ {\scriptscriptstyle \sim_2} } \bigoplus_{j \in \BN} \ \varinjlim_{\lambda \in \omega}  \Nsis^{\lambda, j} \big/ \Hrd.
\end{split}
\end{equation*}
Under the isomorphism via $\EM$ in Lemma \ref{2lem: Sis and Msis, R+}, the reflection of this refinement  on the decomposition of $\Ssis (\BR _+)/ \SS (\BR _+)$ is
\begin{equation*} 
\begin{split}
\bigoplus_{\omega\, \in \BC/ {\scriptscriptstyle \sim_2} } \bigoplus_{j \in \BN} \ \varinjlim_{\lambda \in \omega} \big( x^{-\lambda} (\log x)^j \SS_0 (\overline \BR _+) \big) \big/\SS (\BR _+).
\end{split}
\end{equation*}

\begin{lem}\label{2lem: refined decomp, R+}
We have the following refinements of the decompositions {\rm (\ref{2eq: decompose Ssis (R+)}, \ref{2eq: decompose Msis})},
\begin{equation} \label{2eq: refined, Ssis (R+)}
\begin{split}
\Ssis (\BR_+)/ \SS (\BR _+) = \bigoplus_{\omega\, \in \BC/ {\scriptscriptstyle \sim_2} } \bigoplus_{j \in \BN} \ \varinjlim_{\lambda \in \omega}  \big( x^{-\lambda} (\log x)^j \SS_0 (\overline \BR _+) \big) \big/\SS (\BR _+).
\end{split}
\end{equation}
\begin{equation}\label{2eq: refined, Msis}
\Msis / \Hrd
= \bigoplus_{\omega\, \in \BC/ {\scriptscriptstyle \sim_2} }  \bigoplus_{j \in \BN} \ \varinjlim_{\lambda \in \omega} \Nsis^{\lambda, j} \big/ \Hrd.
\end{equation}
The Mellin transform $\EM$ respects these two decompositions.
\end{lem}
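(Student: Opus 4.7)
The plan is to derive the refined Mellin-side decomposition \eqref{2eq: refined, Msis} first by applying the pointwise splitting \eqref{2eq: Msis = Nsis + Nsis} to the coarse decomposition \eqref{2eq: decompose Msis}, and then to transport the result back to \eqref{2eq: refined, Ssis (R+)} via the Mellin isomorphism of Lemma~\ref{2lem: Sis and Msis, R+}. Each $\sim_1$-equivalence class $\omega \in \BC/{\sim_1}$ is a disjoint union of exactly two $\sim_2$-classes, say $\omega'$ and $\omega' - 1$. Applying \eqref{2eq: Msis = Nsis + Nsis} to each $\Msis^{\lambda,j}/\Hrd$ and checking that the natural inclusions $\Msis^{\lambda,j} \hookrightarrow \Msis^{\lambda+1,j}$ respect the splitting — the summand $\Nsis^{\lambda,j}$ maps into the summand $\Nsis^{(\lambda+1)-1,j} = \Nsis^{\lambda,j}$ of the target, while $\Nsis^{\lambda-1,j}$ maps into $\Nsis^{\lambda+1,j}$, so each parity stays self-contained — the direct limit over $(\omega, \preccurlyeq_1)$ breaks into the direct limits over the two $\sim_2$-sub-chains $(\omega', \preccurlyeq_2)$ and $(\omega' - 1, \preccurlyeq_2)$. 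Summing over $\omega \in \BC/{\sim_1}$ and $j \in \BN$ then reindexes to a sum over $\BC/{\sim_2}$, producing \eqref{2eq: refined, Msis}.

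For the $\Ssis(\BR_+)$ side, the key point will be to verify that under $\EM$ the subspace $x^{-\lambda}(\log x)^j \SS_0(\overline \BR_+)$ corresponds precisely to $\Nsis^{\lambda,j}$. This is the parity refinement of Lemma~\ref{2lem: Mellin isomorphism, Msis lambda j}: for $\varphi \in \SS_0(\overline \BR_+)$ the asymptotic expansion at zero has only even powers $\sum_\kappa a_{2\kappa} x^{2\kappa}$, so the computation \eqref{2eq: poles of Mellin} produces principal parts of pure order $j+1$ only at points in $\lambda - 2\BN$, placing $\EM(x^{-\lambda}(\log x)^j \varphi)$ in $\Nsis^{\lambda,j}$. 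Conversely, the contour-shift argument of Claim~2 in the proof of Lemma~\ref{2lem: Mellin isomorphism, Msis lambda j} identifies the expansion coefficients $a_\kappa$ of the preimage with (up to normalization) the residues of $H \in \Nsis^{\lambda,j}$; the absence of poles at $\lambda - (2\kappa+1)$ forces $a_{2\kappa+1} = 0$, so the preimage lies in $x^{-\lambda}(\log x)^j \SS_0(\overline \BR_+)$. Granted this matching, Lemma~\ref{2lem: Sis and Msis, R+} carries \eqref{2eq: refined, Msis} to \eqref{2eq: refined, Ssis (R+)}, and the final compatibility statement is then automatic.

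The main obstacle is the pointwise splitting \eqref{2eq: Msis = Nsis + Nsis}, which is cited from \cite[Lemma 6.35]{Miller-Schmid-2006}: given only that the principal parts of $H \in \Msis^{\lambda,j}$ at $\lambda - \BN$ are of pure order $j+1$, one must produce a decomposition modulo $\Hrd$ into an even-parity piece and an odd-parity piece while preserving the uniform rapid-decay estimate \eqref{2eq: Mellin rapid decay, 0} on vertical strips. This is non-trivial because a meromorphic function with prescribed principal parts is not canonically represented; the construction uses the Gamma-function identities of \cite[Lemma 6.24]{Miller-Schmid-2006} to supply concrete meromorphic prototypes with the desired pole loci and with Stirling-controlled growth via \eqref{1eq: Stirling's formula}. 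Once this ingredient is in hand, the remainder of the argument reduces to the direct-limit bookkeeping and the parity refinement of Mellin inversion sketched above.
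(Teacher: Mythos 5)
Your proposal follows the same route the paper takes: apply the pointwise splitting $\Msis^{\lambda,j}/\Hrd = \Nsis^{\lambda,j}/\Hrd \oplus \Nsis^{\lambda-1,j}/\Hrd$ from \cite[Lemma 6.35]{Miller-Schmid-2006} term by term in \eqref{2eq: decompose Msis}, reindex the direct limit over $(\omega,\preccurlyeq_1)$ into two parity sub-chains over $\preccurlyeq_2$, and transport the result through $\EM$ using the identification $\EM\big(x^{-\lambda}(\log x)^j\SS_0(\overline\BR_+)\big) = \Nsis^{\lambda,j}$. The paper presents this in the two displayed equations and one sentence immediately preceding the Lemma without a formal proof; your write-up makes explicit two points the paper leaves implicit — the compatibility of the splitting with the transition maps $\Msis^{\lambda,j}\hookrightarrow\Msis^{\lambda+1,j}$ in the direct system, and the verification via the residue calculation \eqref{2eq: poles of Mellin} that the absence of poles on $\lambda-(2\BN+1)$ corresponds exactly to vanishing odd Taylor coefficients, i.e., membership in $\SS_0(\overline\BR_+)$ — both of which are correct.
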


\begin{cor}\label{2cor: refined decomp, R+} Let $\delta \in \BZT $ and $m \in \BZ$, and recall the definitions of  $\SS_\delta (\overline \BR _+)$ and $\SS_m (\overline \BR _+)$ in {\rm \S \ref{sec: Schwartz subspaces}}.

{\rm (1).}  The Mellin transform $\EM$ respects the following decompositions,
\begin{equation}\label{2eq: refined, Ssis (R+), delta}
\begin{split}
\Ssis (\BR_+)/ \SS (\BR _+) = \bigoplus_{\omega\, \in \BC/ {\scriptscriptstyle \sim_2} } \bigoplus_{j \in \BN} \ \varinjlim_{\lambda \in \omega}  \big( x^{-\lambda} (\log x)^j \SS_\delta (\overline \BR _+) \big) \big/\SS (\BR _+),
\end{split}
\end{equation}
\begin{equation}\label{2eq: refined, Msis, delta}
\Msis / \Hrd
= \bigoplus_{\omega\, \in \BC/ {\scriptscriptstyle \sim_2} } \bigoplus_{j \in \BN} \ \varinjlim_{\lambda \in \omega} \Nsis^{\lambda - \delta, j} \big/ \Hrd.
\end{equation}

{\rm (2).} The Mellin transform $\EM$ respects the following decompositions,
\begin{equation}\label{2eq: refined, Ssis (R+), m}
\begin{split}
\Ssis (\BR_+)/ \SS (\BR _+) = \bigoplus_{\omega\, \in \BC/ {\scriptscriptstyle \sim_2} } \bigoplus_{j \in \BN} \ \varinjlim_{\lambda \in \omega}  \big( x^{-\lambda} (\log x)^j \SS_m (\overline \BR _+) \big) \big/\SS (\BR _+),
\end{split}
\end{equation}
\begin{equation}\label{2eq: refined, Msis, m}
\Msis / \Hrd
= \bigoplus_{\omega\, \in \BC/ {\scriptscriptstyle \sim_2} } \bigoplus_{j \in \BN} \ \varinjlim_{\lambda \in \omega} \Nsis^{\lambda - |m|, j} \big/ \Hrd.
\end{equation}
\end{cor}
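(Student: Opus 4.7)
The plan is to deduce the corollary from Lemma \ref{2lem: refined decomp, R+} by a simple reindexing argument, using the key identities from \S \ref{sec: Schwartz subspaces} which assert that $\SS_\delta (\overline \BR_+) = x^\delta \SS_0 (\overline \BR_+)$ for $\delta \in \BZT$ and $\SS_m (\overline \BR_+) = x^{|m|} \SS_0 (\overline \BR_+)$ for $m \in \BZ$.

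First, to obtain (1), I would observe the trivial identity
$$x^{-\lambda} (\log x)^j \SS_0 (\overline \BR_+) = x^{-(\lambda + \delta)} (\log x)^j \SS_\delta (\overline \BR_+),$$
since multiplication by $x^\delta$ is a bijection between the two Schwartz subspaces and commutes with multiplication by the scalar function $(\log x)^j$. Making the change of index $\lambda \mapsto \lambda - \delta$ inside each direct limit and then noting that the map $\omega \mapsto \omega + \delta$ is a bijection of $\BC/{\sim_2}$ onto itself transforms decomposition \eqref{2eq: refined, Ssis (R+)} into \eqref{2eq: refined, Ssis (R+), delta}. For the Mellin side, the isomorphism
$$x^{-\lambda} (\log x)^j \SS_0 (\overline \BR_+) \xrightarrow[\ \cong\ ]{\EM} \Nsis^{\lambda, j}$$
noted immediately after the definition of $\Nsis^{\lambda, j}$ in \S \ref{sec: refinements Msis Sis, R+} shows, after shifting $\lambda \mapsto \lambda - \delta$, that $\EM$ sends $x^{-\lambda} (\log x)^j \SS_\delta (\overline \BR_+)$ isomorphically onto $\Nsis^{\lambda - \delta, j}$, which converts \eqref{2eq: refined, Msis} into \eqref{2eq: refined, Msis, delta}. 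That $\EM$ respects these refined decompositions is then immediate from Lemma \ref{2lem: refined decomp, R+}.

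Part (2) is entirely parallel: the identity $\SS_m (\overline \BR_+) = x^{|m|} \SS_0 (\overline \BR_+)$ yields
$$x^{-\lambda} (\log x)^j \SS_0 (\overline \BR_+) = x^{-(\lambda + |m|)} (\log x)^j \SS_m (\overline \BR_+),$$
and the same reindexing argument, now shifting by $|m|$ instead of $\delta$ and using that $\omega \mapsto \omega + |m|$ is a bijection on $\BC/{\sim_2}$ (since $|m| \in \BN$ is a fixed integer), delivers \eqref{2eq: refined, Ssis (R+), m} and \eqref{2eq: refined, Msis, m}.

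There is essentially no obstacle here: the substantive analytic work has been concentrated in Lemma \ref{2lem: Mellin isomorphism, Msis lambda j} (meromorphic continuation, pole structure, and vertical decay) and in the basic Gamma identities that yield decomposition \eqref{2eq: Msis = Nsis + Nsis}, both of which were established in Lemma \ref{2lem: refined decomp, R+}. The only minor point to watch in the write-up is to verify that the index shifts by $\delta \in \{0, 1\}$ and by $|m| \in \BN$ are compatible with the parity equivalence ``$\sim_2$,'' which is clear since these shifts are integers and so preserve the structure of the quotient $\BC/{\sim_2}$ as a disjoint union of cosets.
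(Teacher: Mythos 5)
Your proof is correct and takes essentially the same approach as the paper, which proves the corollary in one line precisely from Lemma \ref{2lem: refined decomp, R+} together with the identities $x^{\delta} \SS_0 (\overline \BR_+) = \SS_\delta (\overline \BR_+)$ and $x^{|m|} \SS_0 (\overline \BR_+) = \SS_m (\overline \BR_+)$; you have simply written out the reindexing $\lambda \mapsto \lambda - \delta$ (respectively $\lambda \mapsto \lambda - |m|$) and the induced bijection on $\BC/{\sim_2}$ that the paper leaves implicit.
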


\begin{proof}
These follow from Lemma \ref{2lem: refined decomp, R+} in conjunction with  $x^{\delta} \SS_0 (\overline \BR_+) = \SS_\delta (\overline \BR_+) $ and $x^{|m|} \SS_0 (\overline \BR_+) = \SS_m (\overline \BR_+)$.
\end{proof}

\begin{rem}\label{rem: S=S0+S1}
Set $\lambda = 0$ and $j = 0$ in \eqref{2eq: Msis = Nsis + Nsis}. It follows from the isomorphism $\EM$ the decomposition as below,
\begin{equation*}
\SS  (\overline \BR _+) /\SS (\BR _+) = \SS_0 (\overline \BR _+) /\SS (\BR _+) \oplus x \SS_0 (\overline \BR _+) /\SS (\BR _+).
\end{equation*}
Since $x \SS_0 (\overline \BR_+) = \SS_1 (\overline \BR_+) $, one obtains $\SS (\overline \BR _+) = \SS_0 (\overline \BR _+) + \SS_1 (\overline \BR _+)$ and therefore $C^\infty (\overline \BR _+) = C^\infty_0 (\overline \BR _+) + C^\infty_1 (\overline \BR _+)$. See Remark {\rm \ref{rem: C=C0+C1}}.
\end{rem}

\subsection{The spaces $\Ssis (\BRx)$ and $\Msis^\BR$}\label{sec: Ssis and Msis, R}


Following \cite[(6.10)]{Miller-Schmid-2006}, we  write $(\lambda , \delta ) \preccurlyeq ( \lambda' , \delta')$ if $\lambda'  - \lambda  \in \BN$ and $\lambda'  - \lambda  \equiv \delta' + \delta  (\mod 2)$ and $(\lambda , \delta ) \sim ( \lambda' , \delta')$ if $\lambda'  - \lambda - (\delta' + \delta) \in 2 \BZ$. Again, these define an order relation and an equivalence relation on  $\BC \times \BZT$.

\subsubsection{The space $\Ssis (\BRx)$}\label{sec: defn Ssis (Rx)}



According to  \cite[Definition 6.4]{Miller-Schmid-2006} and \cite[Lemma 6.35]{Miller-Schmid-2006}, define
\begin{equation*}
\Ssis (\BRx) =  \sum_{\delta \in \BZT} \ \sum_{\lambda \in \BC } \ \sum_{j \in \BN}  \sgn(x)^{\delta} |x|^{-\lambda} (\log |x|)^j \SS (\BR).
\end{equation*}
We have the following decomposition,
\begin{align*}
  \Ssis (\BRx)/ \SS (\BRx)  
=   \bigoplus_{\omega\, \in \BC \times \BZT / {\scriptscriptstyle \sim } } \bigoplus_{j \in \BN}  \ \varinjlim_{(\lambda, \delta)\, \in \omega} \big( \sgn(x)^{\delta} |x|^{-\lambda} (\log |x|)^j \SS (\BR) \big) \big/\SS (\BRx).
\end{align*}
It follows from $\sgn(x) |x| \SS (\BR) = x \SS (\BR)  \subset \SS (\BR)$ that
\begin{equation*}
\Ssis (\BRx)/ \SS (\BRx) = \bigoplus_{\omega\, \in \BC/ {\scriptscriptstyle \sim_2} } \bigoplus_{j \in \BN} \ \varinjlim_{\lambda \in \omega}  \big(  |x|^{-\lambda} (\log |x|)^j \SS (\BR) \big) \big/\SS (\BRx).
\end{equation*}
We let $\Ssis^\delta (\BRx)$ denote the space of functions $\upsilon \in \Ssis (\BRx)$ satisfying the parity condition \eqref{1eq: delta condition, R}. Clearly, $\Ssis (\BRx) = \Ssis^0 (\BRx) \oplus \Ssis^1 (\BRx)$. Then, 
\begin{equation*}
\Ssis^\delta (\BRx)/ \SS_\delta (\BRx) = \bigoplus_{\omega\, \in \BC/ {\scriptscriptstyle \sim_2} } \bigoplus_{j \in \BN} \ \varinjlim_{\lambda \in \omega}    \big(  |x|^{-\lambda} (\log |x|)^j \SS_\delta (\BR) \big) \big/\SS_\delta (\BRx),
\end{equation*}
where $\SS_\delta (\BR)$ and $\SS_\delta (\BRx)$ are defined in \S \ref{sec: Schwartz subspaces} and \S \ref{sec: S delta and Sm} respectively.
Since 
$ \SS_\delta (\BR) = \sgn(x)^\delta \SS_\delta (\overline \BR _+)  $, 
\begin{equation}\label{2eq: decompose Ssis (Rx), 2}
\Ssis^\delta (\BRx)/ \SS_\delta (\BRx) = \bigoplus_{\omega\, \in \BC/ {\scriptscriptstyle \sim_2} } \bigoplus_{j \in \BN} \ \varinjlim_{\lambda \in \omega}  \big( \sgn(x)^{\delta } |x|^{-\lambda} (\log |x|)^j \SS_\delta (\overline \BR_+) \big) \big/\SS_\delta (\BRx).
\end{equation}
Then, $  |x|^{- \delta} \SS_{\delta} (\overline \BR_+) = \SS_{0} (\overline \BR_+)$ together with $ \SS_0 (\overline \BR_+)/\SS (\BR _+) \oplus |x| \SS_0 (\overline \BR_+)/\SS (\BR _+) = \SS (\overline \BR_+)/\SS (\BR _+)$ (see Remark \ref{rem: S=S0+S1}) yields
\begin{equation}\label{2eq: decompose Ssis (Rx)}
\Ssis^\delta (\BRx)/ \SS_\delta (\BRx) = \bigoplus_{\omega\, \in \BC/ {\scriptscriptstyle \sim_1} } \bigoplus_{j \in \BN} \ \varinjlim_{\lambda \in \omega}  \big( \sgn(x)^{\delta } |x|^{-\lambda} (\log |x|)^j \SS (\overline \BR_+) \big) \big/\SS_\delta (\BRx).
\end{equation}
In particular,
\begin{equation*} 
\Ssis^\delta (\BRx) = \sgn(x)^{\delta } \Ssis (\BR_+) = \left \{ \sgn (x)^\delta \upsilon  (|x|) : \upsilon \in \Ssis ( \BR_+) \right \}.
\end{equation*}

\subsubsection{The space $\Msis^\BR $}
We simply define $\Msis^{\BR} = \Msis \times \Msis$. 

\subsubsection{Isomorphism between  $\Ssis (\BRx)$ and $\Msis^\BR$ via the Mellin transform $\EM_{\BR}$}

Let  $\upsilon \in \Ssis (\BRx)$. Since $\upsilon_\delta \in \Ssis (\overline \BR_ +)$, the identity 
$$\EM_\delta \upsilon (s) = 2 \EM \upsilon_\delta (s)$$ 
extends the definition of the Mellin transform $\EM_\delta$ onto the space  $\Ssis (\BRx)$. Therefore, as a consequence of Lemma \ref{2lem: Mellin isomorphism, Msis lambda j}, Lemma \ref{2lem: Sis and Msis, R+} and Corollary \ref{2cor: refined decomp, R+} (1), the following lemma is readily established.

\begin{lem}\label{2lem: Ssis to Msis, R}
For $\delta \in \BZT$, the Mellin transform $\EM_\delta$ establishes an isomorphism between the spaces $\Ssis^\delta (\BRx)$ and $\Msis$ which respects their decompositions \eqref{2eq: decompose Ssis (Rx)} and \eqref{2eq: decompose Msis}  as well as \eqref{2eq: decompose Ssis (Rx), 2} and \eqref{2eq: refined, Msis, delta}. Therefore, $\EM^\BR = (\EM_0, \EM_1)$ establishes an isomorphism between $ \Ssis (\BRx) = \Ssis^0 (\BRx) \oplus \Ssis^1 (\BRx)$ and $\Msis^\BR = \Msis \times \Msis$.
\end{lem}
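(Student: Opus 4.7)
The plan is to reduce everything to the already-established isomorphism $\EM : \Ssis(\BR_+) \xrightarrow{\cong} \Msis$ of Lemma \ref{2lem: Sis and Msis, R+}, together with its refinements in Corollary \ref{2cor: refined decomp, R+} (1), by exploiting the factorization $\Ssis^\delta(\BRx) = \sgn(x)^\delta \Ssis(\BR_+)$ proved at the end of \S \ref{sec: defn Ssis (Rx)}.

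First I would set up the correspondence. For $\upsilon \in \Ssis^\delta(\BRx)$, the restriction $x \mapsto \upsilon(x)$ on $\BR_+$ coincides with the symmetrized function $\upsilon_\delta$ of \eqref{1eq: upsilon delta}, and the assignment $\upsilon \mapsto \upsilon_\delta = \upsilon \restriction_{\BR_+}$ is a linear bijection $\Ssis^\delta(\BRx) \to \Ssis(\BR_+)$ with inverse $\varphi \mapsto \sgn(x)^\delta \varphi(|x|)$. The identity \eqref{1eq: M delta = M}, which is immediate from splitting the integral in \eqref{1def: signed Mellin transform} into $\BR_+ \sqcup (-\BR_+)$, extends verbatim to $\Ssis^\delta(\BRx)$ (the integral converges in a half-plane $\Re s > \Re \lambda$ for $\lambda$ dominating the exponents appearing in the expression of $\upsilon$, and is then continued meromorphically by the very same argument as in the proof of Claim 1 of Lemma \ref{2lem: Mellin isomorphism, Msis lambda j}). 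Hence $\EM_\delta = 2\,\EM \circ (\upsilon \mapsto \upsilon_\delta)$ is an isomorphism onto $\Msis$.

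Next I would verify that the decompositions match term by term. Under $\upsilon \mapsto \upsilon_\delta$ one has
\begin{equation*}
\sgn(x)^\delta |x|^{-\lambda}(\log|x|)^j \SS(\overline\BR_+) \;\longleftrightarrow\; x^{-\lambda}(\log x)^j \SS(\overline\BR_+),
\end{equation*}
which under $\EM$ lands in $\Msis^{\lambda,j}$ by Lemma \ref{2lem: Mellin isomorphism, Msis lambda j}. Taking direct limits along each $\sim_1$-class $\omega$ and summing over $j \in \BN$ gives the compatibility of \eqref{2eq: decompose Ssis (Rx)} with \eqref{2eq: decompose Msis}. For the refined version, observe that $|x|^{-\lambda}\SS_\delta(\overline\BR_+) = |x|^{-(\lambda-\delta)} \cdot |x|^\delta\SS_0(\overline\BR_+)\cdot|x|^{-\delta}\cdot|x|^\delta\SS_0(\overline\BR_+)$; more cleanly, $x^{-\lambda}\SS_\delta(\overline\BR_+) = x^{-(\lambda-\delta)} \SS_0(\overline\BR_+)$, which by Corollary \ref{2cor: refined decomp, R+} is mapped by $\EM$ into $\Nsis^{\lambda-\delta,\,j}$. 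Taking direct limits along each $\sim_2$-class then matches \eqref{2eq: decompose Ssis (Rx), 2} with \eqref{2eq: refined, Msis, delta}.

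Finally, the direct-sum splitting $\Ssis(\BRx) = \Ssis^0(\BRx) \oplus \Ssis^1(\BRx)$, coming from \eqref{1eq: C = C0 + C1} applied to each summand in the definition of $\Ssis(\BRx)$, combined with the already-proved isomorphism $\EM_\delta : \Ssis^\delta(\BRx) \xrightarrow{\cong} \Msis$ in each parity, immediately yields that $\EM^{\BR} = (\EM_0,\EM_1)$ is an isomorphism $\Ssis(\BRx) \xrightarrow{\cong} \Msis \times \Msis = \Msis^{\BR}$. No step is genuinely hard here; the only mild subtlety is keeping track of the shift by $\delta$ in the poles when passing between \eqref{2eq: refined, Msis} and \eqref{2eq: refined, Msis, delta}, which is dictated by the identity $x^\delta \SS_0(\overline\BR_+) = \SS_\delta(\overline\BR_+)$.
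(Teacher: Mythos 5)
Your proposal is correct and follows essentially the same route as the paper: reduce via $\upsilon \mapsto \upsilon_\delta$ and the identity $\EM_\delta = 2\,\EM\circ(\upsilon\mapsto\upsilon_\delta)$ to the already-established isomorphism $\EM : \Ssis(\BR_+) \to \Msis$ of Lemma \ref{2lem: Sis and Msis, R+}, with the refined decompositions handled by Corollary \ref{2cor: refined decomp, R+} and the shift $x^{-\lambda}\SS_\delta(\overline\BR_+) = x^{-(\lambda-\delta)}\SS_0(\overline\BR_+)$. (The garbled intermediate display before your ``more cleanly'' line should simply be deleted; the clean identity is all that is needed.)
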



\subsubsection{An alternative decomposition of $\Ssis^\delta (\BRx)$}\label{sec: alternative decomp, R} 

The following lemma follows from Corollary \ref{2cor: refined decomp, R+} (1) (compare \cite[Corollary 6.17]{Miller-Schmid-2006}).

\begin{lem}\label{2lem: Ssis delta, M delta}
Let $\delta \in \BZ/2 \BZ$. 
The Mellin transform $\EM_\delta$ respects the following  decompositions,\begin{equation}\label{2eq: refinement Ssis, R}
\begin{split}
& \ \Ssis^\delta (\BRx)/ \SS_\delta (\BRx) \\
= & \bigoplus_{\omega\, \in \BC \times \BZT/ {\scriptscriptstyle \sim } } \bigoplus_{j \in \BN} \ \varinjlim_{(\lambda, \epsilon)\, \in \omega} \big( \sgn(x)^{ \epsilon} |x|^{-\lambda} (\log |x|)^j \SS_{ \epsilon + \delta} ( \BR ) \big) \big/\SS_\delta (\BRx),
\end{split}
\end{equation}
\begin{equation}\label{2eq: refinement Msis, R}
\Msis / \Hrd
= \bigoplus_{\omega\, \in \BC \times \BZT/ {\scriptscriptstyle \sim } } \bigoplus_{j \in \BN} \ \varinjlim_{(\lambda, \epsilon)\, \in \omega} \Nsis^{\lambda - (\epsilon + \delta), j} \big/ \Hrd.
\end{equation}
\end{lem}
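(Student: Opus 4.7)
The plan is to deduce both displays by re-parametrizing the already-established decompositions from Corollary \ref{2cor: refined decomp, R+}(1) via a bijection on the indexing sets, then transferring across the Mellin isomorphism of Lemma \ref{2lem: Ssis to Msis, R}.

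First I would reduce the typical summand on the right of \eqref{2eq: refinement Ssis, R} to the form used in \eqref{2eq: decompose Ssis (Rx), 2} (equivalently \eqref{2eq: refined, Ssis (R+), delta}). Let $\eta \in \{0,1\}$ denote the residue of $\epsilon+\delta$ modulo $2$. The identities $\SS_\eta(\BR) = \sgn(x)^\eta \SS_\eta(\overline\BR_+)$, $\SS_\eta(\overline\BR_+) = |x|^\eta \SS_0(\overline\BR_+)$ and $\sgn(x)^{2\epsilon+\delta} = \sgn(x)^\delta$ from \S \ref{sec: Schwartz subspaces} combine at once to
\[
\sgn(x)^\epsilon |x|^{-\lambda}(\log|x|)^j \SS_{\epsilon+\delta}(\BR) = \sgn(x)^\delta |x|^{-(\lambda-\eta)}(\log|x|)^j \SS_0(\overline\BR_+),
\]
and the exponent $\lambda-\eta$ is exactly $\lambda - (\epsilon+\delta)$ when $\epsilon+\delta$ is lifted from $\BZT$ to $\{0,1\}\subset\BC$.

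Second I would construct the bijection $\Phi\colon (\BC \times \BZT)/\sim \;\widetilde{\to}\; \BC/\sim_2$ by $\Phi[(\lambda,\epsilon)] = [\lambda - \eta]$. Well-definedness follows because $(\lambda,\epsilon)\sim(\lambda',\epsilon')$ gives $\lambda' - \lambda \equiv \epsilon+\epsilon' \equiv \eta'-\eta \pmod 2$; bijectivity follows because each $\sim$-class meets each parity fiber $\{(\cdot,\epsilon)\}$ in a single $\sim_2$-coset on the $\lambda$-coordinate. A parallel check shows the order relation $\preccurlyeq$ on $\BC \times \BZT$ transports to $\preccurlyeq_2$ on $\BC$ under $\Phi$: in any of the three cases $\eta'-\eta \in \{-1,0,1\}$, the congruence $\lambda'-\lambda \equiv \epsilon+\epsilon' \pmod 2$ forces $(\lambda'-\eta')-(\lambda-\eta) \in 2\BN$. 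So the directed systems agree and their direct limits are canonically isomorphic.

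Combining the two steps transports the right side of \eqref{2eq: refinement Ssis, R} term-by-term into the right side of \eqref{2eq: decompose Ssis (Rx), 2}, which has already been established; this proves the first assertion. For the second assertion I would apply $\EM_\delta$ summand by summand. By Lemma \ref{2lem: Mellin isomorphism, Msis lambda j} together with \eqref{1eq: M delta = M}, the Mellin transform $\EM_\delta$ identifies $\sgn(x)^\delta |x|^{-\mu}(\log|x|)^j \SS_0(\overline\BR_+)$ with $\Nsis^{\mu,j}$ up to $\Hrd$; specializing to $\mu = \lambda - \eta = \lambda - (\epsilon+\delta)$ produces exactly \eqref{2eq: refinement Msis, R}, and Lemma \ref{2lem: Ssis to Msis, R} guarantees that $\EM_\delta$ respects the direct-sum and direct-limit structure.

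The main obstacle is purely combinatorial: one must keep track of the fact that $\epsilon+\delta$ is used in $\BZT$ when indexing Schwartz spaces but is lifted to $\{0,1\}\subset\BC$ when entering the exponent $\lambda-(\epsilon+\delta)$, and one must verify that the coarser order $\preccurlyeq$ on $\BC\times\BZT$ really corresponds to $\preccurlyeq_2$ on $\BC$ (rather than to $\preccurlyeq_1$) under $\Phi$. Once this bookkeeping is done the argument is formal.
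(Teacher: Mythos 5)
Your proof is correct and it supplies exactly the bookkeeping that the paper compresses into the single-sentence citation of Corollary \ref{2cor: refined decomp, R+}(1): the algebraic reduction $\sgn(x)^\epsilon |x|^{-\lambda}(\log|x|)^j \SS_{\epsilon+\delta}(\BR) = \sgn(x)^\delta |x|^{-(\lambda-\eta)}(\log|x|)^j \SS_0(\overline\BR_+)$, the order-preserving bijection $(\BC\times\BZT)/\!\sim\ \to\ \BC/\!\sim_2$, and the transfer through $\EM_\delta$ via $\EM_\delta\upsilon = 2\EM\upsilon_\delta$. The approach is the same as the paper's; the only caution I would add is to state explicitly that within a $\sim$-class the assignment $(\lambda,\epsilon)\mapsto \lambda-\eta$ is two-to-one, but the two preimages yield identical subspaces, so the direct limits (which here are just increasing unions) are unaffected.
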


\subsection{The spaces $\Ssis (\BCx)$ and $\Msis^\BC$}\label{sec: Ssis (Cx) and Msis C}

We write $(\lambda , m ) \preccurlyeq ( \lambda' , m')$ if $\lambda'  - \lambda  \in |m'-m| + 2\BN$ and  $(\lambda , m) \sim ( \lambda' , m')$ if $\lambda'  - \lambda - |m' - m| \in 2 \BZ$.  These define an order relation and  an equivalence relation on $\BC \times \BZ$. 

\subsubsection{The space $\Ssis (\BCx)$}
In parallel to \S \ref{sec: defn Ssis (Rx)}, we first define
\begin{equation*}
\Ssis (\BCx) =  \sum_{m \in \BZ } \ \sum_{\lambda \in \BC } \ \sum_{j \in \BN}  [z]^{-m} |z|^{-\lambda} (\log |z|)^j \SS (\BC).
\end{equation*}
We have the following decomposition,
\begin{equation*}
\begin{split}
\Ssis (\BCx)/ \SS (\BCx) =  \bigoplus_{\omega\, \in \BC \times \BZ / {\scriptscriptstyle \sim } } \bigoplus_{j \in \BN}  \ \varinjlim_{(\lambda, m)\, \in \omega}   \big( [z]^{-m} |z|^{-\lambda} (\log |z|)^j \SS (\BC) \big) \big/\SS (\BCx).
\end{split}
\end{equation*}
It follows from $[z] |z| \SS (\BC) = z \SS (\BC)  \subset \SS (\BC)$ that
\begin{equation}\label{2eq: decompose Ssis (Cx)}
\Ssis (\BCx)/ \SS (\BCx) =  \bigoplus_{\omega\, \in \BC/ {\scriptscriptstyle \sim_2} } \bigoplus_{j \in \BN} \ \varinjlim_{\lambda \in \omega}  \big(  |z|^{-\lambda} (\log |z|)^j \SS (\BC) \big) \big/\SS (\BCx).
\end{equation}
We let $\Ssis^m (\BCx)$ denote the space of functions $\upsilon \in \Ssis (\BCx)$ satisfying \eqref{1eq: m condition, C}.
Then,
\begin{equation*}
\Ssis^m (\BCx)/ \SS_m (\BCx) =  \bigoplus_{\omega\, \in \BC/ {\scriptscriptstyle \sim_2} } \bigoplus_{j \in \BN} \ \varinjlim_{\lambda \in \omega}   \big(  |z|^{-\lambda} (\log |z|)^j \SS_m (\BC) \big) \big/\SS_m (\BCx),
\end{equation*}
where $\SS_m (\BC)$ and $\SS_m (\BCx)$ are defined in \S \ref{sec: Schwartz subspaces} and \S \ref{sec: S delta and Sm} respectively.
Since 
$ \SS_m (\BC) = [z]^m \SS_m (\overline \BR _+) $, 
\begin{equation}\label{2eq: decompose Ssis m (Cx), 2}
\Ssis^m (\BCx)/ \SS_m (\BCx) =  \bigoplus_{\omega\, \in \BC/ {\scriptscriptstyle \sim_2} } \bigoplus_{j \in \BN} \ \varinjlim_{\lambda \in \omega} \big( [z]^{m } |z|^{-\lambda} (\log |z|)^j \SS_m (\overline \BR_+) \big) \big/\SS_m (\BCx).
\end{equation}
Then, $  |z|^{- |m|} \SS_{m} (\overline \BR_+) = \SS_{0} (\overline \BR_+)$ together with  $ \SS_0 (\overline \BR_+)/\SS (\BR _+) \oplus |z| \SS_0 (\overline \BR_+)/\SS (\BR _+) = \SS (\overline \BR_+)/\SS (\BR _+)$  yields
\begin{equation}\label{2eq: decompose Ssis m (Cx)}
\Ssis^m (\BCx)/ \SS_m (\BCx) =  \bigoplus_{\omega\, \in \BC/ {\scriptscriptstyle \sim_1} } \bigoplus_{j \in \BN} \ \varinjlim_{\lambda \in \omega} \big( [z]^{m } |z|^{-\lambda} (\log |z|)^j \SS (\overline \BR_+) \big) \big/\SS_m (\BCx).
\end{equation}
In particular,
\begin{equation*} 
\Ssis^m (\BCx) = [z]^{m } \Ssis (\BR_+) = \left \{ [z]^m \upsilon  (|z|) : \upsilon \in \Ssis ( \BR_+) \right \}.
\end{equation*}

\subsubsection{The space $\Msis^\BC$}\label{sec: Msis C}

For $\lambda \in \BC $ and $j \in \BN$, we define the space  $\Nsis^{\BC, \lambda, j}$ of all sequences $\lpp H_m (s) \rpp $ of meromorphic functions  such that
\begin{itemize}
\item[-] the only singularities of $H_m$ are poles of pure order  $j+1$ at the points in  $ \lambda - |m| - 2 \BN $,
\item[-] Each $H_m $ decays rapidly along vertical lines, uniformly on vertical strips 
(see \eqref{2eq: Mellin rapid decay, 0}), and
\item[-] $H_m (s)$ also decays rapidly with respect to $m$, uniformly on vertical strips, 
in the sense that
\item[\noindent \refstepcounter{equation}(\theequation)  \label{2eq: rapid decay Nsis, C, 0} \hskip 6 pt] for any given $ \alpha, A \in \BN $ and vertical strip $ \BS [a, b]$,
\vskip 5 pt 
\noindent $H_m (s) \lll_{ \lambda, j, \alpha, A, a, b } (|m| + 1)^{-A} (|\Im s| + 1)^{-\alpha} \ \text{ for all }  s \in \BS[a, b],$
if $|m| > \Re \lambda - a$.
\end{itemize}

Observe that the first two conditions amount to $H_m \in \Nsis^{ \lambda - |m|, j}$. Therefore, $\Nsis^{\BC, \lambda, j} \subset \prod_{m\in \BZ} \Nsis^{ \lambda - |m|, j}$.


Define   the space $\Msis^{\BC}$ of all sequences $\lpp H_m \rpp $ of meromorphic functions  such that
\begin{itemize}
\item[-] the poles of each $H_m$ lie in  $ \lambda - |m| - 2 \BN $, for a finite number of $\lambda$,
\item[-] the orders of the poles of $H_m$ are uniformly bounded,
\item[-] Each $H_m $ decays rapidly along vertical lines, uniformly on vertical strips, 
and
\item[-] $H_m $ decays rapidly with respect to $m$, uniformly on vertical strips. 
\end{itemize}
Using the refined Stirling's asymptotic formula \eqref{1eq: Stirling's formula} in  place of \cite[(6.22)]{Miller-Schmid-2006} and the following bound  in  place of \cite[(6.23)]{Miller-Schmid-2006}
\begin{equation*}
\begin{split}
\left|\frac { \Gamma^{(j)} \lp \frac 1 2 {(s - \lambda + |m| )}    \rp } { \Gamma \lp \frac 1 2 {(s + |m|) }    \rp } \right| \lll_{\lambda, j, a, b, r}  
\lp |\Im s| + |m| + 1 \rp^{- \frac 1 2 {\Re \lambda}  }
\end{split}
\end{equation*}
for $\lambda \in \BC$, $j \in \BN$,  $  s \in \BS [a, b]  \smallsetminus \bigcup_{\sstyle \kappa \geq |m| \atop \sstyle \kappa \equiv m (\mod 2)} \BB_{r} (\lambda - \kappa)$, with $ r > 0$, we may follow the same lines of the proofs of \cite[Lemma 6.24]{Miller-Schmid-2006} and \cite[Lemma 6.35]{Miller-Schmid-2006} to show that 
\begin{equation*}
\Msis^\BC = \sum_{\lambda \in \BC} \sum_{j \in \BN} \Nsis^{\BC, \lambda, j},
\end{equation*}
and consequently
\begin{equation}\label{2eq: decompose Msis, C}
\Msis^\BC / \Hrd^\BC
=  \bigoplus_{\omega\, \in \BC/ {\scriptscriptstyle \sim_2} } \bigoplus_{j \in \BN} \ \varinjlim_{\lambda \in \omega} \Nsis^{\BC, \lambda, j} \big/ \Hrd^\BC.
\end{equation}

\subsubsection{Isomorphism between $\Ssis (\BCx)$ and $\Msis^\BC$ via the Mellin transform $\EM_{\BC}$}
 
For $\upsilon \in \Ssis (\BCx)$, its $m$-th Fourier coefficient $\upsilon_m$ is a function in $ \Ssis (\overline \BR_ +)$. Hence the identity 
$$\EM_{-m} \upsilon (s) = 4 \pi \EM \upsilon_m (s)$$ extends the definition of the Mellin transform $\EM_{-m}$ onto the space $\Ssis (\BCx)$.

\begin{lem}\label{2lem: Ssis to Msis, C}
For $m \in \BZ$, the Mellin transform $\EM_{-m}$ establishes an isomorphism between the spaces $\Ssis^{ m} (\BCx)$ and $\Msis$ which respects their decompositions \eqref{2eq: decompose Ssis m (Cx)} and \eqref{2eq: decompose Msis} as well as \eqref{2eq: decompose Ssis m (Cx), 2} and \eqref{2eq: refined, Msis, m}.
Furthermore, $\EM_{\BC} = \prod_{m\in \BZ} \EM_{-m}$ establishes an isomorphism between $ |z|^{-\lambda} (\log |z|)^j \SS (\BC ) $ and $\Nsis^{\BC, \lambda, j}$ for any $\lambda \in \BC$ and $j \in \BN$, and hence an isomorphism between $ \Ssis(\BCx) $ and $\Msis^\BC$ which respects their decompositions  \eqref{2eq: decompose Ssis (Cx)} and \eqref{2eq: decompose Msis, C}.
\end{lem}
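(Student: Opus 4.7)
The first assertion — that for each fixed $m \in \BZ$ the Mellin transform $\EM_{-m}$ realizes an isomorphism between $\Ssis^m(\BCx)$ and $\Msis$ respecting the decompositions \eqref{2eq: decompose Ssis m (Cx)} and \eqref{2eq: decompose Msis}, as well as \eqref{2eq: decompose Ssis m (Cx), 2} and \eqref{2eq: refined, Msis, m} — is essentially a restatement of the real case. Indeed, $\Ssis^m(\BCx) = [z]^m \Ssis(\BR_+)$, and for $\upsilon(z) = [z]^m \tilde\upsilon(|z|)$ the $m$-th Fourier coefficient is $\upsilon_m = \tilde\upsilon$, so $\EM_{-m}\upsilon(s) = 4\pi\,\EM\tilde\upsilon(s)$. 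Lemma \ref{2lem: Sis and Msis, R+} and Corollary \ref{2cor: refined decomp, R+}(2) then give everything needed, since multiplication by $[z]^m$ sends $x^{-\lambda}(\log x)^j \SS_m(\overline \BR_+)$ bijectively onto $[z]^m |z|^{-\lambda}(\log|z|)^j \SS_m(\overline\BR_+) = |z|^{-\lambda}(\log|z|)^j \SS_m(\BC)$.

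For the principal assertion, that $\EM_\BC = \prod_{m\in\BZ}\EM_{-m}$ restricts to an isomorphism between $|z|^{-\lambda}(\log|z|)^j\SS(\BC)$ and $\Nsis^{\BC,\lambda,j}$, the strategy is to combine the Fourier analysis on $\BR/2\pi\BZ$ provided by Lemma \ref{lem: Schwartz}(3.1) with the one-variable Mellin inversion of Lemma \ref{2lem: Mellin isomorphism, Msis lambda j} applied to each Fourier mode. Let $\upsilon(z) = |z|^{-\lambda}(\log|z|)^j\varphi(z)$ with $\varphi \in \SS(\BC)$. Its $m$-th Fourier coefficient is $\upsilon_m(x) = x^{-\lambda}(\log x)^j \varphi_m(x)$, and by Lemma \ref{lem: Schwartz}(3.1) the sequence $\{\varphi_m\}$ lies in $\prod_m \SS_m(\overline\BR_+)$ and satisfies the uniform bounds \eqref{1eq: bounds for Fourier coefficients} together with the asymptotic conditions \eqref{1eq: phi m, 1} and \eqref{1eq: phi m, 2}. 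Applying Lemma \ref{2lem: Mellin isomorphism, Msis lambda j} to each $\upsilon_m$, one obtains $H_m = 4\pi\,\EM\upsilon_m \in \Nsis^{\lambda-|m|,j}$; to upgrade this to membership in $\Nsis^{\BC,\lambda,j}$, one revisits Claim 1 in the proof of Lemma \ref{2lem: Mellin isomorphism, Msis lambda j} and tracks how every bound on $H_m(s)$ inherits an extra factor $(|m|+1)^{-A}$ from the corresponding bound on $\varphi_m$ provided by \eqref{1eq: bounds for Fourier coefficients} and \eqref{1eq: phi m, 2}. This yields precisely the uniform decay \eqref{2eq: rapid decay Nsis, C, 0}.

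For the inverse direction, begin with $\{H_m\} \in \Nsis^{\BC,\lambda,j}$; each $H_m \in \Nsis^{\lambda-|m|,j}$ corresponds via the inverse Mellin transform of Claim 2 to a function $\upsilon_m(x) = x^{-\lambda}(\log x)^j \varphi_m(x)$ with $\varphi_m \in \SS_m(\overline\BR_+)$, and the concentration of poles on $\lambda - |m| - 2\BN$ forces the asymptotic expansion of $\varphi_m$ to involve only $x^{|m|+2\kappa}$, giving \eqref{1eq: phi m, 1}. Defining $\varphi(x e^{i\phi}) = \sum_m \varphi_m(x) e^{i m \phi}$, one must verify \eqref{1eq: bounds for Fourier coefficients} and \eqref{1eq: phi m, 2} for $\{\varphi_m\}$; these are obtained by shifting the contour of the inverse Mellin integral as in Claim 2 and feeding in the uniform estimate \eqref{2eq: rapid decay Nsis, C, 0} in place of \eqref{2eq: Mellin rapid decay}. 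Lemma \ref{lem: Schwartz}(3.1) then guarantees that the Fourier series converges to a Schwartz function on $\BC$, whence $\upsilon = |z|^{-\lambda}(\log|z|)^j\varphi \in |z|^{-\lambda}(\log|z|)^j\SS(\BC)$.

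The main technical obstacle is the quantitative bookkeeping in the two paragraphs above: converting the $(|m|+1)^{-A}$-decay on the function side into the $(|m|+1)^{-A}$-decay on the Mellin side (and vice versa), uniformly on vertical strips, while retaining the contour-shift arguments of Lemma \ref{2lem: Mellin isomorphism, Msis lambda j}. Once this is in hand, the global statement for $\Ssis(\BCx) \cong \Msis^\BC$ respecting the decompositions \eqref{2eq: decompose Ssis (Cx)} and \eqref{2eq: decompose Msis, C} follows by passing to the direct sum over $\omega \in \BC/\sim_2$ and $j \in \BN$ and the direct limit over $\lambda \in \omega$, exactly as in Lemma \ref{2lem: Sis and Msis, R+} for the real case.
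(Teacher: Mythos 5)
Your proposal follows essentially the same route as the paper's proof: split into Fourier modes, apply the one-variable Mellin isomorphism (via Lemma \ref{2lem: Mellin isomorphism, Msis lambda j} and Corollary \ref{2cor: refined decomp, R+}(2)) to place each $H_m$ in $\Nsis^{\lambda-|m|,j}$, and control the $m$-uniformity by tracing \eqref{1eq: bounds for Fourier coefficients} and \eqref{1eq: phi m, 2} through the integral $(-)^\alpha(s-\lambda)_\alpha\EM\upsilon_m(s)$ in one direction and through contour shifts in the other. The paper carries out the bookkeeping you flag by passing to the equivalent formulation \eqref{2eq: rapid decay Nsis, C, 1} (so only $|m| > \alpha + A$ needs to be treated), and records a minor subtlety in a footnote that the left-shift in fact crosses no pole and yields an exponent $A+\varrho$ rather than $A+1$; otherwise your argument matches.
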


\begin{proof}
For $\upsilon \in \Ssis^m (\BCx)$, one has $\upsilon \lp x e^{i \phi}\rp = e^{im \phi} \upsilon_m (x)$ and  $\upsilon_m \in \Ssis (\overline \BR_ +)$. Thus the first assertion follows immediately from Lemma \ref{2lem: Sis and Msis, R+} and Corollary \ref{2cor: refined decomp, R+} (2).

Now let $\varphi \in \SS (\BC)$ and $ \upsilon (z) = |z|^{-\lambda} (\log |z|)^j \varphi (z)$. Clearly, their $m$-th Fourier  coefficients are related by $\upsilon_m (x) = x^{-\lambda} (\log x)^j \varphi_{ m} (x)$.
Since $\varphi_{ m} \in  \SS_m (\overline \BR _+)$, it follows from Corollary \ref{2cor: refined decomp, R+} (2) that $H_{ m} = \EM_{-m} \upsilon = 4\pi \EM \upsilon_m$ lies in $ \Nsis^{\lambda - |m|, j}$, and therefore we are left to show \eqref{2eq: rapid decay Nsis, C, 0}. Recall that in the proof of Lemma \ref{2lem: Mellin isomorphism, Msis lambda j} we turned to verify \eqref{2eq: Mellin rapid decay} instead of \eqref{2eq: Mellin rapid decay, 0}. Likewise, it is more convenient to verify the following equivalent statement of \eqref{2eq: rapid decay Nsis, C, 0},
\begin{itemize}
\item[\noindent \refstepcounter{equation}(\theequation) \label{2eq: rapid decay Nsis, C, 1} \hskip 6 pt] for any given $\alpha,  A \in \BN, b \geq a > \Re \lambda - \alpha - A - 1$, 
\vskip 5 pt 
{\noindent $H_m (s) \lll_{ \lambda, j, \alpha, A, a, b } (|m| + 1)^{-A} (|\Im s| + 1)^{-\alpha} \ \text{ for all }  s \in \BS[a, b] $,  if $ |m| > \alpha + A$.}
\end{itemize}

According to Lemma \ref{lem: Schwartz} (3.1), $\varphi_{ m}$ satisfies the conditions (\ref{1eq: bounds for Fourier coefficients}, \ref{1eq: phi m, 2}). 
Suppose $|m| > \alpha + A$. One directly applies \eqref{1eq: bounds for Fourier coefficients} and \eqref{1eq: phi m, 2} to bound the following integral by a constant multiple of $(|m| + 1)^{- A}$,
\begin{equation*}
(-)^\alpha (s - \lambda )_{\alpha} \EM \upsilon_m (s) = \int_0^\infty \frac {d^\alpha} {d x^\alpha} \lp  (\log x)^j \varphi_{ m} (x) \rp  x^{s - \lambda + \alpha - 1} d x.
\end{equation*}
This proves \eqref{2eq: rapid decay Nsis, C, 1} for $H_{ m} = 4\pi \EM \upsilon_m$.
Therefore, the sequence $\lpp \EM_{-m} \upsilon \rpp $ belongs to $\Nsis^{\BC, \lambda, j}$. 

Conversely, let $\lpp H_m \rpp  \in \Nsis^{\BC, \lambda, j}$, and let $ 4 \pi \upsilon_m$ be the Mellin inversion of $H_{ m}$, \begin{equation*} 
\upsilon_m (x) = \frac 1 {8\pi^2 i}\int_{(\sigma)} H_{ m} (s) x^{-s} d s, \hskip 10 pt \sigma > \Re \lambda - |m|.
\end{equation*} 
Since $H_{ m} \in \Nsis^{\lambda - |m|, j}$, Corollary \ref{2cor: refined decomp, R+} (2) implies that $\upsilon_m (x) \in  x^{- \lambda} (\log x)^j  \SS_m (\overline \BR _+)$ and hence $\varphi_m (x) = x^{ \lambda} (\log x)^{-j} \upsilon_m (x)$ lies in $\SS_m (\overline \BR _+)$.
This proves \eqref{1eq: phi m, 1}.
Similar to the proof of Lemma \ref{2lem: Mellin isomorphism, Msis lambda j}, right shifting of the contour of integration combined with \eqref{2eq: rapid decay Nsis, C, 1} yields \eqref{1eq: bounds for Fourier coefficients}, whereas left shifting combined with \eqref{2eq: rapid decay Nsis, C, 1} yields \eqref{1eq: phi m, 2}\footnote{ Actually, $O_{\alpha, A} \lp (|m| + 1)^{- A} x^{A+1} \rp$ in  \eqref{1eq: phi m, 2} should be replaced by  $O_{\alpha, A,  \rho} \lp (|m| + 1)^{- A} x^{A+ \rho} \rp$, $1 >  \rho > 0$. Moreover, one observes that the left contour shift here does not cross any pole.}. 

The proof of the second assertion is completed.
\end{proof}


\subsubsection{An alternative decomposition of $\Ssis^m (\BCx)$}\label{sec: alternative decomp, C}

The following lemma follows from Corollary \ref{2cor: refined decomp, R+} (2).

\begin{lem}\label{2lem: Ssis m, M -m}
Let $m \in \BZ $. 
The Mellin transform $\EM_{-m} $ respects the following  decompositions,\begin{equation}\label{2eq: refinement Ssis, C}
\begin{split}
&\ \Ssis^m (\BCx)/ \SS_m (\BCx) \\
= &  \bigoplus_{\omega\, \in \BC \times \BZ / {\scriptscriptstyle \sim } } \bigoplus_{j \in \BN} \ \varinjlim_{(\lambda, k)\, \in \omega} \big( [z]^{-k} |z|^{-\lambda} (\log |z|)^j \SS_{m + k} ( \BC) \big) \big/\SS_m (\BCx),
\end{split}
\end{equation}
\begin{equation}\label{2eq: refinement Msis, C}
\Msis / \Hrd
= \bigoplus_{\omega\, \in \BC \times \BZ / {\scriptscriptstyle \sim } } \bigoplus_{j \in \BN} \ \varinjlim_{(\lambda, k)\, \in \omega}  \Nsis^{\lambda - |m + k|, j} \big/ \Hrd.
\end{equation}
\end{lem}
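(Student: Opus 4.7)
The result is the $\BC$-analog of Lemma \ref{2lem: Ssis delta, M delta}, and the plan is to reduce both of its decompositions to the radial decomposition of $\Ssis(\BR_+)/\SS(\BR_+)$ and $\Msis/\Hrd$ already established in Corollary \ref{2cor: refined decomp, R+} (2). The reduction uses the restriction isomorphism $\Ssis^m(\BCx) \cong [z]^m \Ssis(\BR_+)$ together with the change of parameter $(\lambda, k) \mapsto \mu := \lambda - |m+k|$.

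For a typical summand on the function side, writing $\varphi \in \SS_{m+k}(\BC)$ as $\varphi(z) = [z]^{m+k}|z|^{|m+k|}\psi(|z|)$ with $\psi \in \SS_0(\overline \BR_+)$ gives
\begin{equation*}
[z]^{-k}|z|^{-\lambda}(\log|z|)^j \varphi(z) = [z]^m |z|^{-\mu}(\log|z|)^j \psi(|z|).
\end{equation*}
Restriction to $\BR_+$ thereby identifies $[z]^{-k}|z|^{-\lambda}(\log|z|)^j \SS_{m+k}(\BC)$ with $x^{-\mu}(\log x)^j \SS_0(\overline \BR_+) \subset \Ssis(\BR_+)$, and $\SS_m(\BCx)$ with $\SS(\BR_+)$. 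On the Mellin side, Corollary \ref{2cor: refined decomp, R+} (2) identifies this piece with $\Nsis^{\mu, j}/\Hrd$ under $\EM$. So, modulo matching of indexing data, both sides reduce termwise to that corollary.

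The matching is the content: because $|m+k| \equiv m+k \pmod 2$, one has
\begin{equation*}
\mu' - \mu \,=\, (\lambda' - \lambda) - (|m+k'| - |m+k|) \,\equiv\, (\lambda' - \lambda) - (k' - k) \pmod 2,
\end{equation*}
so $\mu \sim_2 \mu'$ iff $(\lambda, k) \sim (\lambda', k')$, and fixing $k$ and varying $\lambda$ shows $(\lambda, k) \mapsto \mu$ maps every $\sim$-class $\omega \subset \BC \times \BZ$ surjectively onto the corresponding $\sim_2$-class in $\BC$. For the partial order, if $(\lambda, k) \preccurlyeq (\lambda', k')$, i.e.\ $\lambda' - \lambda = |k'-k| + 2n$ with $n \in \BN$, the triangle inequality $||m+k'| - |m+k|| \leq |k'-k|$ yields $\mu' - \mu = |k'-k| - (|m+k'| - |m+k|) + 2n \in 2\BN$, hence $\mu \preccurlyeq_2 \mu'$.

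Consequently the directed limits over $(\lambda, k) \in \omega$ on both sides coincide with the directed limits over $\mu$ in the image $\sim_2$-class, and \eqref{2eq: refinement Ssis, C}, \eqref{2eq: refinement Msis, C} follow from Corollary \ref{2cor: refined decomp, R+} (2) summand by summand, with $\EM_{-m} = 4\pi\, \EM$ on radial parts intertwining the two sides. The only point that is more than bookkeeping is the compatibility of the partial orders, for which the triangle inequality $||m+k'| - |m+k|| \leq |k'-k|$ is the decisive input; I expect this to be the main (minor) obstacle.
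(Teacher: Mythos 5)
Your proposal is correct and is essentially the argument the paper intends: the paper dismisses this lemma with the single line "follows from Corollary \ref{2cor: refined decomp, R+} (2)," and your reduction — restriction to $\BR_+$, the reparametrization $\mu = \lambda - |m+k|$, and the order/equivalence-class bookkeeping (with the triangle inequality and the parity of $|m+k'| - |m+k|$ as the decisive inputs) — is precisely the omitted detail. No gap.
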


\section {Hankel transforms and their Bessel kernels} \label{sec: Hankel transforms}

This section is arranged as follows. We start with the type of Hankel transforms over $\BR_+$ whose kernels are the (fundamental) Bessel functions studied in \cite{Qi}. After this, we introduce two auxiliary Hankel transforms and Bessel kernels over $\BR_+$. Finally, we proceed to construct and study Hankel transforms and their Bessel kernels over $\BFx$, with $\BF = \BR, \BC$. 

\begin{defn}\label{3defn: ordered set}
Let $ (\BX, \preccurlyeq) $ be an ordered set satisfying the condition that
\begin{equation}\label{3eq: condition on order}
\text{``}\lambda \preccurlyeq \lambda' \text{ or } \lambda' \preccurlyeq \lambda \text{'' is an equivalence relation.}
\end{equation} 
We denote the above equivalence relation  by $ \lambda \sim \lambda'$.  Given $ \ulambda = (\lambda_1, ..., \lambda_n) \in \BX^{n }$, the set $\{1, ... n\}$ is partitioned into several pair-wise disjoint subsets $L_\alpha$, $\alpha = 1, ..., A$, such that 
$$ \lambda_{l     } \sim  \lambda_{l     '}  \text{ if and only if } l     , l     ' \text{ are in the same } L_\alpha.$$
Each $\Lambda ^{\alpha} = \left \{ \lambda_{l     } \right \}_{l      \in L_\alpha}$\footnote{Here, $\left \{ \lambda_{l     } \right \}_{l      \in L_\alpha}$ is considered as a set, namely, $\lambda_{l     }$ are counted without multiplicity.} 
is a totally ordered set. Let $B_{\alpha} = \left| \Lambda ^{\alpha} \right|$ and  label the elements of $\Lambda ^{\alpha}$  in the descending order, $ \lambda_{\alpha, 1} \succ ... \succ \lambda_{\alpha, B_\alpha}.$
For $ \lambda_{\alpha, \beta} \in \Lambda ^{\alpha}$, let $M_{\alpha, \beta}$ denote the multiplicity of $\lambda_{\alpha, \beta}$ in $\ulambda$, that is, $M_{\alpha, \beta} = \left|\left\{ l      : \lambda_l      = \lambda_{\alpha, \beta} \right \} \right|$, and define $N_{\alpha, \beta} = \sum_{\gamma =1}^{\beta} M_{\alpha, \gamma} = \left|\left\{ l      : \lambda_{\alpha, \beta} \preccurlyeq \lambda_l      \right \} \right|$.

$\ulambda$ is called generic if $\lambda_{l     } \nsim  \lambda_{l     '}$ for any $l      \neq l     '$.
\end{defn}

We recall that the ordered sets $(\BC, \preccurlyeq_1)$,  $(\BC, \preccurlyeq_2)$,  $(\BC \times \BZT, \preccurlyeq )$ and $(\BC \times \BZ, \preccurlyeq )$
defined in \S \ref{sec: all Ssis} all satisfy \eqref{3eq: condition on order}.

\begin{figure}
	\begin{center}
		\begin{tikzpicture}

		\draw [->] (-3.5,0) -- (1.5,0);
		\draw [->] (0, -3) -- (0, 3);
		\draw [-, thick] (1, -0.8) -- (1, 0.8);
		\draw [-, thick] (-0.5 , -3) -- (-0.5, -1.75);
		\draw [-, thick] (-0.5 , 1.75) -- (-0.5, 3);
		\draw [->] (-0.5 , 2) -- (-0.5, 2.5);
		\draw [->] (-0.5 , -2.6) -- (-0.5, -2.5);
		\draw [dotted] (-0.5 , -2) -- (-0.5, 2);
		\draw [-, thick] (1, -0.8) to [out=-90, in=90] (-0.5, -1.75);
		\draw [-, thick] (1, 0.8) to [out=90, in=-90] (-0.5, 1.75);
		
		\draw[fill=white] (-0.5, 0) circle [radius=0.04];
		\node [ below ] at (-0.65, 0.05) {\footnotesize $\sigma$};
		
		\draw[fill=white] (0.5, 0.3 ) circle [radius=0.04];
		\node [above ] at (0.5, 0.3 )  {\footnotesize $\lambda_{l     } - \kappa_{l     } $};
		\draw[fill=white] (-0.7, 0.3 ) circle [radius=0.04];
		\draw[fill=white] (-1.9, 0.3 ) circle [radius=0.04];
		\draw[fill=white] (-3.1, 0.3 ) circle [radius=0.04];
		
		\node [left ] at (-0.5, 2.2 )  {\footnotesize $\EC^d_{(\ulambda, \ukappa)}$};

		\draw [->] (7+-3.5,0) -- (7+1.5,0);
		\draw [->] (7+0, -1.7) -- (7+0, 1.7);
		\draw [-, thick] (7+0.8, -0.81) -- (7+0.8, 0.81);
		\draw [-, thick] (7+-3.5, 0.8) -- (7+0.8, 0.8);
		\draw [-, thick] (7+-3.5, -0.8) -- (7+0.8, -0.8);
		\draw [->] (7+-1, 0.8) -- (7+-2, 0.8);
		\draw [->] (7+-3.5, -0.8) -- (7+-2, -0.8);
		
		\draw[fill=white] (7+0.5, 0.3 ) circle [radius=0.04];
		\node [above ] at (7+0.5, 0.3 )  {\footnotesize $\lambda_{l     }$};
		\draw[fill=white] (7+-0.7, 0.3 ) circle [radius=0.04];
		\draw[fill=white] (7+-1.9, 0.3 ) circle [radius=0.04];
		\draw[fill=white] (7+-3.1, 0.3 ) circle [radius=0.04];
		
		\node [right ] at (7+0.8, 0.8)  {\footnotesize $\EC'_{\ulambda}$};
		
		\end{tikzpicture}
	\end{center}
	\caption{$\EC^d_{(\ulambda, \ukappa)}$ and $\EC'_{\ulambda}$}\label{fig: C d lambda}
\end{figure}

\begin{defn}\label{3defn: C d lambda}
Let $d = 1$ or $2$, $\ulambda \in \BC^{n}$ and $\ukappa \in \BN^n$. Put $\sigma < \frac d 2 + \frac 1 {n} (\Re |\ulambda| - 1 )$ and choose
a contour $ \EC^d _{(\ulambda, \ukappa)}$ {\rm(}see Figure {\rm \ref{fig: C d lambda}}{\rm)} such that
\begin{itemize}
\item[-] $\EC^d _{(\ulambda, \ukappa)}$ is  upward directed  from $\sigma - i \infty$ to $\sigma + i \infty$,
\item[-] all the sets $\lambda_{l     } - \kappa_{l     } - \BN$ lie on the left  side of $\EC _{(\ulambda, \ukappa)}$, and
\item[-] if $s \in \EC^d _{(\ulambda, \ukappa)}$ and $|\Im s| $ is sufficiently large, say $|\Im s| - \max  \left\{ |\Im \lambda_{l     }| \right \} \ggg 1$, then $\Re s = \sigma$.
\end{itemize}
For $\ulambda \in \BC$, we denote $\EC_{\ulambda} = \EC^1 _{(\ulambda, \boldsymbol 0)}$. For $(\umu, \udelta) \in \BC^n \times (\BZT)^n$, we denote $\EC_{(\umu, \udelta)} = \EC^1 _{(\umu, \udelta)}$. For $(\umu, \um) \in \BC^n \times \BZ^n$, we denote $\EC_{(\umu, \um)} = \frac 1 2\cdot \EC^2 _{(2 \umu, \|\um\|)}$.
\end{defn}

\begin{defn}\label{3defn: C ' lambda}
For  $\ulambda \in \BC^{n}$, choose a contour $\EC'_{\ulambda}$ illustrated in Figure {\rm \ref{fig: C d lambda}} such that 
\begin{itemize}
\item[-] $\EC'_{\ulambda}$ starts from and returns to $- \infty$ counter-clockwise,
\item[-] $\EC'_{\ulambda}$ consists two horizontal infinite half lines, 
\item[-] $\EC'_{\ulambda}$ encircles all the sets $\lambda_{l     } - \BN$, and
\item[-] $ \Im s \lll \max  \{ |\Im \lambda_l     | \} + 1$ for all $s \in \EC'_{\ulambda}$.
\end{itemize}
\end{defn}

\subsection {\texorpdfstring{The Hankel transform $ \Hsl$ and the Bessel function $J(x; \usigma, \ulambda)$}{The Hankel transform $ H_{(\varsigma, \lambda)}$ and the Bessel function $J(x; \varsigma, \lambda)$}}

\subsubsection{\texorpdfstring{The definition of  $ \Hsl$}{The definition of  $ H_{(\varsigma, \lambda)}$}}

Consider the ordered set $(\BC, \preccurlyeq_1)$. For  $\ulambda \in \BC^{n}$, let notations $\lambda_{\alpha, \beta}$, $B_\alpha$, $M_{\alpha, \beta}$ and $N_{\alpha, \beta}$ be as in Definition \ref{3defn: ordered set}.
We define the following subspace of $ \Ssis (\BR _+)$,
\begin{equation*}
\Ssis^{ \ulambda } (\BR _+) = \sum_{ \alpha = 1}^A \sum_{\beta = 1}^{B_\alpha} \sum_{j=0}^{N_{\alpha, \beta} - 1} x^{- \lambda_{\alpha, \beta}} (\log x)^{j } \SS (\overline \BR _+).
\end{equation*}
\begin{prop} Let $( \usigma, \ulambda) \in  \{ +, - \}^n \times \BC^{n}$. Suppose $\upsilon \in \SS (\BR _+)$. Then there exists a unique function $\Upsilon \in \Ssis^{ \ulambda } (\BR _+) $ satisfying the following identity,
\begin{equation}\label{3eq: Hankel transform identity 0, R+}
\EM \Upsilon (s ) = G (s; \usigma, \ulambda) \EM \upsilon ( 1 - s).
\end{equation}
We call $\Upsilon$ the Hankel transform of  $\upsilon$ over $\BR _+$ of index $( \usigma, \ulambda)$ and write  $\Hsl \upsilon  = \Upsilon$.
\end{prop}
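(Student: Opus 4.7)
The plan is to exhibit $\Upsilon$ as the inverse Mellin transform of $H(s) := G(s;\usigma,\ulambda)\EM\upsilon(1-s)$ and to reduce the proposition to verifying that $H$ lies in the subspace of $\Msis$ corresponding to $\Ssis^{\ulambda}(\BR_+)$ under the isomorphism of Lemma \ref{2lem: Sis and Msis, R+}. Uniqueness of $\Upsilon$ will then be immediate from the injectivity of $\EM$ on $\Ssis(\BR_+)$, so the argument reduces entirely to verifying the image condition on $H$.

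First, I would check $H \in \Msis$. Since $\upsilon \in \SS(\BR_+)$, Corollary \ref{cor: Mellin, Schwartz} gives $\EM\upsilon \in \Hrd$, hence $s \mapsto \EM\upsilon(1-s)$ is entire and rapidly decaying on vertical strips. The only singularities of $H$ come from the Gamma factors in $G(s;\usigma,\ulambda) = \prod_l \Gamma(s-\lambda_l) e(\pm(s-\lambda_l)/4)$; they lie in the finite union $\bigcup_{\alpha,\beta}(\lambda_{\alpha,\beta}-\BN)$ with orders uniformly bounded by $n$. The vertical-strip growth bound \eqref{1eq: vertical bound, G (s; sigma, lambda)} combined with the rapid decay of $\EM\upsilon(1-s)$ yields the required rapid decay of $H$ uniformly on vertical strips outside neighborhoods of the poles.

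For the refined localization, the plan is to show that $H$ lies in $\Hrd + \sum_{\alpha,\beta}\sum_{j=0}^{N_{\alpha,\beta}-1}\Msis^{\lambda_{\alpha,\beta},j}$, which under the Mellin isomorphism is precisely the image of $\Ssis^{\ulambda}(\BR_+)$. The key pole count is the following: at any point $p = \lambda_{\alpha,\beta}-\kappa$, the factors $\Gamma(s-\lambda_l)$ that are singular at $p$ are exactly those with $l \in L_\alpha$ and $\lambda_l \succcurlyeq_1 p$, giving total order $\sum_{\gamma : \lambda_{\alpha,\gamma} \succcurlyeq_1 p} M_{\alpha,\gamma}$, which equals $N_{\alpha,\beta_{\max}(p)}$, where $\beta_{\max}(p)$ is the largest index with $\lambda_{\alpha,\beta_{\max}(p)} \succcurlyeq_1 p$. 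This is exactly the maximal pole order at $p$ admitted by the summand $\sum_{j=0}^{N_{\alpha,\beta_{\max}(p)}-1}\Msis^{\lambda_{\alpha,\beta_{\max}(p)},j}$. An iterative subtraction, pole by pole, of the principal part of $H$ and ascription to this maximal component, then realizes the desired decomposition modulo $\Hrd$.

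The main obstacle is the bookkeeping in this last step: each pole $p$ lies in several translated sets $\lambda_{\alpha,\beta}-\BN$ simultaneously, so one must allocate principal parts with care; however the inequality $\mathrm{ord}_p H \leq N_{\alpha,\beta_{\max}(p)}$ established above guarantees that the allocation to the $\Msis^{\lambda_{\alpha,\beta_{\max}(p)},j}$-summand with $j \leq N_{\alpha,\beta_{\max}(p)}-1$ is always legal and exhausts the principal part. Once the decomposition of $H$ is in place, the Mellin isomorphism of Lemma \ref{2lem: Sis and Msis, R+} produces the unique $\Upsilon \in \Ssis^{\ulambda}(\BR_+)$ with $\EM\Upsilon = H$, verifying \eqref{3eq: Hankel transform identity 0, R+} and completing the proof.
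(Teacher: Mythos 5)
Your proof follows the paper's argument closely: establish that $H(s)=G(s;\usigma,\ulambda)\EM\upsilon(1-s)$ lies in $\Msis$, count pole orders at each $p\in\lambda_{\alpha,1}-\BN$ to find order exactly $N_{\alpha,\beta_{\max}(p)}$, and then invoke the Mellin isomorphism of Lemma~\ref{2lem: Sis and Msis, R+} to produce $\Upsilon\in\Ssis^\ulambda(\BR_+)$. The one place your sketch is looser than the paper's is the ``iterative subtraction of principal parts'': literally subtracting the rational-function principal part at a pole does not land you in $\Hrd$ (rational functions do not decay on vertical strips), so the subtraction must be performed with Gamma-built approximants that match the principal part, have pure poles along the correct shifted set, and decay rapidly; this is precisely what the decomposition \eqref{2eq: decompose Msis} (resting on \cite[Lemmas 6.24, 6.35]{Miller-Schmid-2006}) already packages for you, and the cleaner move — which is what the paper implicitly does — is to read the membership $H\in\sum_{\alpha,\beta}\sum_{j<N_{\alpha,\beta}}\Msis^{\lambda_{\alpha,\beta},j}$ directly from that established decomposition and your pole count, rather than re-deriving it by bare subtraction.
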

\begin{proof}
Recall the definition  of $G (s; \usigma, \ulambda)$ given by (\ref{1def: G pm (s)}, \ref{1def: G(s; sigma; lambda)}),
\begin{equation*}
G (s; \usigma, \ulambda) = e \lp  \frac {\sum_{l      = 1}^n \varsigma_l      (s - \lambda_l     )} 4 \rp
\prod_{l      = 1}^n \Gamma \lp s - \lambda_{l     } \rp.
\end{equation*}
The product in the above expression   may be rewritten as below
\begin{equation*}
\prod_{ \alpha = 1}^A \prod_{\beta = 1}^{B_\alpha}  \Gamma \lp s - \lambda _{\alpha, \beta} \rp^{M_{\alpha, \beta} }.
\end{equation*}
Thus the singularities of $G (s; \usigma, \ulambda)$ are poles at the points in $ \lambda _{\alpha, 1} - \BN$, $\alpha =1,..., A$. More precisely, $G (s; \usigma, \ulambda)$ has a pole of pure order $N_{\alpha, \beta}$ at $\lambda \in \lambda_{\alpha, 1} - \BN$ if one let  $\beta = \max \left\{ \beta' : \lambda \preccurlyeq_1 \lambda_{\alpha, \beta' } \right \} $. Moreover, in view of \eqref{1eq: vertical bound, G (s; sigma, lambda)} in Lemma \ref{1lem: vertical bound}, $G (s; \usigma, \ulambda)$ is of uniform moderate growth on vertical strips. 

On the other hand, according to Corollary \ref{cor: Mellin, Schwartz} (1), $\EM \upsilon (1-s)$ uniformly rapidly decays on vertical strips. 

Therefore, the product $G (s; \usigma, \ulambda) \EM \upsilon ( 1 - s)$ on the right hand side of \eqref{3eq: Hankel transform identity 0, R+} is a meromorphic function in the space $\sum_{ \alpha = 1}^A \sum_{\beta = 1}^{B_\alpha} \sum_{j=0}^{N_{\alpha, \beta} - 1}   \Msis^{ \lambda_{\alpha, \beta} , j}$. We conclude from Lemma \ref{2lem: Sis and Msis, R+} that  \eqref{3eq: Hankel transform identity 0, R+} uniquely determines a function  $\Upsilon$ in $ \Ssis^{ \ulambda } (\BR _+)$.
\end{proof}

\subsubsection{The Bessel function $J(x; \usigma, \ulambda)$}\label{sec: Bessel kernel J(x; sigma, lambda)}
\
\vskip 5 pt 
{\it The integral kernel $J(x; \usigma, \ulambda)$ of $\Hsl$.}
Suppose $\upsilon \in \SS (\BR _+)$. By the Mellin inversion, we have
\begin{equation}\label{3eq: Bessel kernel, Mellin inversion, 1}
\Upsilon (x) 
= \frac { 1 } {2 \pi i} \int_{(\sigma)} G (s; \usigma, \ulambda) \EM  \upsilon (1 - s) x ^{ - s} d s, \hskip 10 pt \textstyle \sigma > \max  \left\{ \Re  \lambda_l      \right \}.
\end{equation}
It is an iterated double integral as below
\begin{equation*}
\Upsilon (x) 
= \frac { 1 } {2 \pi i} \int_{(\sigma)} \int_0^\infty \upsilon (y) y^{ - s } d y \cdot G (s; \usigma, \ulambda) x ^{ - s} d s.
\end{equation*}
We now shift the integral contour to $\EC _{\ulambda}$ defined in Definition \ref{3defn: C d lambda}. Using \eqref{1eq: vertical bound, G (s; sigma, lambda)} in Lemma \ref{1lem: vertical bound}, one shows that the above double integral becomes absolutely convergent after this contour shift.
Therefore, on changing the order of integrals, one obtains
\begin{equation}\label{2eq: Psi (x; sigma) as Hankel transform}
\Upsilon (x ) = \int_0^\infty \upsilon (y) J \big( (xy)^{\frac 1 n}; \usigma, \ulambda\big) d y.
\end{equation}
Here $J (x;  \usigma, \ulambda)$ is the {\rm({\it fundamental})} \textit{Bessel function} defined by the  Mellin-Barnes type integral
\begin{equation}\label{3eq: definition of J (x; sigma)}
J (x ; \usigma, \ulambda) = \frac 1  {2 \pi i}\int_{\EC_{\ulambda}} G(s; \usigma, \ulambda) x^{- n s} d s,
\end{equation}
which is categorized as a {\it Bessel function of the second kind}  (see \cite{Qi}).


\begin{rem}
The expression \eqref{2eq: Psi (x; sigma) as Hankel transform} of the Hankel transform together with properties of the   Bessel function $J (x ; \usigma, \ulambda)$ may also yield $\Upsilon \in \Ssis^{ \ulambda } (\BR _+)$. 

The Schwartz condition on $\Upsilon$ at infinity follows from either the rapid decay or the oscillation of $J (x ; \usigma, \ulambda)$ as well as its derivatives {\rm ({\it see \cite[\S 5, \S 9]{Qi}})}.  

As for the singularity type of $\Upsilon$ at zero, we first assume that $\ulambda$ is generic. We express  $J (x ; \usigma, \ulambda)$ as a combination of Bessel functions of the first kind {\rm ({\it see \cite[\S 7.1, 7.2]{Qi}})}. Then the type of singularities of $\Upsilon$ at zero is reflected by the leading term in the series expansions of Bessel functions of the first kind. For nongeneric $\ulambda$ the occurrence of powers of $\log x $ follows from either solving the Bessel equations using the Frobenius method or taking the limit of the above expression of  $J (x ; \usigma, \ulambda)$ with respect to the index $\ulambda$.
\end{rem}

{\it Shifting the index of $J (x ; \usigma, \ulambda)$.}
\begin{lem}\label{3lem: normalize J(x; sigma, lambda)}
Let $(\usigma, \ulambda) \in \{+, -\}^n \times \BC^{n}$ and $\lambda \in \BC$. Recall that  $\ue^n$ denotes the $n$-tuple $ ( {1,..., 1})$. Then
\begin{equation}\label{4eq: normalize J}
J (x ; \usigma, \ulambda - \lambda \ue^n) = x^{n \lambda} J (x ; \usigma, \ulambda).
\end{equation}
\end{lem}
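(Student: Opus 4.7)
The plan is to perform a straightforward change of variable in the Barnes-Mellin integral \eqref{3eq: definition of J (x; sigma)} defining $J(x;\usigma,\ulambda)$. First I will record the elementary identity
\[
G(s;\usigma,\ulambda - \lambda\ue^n)=\prod_{l=1}^n G_{\varsigma_l}\bigl(s-(\lambda_l-\lambda)\bigr)=\prod_{l=1}^n G_{\varsigma_l}\bigl((s+\lambda)-\lambda_l\bigr)=G(s+\lambda;\usigma,\ulambda),
\]
which follows directly from \eqref{1def: G(s; sigma; lambda)}. Substituting into \eqref{3eq: definition of J (x; sigma)} gives
\[
J(x;\usigma,\ulambda-\lambda\ue^n)=\frac{1}{2\pi i}\int_{\EC_{\ulambda-\lambda\ue^n}} G(s+\lambda;\usigma,\ulambda)\,x^{-ns}\,ds.
\]

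Next I will make the change of variable $s'=s+\lambda$ and translate the contour. The translated contour $\EC_{\ulambda-\lambda\ue^n}+\lambda$ is an upward directed contour that lies to the right of every set $\lambda_l-\BN$, just like $\EC_{\ulambda}$ (cf.\ Definition \ref{3defn: C d lambda}), and on tails $|\Im s'|\ggg 1$ it has constant real part. Since $G(s';\usigma,\ulambda)$ has uniform moderate growth on vertical strips by Lemma \ref{1lem: vertical bound} and its only singularities lie in $\bigcup_l \lambda_l-\BN$, one can freely deform $\EC_{\ulambda-\lambda\ue^n}+\lambda$ to $\EC_{\ulambda}$ without crossing any pole, using a rectangular homotopy whose horizontal pieces vanish as $|\Im s'|\to\infty$. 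Using $x^{-ns}=x^{n\lambda}x^{-ns'}$, the integral becomes
\[
x^{n\lambda}\cdot\frac{1}{2\pi i}\int_{\EC_{\ulambda}} G(s';\usigma,\ulambda)\,x^{-ns'}\,ds' = x^{n\lambda} J(x;\usigma,\ulambda),
\]
which is the desired identity.

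I do not expect any real obstacle: the proof is essentially a translation of the $s$-variable combined with a contour deformation justified by the decay estimate in Lemma \ref{1lem: vertical bound}. The only point that merits a sentence of care is the contour verification, namely that $\EC_{\ulambda-\lambda\ue^n}+\lambda$ and $\EC_{\ulambda}$ are homotopic in the pole-free region of $G(\,\cdot\,;\usigma,\ulambda)$, which is immediate from the defining properties of the contours.
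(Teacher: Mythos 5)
Your proof is correct and is precisely the argument the paper implicitly has in mind (the lemma is stated without proof, as are its analogues \eqref{3eq: normalize j (lambda, delta)} and \eqref{3eq: normalize j (mu, m)} for the kernels $j_{(\umu,\udelta)}$ and $j_{(\umu,\um)}$). The observation $G(s;\usigma,\ulambda-\lambda\ue^n)=G(s+\lambda;\usigma,\ulambda)$, followed by the substitution $s'=s+\lambda$ and a contour deformation justified by Definition \ref{3defn: C d lambda} and the vertical-strip decay from Lemma \ref{1lem: vertical bound}, is exactly the canonical change-of-variable argument, and your note that $\EC_{\ulambda-\lambda\ue^n}+\lambda$ satisfies the same admissibility constraints as $\EC_{\ulambda}$ is the one point that needs a word of care.
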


{\it Regularity of $J(x; \usigma, \ulambda)$.}
According to \cite[\S 6, 7]{Qi}, $J (x ; \usigma, \ulambda)$ satisfies a differential equation with analytic coefficients. Therefore, $J (x ; \usigma, \ulambda)$ admits an analytic continuation from $\BR _+$ onto $\BU$, and in particular is {\it real analytic}. Here, we shall take an alternative viewpoint from \cite[Remark 7.11]{Qi},
that is the following Barnes type integral representation,
\begin{equation} \label{3eq: Barnes contour integral 1}
J (\zeta ; \usigma, \ulambda) = \frac 1  {2 \pi i}\int_{\EC_{\ulambda}'} G(s; \usigma, \ulambda) \zeta^{-n s} d s, \hskip 10 pt \zeta = x e^{i \omega} \in \BU, x \in \BR _+, \omega \in \BR,
\end{equation}
with the integral contour  given in Definition \ref{3defn: C ' lambda}.
One first rewrites $G  (s, \pm)$ using Euler's reflection formula,
\begin{equation*} 
G  (s, \pm) = \frac {\pi e \lp \pm \frac 1 4 s   \rp} {\sin (\pi s) \Gamma (1-s)},
\end{equation*}
Then Stirling's formula \eqref{1eq: Stirling's formula} yields,
\begin{equation*} 
G(- \rho + it; \usigma, \ulambda) \lll_{\ulambda, r} {e^{n \rho}} {\rho^{ - n \lp \rho + \frac 1 2 \rp - \Re |\ulambda|}},
\end{equation*}
for all $ - \rho + it \notin \bigcup_{l      = 1}^n \bigcup_{\kappa \in \BN} \BB_r (\lambda_{l     } - \kappa)$ satisfying $\rho \ggg 1$ and $ t \lll \max  \{ |\Im \lambda_{l     }| \} + 1$.
It follows that the contour integral in \eqref{3eq: Barnes contour integral 1} converges absolutely and compactly
in $\zeta$, and hence $J (\zeta ; \usigma, \ulambda)$ is analytic in $\zeta$. 

Moreover, given any bounded open subset of $\BC^{n}$, one fixes a single contour $\EC' = \EC'_{\ulambda}$ for all $\ulambda$ in this set and verifies the uniform convergence of the integral in the $\ulambda$ aspect. Then follows the analyticity of $J (\zeta ; \usigma, \ulambda)$  with respect to $\ulambda$.

\begin{lem}\label{3lem: J (x; sigma, lambda) analytic}
$J (x ; \usigma, \ulambda) $  admits an analytic continuation $J (\zeta ; \usigma, \ulambda)$ from $\BR _+$ onto $\BU$. 
In particular, $J (x ; \usigma, \ulambda) $ is a real analytic function of $x$ on $\BR _+$. Moreover,  $J (\zeta ; \usigma, \ulambda)$  is an analytic function of $\ulambda$ on $\BC^{n}$.
\end{lem}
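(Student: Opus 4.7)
The plan is to take the Hankel-type Barnes integral \eqref{3eq: Barnes contour integral 1} over the contour $\EC'_\ulambda$ as the candidate analytic continuation to $\BU \times \BC^n$. That this integral agrees on $\BR_+$ with the original definition \eqref{3eq: definition of J (x; sigma)} is already stated to follow from \cite[Remark 7.11]{Qi}, so the entire task reduces to verifying that the integral converges absolutely and locally uniformly in both $\zeta \in \BU$ and $\ulambda \in \BC^n$.

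I would follow the hint in the paragraph above the statement. Applying Euler's reflection formula $G_{\varsigma_l}(s - \lambda_l) = \pi\, e(\varsigma_l(s-\lambda_l)/4) / [\sin(\pi(s-\lambda_l))\Gamma(1-s+\lambda_l)]$ moves the $n$ Gamma factors to the denominator, and Stirling \eqref{1eq: Stirling's formula} applied to them gives, on the two horizontal half-lines of $\EC'_\ulambda$ (where $\Re s = -\varrho \to -\infty$ while $\Im s$ stays bounded by construction), the bound
\[
|G(-\varrho + it; \usigma, \ulambda)| \lll \frac{e^{n\varrho}}{\varrho^{\,n(\varrho + 1/2) + \Re|\ulambda|}}.
\]
The factors $\sin(\pi(s-\lambda_l))^{-1}$ and $e(\varsigma_l(s-\lambda_l)/4)$ are harmlessly bounded on this portion of the contour, since $\EC'_\ulambda$ avoids small discs around the poles and $\Im s$ is bounded. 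Multiplying by $|\zeta^{-ns}| = x^{n\varrho} e^{n t \omega}$, which is a bounded multiple of $x^{n\varrho}$ whenever $(x,\omega)$ ranges over a compact subset of $\BR_+ \times \BR$, yields super-exponential decay as $\varrho \to \infty$ and hence absolute convergence of the integral.

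For analyticity in $\zeta$, the integrand is entire in $s$ and holomorphic in $\zeta = xe^{i\omega}$ on $\BU$, and the above bound is locally uniform in $(x,\omega)$; a standard Morera--Fubini argument then makes $J(\zeta;\usigma,\ulambda)$ holomorphic in $\zeta$, and its restriction to $\omega = 0$ gives real-analyticity on $\BR_+$. For analyticity in $\ulambda$, I would fix a bounded open $U \subset \BC^n$ and choose a single contour $\EC'$ that encircles every pole set $\lambda_l - \BN$ for all $\ulambda \in U$ (possible because these poles lie in a common half-plane once $U$ is bounded). The estimate above remains uniform over $\ulambda \in U$ — the only dependence on $\ulambda$ in the bound is through $\Re|\ulambda|$, bounded on $U$ — and joint holomorphy of the integrand in $(\zeta, \ulambda)$ plus uniform convergence then give joint analyticity, either directly or via Hartogs. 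The only mildly delicate point, and the main ``obstacle'', is organizing this uniform choice of contour $\EC'$ so that Stirling's estimate holds uniformly in $\ulambda$ on compact sets; this is arranged by placing the horizontal rays of $\EC'$ outside the strip $\{|\Im s| \leq \max_{\ulambda \in U}|\Im \lambda_l| + 1\}$ and its left return sufficiently far from all poles uniformly over $U$.
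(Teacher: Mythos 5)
Your proposal follows the paper's proof essentially verbatim: both pass to the Barnes integral \eqref{3eq: Barnes contour integral 1} over $\EC'_{\ulambda}$, invoke Euler's reflection formula to place the Gamma factors in the denominator, derive precisely the bound $G(-\varrho+it;\usigma,\ulambda)\lll e^{n\varrho}\varrho^{-n(\varrho+1/2)-\Re|\ulambda|}$ via Stirling, conclude locally uniform absolute convergence (hence holomorphy in $\zeta$), and then fix a single contour $\EC'$ for $\ulambda$ ranging over a bounded open set to obtain analyticity in $\ulambda$. The argument is correct and identical in structure and substance to the paper's.
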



{\it The rank-one and rank-two cases.}

\begin{example}\label{ex: fundamental Bessel, n=1, 2}
According to \cite[Proposition 2.4]{Qi}, if $n = 1$, then
\begin{equation*}
J (x; \pm, 0) = e^{\pm i x}.
\end{equation*}
For $n = 2$, from \cite[Proposition 2.7]{Qi} we have
\begin{align*}
J (x; \pm, \pm, \lambda, - \lambda)   = \pm \pi i e^{\pm \pi i \lambda} H^{(1, 2)}_{2 \lambda} (2 x), \hskip 10 pt 
J (x; \pm, \mp, \lambda, - \lambda)   = 2 e^{\mp \pi i \lambda} K_{2 \lambda} (2 x),
\end{align*}
where, for $\nu \in \BC$, $H^{(1)}_{\nu}$, $H^{(2)}_{\nu}$ are the Hankel functions, and $K_{\nu}$ is the $K$-Bessel function {\rm ({\it the modified Bessel function of the second kind})}.
\end{example}

\subsection {\texorpdfstring{The Hankel transforms $\hld$, $\hmum$ and the Bessel kernels $j_{(\umu, \udelta)} $, $j_{(\umu, \um)}$}{The Hankel transforms $h_{(\mu, \delta)} $, $h_{(\mu, m)}$ and the Bessel kernels $j_{(\mu, \delta)} $, $j_{(\mu, m)}$}}

Consider the ordered set $(\BC, \preccurlyeq_2)$ and define $ \lambda _{\alpha, \beta}$, $B_\alpha$, $M_{\alpha, \beta}$ and $N_{\alpha, \beta}$  as in Definition \ref{3defn: ordered set} corresponding to $\ulambda \in \BC^n$. We define the following subspace of $ \Ssis (\BR _+)$
\begin{equation}\label{3eq: Ssis2, R+}
\Ssiss^{ \ulambda } (\BR _+) = \sum_{ \alpha = 1}^A \sum_{\beta = 1}^{B_\alpha}  \sum_{j=0}^{N_{\alpha, \beta} - 1} x^{ - \lambda_{\alpha, \beta}} (\log x)^{j } \SS_{0} (\overline \BR _+).
\end{equation}

\subsubsection{\texorpdfstring{The definition of  $  \hld$}{The definition of  $h_{(\mu, \delta)} $}}\label{sec: h (lambda, delta)}


The following proposition provides the definition of the Hankel transform $\hld$, which maps $\Ssiss^{ - \umu - \udelta } (\BR _+)$    onto  $\Ssiss^{ \umu - \udelta } (\BR _+)$ bijectively. 

\begin{prop} \label{3prop: h (lambda, delta)}
Let $(\umu, \udelta) \in \BC^{n} \times (\BZ/2 \BZ)^n$. Suppose $\upsilon \in \Ssiss^{ - \umu - \udelta } (\BR _+)$. Then there exists a unique function $\Upsilon \in \Ssiss^{ \umu - \udelta } (\BR _+) $ satisfying the following identity,
\begin{equation}\label{3eq: Hankel transform identity 1, R+}
\EM \Upsilon (s ) = G_{(\umu, \udelta )} (s) \EM \upsilon ( 1 - s).
\end{equation}
We call $\Upsilon$ the Hankel transform of  $\upsilon$ over $\BR _+$ of index $(\umu, \udelta)$ and  write $\hld \upsilon  = \Upsilon$. Furthermore, we have the Hankel inversion formula
\begin{equation}\label{3eq: Hankel inversion, R 0}
\hld \upsilon  = \Upsilon, \hskip 10 pt \hmld \Upsilon  = \upsilon.
\end{equation}
\end{prop}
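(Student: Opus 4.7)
The plan is to follow the same template as the proof of the previous proposition for $\Hsl$, with the order relation $\preccurlyeq_1$ replaced by $\preccurlyeq_2$ and with the refined decomposition of $\Msis$ from Corollary \ref{2cor: refined decomp, R+} (1) playing the role of Lemma \ref{2lem: Sis and Msis, R+}. Concretely, I would first unpack the singularity structure of $G_{(\umu, \udelta)}(s) = \prod_{l=1}^{n} G_{\delta_l}(s - \mu_l)$: from the definition \eqref{1def: G delta}, $G_{\delta}(s)$ has simple poles exactly at $s \in -\delta - 2\BN$, so the factor $G_{\delta_l}(s-\mu_l)$ contributes poles along $\mu_l - \delta_l - 2\BN$. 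Applying Definition \ref{3defn: ordered set} to the tuple $\umu - \udelta \in \BC^n$ with respect to $(\BC, \preccurlyeq_2)$ and denoting the resulting data by $(\mu-\delta)_{\alpha,\beta}$, $B_\alpha$, $M_{\alpha,\beta}$, $N_{\alpha,\beta}$, I obtain that $G_{(\umu,\udelta)}$ has a pole of pure order $N_{\alpha,\beta}$ at each point of $(\mu-\delta)_{\alpha,\beta} - 2\BN$ not already covered by a larger $(\mu-\delta)_{\alpha,\beta'}$; in particular,
\begin{equation*}
G_{(\umu,\udelta)} \in \sum_{\alpha=1}^{A}\sum_{\beta=1}^{B_\alpha}\sum_{j=0}^{N_{\alpha,\beta}-1} \Nsis^{(\mu-\delta)_{\alpha,\beta},\, j}.
\end{equation*}

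Next, I would estimate the product on the right side of \eqref{3eq: Hankel transform identity 1, R+}. By \eqref{1eq: vertical bound, G (lambda, delta) (s)} of Lemma \ref{1lem: vertical bound}, $G_{(\umu,\udelta)}(s)$ is of uniform moderate growth on vertical strips off small discs around its poles. On the other hand, since $\upsilon \in \Ssiss^{-\umu-\udelta}(\BR_+) \subset \Ssis(\BR_+)$, Lemma \ref{2lem: Sis and Msis, R+} (or rather its refined version in Corollary \ref{2cor: refined decomp, R+} (1)) tells us that $\EM\upsilon \in \Msis$; moreover, one checks that under the substitution $s \mapsto 1-s$ all poles of $\EM\upsilon(1-s)$ move to half planes disjoint from those of $G_{(\umu,\udelta)}(s)$, while $\EM\upsilon(1-s)$ decays rapidly on vertical strips. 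Consequently,
\begin{equation*}
G_{(\umu,\udelta)}(s)\,\EM\upsilon(1-s) \in \sum_{\alpha=1}^{A}\sum_{\beta=1}^{B_\alpha}\sum_{j=0}^{N_{\alpha,\beta}-1}\Nsis^{(\mu-\delta)_{\alpha,\beta},\,j},
\end{equation*}
which is precisely the image of $\Ssiss^{\umu-\udelta}(\BR_+)$ under $\EM$ according to \eqref{3eq: Ssis2, R+} and Corollary \ref{2cor: refined decomp, R+} (1). Existence and uniqueness of $\Upsilon \in \Ssiss^{\umu-\udelta}(\BR_+)$ satisfying \eqref{3eq: Hankel transform identity 1, R+} then follow from the isomorphism furnished by that corollary.

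For the inversion \eqref{3eq: Hankel inversion, R 0}, I would substitute $s \mapsto 1-s$ in \eqref{3eq: Hankel transform identity 1, R+} to get $\EM\Upsilon(1-s) = G_{(\umu,\udelta)}(1-s)\,\EM\upsilon(s)$, and then use the functional relation \eqref{1eq: G mu delta (1-s) = G - mu delta (s)}, namely $G_{(\umu,\udelta)}(1-s)\,G_{(-\umu,\udelta)}(s) = 1$, to rewrite this as $\EM\upsilon(s) = G_{(-\umu,\udelta)}(s)\,\EM\Upsilon(1-s)$. Since $\Upsilon \in \Ssiss^{\umu-\udelta}(\BR_+) = \Ssiss^{-(-\umu)-\udelta}(\BR_+)$, the defining identity for $\hmld\Upsilon$ is exactly this relation, so $\hmld\Upsilon = \upsilon$ by the uniqueness half of the existence statement already established.

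The main obstacle I foresee is the bookkeeping of the shift by $\udelta$: one must verify that the poles of $G_{(\umu,\udelta)}$ sit at $(\mu-\delta)_{\alpha,\beta} - 2\BN$ rather than at $\mu_l - 2\BN$ (so that the output space is $\Ssiss^{\umu-\udelta}$, not $\Ssiss^{\umu}$) and that the refined decomposition of $\Msis$ in Corollary \ref{2cor: refined decomp, R+} (1) accommodates these shifted poles in the $\Nsis^{\lambda-\delta,j}$ components. The rest is a direct transcription of the argument given for $\Hsl$.
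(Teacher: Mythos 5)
Your overall template — invoke the singularity structure of $G_{(\umu,\udelta)}$, appeal to the refined decomposition of $\Msis$ under $\preccurlyeq_2$, and derive the inversion from the functional relation \eqref{1eq: G mu delta (1-s) = G - mu delta (s)} — is the same as the paper's, and your treatment of the inversion is correct. But there is a genuine gap in the existence/uniqueness step.

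You track the poles of $G_{(\umu,\udelta)}(s)$ (at $\mu_l - \delta_l - 2\BN$) and note that the poles of $\EM\upsilon(1-s)$, namely at $1 + \mu_l + \delta_l + 2\BN$, lie in ``half planes disjoint from'' those of $G_{(\umu,\udelta)}$. That observation is true but irrelevant: disjointness of pole sets does not prevent the product $G_{(\umu,\udelta)}(s)\,\EM\upsilon(1-s)$ from inheriting the poles of $\EM\upsilon(1-s)$. And those poles go to $+\infty$, so if they survived, the product would fail to lie in $\Msis$ at all (let alone in the required $\sum\Nsis$ subspace), since $\Msis$ only tolerates poles in finitely many sets of the form $\lambda-\BN$. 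What actually makes the argument work is a cancellation you do not mention: writing
\begin{equation*}
G_{\delta}(s-\mu) = i^{\delta}\,\pi^{\frac12-(s-\mu)}\, \frac{\Gamma\!\left(\tfrac12(s-\mu+\delta)\right)}{\Gamma\!\left(\tfrac12(1-s+\mu+\delta)\right)},
\end{equation*}
the reciprocal Gamma in the denominator contributes \emph{zeros} of $G_{(\umu,\udelta)}(s)$ at precisely $s = 1 + \mu_l + \delta_l + 2\kappa$, $\kappa\in\BN$, i.e. at exactly the poles of $\EM\upsilon(1-s)$. Only after checking that the multiplicity of these zeros is at least the pole order of $\EM\upsilon(1-s)$ — and this is where the hypothesis $\upsilon\in\Ssiss^{-\umu-\udelta}(\BR_+)$ enters via the combinatorics $N^{-}_{\alpha,\beta}$ on the $\preccurlyeq_2$-ordered data for $-\umu-\udelta$ — can one conclude that the product has poles only at $\mu_l-\delta_l-2\BN$ with the correct pure orders, hence lands in $\sum_{\alpha}\sum_{\beta}\sum_{j=0}^{N^{+}_{\alpha,\beta}-1}\Nsis^{\mu^{+}_{\alpha,\beta},j}$. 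Without this cancellation step the proof does not close. (A smaller side slip: $G_{(\umu,\udelta)}$ alone is only of moderate growth, not rapid decay, so it cannot itself belong to a $\Nsis$ space; only the product with $\EM\upsilon(1-s)$ does.)
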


\begin{proof}
Recall the definition of $G_{(\umu, \udelta )}$ given by (\ref{1def: G delta}, \ref{1def: G (lambda, delta)}), 
\begin{equation*}
G_{(\umu, \udelta )} (s) = i^{|\udelta|} \pi^{n \lp \frac 1 2 - s \rp + |\umu|} 
  \frac { \prod_{l      = 1}^n \Gamma \lp \frac 1 2( {s - \mu_l      + \delta_l     } ) \rp  } 
{\prod_{l      = 1}^n \Gamma \lp \frac 1 2 ( {1 - s + \mu_l      + \delta_l     } ) \rp }, \end{equation*}
where $|\udelta| = \sum_{l     } \delta_l      \in \BN$, with each $\delta_l     $ viewed as a number in the set $\{0, 1\} \subset \BN$. 

We write $\umu^\pm = \pm \umu - \udelta$. Since $\mu^+_{  l     } + \mu^-_{ l     } = - 2 \delta_l      \in \{0, - 2\}$, the partition $\left\{ L_{\alpha} \right \}_{\alpha = 1}^A$ of $\{1, ..., n\}$ and $B_\alpha$ in Definition \ref{3defn: ordered set} are the same for both $\umu^+$ and $\umu^-$. Let  $ \mu^\pm _{\alpha, \beta}$, $M^\pm _{\alpha, \beta}$ and $N^\pm _{\alpha, \beta}$ be the notations  in Definition \ref{3defn: ordered set} corresponding to $\umu^\pm$.  Then the Gamma quotient above may be rewritten as follows,
\begin{equation*}
\frac { \prod_{ \alpha = 1}^A \prod_{\beta = 1}^{B_\alpha}  \Gamma \left( \frac 1 2 \big( { s - \mu^+_{\alpha, \beta}} \big) \right)^{M^+_{\alpha, \beta} } } 
{ \prod_{ \alpha = 1}^A \prod_{\beta = 1}^{B_\alpha}  \Gamma \left( \frac 1 2 \big( {1 - s - \mu^-_{\alpha, \beta}} \big) \right)^{M^-_{\alpha, \beta} } }.
\end{equation*}
Thus, at each point $\mu \in \mu^+_{\alpha, 1} - 2\BN$ the product in the numerator contributes to $G_{(\umu, \udelta )} (s)$ a pole of pure order  $N_{\alpha, \beta}^+$, with $\beta = \max \left\{ \beta' : \mu \preccurlyeq_2 \mu^+_{\alpha, \beta' } \right \} $, whereas at each point $ \mu \in - \mu^-_{\alpha, 1} + 2\BN + 1$ the denominator contributes a zero of order $N_{\alpha, \beta}^-$, with $\beta = \max \left\{ \beta' : 1 - \mu \preccurlyeq_2 \mu^-_{\alpha, \beta' } \right \} $. 
Moreover, \eqref{1eq: vertical bound, G (lambda, delta) (s)} in Lemma \ref{1lem: vertical bound} implies that $G_{(\umu, \udelta )} (s)$ is of uniform moderate growth on vertical strips. 

On the other hand, according  to Lemma \ref{2lem: refined decomp, R+},  the Mellin transform $\EM \upsilon $ lies in the space $ \sum_{ \alpha = 1}^A \sum_{\beta = 1}^{B_\alpha}  \sum_{j=0}^{N^-_{\alpha, \beta} - 1} \Nsis^{  \mu^-_{\alpha, \beta}, j}.$
In particular,  the poles of $\EM \upsilon (1-s)$ are annihilated by the zeros contributed from the denominator of the Gamma quotient. Furthermore, $\EM \upsilon (1-s)$ uniformly rapidly decays on vertical strips. 

We conclude that the product $ G_{(\umu, \udelta )} (s) \EM \upsilon ( 1 - s)$ on the right hand side of \eqref{3eq: Hankel transform identity 1, R+} lies in the space $\sum_{ \alpha = 1}^A \sum_{\beta = 1}^{B_\alpha}  \sum_{j=0}^{N^+_{\alpha, \beta} - 1} \Nsis^{  \mu^+_{\alpha, \beta}, j} $, and hence  $\Upsilon \in \Ssiss^{ \umu - \udelta } (\BR _+) $, with another application of  Lemma \ref{2lem: refined decomp, R+}.

Finally, the Hankel inversion formula \eqref{3eq: Hankel inversion, R 0} is an immediate consequence of the functional relation \eqref{1eq: G mu delta (1-s) = G - mu delta (s)} of gamma factors.
\end{proof}


\subsubsection{\texorpdfstring{The definition of  $ \hmum$}{The definition of  $h_{(\mu, m)}$}}\label{sec: h mu m}

The following proposition provides the definition of the Hankel transform $\hmum$, which maps $\Ssiss^{ - 2 \umu - \| \um \| } (\BR _+)$    onto  $\Ssiss^{ 2 \umu - \| \um \| } (\BR _+)$ bijectively. 

\begin{prop} \label{3prop: h (mu, m)}
Let $(\umu, \um) \in \BC^{n} \times \BZ ^n$. Suppose $\upsilon \in \Ssiss^{ - 2 \umu - \|\um\| } (\BR _+)$. Then there exists a unique function $\Upsilon \in \Ssiss^{ 2 \umu - \|\um\| } (\BR _+) $ satisfying the following identity,
\begin{equation}\label{3eq: Hankel transform identity 2, R+}
\EM \Upsilon (2 s ) = G_{(\umu, \um )} (s) \EM \upsilon ( 2(1-s)).
\end{equation}
We call $\Upsilon$ the Hankel transform of  $\upsilon$ over $\BR _+$ of index $(\umu, \um)$ and write $\hmum \upsilon  = \Upsilon$.
Moreover, we have the Hankel inversion formula
\begin{equation}\label{3eq: Hankel inversion, C 0}
\hmum \upsilon  = \Upsilon, \hskip 10 pt \hmmum \Upsilon  = \upsilon.
\end{equation}
\end{prop}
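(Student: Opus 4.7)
The plan is to follow the proof of Proposition \ref{3prop: h (lambda, delta)} almost verbatim, after making the substitution $t = 2 s$ so that the defining identity \eqref{3eq: Hankel transform identity 2, R+} becomes
\begin{equation*}
\EM \Upsilon (t) = \widetilde G (t) \cdot \EM \upsilon (2 - t), \hskip 10 pt \widetilde G (t) = G_{(\umu, \um)} (t/2).
\end{equation*}
Throughout, I would use the order relation $\preccurlyeq_2$ on $\BC$ and rely on the refined decomposition \eqref{2eq: refined, Msis} as the final book-keeping device.

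First I would analyze the singularities of $\widetilde G (t)$ directly from \eqref{1def: G m (s)}. The numerator factors $\Gamma \lp s - \mu_l + \frac 1 2 |m_l| \rp$ supply $\widetilde G (t)$ with poles of pure order at the points of $2 \mu_l - |m_l| - 2 \BN$, while the denominator factors $\Gamma \lp 1 - s + \mu_l + \frac 1 2 |m_l| \rp$ contribute zeros at the points of $2 + 2 \mu_l + |m_l| + 2 \BN$. Setting $\umu^\pm = \pm 2 \umu - \|\um\|$, the key combinatorial observation
\begin{equation*}
\mu^+_l - \mu^+_{l'} + \mu^-_l - \mu^-_{l'} = - 2 \lp |m_l| - |m_{l'}| \rp \in 2 \BZ
\end{equation*}
shows that $\umu^+$ and $\umu^-$ induce the same partition of $\{1, \dots, n\}$ under $\sim_2$. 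Hence the multiplicities of the poles and zeros of $\widetilde G (t)$, regrouped into the orbits of Definition \ref{3defn: ordered set}, are exactly the quantities $N^+_{\alpha, \beta}$ and $N^-_{\alpha, \beta}$ appearing in the proof of Proposition \ref{3prop: h (lambda, delta)}. The uniform moderate growth of $\widetilde G (t)$ on vertical strips is furnished by \eqref{1eq: vertical bound, G (mu, m) (s)}.

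Since $\upsilon \in \Ssiss^{- 2 \umu - \|\um\|} (\BR_+)$, the refined decomposition \eqref{2eq: refined, Msis} places $\EM \upsilon$ in $\sum_{\alpha, \beta} \sum_{j=0}^{N^-_{\alpha, \beta} - 1} \Nsis^{\mu^-_{\alpha, \beta}, j}$, and the rapid decay of $\EM \upsilon$ on vertical strips comes along with this description. Substituting $2 - t$ moves the poles of $\EM \upsilon (2 - t)$ onto the zero loci of $\widetilde G (t)$, with matching upper bounds on orders, so the product $\widetilde G (t) \cdot \EM \upsilon (2 - t)$ is regular there. Consequently it belongs to $\sum_{\alpha, \beta} \sum_{j=0}^{N^+_{\alpha, \beta} - 1} \Nsis^{\mu^+_{\alpha, \beta}, j}$, and a second application of \eqref{2eq: refined, Msis} produces the unique $\Upsilon \in \Ssiss^{2 \umu - \|\um\|} (\BR_+)$ realizing the defining identity. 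The inversion \eqref{3eq: Hankel inversion, C 0} is then immediate from the functional relation \eqref{1eq: G mu m (1-s) = G - mu m (s)}.

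The hard part will be the pole/zero matching: one must verify with precise multiplicities that every pole of $\EM \upsilon (2 - t)$ at a point of $2 + 2 \mu_l + |m_l| + 2 \BN$ is cancelled by the vanishing of $\widetilde G (t)$ there. This is exactly where the combinatorial observation above enters, guaranteeing that the pole order $N^-_{\alpha, \beta}$ coming from $\upsilon$ agrees with the vanishing order supplied by the denominator of $\widetilde G (t)$. The remaining analytic estimates and rapid-decay bounds on vertical strips transfer from the real case without modification.
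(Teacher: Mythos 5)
Your proposal is correct and follows essentially the same route as the paper's: the paper's own proof simply rewrites the identity as $\EM\Upsilon(s) = G_{(\umu,\um)}(s/2)\,\EM\upsilon(2-s)$, displays the explicit Gamma quotient, and then refers the reader back to the argument of Proposition \ref{3prop: h (lambda, delta)}, swapping in \eqref{1eq: vertical bound, G (mu, m) (s)} and \eqref{1eq: G mu m (1-s) = G - mu m (s)} for their $(\umu,\udelta)$ analogues. Your writeup usefully spells out what ``apply the same arguments'' entails here --- the identification $\umu^{\pm} = \pm 2\umu - \|\um\|$, the verification via the combinatorial relation that $\umu^{+}$ and $\umu^{-}$ give the identical $\sim_2$-partition (the direct analogue of $\mu^{+}_l + \mu^{-}_l = -2\delta_l$ in the $(\umu,\udelta)$ case is $\mu^{+}_l + \mu^{-}_l = -2|m_l|$, equally valid), and the pole/zero cancellation that places the product in $\sum_{\alpha,\beta}\sum_{j < N^{+}_{\alpha,\beta}}\Nsis^{\mu^{+}_{\alpha,\beta},j}$ --- but the underlying idea and supporting lemmas are identical to the paper's.
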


\begin{proof}
We first rewrite \eqref{3eq: Hankel transform identity 2, R+} as follows,
\begin{equation*} 
\EM \Upsilon ( s ) = G_{(\umu, \um )} \lp \frac s 2 \rp \EM \upsilon ( 2 - s ).
\end{equation*}
From (\ref{1def: G m (s)}, \ref{1def: G (mu, m)}), we have
\begin{equation*}
G_{(\umu, \um)} \lp \frac s 2 \rp = i^{\left|\|\um\|\right|} \pi^{n \lp 1 -  s \rp + 2 |\umu|} 
  \frac { \prod_{l      = 1}^n \Gamma \lp \frac 1 2 ( {s - 2\mu_l      + |m_l     |} ) \rp  } 
{\prod_{l      = 1}^n \Gamma \lp \frac 1 2 ( {2 - s + 2\mu_l      + |m_l     |} ) \rp },
\end{equation*}
where $\left|\|\um\|\right| = \sum_{l     =1}^n |m_l     | $ according to our notations.
We can now proceed to apply the same arguments in the proof of Proposition \ref{3prop: h (lambda, delta)}. Here, one uses \eqref{1eq: vertical bound, G (mu, m) (s)} and \eqref{1eq: G mu m (1-s) = G - mu m (s)} instead of \eqref{1eq: vertical bound, G (lambda, delta) (s)} and \eqref{1eq: G mu delta (1-s) = G - mu delta (s)} respectively.
\end{proof}

\subsubsection{\texorpdfstring{The Bessel kernel  $j_{(\umu, \udelta)} $}{The Bessel kernel  $j_{(\mu, \delta)} $}} \label{sec: Bessel kernel j lambda delta}
\ 
\vskip 5 pt

{\it The definition of  $j_{(\umu, \udelta)} $.}
For $(\umu, \udelta) \in \BC^{n} \times (\BZ/2 \BZ)^n$, we define the Bessel kernel $j_{(\umu, \udelta)}$,
\begin{equation}\label{2def: Bessel kernel, 1}
j_{(\umu, \udelta)} (x) = \frac 1  {2 \pi i} \int_{\EC_{(\umu, \udelta)}} G_{(\umu, \udelta)} (s ) x^{- s} d s.
\end{equation}
We call the integral in \eqref{2def: Bessel kernel, 1} a  Mellin-Barnes type integral. It is clear that
\begin{equation}\label{3eq: normalize j (lambda, delta)}
j_{(\umu - \mu \ue^n, \udelta)} (x) = x^{\mu} j_{(\umu, \udelta)} (x).
\end{equation}
In view of \eqref{1eq: G (lambda, delta) = G (s; sigma, lambda)}, we have
\begin{equation}\label{3eq: j (lambda, delta) and fundamental}
j_{(\umu, \udelta)} (x) = (2\pi)^{|\umu|}  \sum_{\usigma \in \{ +, - \}^n} \usigma^{\udelta} J \big(2 \pi x^{\frac 1 n}; \usigma, \umu \big).
\end{equation}


{\it Regularity of $j_{(\umu, \udelta)} $.}
It follows from \eqref{3eq: j (lambda, delta) and fundamental} and Lemma \ref{3lem: J (x; sigma, lambda) analytic} that
$j_{(\umu, \udelta)} (x)$ admits an analytic continuation $j_{(\umu, \udelta)} (\zeta)$, which is also analytic with respect to $\umu$.
Moreover,  $j_{(\umu, \udelta)} (\zeta)$ has the following Barnes type integral representation,
\begin{equation}\label{2def: Bessel kernel, analytic continuation, 1}
j_{(\umu, \udelta)} (\zeta) = \frac 1  {2 \pi i} \int_{\EC'_{\umu - \udelta}} G_{(\umu, \udelta)} (s ) \zeta^{- s} d s, \hskip  10 pt \zeta \in \BU.
\end{equation}
To see the convergence, the following formula is required
\begin{equation} 
G_\delta (s) = 
\left\{ \begin{split}
& \frac {\pi (2 \pi)^{-s}}  {\sin \left(\frac 1 2 {\pi s}   \right) \Gamma (1-s)}, \hskip 10 pt \text { if } \delta = 0,\\
& \frac {\pi i (2 \pi)^{-s}}  {\cos \left(\frac 1 2 {\pi s}   \right) \Gamma (1-s)}, \hskip 10 pt \text { if } \delta = 1.
\end{split} \right.
\end{equation}

{\it The integral kernel of $\hld$.}
Suppose $\upsilon \in \Ssiss^{ - \umu - \udelta } (\BR _+)$. In order to proceed in the same way as in \S \ref{sec: Bessel kernel J(x; sigma, lambda)},
one needs to assume that $(\umu, \udelta) $ satisfies the condition 
\begin{equation}\label{3eq: condition on (lambda, delta), 0}
\textstyle \min  \left\{ \Re \mu_l      + \delta_{l     } \right\} + 1 > \max  \left\{ \Re \mu_l      - \delta_{l     } \right\}.
\end{equation}
Then,
\begin{equation}\label{3eq: integral kernel, R 0}
\hld \upsilon  (x ) =  \int_0^\infty \upsilon (y) j_{(\umu, \udelta)} ( xy ) d y.
\end{equation}
Here, it is required for the convergence of the integral over $d y$ that the contour $\EC _{(\umu, \udelta)}$ in \eqref{2def: Bessel kernel, 1} is chosen   to lie in the left half-plane $\left\{s : \Re s < \min  \left\{ \Re \mu_l      + \delta_{l     } \right\} + 1 \right\}$. According to Definition \ref{3defn: C d lambda}, this choice of  $\EC _{(\umu, \udelta)}$ is permissible due to our assumption \eqref{3eq: condition on (lambda, delta), 0}.

If one assumes $\upsilon \in \SS (\BR _+)$, then \eqref{3eq: integral kernel, R 0} remains valid without requiring the condition \eqref{3eq: condition on (lambda, delta), 0}. 

\vskip 5 pt

{\it The rank-one and rank-two examples.}

\begin{example}\label{ex: Bessel kernel, real, 0, n=1, 2}
If $n = 1$, we have
\begin{equation}\label{3eq: j (0, delta)}
j_{(0, 0)} (x ) = 2 \cos (2\pi x), \hskip 10 pt j_{(0, 1)} (x ) = 2 i \sin (2\pi x).
\end{equation}
If $n = 2$, we are particularly interested in the following Bessel kernel,
\begin{equation}\label{3eq: j lambda delta, discrete series}
j_{\lp \frac 1 2 m , - \frac 1 2 m ,\, \delta (m) + 1, 0 \rp} (x) = j_{\lp \frac 1 2 m , - \frac 1 2 m ,\, \delta (m), 1 \rp} (x) =  2 \pi i^{m + 1} J_{m} (4 \pi \sqrt x),
\end{equation}
 with $m \in \BN $. For $\nu \in \BC$, $J_{\nu}$ is the $J$-Bessel function {\rm ({\it the Bessel function of the first kind})}.
\end{example}

\subsubsection{\texorpdfstring{The Bessel kernel $j_{(\umu, \um)} $}{The Bessel kernel $j_{(\mu, m)} $}} \label{sec: Bessel kernel j mu m}
\
\vskip 5 pt
{\it The definition of $j_{(\umu, \um)} $.}
For $(\umu, \um) \in \BC^{n} \times  \BZ ^n$ define the Bessel kernel $j_{(\umu, \um)}$ by
\begin{equation}\label{3def: Bessel kernel j mu m}
j_{(\umu, \um)} (x) = \frac 1  {2 \pi i} \int_{\EC _{ (\umu, \um)}} G_{(\umu, \um)} (s ) x^{- 2 s} d s.
\end{equation}
The integral in \eqref{3def: Bessel kernel j mu m} is called a Mellin-Barnes type integral. We have
\begin{equation}\label{3eq: normalize j (mu, m)}
j_{(\umu - \mu \ue^n, \um)} (x) = x^{2 \mu} j_{(\umu, \um)} (x).
\end{equation}
In view of Lemma \ref{1lem: complex and real gamma factors}, if $(\boldsymbol \eta, \udelta) \in \BC^{2n} \times (\BZT)^{2n}$ is related to $(\umu, \um) \in \BC^{n} \times  \BZ ^n$ via either \eqref{1eq: relation between (mu, m) and (lambda, delta), 1} or  \eqref{1eq: relation between (mu, m) and (lambda, delta), 2}, then 
\begin{equation}\label{3eq: j mu m = j lambda delta}
i^n j_{(\umu, \um)} (x) = j_{(\boldsymbol \eta, \udelta)} \big( x^2 \big).
\end{equation}

{\it Regularity of $j_{(\umu, \um)} $.}
In view of  \eqref{3eq: j mu m = j lambda delta}, the regularity of
$j_{(\umu, \um)} $ follows from that of $j_{(\boldsymbol \eta, \udelta)}$. 
Alternatively, this may be seen from
\begin{equation}\label{3eq: Bessel kernel, analytic continuation, 2}
j_{(\umu, \um)} (\zeta) = \frac 1  {2 \pi i} \int_{\EC'_{ \umu - \frac 1 2 \|\um\|}} G_{(\umu, \um)} (s ) \zeta^{- 2 s} d s, \hskip  10 pt \zeta \in \BU.
\end{equation}
To see the convergence,  the following formula is required
\begin{equation} \label{3eq: rewrite G m}
G_m (s) = 
\frac {\pi i^{|m|} (2\pi )^{1-2s} }  { \sin \lp \pi \lp s + \frac 1 2 {|m|}   \rp \rp \Gamma \lp 1-s-\frac 1 2 {|m|}   \rp  \Gamma \lp 1-s+\frac 1 2 {|m|}   \rp }.
\end{equation}

{\it The integral kernel of $\hmum$.}
Suppose $\upsilon \in \Ssiss^{ - 2 \umu - \|\um\| } (\BR _+)$. We assume that $(\umu, \um) $ satisfies the following condition
\begin{equation}\label{3eq: condition on (mu, m), 0}
\textstyle \min  \left\{ \Re \mu_l      + \frac 1 2 { |m_{l     }| }   \right\} + 1 > \max  \left\{ \Re \mu_l      - \frac 1 2 {|m_{l     }|}   \right\}.
\end{equation}
Then
\begin{equation}\label{3eq: h mu m = int of j mu m}
\hmum \upsilon (x)  =  \int_0^\infty \upsilon (y) j_{(\umu, \um)} ( xy ) \cdot 2 y d y,
\end{equation}
It is required for convergence that the integral contour $\EC _{ (\umu, \um)}$ in \eqref{3def: Bessel kernel j mu m} lies in the left half-plane $\left\{s : \Re s < \min  \left\{\Re \mu_l      + \frac 1 2 { |m_{l     }| }   \right\} + 1 \right\}$. This is guaranteed by \eqref{3eq: condition on (mu, m), 0}.

Moreover, if one assumes  $\upsilon \in \SS (\BR _+)$, then \eqref{3eq: h mu m = int of j mu m} holds true for any index $(\umu, \um)$.

\vskip 5 pt

{\it The rank-one case.}

\begin{example}\label{ex: Bessel kernel j 0 m, complex, n=1}
If $n = 1$, in view of \eqref{3eq: j lambda delta, discrete series} and \eqref{3eq: j mu m = j lambda delta}, we have for $m \in \BZ$
\begin{equation}\label{3eq: j (0, m)}
j_{(0, m)} (x) = 2 \pi i^{|m| } J_{|m|} (4 \pi x) = 2 \pi i^{m } J_{m} (4 \pi x).
\end{equation}
where the second equality follows from the identity $J_{-m} (x) = (-)^m J_m (x)$.
\end{example}


{\it Auxiliary bounds for $j_{(\umu, \um + m\ue^n)}$.}
\begin{lem}\label{3lem: bound of the Bessel kernel, C}
Let $(\umu, \um) \in \BC^{n} \times  \BZ ^n$ and $m \in \BZ$. 
Put 
\begin{align*}
& A =\textstyle  n \lp \max  \{ \Re \mu_l      \} + \frac 1 2 \max  \{ |m_l     | \} - \frac 1 2 \rp - \Re |\umu| + \frac 1 2 |\|\um\||, \\
& B_+ =\textstyle  - 2 \min  \{ \Re \mu_l      \} + \max  \{ |m_l     | \} + \max \left\{ \frac 1 n - \frac 1 2, 0 \right \}, \\
& B_- =\textstyle  - 2 \max  \{ \Re \mu_l      \} - \max  \{ |m_l     | \}.
\end{align*} 
Fix  $ \epsilon > 0$. Denote by $\ue^n$   the $n$-tuple $(1, ..., 1)$. We have the following estimate
\begin{equation}
\label{2eq: bound of the Bessel kernel, C}
\begin{split}
j_{(\umu, \um + m\ue^n)} (x) \lll_{(\umu, \um),\, \epsilon, \, n} &\ \lp \frac { 2 \pi e x^{\frac 1 { n}}} {|m| + 1} \rp^{n |m|}  (|m| + 1)^{A + n \epsilon} \max \left\{ x^{B_+ + 2\epsilon }, x^{ B_- - 2\epsilon}  \right\}.
\end{split}
\end{equation}
\end{lem}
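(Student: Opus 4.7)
The strategy is to estimate the Mellin--Barnes contour integral \eqref{3def: Bessel kernel j mu m} defining $j_{(\umu, \um + m\ue^n)}(x)$ by shifting the contour leftward past the rightmost cluster of poles of $G_{(\umu, \um + m\ue^n)}(s)$. For $|m|$ sufficiently large, these rightmost poles lie at $s = \mu_l - |m_l + m|/2$ for $l = 1, \ldots, n$, situated at worst at $\Re s = -\tfrac 12 |m| + \max_l \Re \mu_l + \tfrac 12 \max_l |m_l|$. The residue at each such pole is the source of the characteristic factor $(2\pi e\, x^{1/n}/(|m|+1))^{n|m|}$ in the target bound: the $x$-power $x^{|m|}$ comes directly from $x^{-2s}$ evaluated at the pole, while the $\Gamma$-function in the denominator of $G_{m_l + m}(s - \mu_l)$ contributes $\Gamma(1 + |m_l + m|)^{-1}$, to which Stirling's formula \eqref{1eq: Stirling's formula} supplies the factor $(2\pi e/|m_l+m|)^{|m_l+m|} \sqrt{2\pi/|m_l + m|}$. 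Since $|m_l + m| = |m| + O(1)$ for $|m|$ large, the product over $l$ then yields the claimed factor.

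Concretely, in the generic case where the poles $\mu_l - |m_l+m|/2$ are distinct, the residue at $s = \mu_l - |m_l + m|/2$ equals
\begin{equation*}
\frac{i^{|m_l + m|} (2\pi)^{1 + |m_l+m| - 2\Re \mu_l}\, x^{|m_l+m| - 2\Re \mu_l}}{\Gamma(1 + |m_l + m|)} \cdot \prod_{l' \neq l} G_{m_{l'} + m}\!\lp \mu_l - \mu_{l'} - \tfrac 12 |m_l + m| \rp,
\end{equation*}
and the $\Gamma$-factors in the product have arguments of order $|m|/2$, hence are amenable to Stirling and produce additional factors of size $(|m|/(4\pi e))^{O(1)}$ that assemble the polynomial-in-$|m|$ prefactor $(|m|+1)^A$. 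The $x^{B_-}$ dependence in the small-$x$ regime emerges by extracting $x^{|m|} \cdot x^{-2 \max_l \Re \mu_l - \max_l |m_l|}$ from the residue.

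The remaining technical work is to (i) bound the lower poles $s = \mu_l - |m_l+m|/2 - k$ with $k \geq 1$, whose contributions together form a convergent Bessel-type series dominated by the leading term; (ii) treat degenerate indices $\umu$ with coalescing poles, producing powers of $\log x$ that may be absorbed via $|\log x|^j \lll_\epsilon x^{-n\epsilon} + x^{n\epsilon}$ into the $(|m|+1)^{n\epsilon}$ and $x^{\pm 2\epsilon}$ factors; and (iii) obtain the weaker large-$x$ bound $x^{B_+ + 2\epsilon}$, either by shifting the Mellin--Barnes contour rightward up to the constraint $\Re s < \tfrac 12 + \tfrac 1 n (\Re |\umu| - \tfrac 12)$ of Definition \ref{3defn: C d lambda} (the term $\max\{1/n - 1/2, 0\}$ in $B_+$ reflecting the resulting contour-integrability threshold), or by simply noting that $x^{B_-} \leq x^{B_+ + 4\epsilon}$ when $x \geq 1$. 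The main obstacle is the sharp extraction of the constant $2\pi e$: a naive Stirling bound on a shifted vertical line (rather than evaluating the residue at the pole itself) would yield $(4\pi e)^{n|m|}$ in place of $(2\pi e)^{n|m|}$, off by a factor of $2^{n|m|}$, so one must really work with the residue expansion and apply Stirling only to the gamma at the \emph{large} argument $1 + |m_l + m|$, never to the gamma sitting at the pole.
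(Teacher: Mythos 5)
Your proposal and the paper take different routes, and the stated motivation for your route rests on a misconception about the paper's. The paper does \emph{not} extract residues at all: it bounds the Mellin--Barnes integrand in absolute value directly on an $m$-adapted contour $\EC_m = \EC_{(\umu,\,\um+m\ue^n)}$ placed at $\Re s \approx -\tfrac 12|m|$ (more precisely, pinched between $\sigma_m-\epsilon$ and $\rho_m+\epsilon$ with $\rho_m = \max_l\{\Re\mu_l - \tfrac 12|m_l+m|\}$). At that location, each numerator factor $\Gamma\!\lp s-\mu_l+\tfrac 12|m_l+m|\rp$ has argument with $O(1)$ real part and is controlled by a standard vertical-strip estimate, while each denominator factor $\Gamma\!\lp 1-s+\mu_l+\tfrac 12|m_l+m|\rp$ has argument with real part $\approx 1+|m|$, so \eqref{1eq: Stirling's formula} applies cleanly and supplies both the $e^{n|m|}$ and the $(|m|+1)^{-n|m|+O(1)}$. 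Combined with $\lp(2\pi)^n x\rp^{-2\Re s} \approx ((2\pi)^n x)^{|m|+O(1)}$, this produces exactly $\lp 2\pi e\, x^{1/n}/(|m|+1)\rp^{n|m|}$. There is no loss of $2^{n|m|}$; your concern at the end of the proposal is based on not noticing that the contour moves with $m$. Once the contour is chosen to track the rightmost pole cluster, the ``naive'' Stirling-on-contour estimate already lands on the sharp constant.

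There is also a more substantive gap in the residue route as sketched. The sum of residues at \emph{all} poles $\mu_l - \tfrac 12|m_l+m| - k$, $k\geq 0$, reproduces the ascending power series for $j_{(\umu,\,\um+m\ue^n)}$, and for $x \gg 1$ this series is not ``dominated by the leading term'': the $k$-th term carries $x^{|m|+2k-2\Re\mu_l}$ and grows with $k$. So item (i) of your outline is only usable for $x \lll 1$. For the $x^{B_+ + 2\epsilon}$ regime one cannot shift past all the poles; one must stop after the first cluster and bound the remaining contour integral --- which is precisely the Stirling-on-contour argument the paper runs, so nothing is gained by extracting the residue first. In other words, item (iii) is not a detachable afterthought: for large $x$ the residue expansion is worse than useless and the whole weight of the proof falls back on the contour estimate. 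The paper's argument handles both $x$-regimes in one shot via $\max\{y^{-2\sigma_m+2\epsilon}, y^{-2\rho_m-2\epsilon}\}$ with both exponents $\approx |m| + O(1)$, and also avoids your item (ii) entirely (coalescing poles, $\log x$ factors) since it never differentiates residues.
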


\begin{proof} 
Let 
\begin{align*}
\rho_m &= \textstyle \max  \left\{ \Re \mu_l      - \frac 1 2 { |m_l      + m | }   \right  \}, \\
\sigma_m &= \min \left\{ \textstyle \frac 1 2 + \frac 1 n \lp \Re |\umu| - \frac 1 2 \left| \| \um + m\ue^n\| \right| - 1 \rp, \rho_m \right \}.
\end{align*}
Choose the contour $\EC_m = \EC _{(\umu, \um + m\ue^n)}$ (see Definition \ref{3defn: C d lambda}) such that
\begin{itemize}
\item[-] if $s \in \EC_m$ and $\Im s $ is sufficiently large, then $\Re s = \sigma_m - \epsilon$, and
\item[-] $\EC_m$ lies in the vertical strip $\BS[ \sigma_m - \epsilon, \rho_m +\epsilon]$.
\end{itemize} 

We first assume that $|m|$ is large enough so that
$$ \textstyle  n \lp \rho_m + \epsilon - \frac 1 2 \rp - \Re |\umu| - \frac 1 2 \left| \| \um + m\ue^n \| \right| < 0.$$
For the sake of brevity, we write $y = (2\pi)^{ n} x $. 
We first bound $\left| j_{(\umu, \um + m\ue^n)} (x) \right|$ by
\begin{equation*}
\begin{split}
 (2\pi)^{n + \Re |\umu|} \int_{\EC_m}  y^{- 2 \Re s} \prod_{l      = 1}^n \left| \frac { \Gamma \lp s - \mu_l      + \frac 1 2 { |m_l      + m| }   \rp} { \Gamma \lp 1 - s + \mu_l      + \frac 1 2 { |m_l      + m |}   \rp } \right| |d s|.
\end{split}
\end{equation*}
With the observations that for $s \in \EC_m$
\begin{itemize}
\item[-] $\Re s \in [\sigma_m - \epsilon, \rho_m +\epsilon]$,
\vskip 2 pt
\item[-] $\left| \Re s - \mu_l      + \frac 1 2 { |m_l      + m| }   \right| \lll_{ (\umu, \um)} 1$, 
\vskip 2 pt
\item[-] $\left| \lp 1 - \Re s + \mu_l      + \frac 1 2  { |m_l      + m| }   \rp - |m| \right| \lll_{ (\umu, \um)} 1$,
\end{itemize}
in conjunction with Stirling's formula \eqref{1eq: Stirling's formula}, we have the following estimate
\begin{equation*}
\begin{split}
j_{(\umu, \um + m\ue^n)} (x) & \lll_{\, (\umu, \um),\, n,\, \epsilon} \max \left\{ y^{- 2 \sigma_m + 2 \epsilon }, y^{- 2 \rho_m - 2 \epsilon } \right \} \\
& \hskip 40 pt \int_{\EC_m} \frac {(|\Im s| + 1)^{ n \lp \Re s - \frac 1 2 \rp - \Re |\umu| + \frac 1 2 \left| \| \um + m\ue^n\| \right|}} {  e^{- n |m|} \big( \sqrt {(\Im s)^2 + m^2} + 1 \big)^{n \lp \frac 1 2 - \Re s \rp + \Re |\umu| + \frac 1 2 \left| \| \um + m\ue^n\| \right| } } |d s| \\
& \leq  \max \left\{ y^{- 2 \sigma_m + 2 \epsilon }, y^{- 2 \rho_m - 2 \epsilon } \right \} e^{n |m|} 
(|m| + 1)^{n \lp \rho_m + \epsilon - \frac 1 2 \rp - \Re |\umu| - \frac 1 2 \left| \| \um + m\ue^n\| \right|} \\
& \hskip 111 pt \int_{\EC_m} (|\Im s| + 1)^{ n \lp \Re s - \frac 1 2 \rp - \Re |\umu| + \frac 1 2 \left| \| \um + m\ue^n\| \right|} |d s|.
\end{split}
\end{equation*}
For $s \in \EC_m$, we have  $\Re s = \sigma_m - \epsilon$ if $\Im s$ is sufficiently large, and our choice of $\sigma_m$ implies
$\textstyle n \lp \sigma_m - \epsilon - \frac 1 2 \rp - \Re |\umu| + \frac 1 2 \left| \| \um + m\ue^n\| \right| \leq - 1 - n \epsilon$, then it follows that the above integral converges and is of size $O_{(\umu, \um), \epsilon, n} (1)$.

Finally, note that both $ - 2\sigma_m +  2 \epsilon$ and $- 2\rho_m -  2 \epsilon$ are close to $ {|m|} $, whereas the exponent of $(|m| + 1)$, that is $ n \lp  \rho_m + \epsilon - \frac 1 2 \rp - \Re |\umu| - \frac 1 2 \left| \| \um + m\ue^n\| \right| $, is close to $- n |m|$.
Thus the following bounds yield \eqref{2eq: bound of the Bessel kernel, C},
\begin{equation*}
\begin{split}
& \textstyle {|m|} + B_-  \leq - 2 \rho_m  \leq - 2 \sigma_m  \leq  {|m|}   + B_+, \\
&  \textstyle n \lp \rho_m - \frac 1 2 \rp - \Re |\umu| - \textstyle \frac 1 2 \left| \| \um + m\ue^n\| \right| \leq - n |m| + A.
\end{split}
\end{equation*}
When $|m|$ is small, we have the following estimate that also implies \eqref{2eq: bound of the Bessel kernel, C},
\begin{equation*}
j_{(\umu, \um + m\ue^n)} (x) \lll_{ (\umu, \um),\, \epsilon,\, n}
\max \left\{ y^{- 2 \sigma_m + 2 \epsilon }, y^{- 2 \rho_m - 2 \epsilon } \right \} e^{n |m|}.
\end{equation*}
\end{proof}

Using the formula \eqref{3eq: rewrite G m} of $G_m (s)$ instead of \eqref{1def: G m (s)} and the  Barnes type integral representation \eqref{3eq: Bessel kernel, analytic continuation, 2} for $j_{(\umu, \um + m\ue^n)} (\zeta)$ instead of the  Mellin-Barnes type integral representation \eqref{3def: Bessel kernel j mu m} for  $j_{(\umu, \um + m\ue^n)} (x)$, similar arguments in the proof of Lemma \ref{3lem: bound of the Bessel kernel, C} imply the following lemma.

\begin{lem}\label{3lem: bound of the Bessel kernel, analytic continuation, C}
Let $(\umu, \um) \in \BC^{n} \times  \BZ ^n$ and $m \in \BZ$. 
Put 
\begin{align*}
& A =\textstyle  n \lp \max  \{ \Re \mu_l      \} + \frac 1 2 \max  \{ |m_l     | \} - \frac 1 2 \rp - \Re |\umu| + \frac 1 2 |\|\um\||, \\
& B =\textstyle   - 2 \max  \{ \Re \mu_l      \} - \max  \{ |m_l     | \},  \hskip 10 pt C =\textstyle 2 \max  \{ |\Im \mu_l     | \}.
\end{align*} 
Fix $X > 0$ and $ \epsilon > 0$. Then 
\begin{equation*}
\begin{split}
j_{(\umu, \um + m\ue^n)} \lp x e^{i\omega} \rp \lll_{(\umu, \um),\, X,\, \epsilon, \, n} &\ \lp \frac { 2 \pi e x^{\frac 1 { n}}} {|m| + 1} \rp^{n |m|}  (|m| + 1)^{A + n \epsilon} x^{B + 2\epsilon } e^{ |\omega| ( C + 2 \epsilon)}
\end{split}
\end{equation*}
for all $x < X$.
\end{lem}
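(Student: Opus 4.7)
The plan is to mirror the proof of Lemma \ref{3lem: bound of the Bessel kernel, C}, making the two substitutions prescribed by the hint: the Barnes--Mellin representation \eqref{3def: Bessel kernel j mu m} is replaced by the Barnes contour integral \eqref{3eq: Bessel kernel, analytic continuation, 2}, and each factor of $G_{(\umu, \um + m \ue^n)}(s)$ is rewritten using the reflected form \eqref{3eq: rewrite G m} so that the Gamma functions sit only in the denominator. Set $\rho_m = \max\{\Re\mu_l - \tfrac 1 2 |m_l + m|\}$ and choose the contour $\EC' = \EC'_{\umu - \frac{1}{2}\|\um + m\ue^n\|}$ with its two horizontal infinite half-lines running along $\Im s = \pm T_0$, $T_0 = \max\{|\Im\mu_l|\} + \epsilon$, joined by a short arc near $\Re s = \rho_m + \epsilon$ that stays at distance $\geq r > 0$ from every pole of $G_{(\umu, \um + m\ue^n)}$. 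Since $\zeta = xe^{i\omega}$ gives $|\zeta^{-2s}| = x^{-2\Re s} e^{2\omega\Im s}$, the uniform bound $|\Im s| \leq T_0 \leq \tfrac 1 2 C + \epsilon$ valid on the whole of $\EC'$ immediately produces the factor $e^{|\omega|(C + 2\epsilon)}$.

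On the horizontal half-lines $s = \tau \pm i T_0$ with $\tau = -\varrho \to -\infty$, the choice $T_0 > \max\{|\Im\mu_l|\}$ keeps $|\sin(\pi(s - \mu_l + \tfrac 1 2|m_l + m|))| \gtrsim_\umu 1$ via $|\sin(\pi w)|^2 = \sin^2(\pi\Re w) + \sinh^2(\pi\Im w)$. Stirling's formula \eqref{1eq: Stirling's formula} applied to the two denominator Gammas $\Gamma(1 - s + \mu_l \pm \tfrac 1 2 |m_l+m|)$ shows that $|G_{(\umu, \um + m\ue^n)}(s)|$ decays super-exponentially in $\varrho$; combined with $x^{-2\Re s} = x^{2\varrho}$, the integrand $|G(s)\zeta^{-2s}|$ is controlled by $(2\pi e x^{1/n}/\varrho)^{2n\varrho}$ up to polynomial factors in $\varrho$ and $(|m|+1)$, hence is absolutely integrable for any $x > 0$.

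The main term will emerge from the regime $\varrho \sim \tfrac 1 2 |m|$, where the two denominator Gammas become asymmetric: $\Gamma(1 + \varrho - \tfrac 1 2|m_l+m| + O(1))$ is bounded (argument of size $O(1)$) while $\Gamma(1 + \varrho + \tfrac 1 2|m_l+m| + O(1))$ has size of order $|m|^{|m|} e^{-|m|}$ by Stirling. Taking the product over $l = 1, \ldots, n$ produces the factor $(2\pi e / (|m|+1))^{n|m|}$, and coupling with $x^{2\varrho} = x^{|m|}$ yields the prefactor $(2\pi e x^{1/n}/(|m|+1))^{n|m|}$. Polynomial corrections from Stirling give the factor $(|m|+1)^{A + n\epsilon}$ exactly as in Lemma \ref{3lem: bound of the Bessel kernel, C}, while the exponent $x^{B + 2\epsilon}$ arises from a careful bookkeeping of the $\epsilon$-shift at the rightmost extent of $\EC'$, absorbing bounded $O_{(\umu, \um), X}(1)$ factors into the implicit constant through the standing hypothesis $x < X$.

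The main technical obstacle will be controlling the integrand on the horizontal half-lines as $\Re s \to -\infty$: the factor $x^{-2\Re s}$ grows unboundedly for $x < 1$, and only the super-exponential decay of the Gammas now sitting in the denominator (made visible precisely by the rewriting \eqref{3eq: rewrite G m}) can tame it; with the original form \eqref{1def: G m (s)} the large Gammas would sit in the numerator and the Barnes integral would diverge outright. A secondary subtlety is handling the asymmetric regime $\varrho \sim \tfrac 1 2 |m|$, where the two denominator Gamma factors have wildly different magnitudes and Stirling must be applied to each separately, all while keeping the dependence on $(\umu, \um, m)$ uniform.
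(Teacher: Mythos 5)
Correct --- this follows exactly the paper's own indicated route: there is no written proof of this lemma in the text, only the one-sentence instruction to rerun the argument of Lemma \ref{3lem: bound of the Bessel kernel, C} with \eqref{3eq: rewrite G m} in place of \eqref{1def: G m (s)} and the Barnes contour representation \eqref{3eq: Bessel kernel, analytic continuation, 2} in place of the Barnes--Mellin one \eqref{3def: Bessel kernel j mu m}. Your elaboration correctly locates each of the mechanisms those substitutions make operative: the $\Im s$-bound on $\EC'$ producing $e^{|\omega|(C+2\epsilon)}$, the lower bound on the sine factor via $T_0 > \max\{|\Im\mu_l|\}$ so that only the denominator Gammas govern growth, the super-exponential decay of the reciprocal Gammas along the horizontal arms taming $x^{-2\Re s}$, the Stirling asymmetry near $\Re s \approx \rho_m$ yielding $(2\pi e x^{1/n}/(|m|+1))^{n|m|}$, and the role of $x<X$ in absorbing bounded factors into the $X$-dependent implicit constant.
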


\subsection {\texorpdfstring{The Hankel transform $\Hld$ and the Bessel kernel $J_{(\umu, \udelta)} $}{The Hankel transform $H_{(\mu, \delta)} $ and the Bessel kernel $J_{(\mu, \delta)} $}}


\subsubsection{\texorpdfstring{The definition of  $ \Hld$}{The definition of  $H_{(\mu, \delta)} $}}

Consider the ordered set $(\BC \times \BZT , \preccurlyeq )$ and define $( \mu_{\alpha, \beta}, \delta_{\alpha, \beta} ) = ( \mu, \delta )_{\alpha, \beta}$,  $B_\alpha$, $M_{\alpha, \beta}$ and $N_{\alpha, \beta}$ as in Definition \ref{3defn: ordered set} corresponding to $( \umu, \udelta ) \in ( \BC \times \BZ / 2\BZ)^n$. We define the following subspaces of $ \Ssis (\BRx)$,
\begin{equation}\label{3eq: Ssis (lambda, delta) delta, R}
\Ssis ^{( \umu, \udelta ), \delta} (\BRx) = \sum_{ \alpha = 1}^A \sum_{\beta = 1}^{B_\alpha}  \sum_{j=0}^{N_{\alpha, \beta} - 1} \sgn(x)^{ \delta_{\alpha, \beta}} |x|^{ - \mu_{\alpha, \beta}} (\log |x|)^{j } \SS_{\delta_{\alpha, \beta} + \delta } ( \BR ).
\end{equation}
\begin{equation}\label{3eq: Ssis (lambda, delta), R}
\begin{split}
\Ssis ^{( \umu, \udelta )}  (\BRx) = & \Ssis ^{( \umu, \udelta ), 0} (\BRx) \oplus \Ssis ^{( \umu, \udelta ), 1} (\BRx) \\
= & \sum_{ \alpha = 1}^A \sum_{\beta = 1}^{B_\alpha}  \sum_{j=0}^{N_{\alpha, \beta} - 1} \sgn(x)^{\delta_{\alpha, \beta}} |x|^{ - \mu_{\alpha, \beta}} (\log |x|)^{j } \SS ( \BR ).
\end{split}
\end{equation}

From the definition  of $\Ssiss^{\ulambda} (\BR _+)$ in \eqref{3eq: Ssis2, R+}, together with $\SS_{\delta} (\BR) = \sgn (x)^\delta \SS_{\delta} (\overline \BR _+)$ and $\SS_{\delta} (\overline \BR _+) = x^{\delta} \SS_{0} (\overline \BR _+) $, we have
\begin{equation}\label{3eq: Ssis (lambda, delta) delta = Ssis2}
\Ssis ^{( \umu, \udelta ), \delta} (\BRx) = \sgn(x)^{\delta} \Ssiss^{\umu - (\udelta + \delta \ue^n)} (\BR _+).
\end{equation}

The following theorem gives the definition of the Hankel transform $\Hld$, which maps $\Ssis^{ (- \umu, \udelta) } (\BRx)$    onto  $\Ssis^{( \umu, \udelta) } (\BRx)$ bijectively.

\begin{thm} \label{3prop: H (lambda, delta)}
Let $(\umu, \udelta) \in \BC^{n} \times (\BZ/2 \BZ)^n$. Suppose $ \upsilon \in \Ssis^{ (- \umu, \udelta) } (\BRx)$.  Then there exists a unique function $\Upsilon \in \Ssis^{( \umu, \udelta) } (\BRx)$ satisfying the following two identities,
\begin{equation}\label{3eq: Hankel transform identity, R}
\EM _\delta \Upsilon (s ) = G_{(\umu,  \udelta + \delta \ue^n)} (s) \EM _\delta \upsilon ( 1 - s), \hskip 10 pt \delta \in \BZ/2 \BZ.
\end{equation}
We call $\Upsilon$ the Hankel transform of  $\upsilon$ over $\BRx$ of index $(\umu, \udelta)$ and  write $\Hld \upsilon  = \Upsilon$. Moreover, we have the Hankel inversion formula
\begin{equation}\label{3eq: Hankel inversion, R}
\Hld \upsilon  = \Upsilon, \hskip 10 pt \Hmld \Upsilon  = \upsilon.
\end{equation}
\end{thm}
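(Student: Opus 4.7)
The plan is to reduce the statement to the analogous result over $\BR_+$, namely Proposition \ref{3prop: h (lambda, delta)}, by decomposing $\upsilon$ into its even and odd components. Recall from \eqref{3eq: Ssis (lambda, delta), R} that $\Ssis^{(-\umu, \udelta)}(\BRx) = \Ssis^{(-\umu, \udelta), 0}(\BRx) \oplus \Ssis^{(-\umu, \udelta), 1}(\BRx)$, so $\upsilon = \upsilon^{(0)} + \upsilon^{(1)}$ uniquely, and by \eqref{3eq: Ssis (lambda, delta) delta = Ssis2} each component has the form $\upsilon^{(\delta)}(x) = \sgn(x)^\delta \upsilon_\delta(|x|)$ with $\upsilon_\delta \in \Ssiss^{-\umu - (\udelta + \delta \ue^n)}(\BR_+)$.

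Next, I apply the Hankel transform $h_{(\umu, \udelta + \delta \ue^n)}$ from Proposition \ref{3prop: h (lambda, delta)} to each $\upsilon_\delta$, obtaining $\Upsilon_\delta = h_{(\umu, \udelta + \delta \ue^n)} \upsilon_\delta \in \Ssiss^{\umu - (\udelta + \delta \ue^n)}(\BR_+)$, and set $\Upsilon^{(\delta)}(x) = \sgn(x)^\delta \Upsilon_\delta(|x|)$, $\Upsilon = \Upsilon^{(0)} + \Upsilon^{(1)}$. By \eqref{3eq: Ssis (lambda, delta) delta = Ssis2} again, $\Upsilon^{(\delta)} \in \Ssis^{(\umu, \udelta), \delta}(\BRx)$, so $\Upsilon \in \Ssis^{(\umu, \udelta)}(\BRx)$ as required. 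To verify \eqref{3eq: Hankel transform identity, R}, note that $\upsilon^{(\delta)}$ and $\Upsilon^{(\delta)}$ are precisely the even/odd parts appearing in \eqref{1eq: upsilon delta}, so Observation \ref{observ: Mellin} gives $\EM_\delta \upsilon = 2\EM \upsilon_\delta$ and $\EM_\delta \Upsilon = 2\EM \Upsilon_\delta$. Then the defining identity \eqref{3eq: Hankel transform identity 1, R+} for $h_{(\umu, \udelta + \delta \ue^n)}$ yields
\begin{equation*}
\EM_\delta \Upsilon(s) = 2 \EM \Upsilon_\delta(s) = 2 G_{(\umu, \udelta + \delta \ue^n)}(s) \EM \upsilon_\delta(1-s) = G_{(\umu, \udelta + \delta \ue^n)}(s) \EM_\delta \upsilon(1-s).
\end{equation*}

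For uniqueness, suppose $\Upsilon' \in \Ssis^{(\umu, \udelta)}(\BRx)$ also satisfies \eqref{3eq: Hankel transform identity, R}. Splitting $\Upsilon'$ into its even and odd parts via \eqref{1eq: upsilon delta} and using $\EM_\delta \Upsilon' = 2 \EM \Upsilon'_\delta$, the identity becomes $\EM \Upsilon'_\delta(s) = G_{(\umu, \udelta + \delta \ue^n)}(s) \EM \upsilon_\delta(1-s)$ for each $\delta$, which by the uniqueness clause of Proposition \ref{3prop: h (lambda, delta)} (equivalently by the isomorphism in Lemma \ref{2lem: Ssis to Msis, R}) forces $\Upsilon'_\delta = \Upsilon_\delta$ and hence $\Upsilon' = \Upsilon$. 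Finally, the inversion formula \eqref{3eq: Hankel inversion, R} follows by applying the same construction with $(\umu, \udelta)$ replaced by $(-\umu, \udelta)$ and invoking the componentwise inversion \eqref{3eq: Hankel inversion, R 0} of Proposition \ref{3prop: h (lambda, delta)}.

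I do not anticipate a real obstacle here: the entire proof is bookkeeping that transports Proposition \ref{3prop: h (lambda, delta)} through the parity decomposition, the only mild care being to match up the index shift $\udelta \mapsto \udelta + \delta \ue^n$ correctly on both sides of the transform, which is precisely what is needed because the $\delta$-th signed Mellin transform of a function on $\BRx$ reads off the $\delta$-parity piece, and the latter has its singularity exponent twisted by $\delta \ue^n$ in the identification \eqref{3eq: Ssis (lambda, delta) delta = Ssis2}.
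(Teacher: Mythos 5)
Your proof is correct and follows essentially the same route as the paper's: decompose $\upsilon$ by parity via $\upsilon_\delta$, push each piece through $\hh_{(\umu,\udelta+\delta\ue^n)}$ using Proposition \ref{3prop: h (lambda, delta)} together with the identification \eqref{3eq: Ssis (lambda, delta) delta = Ssis2} and the relation $\EM_\delta\upsilon = 2\EM\upsilon_\delta$, then reassemble. The uniqueness and inversion arguments also match the paper.
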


\begin{proof}
Recall that 
$$\EM_\delta \upsilon (s) = 2 \EM \upsilon_\delta (s).$$
In view of \eqref{3eq: Ssis (lambda, delta) delta = Ssis2}, one has $\upsilon_\delta \in \Ssiss^{- \umu - (\udelta + \delta \ue^n)} (\BR _+)$. Applying Proposition \ref{3prop: h (lambda, delta)}, there is a unique function $\Upsilon_\delta \in \Ssiss^{ \umu - (\udelta + \delta \ue^n)} (\BR _+)$ satisfying 
\begin{equation*}
\EM  \Upsilon_\delta (s ) = G_{(\umu, \udelta + \delta \ue^n)} (s) \EM  \upsilon_\delta ( 1 - s).
\end{equation*}
According to \eqref{3eq: Ssis (lambda, delta) delta = Ssis2}, $\Upsilon (x) = \Upsilon_0 (|x|) + \sgn (x)\Upsilon_1 (|x|)$ lies in $\Ssis^{( \umu, \udelta), 0} (\BRx) \oplus \Ssis^{( \umu, \udelta), 1} (\BRx) = \Ssis^{( \umu, \udelta) } (\BRx)$. Clearly, $\Upsilon$ satisfies \eqref{3eq: Hankel transform identity, R}. Moreover, \eqref{3eq: Hankel inversion, R} follows immediately from \eqref{3eq: Hankel inversion, R 0} in Proposition  \ref{3prop: h (lambda, delta)}.
\end{proof}

\begin{cor} \label{3cor: H = h, R}
Let $(\umu, \udelta) \in \BC^{n} \times (\BZT)^n$ and $\delta \in \BZT$. Suppose that $\varphi \in \Ssiss^{- \umu - (\udelta + \delta \ue^n)} (\BR _+)$ and $ \upsilon (x) = \sgn (x)^{\delta} \varphi (|x|)$.  Then
\begin{equation*}
\Hld \upsilon (\pm x) = (\pm)^{\delta } \hh_{(\umu, \udelta + \delta \ue^n)} \varphi (x), \hskip 10 pt x \in \BR _+.
\end{equation*}
\end{cor}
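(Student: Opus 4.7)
\medskip

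The plan is to exploit the parity of $\upsilon$ to reduce the two defining Mellin identities for $\Hld$ in Theorem \ref{3prop: H (lambda, delta)} to the single Mellin identity for $\hh_{(\umu, \udelta+\delta\ue^n)}$ in Proposition \ref{3prop: h (lambda, delta)}, and then use Mellin injectivity to identify the two transforms on $\BR_+$.

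First, I would record the parity of $\upsilon$. Since $\upsilon(x)=\sgn(x)^\delta\varphi(|x|)$, it satisfies $\upsilon(-x)=(-)^\delta\upsilon(x)$, so the opposite-parity component $\upsilon_{\delta+1}$ vanishes and $\upsilon_\delta(x)=\varphi(x)$ for $x\in\BR_+$. In terms of the signed Mellin transforms this gives
\begin{equation*}
\EM_{\delta+1}\upsilon(s)=0, \hskip 10 pt \EM_\delta\upsilon(s)=2\EM\varphi(s).
\end{equation*}
Let $\Upsilon=\Hld\upsilon\in\Ssis^{(\umu,\udelta)}(\BRx)$. Specializing the identity \eqref{3eq: Hankel transform identity, R} to $\delta'=\delta+1$ (mod $2$) gives $\EM_{\delta+1}\Upsilon(s)=0$, so $\Upsilon_{\delta+1}=0$ by Lemma \ref{2lem: Ssis to Msis, R}, i.e.\ $\Upsilon$ has parity $\delta$. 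Hence there exists $\Upsilon_+\in\Ssiss^{\umu-(\udelta+\delta\ue^n)}(\BR_+)$ with $\Upsilon(\pm x)=(\pm)^\delta\Upsilon_+(x)$ for $x\in\BR_+$, and $\EM_\delta\Upsilon(s)=2\EM\Upsilon_+(s)$.

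Next I would combine these with the remaining defining identity \eqref{3eq: Hankel transform identity, R} for $\delta'=\delta$:
\begin{equation*}
2\EM\Upsilon_+(s)=\EM_\delta\Upsilon(s)=G_{(\umu,\,\udelta+\delta\ue^n)}(s)\,\EM_\delta\upsilon(1-s)=2\,G_{(\umu,\,\udelta+\delta\ue^n)}(s)\,\EM\varphi(1-s).
\end{equation*}
Thus $\EM\Upsilon_+(s)=G_{(\umu,\,\udelta+\delta\ue^n)}(s)\,\EM\varphi(1-s)$, which is precisely the defining identity \eqref{3eq: Hankel transform identity 1, R+} characterizing $\hh_{(\umu,\,\udelta+\delta\ue^n)}\varphi$. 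By the uniqueness clause of Proposition \ref{3prop: h (lambda, delta)} (equivalently, by the isomorphism in Lemma \ref{2lem: Sis and Msis, R+}), I conclude $\Upsilon_+=\hh_{(\umu,\,\udelta+\delta\ue^n)}\varphi$, which upon substitution into $\Upsilon(\pm x)=(\pm)^\delta\Upsilon_+(x)$ is the asserted identity.

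There is no essential obstacle here; the statement is a bookkeeping corollary of Theorem \ref{3prop: H (lambda, delta)}. The only point that requires care is the parity reduction: one must verify that $\upsilon\in\Ssis^{(-\umu,\udelta),\delta}(\BRx)$ (which is built into the hypothesis $\varphi\in\Ssiss^{-\umu-(\udelta+\delta\ue^n)}(\BR_+)$ via the identification \eqref{3eq: Ssis (lambda, delta) delta = Ssis2}) forces $\Upsilon$ to lie in $\Ssis^{(\umu,\udelta),\delta}(\BRx)$, so that the problem on $\BRx$ genuinely collapses to the problem on $\BR_+$ treated in Proposition \ref{3prop: h (lambda, delta)}.
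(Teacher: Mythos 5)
Your proposal is correct and follows essentially the same route the paper uses: the corollary has no explicit proof in the text because it is immediate from the construction in the proof of Theorem \ref{3prop: H (lambda, delta)}, where $\Hld\upsilon$ is built precisely by applying $\hh_{(\umu,\udelta+\delta\ue^n)}$ to the parity components $\upsilon_\delta$ and recombining via \eqref{3eq: Ssis (lambda, delta) delta = Ssis2}. Your argument — killing the opposite-parity component through $\EM_{\delta+1}\upsilon=0$ and then matching \eqref{3eq: Hankel transform identity, R} against \eqref{3eq: Hankel transform identity 1, R+} on the remaining component, invoking the uniqueness in Proposition \ref{3prop: h (lambda, delta)} — is just the "verification" direction of the same calculation and is sound.
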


\subsubsection{The Bessel kernel $J_{(\umu, \udelta)} $}

Let $(\umu, \udelta) \in \BC^{n} \times (\BZ/2 \BZ)^n$.
We define
\begin{equation}\label{3def: Bessel function, R, 0}
\begin{split}
J_{(\umu, \udelta)} \lp \pm x \rp = \frac 1 2 \sum_{\delta \in \BZ/ 2\BZ} (\pm)^{ \delta} j_{(\umu, \udelta + \delta \ue^n)} (x), \hskip 10 pt x \in \BR_+, 
\end{split}
\end{equation}
or equivalently,
\begin{equation}\label{3def: Bessel function, R}
\begin{split}
J_{(\umu, \udelta)} \lp x \rp = \frac 1 2 \sum_{\delta \in \BZ/ 2\BZ} \sgn (x)^{ \delta} j_{(\umu, \udelta + \delta \ue^n)} (|x|), \hskip 10 pt x \in \BRx.
\end{split}
\end{equation}
Some properties of $J_{(\umu, \udelta)}$ are summarized as below.

\begin{prop}\label{3prop: properties of J, R}
Let $(\umu, \udelta) \in \BC^{n} \times (\BZT)^n$.

{\rm (1).} Let $(\mu, \delta) \in \BC\times \BZT$. We have
\begin{equation*}
J_{(\umu - \mu \ue^n, \udelta - \delta \ue^n)} (x) = \sgn (x)^\delta |x|^{\mu} J_{(\umu, \udelta)} (x).
\end{equation*}

{\rm(2).} $J_{(\umu, \udelta)} (x)$  is a real analytic function of $x$ on $\BRx$ as well as an analytic function of $\umu$ on $\BC^{n}$.

{\rm(3).} Assume that $\umu$ satisfies the  condition
\begin{equation}\label{3eq: condition on lambda}
\textstyle \min  \left\{ \Re \mu_l      \right\} + 1 > \max  \left\{ \Re \mu_l      \right\}.
\end{equation}
Then for $ \upsilon \in \Ssis^{ (- \umu, \udelta) } (\BRx)$ 
\begin{equation}\label{3eq: Hankel transform, with Bessel kernel, R}
\Hld \upsilon (x) = \int_{\BR ^\times} \upsilon (y) J_{(\umu, \udelta)} (xy ) d y.
\end{equation}
Moreover, if $\upsilon \in \SS (\BRx)$, then \eqref{3eq: Hankel transform, with Bessel kernel, R} remains true for any index $ \umu \in \BC^n $.
\end{prop}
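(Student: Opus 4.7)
The strategy is to reduce each of (1)--(3) to the already-established properties of the Bessel kernels $j_{(\umu, \udelta')}$ on $\BR_+$ from \S\ref{sec: Bessel kernel j lambda delta}, via the defining parity decomposition \eqref{3def: Bessel function, R} together with the isomorphism $\Ssis^{(\umu,\udelta),\delta}(\BRx) = \sgn(x)^\delta \Ssiss^{\umu-(\udelta+\delta\ue^n)}(\BR_+)$ of \eqref{3eq: Ssis (lambda, delta) delta = Ssis2} and Corollary \ref{3cor: H = h, R}. Since \eqref{3def: Bessel function, R} expresses $J_{(\umu,\udelta)}$ as a $\BZT$-average of parity-twisted $j_{(\umu, \udelta + \delta'\ue^n)}$'s, every claim about $J$ on $\BRx$ reduces to a pair of claims about $j$ on $\BR_+$.

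For (1), I substitute $\umu \mapsto \umu - \mu\ue^n$ and $\udelta \mapsto \udelta - \delta\ue^n$ on the right-hand side of \eqref{3def: Bessel function, R}, reindex the summation variable $\delta' \mapsto \delta'' := \delta' - \delta$ in $\BZT$, and apply the homogeneity \eqref{3eq: normalize j (lambda, delta)} to each summand; the factor $\sgn(x)^{\delta+\delta''} = \sgn(x)^\delta \sgn(x)^{\delta''}$ then pulls out to yield $\sgn(x)^\delta |x|^\mu J_{(\umu,\udelta)}(x)$. For (2), the regularity statements of \S\ref{sec: Bessel kernel j lambda delta} assert that each $j_{(\umu,\udelta+\delta'\ue^n)}(|x|)$ is real analytic in $x\in\BR_+$ and analytic in $\umu\in\BC^n$, so the finite sum \eqref{3def: Bessel function, R} inherits these properties on $\BRx$, once one uses that $\sgn(x)^{\delta'}$ is locally constant there.

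Part (3) is the substantive step. I first observe that \eqref{3eq: condition on lambda}, namely $\min_l\Re\mu_l + 1 > \max_l\Re\mu_l$, implies the condition \eqref{3eq: condition on (lambda, delta), 0} for each shifted index $(\umu, \udelta + \delta\ue^n)$, $\delta\in\BZT$, since the corresponding mod-$2$ parities all lie in $\{0,1\}$ and can therefore only enlarge the left-hand side and shrink the right-hand side relative to the bare $\Re\mu_l$'s. Given $\upsilon\in\Ssis^{(-\umu,\udelta)}(\BRx)$, I decompose $\upsilon(x) = \upsilon_0(|x|) + \sgn(x)\upsilon_1(|x|)$ with $\upsilon_\delta\in\Ssiss^{-\umu-(\udelta+\delta\ue^n)}(\BR_+)$ as in \eqref{1eq: upsilon delta} and \eqref{3eq: Ssis (lambda, delta) delta = Ssis2}. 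Corollary \ref{3cor: H = h, R} and linearity then give $\Hld\upsilon(x) = \sum_{\delta\in\BZT} \sgn(x)^\delta \hh_{(\umu,\udelta+\delta\ue^n)}\upsilon_\delta(|x|)$, and \eqref{3eq: integral kernel, R 0} rewrites each $\hh$-summand as $\int_0^\infty \upsilon_\delta(y) j_{(\umu,\udelta+\delta\ue^n)}(|x|y)\, dy$. Substituting $\upsilon_\delta(y) = \tfrac12\bigl(\upsilon(y)+(-)^\delta\upsilon(-y)\bigr)$, unfolding the $\BR_+$-integrals back to $\BR^\times$ via $y\mapsto -y$, and recognizing the resulting $\BZT$-sum through \eqref{3def: Bessel function, R 0} produces $\int_{\BR^\times}\upsilon(y)J_{(\umu,\udelta)}(xy)\, dy$, which is \eqref{3eq: Hankel transform, with Bessel kernel, R}. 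When $\upsilon\in\SS(\BRx)$, the concluding remark of \S\ref{sec: Bessel kernel j lambda delta} removes the need for \eqref{3eq: condition on (lambda, delta), 0} in \eqref{3eq: integral kernel, R 0}, so the restriction \eqref{3eq: condition on lambda} on $\umu$ can be dropped entirely. The only real care lies in the bookkeeping of indices modulo $2$ during the parity decomposition; no genuine obstacle is anticipated.
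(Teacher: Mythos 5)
Your proof is correct, and it is the direct real analogue of the argument the paper gives for the complex companion statement (Proposition \ref{3prop: properties of J, C}): decompose $\upsilon$ into its $\BZT$-parity pieces via \eqref{3eq: Ssis (lambda, delta) delta = Ssis2}, push the Hankel transform through Corollary \ref{3cor: H = h, R} to land on the $\BR_+$-kernels $j_{(\umu,\udelta+\delta\ue^n)}$, and reassemble via \eqref{3def: Bessel function, R, 0}. This is exactly the route the paper leaves implicit for the real case, so no gap and no genuinely different strategy.
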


\begin{example}\label{3ex: Bessel n=1, R}
For $n=1$, we have 
$$J_{(0, 0)} (x) =  e(x).$$
For $n = 2$, \eqref{3eq: j (lambda, delta) and fundamental} and \eqref{3def: Bessel function, R, 0} yield 
\begin{equation*}
J_{(\mu, - \mu, \delta, 0)} (\pm x) = J \lp 2\pi \sqrt x; +, \pm , \mu, - \mu \rp + (-)^{\delta } J \lp 2\pi \sqrt x; -, \mp, \mu, - \mu \rp\end{equation*}
for $x \in \BR _+$, $\mu \in \BC$ and $\delta \in \BZT$.
In view of Example {\rm \ref{ex: fundamental Bessel, n=1, 2}}, for $x \in \BR _+$,  we have
\begin{equation*}
J_{(\mu, - \mu, \delta, 0)} (x)  = \left\{ 
\begin{split}
& - \frac {\pi } {\sin (\pi \mu)} \lp J_{2 \mu} (4\pi \sqrt x ) - J_{-2 \mu} (4\pi \sqrt x ) \rp, \hskip 10 pt \text{ if } \delta = 0, \\
&  \frac {\pi i} {\cos (\pi \mu)} \lp J_{2 \mu} (4\pi \sqrt x) + J_{-2 \mu} (4\pi \sqrt x ) \rp , \hskip 21 pt \text{ if } \delta = 1,
\end{split}
\right.
\end{equation*}
where the right hand side is replaced by its limit if  $2 \mu \in  \delta + 2\BZ$, and
\begin{equation*} 
J_{(\mu, - \mu, \delta, 0)} (-x) = 
\left\{ 
\begin{split}
& 4 \cos (\pi \mu) K_{2 \mu} (4 \pi \sqrt x ), \hskip 24 pt \text{ if } \delta = 0,\\
& - 4 i \sin (\pi \mu) K_{2 \mu} (4 \pi  \sqrt x ), \hskip 11 pt \text{ if } \delta = 1.
\end{split}
\right.
\end{equation*}
Observe that for $m \in \BN $
\begin{equation*}
J_{\lp \frac 12 m , - \frac 12 m ,\, \delta (m) + 1, 0 \rp} (x) = 2 \pi i^{m+1} J_m (4\pi \sqrt x), \hskip 10 pt J_{\lp \frac 12  m , - \frac 12 m ,\, \delta (m) + 1, 0 \rp} (-x) = 0.
\end{equation*}
\end{example}

\delete{
\begin{defn}
	We define the Rankin-Selberg convolution of $(\umu, \udelta) \in \BC^n \times (\BZT)^n$ and $(\umu', \udelta') \in \BC^{n'} \times (\BZT)^{n'}$ as $ (\umu, \udelta) \times (\umu', \udelta') = \big(\mu_l + \mu_{l'}', \delta_l + \delta_{l'}' \big)_{\sstyle l = 1,..., n \atop \sstyle l' = 1, ..., n'} \in \BC^{n n'} \times (\BZT)^{n n'}$. 
\end{defn}

\begin{example}
	Given $\umu \in \BC^n$ and $m \in \BN$, by Lemma {\rm \ref{1lem: complex and real gamma factors}}, we observe that $i^n G_{(\umu, m \ue^n)} (s)$ is the gamma factor for the Rankin-Selberg convolution of $(\umu, \udelta) $, with $\udelta \in (\BZT)^n$ arbitrary, and $\lp \frac 12 m , - \frac 12 m ,\, \delta (m) + 1, 0 \rp$ {\rm (}or $\lp \frac 12 m , - \frac 12 m ,\, \delta (m)  , 1 \rp${\rm )}. As a shorthand notation, let us write   the Rankin-Selberg Bessel kernel as  $J_{(\umu, \udelta), m} $. Then  $J_{(\umu, \udelta), m} $ vanishes identically on $- \BR_+$ and $J_{(\umu, \udelta), m} (x) = i^n j_{(\umu, m \ue^n)} (\sqrt x)$ for $x \in \BR_+$. 
	
	From the Mellin-Barnes type integral representation of $j_{(\umu, m \ue^n)} (x)$ {\rm (}see \eqref{3def: Bessel kernel j mu m}{\rm )}, it can be easily seen {\rm (}compare Lemma {\rm \ref{3lem: bound of the Bessel kernel, C}}{\rm )} that
	\begin{align*}
	x^j J_{(\umu, \udelta), m}^{(j)} (x) \lll_{ \umu, m, j, \epsilon, n} \max \left\{ x^{ \frac 1 2 m - \Re \mu_l - \epsilon} \right\}
	\end{align*}
	for $x \lll 1$. With the notations in Definition {\rm \ref{3defn: ordered set}}, we have a slightly better bound by left shifting the contour of integration,
	\begin{align*}
	x^j J_{(\umu, \udelta), m}^{(j)} (x) \lll_{ \umu, m, j,  n} \max \left\{ (\log x)^{M_{\alpha, 1} - 1} x^{ \frac 1 2 m - \Re \mu_{\alpha, 1} } \right\}_{\alpha = 1}^A
	\end{align*}
	for $x \lll 1$. See the remarks after \cite[Corollary 7.13]{Qi}.
\end{example}
}

\subsection {\texorpdfstring{The Hankel transform $ \Hmum$ and the Bessel kernel $J_{(\umu, \um)} $}{The Hankel transform $ H_{(\mu, m)}$ and the Bessel kernel $J_{(\mu, m)} $}}

\subsubsection{\texorpdfstring{The definition of  $ \Hmum$}{The definition of $ H_{(\mu, m)}$}}

Consider now the ordered set $(\BC \times \BZ , \preccurlyeq )$ and define $( 2 \mu_{\alpha, \beta}, m_{\alpha, \beta} ) = ( 2 \mu, m )_{\alpha, \beta}$,  $B_\alpha$, $M_{\alpha, \beta}$ and $N_{\alpha, \beta}$ as in Definition \ref{3defn: ordered set} corresponding to $( 2 \umu, \um ) \in ( \BC \times \BZ )^n$. We define the following subspace of $ \Ssis (\BCx)$,
\begin{equation}\label{3eq: Ssis (mu, m), C}
\begin{split}
\Ssis ^{( \umu, \um )}  (\BCx) = \sum_{ \alpha = 1}^A \sum_{\beta = 1}^{B_\alpha}  \sum_{j=0}^{N_{\alpha, \beta} - 1} [z]^{- m_{\alpha, \beta}} \|z\|^{ -  \mu_{\alpha, \beta}} (\log |z|)^{j } \SS ( \BC ).
\end{split}
\end{equation}
The projection via the $m$-th Fourier coefficient maps $\Ssis ^{( \umu, \um )}  (\BCx) $ onto the space
\begin{equation}\label{3eq: Ssis (mu, m) m, C}
\Ssis ^{( \umu, \um ), m} (\BCx) = \sum_{ \alpha = 1}^A \sum_{\beta = 1}^{B_\alpha}  \sum_{j=0}^{N_{\alpha, \beta} - 1} [z]^{ - m_{\alpha, \beta}} \|z\|^{ - \mu_{\alpha, \beta}} (\log |z|)^{j } \SS_{m_{\alpha, \beta} + m} ( \BC ).
\end{equation}
From the definition of $\Ssiss^{\ulambda} (\BR _+)$ in \eqref{3eq: Ssis2, R+}, along with $\SS_{m} (\BC) = [z]^m \SS_{m} (\overline \BR _+)$  and $\SS_{m} (\overline \BR _+) = x^{|m|} \SS_{0} (\overline \BR _+) $, we have
\begin{equation}\label{3eq: Ssis (mu, m) m = Ssis2}
\Ssis ^{( \umu, \um ), m} (\BCx) = [z]^{m} \Ssiss^{2 \umu - \| \um + m\ue^n \|  } (\BR _+).
\end{equation}

The following theorem gives the definition of the Hankel transform $\Hmum$, which maps $\Ssis^{ (- \umu, - \um) } (\BCx)$    onto  $\Ssis^{( \umu, \um) } (\BCx)$ bijectively.

\begin{thm} \label{3prop: H (mu, m)}
Let $(\umu, \um) \in \BC^{n} \times \BZ ^n$. Suppose $ \upsilon \in \Ssis^{ (- \umu, - \um) } (\BCx)$.  Then there exists a unique function $\Upsilon \in \Ssis^{( \umu, \um) } (\BCx)$ satisfying the following sequence of identities,
\begin{equation}\label{3eq: Hankel transform identity, C}
\EM _{-m} \Upsilon (2 s ) = G_{(\umu,  \um + m\ue^n)} (s) \EM _m \upsilon ( 2 (1-s) ), \hskip 10 pt m \in \BZ .
\end{equation}
We call $\Upsilon$ the Hankel transform of  $\upsilon$ over $\BCx$ of index $(\umu, \um)$ and  write $\Hmum \upsilon  = \Upsilon$. Moreover, we have the Hankel inversion formula
\begin{equation}\label{3eq: Hankel inversion, C}
\Hmum \upsilon  = \Upsilon, \hskip 10 pt \Hmmum \Upsilon  = \upsilon.
\end{equation}
\end{thm}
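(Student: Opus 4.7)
The plan is to mirror the proof of Theorem \ref{3prop: H (lambda, delta)}, replacing the two-term parity decomposition $\upsilon = \upsilon_0 + \sgn(x)\upsilon_1$ by the Fourier series decomposition along $\BCx \cong \BR_+ \times \BR/2\pi\BZ$. By the relation $\EM_{-m}\varphi(s) = 4\pi \EM \varphi_m(s)$ from \eqref{1eq: Mm = M}, the identity \eqref{3eq: Hankel transform identity, C} is equivalent, after cancelling the common factor $4\pi$, to the Mellin identity on Fourier coefficients
$$\EM \Upsilon_m(2s) = G_{(\umu, \um + m\ue^n)}(s) \, \EM \upsilon_{-m}(2(1-s)), \quad m \in \BZ.$$
Applying \eqref{3eq: Ssis (mu, m) m = Ssis2} to $(-\umu, -\um)$ with Fourier index $-m$, and noting that $\|{-\um} + (-m)\ue^n\| = \|\um + m\ue^n\|$ componentwise, one obtains $\upsilon_{-m} \in \Ssiss^{-2\umu - \|\um + m\ue^n\|}(\BR_+)$. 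Then Proposition \ref{3prop: h (mu, m)} with index $(\umu, \um + m\ue^n)$ produces a unique $\Upsilon_m \in \Ssiss^{2\umu - \|\um + m\ue^n\|}(\BR_+)$ satisfying this identity.

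Next I would assemble the candidate $\Upsilon(z) = \sum_{m \in \BZ} [z]^m \Upsilon_m(|z|)$ and verify it lies in $\Ssis^{(\umu, \um)}(\BCx)$. Each summand $[z]^m \Upsilon_m(|z|)$ sits in $\Ssis^{(\umu, \um), m}(\BCx)$ described by \eqref{3eq: Ssis (mu, m) m, C}. Via Lemma \ref{2lem: Ssis to Msis, C}, it suffices to show the sequence $\lpp \EM_{-m}\Upsilon \rpp_{m\in\BZ}$ belongs to the $\Msis^\BC$-subspace reflecting the decomposition \eqref{3eq: Ssis (mu, m), C}. For each fixed $m$, the pole locations with pure orders and rapid vertical decay are supplied by Proposition \ref{3prop: h (mu, m)} together with \eqref{1eq: vertical bound, G (mu, m) (s)}.

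The main obstacle I anticipate is the uniform rapid decay in $m$ demanded by \eqref{2eq: rapid decay Nsis, C, 0}. Since $\upsilon \in \Ssis^{(-\umu, -\um)}(\BCx)$, Lemma \ref{2lem: Ssis to Msis, C} furnishes rapid decay of $\lpp \EM_m \upsilon(2(1-\cdot)) \rpp_m$ in $m$, uniformly on vertical strips. The gamma factor $G_{(\umu, \um + m\ue^n)}(s)$ has only polynomial growth in $|m|$ by \eqref{1eq: vertical bound, G (mu, m) (s)}, with exponent uniformly bounded on vertical strips, so the product retains rapid decay in $m$. The delicate point is tracking how the poles of $G_{(\umu, \um + m\ue^n)}(s)$ migrate with $m$ to confirm compatibility with the pole set $\lambda - |m + k| - 2\BN$ in the definition of $\Msis^\BC$. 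Finally, the Hankel inversion \eqref{3eq: Hankel inversion, C} follows by applying \eqref{3eq: Hankel inversion, C 0} Fourier-componentwise: since $G_m$ depends on $m$ only through $|m|$, one has $G_{(\umu, -\um + m\ue^n)} = G_{(\umu, \um - m\ue^n)}$, and the functional equation \eqref{1eq: G mu m (1-s) = G - mu m (s)} applied with index $\um - m\ue^n$ then yields the required cancellation.
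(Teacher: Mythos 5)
Your proposal is correct and follows essentially the same route as the paper: reduce to Fourier coefficients via $\EM_{-m}\Upsilon = 4\pi\EM\Upsilon_m$, place $\upsilon_{-m}$ in $\Ssiss^{-2\umu - \|\um+m\ue^n\|}(\BR_+)$ via \eqref{3eq: Ssis (mu, m) m = Ssis2}, apply Proposition \ref{3prop: h (mu, m)} componentwise, and then invoke Lemma \ref{2lem: Ssis to Msis, C} together with the uniform moderate growth of $G_{(\umu,\um+m\ue^n)}$ from \eqref{1eq: vertical bound, G (mu, m) (s)} to confirm the assembled Fourier series lies in $\Ssis^{(\umu,\um)}(\BCx)$. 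The one place you over-engineer is the inversion formula: it follows immediately from a Fourier-componentwise application of \eqref{3eq: Hankel inversion, C 0} without needing the $|m|$-symmetry observation, though that observation is true and does no harm.
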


\begin{proof}
Recall that 
$$\EM_{m} \upsilon (s) = 4 \pi \EM \upsilon_{-m} (s).$$
In view of \eqref{3eq: Ssis (mu, m) m = Ssis2}, we have $\upsilon_{- m} \in \Ssiss^{- 2 \umu - \| \um + m\ue^n \|} (\BR _+)$. Applying Proposition \ref{3prop: h (mu, m)}, we infer that there is a unique function $\Upsilon_m \in \Ssiss^{ 2 \umu - \| \um + m\ue^n \|} (\BR _+)$ satisfying 
\begin{equation*} 
\EM  \Upsilon_{m} (2 s ) = G_{(\umu,  \um + m\ue^n)} (s) \EM  \upsilon_{-m} ( 2(1-s) ).
\end{equation*}
According to Lemma \ref{2lem: Ssis to Msis, C}, in order to show that the Fourier series $\Upsilon \lp x e^{i \phi}\rp = \sum  \Upsilon_m (x) e^{im \phi}$ lies in $\Ssis^{( \umu, \um) } (\BCx)$, it suffices to verify that  $G_{(\umu,  \um + m\ue^n)} (s) \EM  \upsilon_{-m} ( 2(1-s) )$   rapidly decays with respect to $m$, uniformly on vertical strips. 
This however follows from the uniform rapid decay of $\EM  \upsilon_{-m} ( 2(1-s) )$ along with the uniform moderate growth of $G_{(\umu,  \um + m\ue^n)} (s)$ (\eqref{1eq: vertical bound, G (mu, m) (s)} in Lemma \ref{1lem: vertical bound}) in the $m$ aspect on vertical strips. 

Finally,  \eqref{3eq: Hankel inversion, C 0} in Proposition  \ref{3prop: h (mu, m)} implies \eqref{3eq: Hankel inversion, C}.
\end{proof}

\begin{cor} \label{3cor: H = h, C}
Let $(\umu, \um) \in \BC^{n} \times \BZ^n$ and $m \in \BZ $. Suppose $\varphi \in \Ssiss^{- 2 \umu - \| \um + m\ue^n \|} (\BR _+)$ and $ \upsilon (z) = [z]^{- m} \varphi (|z|)$.  Then
\begin{equation*}
\Hmum \upsilon \lp x e^{i\phi} \rp = e^{i m \phi} \hh_{(\umu,  \um + m\ue^n)} \varphi (x), \hskip 10 pt x \in \BR _+, \phi \in \BR / 2 \pi \BZ.
\end{equation*}
\end{cor}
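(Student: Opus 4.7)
The plan is to reduce the statement to Proposition \ref{3prop: h (mu, m)} by passing to Fourier coefficients. Writing $\upsilon$ in polar coordinates as $\upsilon(xe^{i\phi}) = e^{-im\phi}\varphi(x)$, I would first compute the $k$-th Fourier coefficient using \eqref{1eq: Fourier coefficients of upsilon}: orthogonality of $\{e^{ik\phi}\}$ yields $\upsilon_k(x) = \delta_{k,-m}\varphi(x)$. By the relation \eqref{1eq: Mm = M} extended to $\Ssis(\BCx)$, this gives $\EM_k\upsilon(s) = 4\pi\EM\upsilon_{-k}(s) = 0$ for $k\neq m$, and $\EM_m\upsilon(s) = 4\pi\EM\varphi(s)$.

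Next, I would substitute these into the defining sequence of identities \eqref{3eq: Hankel transform identity, C} for $\Upsilon = \Hmum\upsilon$, taken index-by-index in $k$. For $k\neq m$ the right-hand side vanishes, so $\EM_{-k}\Upsilon(2s) = 4\pi\EM\Upsilon_k(2s) = 0$, and hence $\Upsilon_k \equiv 0$. For $k = m$ the identity simplifies to
\begin{equation*}
\EM\Upsilon_m(2s) = G_{(\umu,\um+m\ue^n)}(s)\,\EM\varphi(2(1-s)),
\end{equation*}
which is precisely the defining identity \eqref{3eq: Hankel transform identity 2, R+} of the Hankel transform $\hh_{(\umu,\um+m\ue^n)}\varphi$ over $\BR_+$. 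Since the hypothesis $\varphi\in\Ssiss^{-2\umu-\|\um+m\ue^n\|}(\BR_+)$ matches the domain requirement of Proposition \ref{3prop: h (mu, m)} with index $(\umu,\um+m\ue^n)$, the uniqueness part of that proposition yields $\Upsilon_m = \hh_{(\umu,\um+m\ue^n)}\varphi$, which automatically lies in $\Ssiss^{2\umu-\|\um+m\ue^n\|}(\BR_+)$.

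Finally, I would reassemble via the Fourier expansion \eqref{1eq: Fourier series expansion}:
\begin{equation*}
\Upsilon(xe^{i\phi}) = \sum_{k\in\BZ}\Upsilon_k(x)e^{ik\phi} = \Upsilon_m(x)e^{im\phi} = e^{im\phi}\hh_{(\umu,\um+m\ue^n)}\varphi(x),
\end{equation*}
which is the claim. Compatibility with the target space $\Ssis^{(\umu,\um)}(\BCx)$ follows from the identification \eqref{3eq: Ssis (mu, m) m = Ssis2} applied to $\Upsilon_m$.

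The only subtlety I anticipate is bookkeeping of signs and shifts: the definition \eqref{1eq: Mm = M} pairs $\EM_{-m}$ with $\varphi_m$, so one must carefully track which Fourier index ($k$ versus $-k$) appears on each side of \eqref{3eq: Hankel transform identity, C}, and correspondingly that the sequence $(\Upsilon_k)_{k\in\BZ}$ with only $\Upsilon_m\neq 0$ really does satisfy the decay hypotheses built into $\Msis^\BC$ via Lemma \ref{2lem: Ssis to Msis, C}; but with only finitely many (in fact one) nonzero term, these conditions are automatic. No genuine analytic obstacle arises since all the hard work has already been done in Proposition \ref{3prop: h (mu, m)} and Theorem \ref{3prop: H (mu, m)}.
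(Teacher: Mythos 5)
Your proof is correct and follows exactly the mechanism the paper has in mind: the corollary is read off from the proof of Theorem \ref{3prop: H (mu, m)}, where $\Hmum\upsilon$ is constructed Fourier mode by Fourier mode via Proposition \ref{3prop: h (mu, m)}, and when $\upsilon$ lives in a single mode the series collapses to one term. Your bookkeeping of the $k$ versus $-k$ indices, the vanishing of the other modes via injectivity of $\EM$, and the observation that the $\Msis^{\BC}$ decay conditions are trivially satisfied for a singleton sequence are all accurate.
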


\subsubsection{\texorpdfstring{The Bessel kernel $J_{(\umu, \um)} $}{The Bessel kernel $J_{(\mu, m)} $}}

For $(\umu, \um) \in \BC^{n} \times \BZ^n$, we define
\begin{equation}\label{2eq: Bessel kernel over C, polar}
 J_{(\umu, \um)} \lp x e^{i \phi} \rp =  \frac 1 {2 \pi} \sum_{m \in \BZ} j_{(\umu, \um + m\ue^n)} (x ) e^{ i m \phi}, 
\end{equation}
or equivalently,
\begin{equation}\label{2eq: Bessel kernel over C}
J_{(\umu, \um)} \lp z \rp =  \frac 1 {2 \pi} \sum_{m \in \BZ} j_{(\umu, \um + m\ue^n)} (|z|) [z]^m.
\end{equation}
Lemma \ref{3lem: bound of the Bessel kernel, C} secures the absolute convergence of this series.

\begin{prop}\label{3prop: properties of J, C}
Let $(\umu, \um) \in \BC^{n} \times \BZ^n$.

{\rm (1).} Let $(\mu, m) \in \BC \times \BZ$. We have
\begin{equation*}
J_{(\umu - \mu \ue^n, \um - m \ue^n)} (z) = [z]^m \|z\|^{\mu} J_{(\umu, \um)} (z).
\end{equation*}

{\rm (2).} $ J_{(\umu, \um)} (z)$  is a real analytic function of $z$ on $\BCx$ as well as an analytic function of $\umu$ on $\BC^{n}$.

{\rm (3).} Assume that $\umu$ satisfies the following condition
\begin{equation}\label{3eq: condition on mu}
\textstyle  \min  \left\{ \Re \mu_l      \right\} + 1 > \max  \left\{ \Re \mu_l      \right\}.
\end{equation}
Suppose $ \upsilon \in \Ssis^{ (- \umu, - \um) } (\BCx)$. Then 
\begin{equation} \label{3eq: Hankel transform, C, polar}
\Upsilon \lp x e^{i \phi} \rp  =  \int_0^\infty \int_0^{2\pi} \upsilon \lp y e^{i \theta}\rp  J_{(\umu, \um)} \lp x e^{ i \phi } y e^{ i \theta } \rp \cdot 2 y d \theta dy,
\end{equation}
or equivalently,
\begin{equation} \label{3eq: Hankel transform, with Bessel kernel, C}
\Upsilon (z)  =  \int_{\BCx} \upsilon (u) J_{(\umu, \um)} ( zu ) d u.
\end{equation}
Moreover, \eqref{3eq: Hankel transform, C, polar} and \eqref{3eq: Hankel transform, with Bessel kernel, C} still hold true for any index $ \umu \in \BC$ if $\upsilon \in \SS (\BCx)$.
\end{prop}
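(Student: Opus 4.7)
The three assertions follow by the same general philosophy: expand $J_{(\umu,\um)}(z)$ in its defining Fourier series (\ref{2eq: Bessel kernel over C}), transfer each statement to the already-established analogue for the scalar kernel $j_{(\umu,\um+m\ue^n)}$ on $\BR_+$, and then recombine using the sharp bounds provided by Lemma~\ref{3lem: bound of the Bessel kernel, C}. I would execute the three parts in order.

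For part (1), I would simply shift the summation index in (\ref{2eq: Bessel kernel over C}). Applying (\ref{3eq: normalize j (mu, m)}) one gets
\begin{equation*}
j_{(\umu - \mu\ue^n,\,\um - m\ue^n + k\ue^n)}(|z|) = |z|^{2\mu} j_{(\umu,\,\um + (k-m)\ue^n)}(|z|),
\end{equation*}
so that after the substitution $k' = k-m$ the defining series for $J_{(\umu - \mu\ue^n,\um - m\ue^n)}(z)$ factors as $[z]^m |z|^{2\mu}$ times the series for $J_{(\umu,\um)}(z)$; since $\|z\| = |z|^2$ this is exactly the desired identity.

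For part (2), the key input is the super-exponential decay $(2\pi e x^{1/n}/(|m|+1))^{n|m|}$ in Lemma~\ref{3lem: bound of the Bessel kernel, C}, which is in fact local-uniform in $x \in \BR_+$ and in $\umu$ on compacta. This lets the Fourier series (\ref{2eq: Bessel kernel over C, polar}) together with all its partial derivatives in $(x,\phi)$ converge absolutely and locally uniformly on $\BR_+\times\BR$, and similarly to be dominated on compact subsets of $\BC^n$ in the index $\umu$. Real analyticity in $x$ of each term $j_{(\umu,\um+m\ue^n)}(x)$ follows from Lemma~\ref{3lem: J (x; sigma, lambda) analytic} via (\ref{3eq: j mu m = j lambda delta}), and real analyticity in $\phi$ follows because the Fourier coefficients decay faster than any geometric rate, so the series extends to complex $\phi$. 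Analyticity in $\umu$ transfers by the same Weierstrass argument (locally uniform convergence of analytic functions).

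For part (3), fix $\umu$ satisfying (\ref{3eq: condition on mu}) and $\upsilon\in\Ssis^{(-\umu,-\um)}(\BCx)$; I will compute the right-hand side of (\ref{3eq: Hankel transform, C, polar}). Expanding $\upsilon(ye^{i\theta}) = \sum_k \upsilon_k(y) e^{ik\theta}$ and $J_{(\umu,\um)}(xye^{i(\phi+\theta)})$ as in (\ref{2eq: Bessel kernel over C, polar}), integrating in $\theta$ picks out $l=-k$ by orthogonality, leaving
\begin{equation*}
\sum_{k\in\BZ} e^{-ik\phi} \int_0^\infty \upsilon_k(y)\, j_{(\umu,\um - k\ue^n)}(xy)\cdot 2y\, dy = \sum_{k\in\BZ} e^{-ik\phi}\, \hh_{(\umu,\um - k\ue^n)} \upsilon_k(x),
\end{equation*}
where the last equality uses (\ref{3eq: h mu m = int of j mu m}). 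Reindexing $k \mapsto -k$ and invoking Corollary~\ref{3cor: H = h, C} applied to each Fourier piece $e^{ik\phi}\upsilon_k(x) = [z]^{-(-k)}\upsilon_k(|z|)$ identifies the sum as the Fourier expansion of $\Hmum\upsilon$. The justification of the interchange of summation and integration is the technical heart of the argument and will use (a) the uniform rapid decay bound in Lemma~\ref{3lem: bound of the Bessel kernel, C}, which under hypothesis (\ref{3eq: condition on mu}) forces an integrable majorant of $j_{(\umu,\um + l\ue^n)}(xy)$ in $y$ against each $\upsilon_k(y)$, and (b) the rapid decay of $\upsilon_k$ in $|k|$ guaranteed by Lemma~\ref{2lem: Ssis to Msis, C}. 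Equation (\ref{3eq: Hankel transform, with Bessel kernel, C}) is then (\ref{3eq: Hankel transform, C, polar}) rewritten via $d u = 2y\, d\theta\, dy$. Finally, for $\upsilon\in\SS(\BCx)$ and arbitrary $\umu$ the extension follows by analytic continuation in $\umu$: both sides of (\ref{3eq: Hankel transform, with Bessel kernel, C}) are holomorphic in $\umu\in\BC^n$ (the integral converges absolutely and locally uniformly thanks to Lemma~\ref{3lem: bound of the Bessel kernel, C} and the double rapid decay of $\upsilon$ at $0$ and $\infty$, while the left-hand side is analytic by construction), and they already agree on the nonempty open subset where (\ref{3eq: condition on mu}) holds. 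The main obstacle will be the uniform-in-$\umu$ bookkeeping needed to secure absolute convergence and the interchange of sum/integral; this is essentially a careful application of Lemma~\ref{3lem: bound of the Bessel kernel, C} together with the Fourier-coefficient decay from Lemma~\ref{lem: Schwartz}.
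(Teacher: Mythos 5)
Your overall strategy matches the paper's: reduce everything to the Fourier coefficients $j_{(\umu,\um+m\ue^n)}$, use the normalization formula for part~(1), transfer the scalar results from $\hh_{(\umu,\um+m\ue^n)}$ for part~(3), and control everything with the super-exponential decay in $m$. Parts~(1) and~(3) are correct and essentially identical in substance to the paper (the paper expands $\Upsilon$ and applies Proposition~\ref{3prop: h (mu, m)} coefficient-wise; you expand the right-hand side and use orthogonality in $\theta$ --- the same argument from the other direction). Your analytic-continuation shortcut for the Schwartz case is a legitimate alternative to the paper's observation that \eqref{3eq: h mu m = int of j mu m} applies unconditionally when $\upsilon_{-m}\in\SS(\BR_+)$.

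Part~(2) contains a genuine gap. You argue that the series \eqref{2eq: Bessel kernel over C, polar} and all its $(x,\phi)$-derivatives converge locally uniformly, citing only Lemma~\ref{3lem: bound of the Bessel kernel, C}. But (a) Lemma~\ref{3lem: bound of the Bessel kernel, C} bounds $j_{(\umu,\um+m\ue^n)}(x)$, not its $x$-derivatives, so the claim about the derivative series is unsubstantiated as stated; and (b) more fundamentally, even uniform convergence of all derivatives of a series of real-analytic functions only yields a $C^\infty$ sum, not a real-analytic one. Your extension to complex $\phi$ handles analyticity in $\phi$ for fixed $x$, but joint real analyticity of $J_{(\umu,\um)}(xe^{i\phi})$ in $(x,\phi)$---which is what is needed to conclude real analyticity of $J_{(\umu,\um)}(z)$---requires bounding the terms $j_{(\umu,\um+m\ue^n)}(\zeta)$ for $\zeta$ ranging in a complex neighborhood of $\BR_+$ inside $\BU$. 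That is precisely what Lemma~\ref{3lem: bound of the Bessel kernel, analytic continuation, C} supplies (the extra factor $e^{|\omega|(C+2\epsilon)}$ controlling the growth in the imaginary part of $\log\zeta$, still beaten by the $(|m|+1)^{-n|m|}$ decay). The paper uses that lemma to get compact convergence on $\BU\times(\BC/2\pi\BZ)$, hence holomorphy in the complexified variables, hence real analyticity of the restriction. Replace your appeal to Lemma~\ref{3lem: bound of the Bessel kernel, C} by Lemma~\ref{3lem: bound of the Bessel kernel, analytic continuation, C} and the argument goes through.
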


\begin{proof}

(1). This is clear.

(2). In \eqref{2eq: Bessel kernel over C, polar}, with abuse of notation, we view $x$ and $\phi$ as complex variables on $\BU$ and $\BC/2 \pi \BZ$ respectively, $j_{(\umu, \um + m\ue^n)} (x )$ and $ e^{ i m \phi}$ as analytic functions. Then Lemma \ref{3lem: bound of the Bessel kernel, analytic continuation, C} implies that the series in \eqref{2eq: Bessel kernel over C, polar} is absolutely convergent, compactly with respect to both $x$ and $\phi$, and therefore $ J_{(\umu, \um)} \lp x e^{i \phi} \rp$ is an analytic function of $x$ and $\phi$. In particular, $ J_{(\umu, \um)} (z)$  is a \textit{real analytic} function of $z$ on $\BCx$.

Moreover, in  Lemma \ref{3lem: bound of the Bessel kernel, C}, we may allow $\umu$ to vary in an $\epsilon$-ball in $\BC^{n}$ and choose the implied constant in the estimate to be uniformly bounded with respect to $\umu$. This implies that the series in \eqref{2eq: Bessel kernel over C, polar} is convergent compactly in the $\umu$ aspect.
Therefore, $ J_{(\umu, \um)} (z)$ is an analytic function of $\umu$ on $\BC^{n}$.

(3). It follows from  \eqref{3eq: Ssis (mu, m) m = Ssis2} that $\upsilon_{- m} \in \Ssiss^{- 2 \umu - \| \um + m\ue^n\|} (\BR _+)$. Moreover, one observes that  $(\umu, \um + m \ue^n)$ satisfies the condition \eqref{3eq: condition on (mu, m), 0} due to \eqref{3eq: condition on mu}. Therefore, in conjunction with  Proposition \ref{3prop: h (mu, m)}, \eqref{3eq: h mu m = int of j mu m} implies 
\begin{equation*} 
\Upsilon_{m} (x ) = 2 \int_0^\infty \upsilon_{- m} (y) j_{(\umu, \um + m\ue^n)} \lp x y \rp y d y.
\end{equation*}
Hence
\begin{equation*} 
\begin{split}
\Upsilon \lp x e^{i \phi} \rp =   \sum_{m \in \BZ} \Upsilon_{ m} (x ) e^{i m \phi}  
=   \sum_{m \in \BZ} \frac 1 { \pi} \int_0^\infty \int_0^{2\pi} \upsilon \lp y e^{i \theta}\rp j_{(\umu, \um + m\ue^n)} \lp x y \rp e^{ i m (\phi + \theta)} y d \theta d y.
\end{split}
\end{equation*}
The estimate of $j_{(\umu, \um + m\ue^n)}$ in Lemma \ref{3lem: bound of the Bessel kernel, C} implies that
the above series of integrals converges absolutely. On interchanging the order of summation and integration, one obtains \eqref{3eq: Hankel transform, C, polar}  in view of the definition of $J_{(\umu, \um)}$ in \eqref{2eq: Bessel kernel over C, polar}.

Note that in the case $\upsilon \in \SS (\BCx)$, one has $\upsilon_{- m}  \in \SS (\BR _+)$, and therefore \eqref{3eq: h mu m = int of j mu m} can be applied unconditionally. 
\end{proof}


\begin{example}\label{3ex: Bessel n=1, C}
Let $n = 1$. 
From \eqref{3eq: j (0, m)}, we have
\begin{equation*}
j_{(0, m)} (x ) = 
\left \{
\begin{split}
& (-)^d 2\pi J_{2 d } (4 \pi x ), \hskip 23 pt \text { if } |m| = 2 d,\\
& (-)^{d } 2\pi i J_{2 d + 1} (4 \pi x ), \hskip 10 pt \text { if } |m| = 2 d + 1.
\end{split}
\right.
\end{equation*}
The following expansions {\rm (\cite[2.22 (3, 4)]{Watson})}
\begin{align*}
\cos (x \cos \phi) &= J_0(x) + 2 \sum_{d = 1}^\infty (-)^d J_{2 d} (x) \cos (2 d \phi),\\
\sin (x \cos \phi) &=   2 \sum_{d = 0}^\infty (-)^d J_{2 d + 1} (x) \cos ((2 d + 1) \phi),
\end{align*}
imply
\begin{equation*}
J_{(0, 0)} \lp x e^{i \phi} \rp = \cos (4 \pi x \cos \phi) + i \sin (4 \pi x \cos \phi) = e(2 x \cos \phi ),
\end{equation*}
or equivalently,
\begin{equation*}
J_{(0, 0)} (z) = e(z + \overline z).
\end{equation*}
We remark that the two expansions \cite[2.22 (3, 4)]{Watson} can be incorporated into 
\begin{equation*}
e^{i x \cos \phi} = \sum_{m = -\infty}^{\infty}  i^{m}  J_m (x) e^{im \phi}.
\end{equation*}
\end{example}

	\subsection{Concluding remarks}
	
	\subsubsection{Connection formulae}\label{sec: connection formulae}
	
	\delete {We have introduced above the fundamental Bessel function $J(x; \usigma, \ulambda)$ and various Bessel kernels $j_{(\umu, \udelta)} (x)$, $j_{(\umu, \um)} (x)$, $J_{(\umu, \udelta)} (x)$ and $J_{(\umu, \um)} (z)$. In the following, we shall summarize the various connection formulae between them given by (\ref{3eq: j (lambda, delta) and fundamental}, \ref{3eq: j mu m = j lambda delta}, \ref{3def: Bessel function, R, 0}, \ref{3def: Bessel function, R}, \ref{2eq: Bessel kernel over C, polar}, \ref{2eq: Bessel kernel over C}).
	\begin{equation*}
		J_{(\umu, \udelta)} \lp \pm x \rp = \frac 1 2 \sum_{\delta \in \BZ/ 2\BZ} (\pm)^{ \delta} j_{(\umu, \udelta + \delta \ue^n)} (x),  \  J_{(\umu, \um)} \lp x e^{i \phi} \rp =  \frac 1 {2 \pi} \sum_{m \in \BZ} j_{(\umu, \um + m\ue^n)} (x) e^{i m \phi},  
	\end{equation*}
	for $x \in \BR_+, \phi \in \BR/2\pi \BZ$. 
	\begin{equation*}
		i^n j_{(\umu, \um)} (x) = j_{(\boldsymbol \eta, \udelta)} \big( x^2 \big),
	\end{equation*}
	if  $(\umu, \um) \in \BC^n \times \BZ^n$ and $(\boldsymbol \eta, \udelta) \in \BC^{2n} \times (\BZT)^{2n}$ satisfy  either \eqref{1eq: relation between (mu, m) and (lambda, delta), 1} or  \eqref{1eq: relation between (mu, m) and (lambda, delta), 2}.
	\begin{equation*}
		j_{(\umu, \udelta)} (x) = (2\pi)^{|\umu|}  \sum_{\usigma \in \{ +, - \}^n} \usigma^{\udelta} J \big(2 \pi x^{\frac 1 n}; \usigma, \umu \big), \hskip 10 pt x\in \BR_+.
	\end{equation*}

}
	

	From the various connection formulae (\ref{3eq: j (lambda, delta) and fundamental}, \ref{3eq: j mu m = j lambda delta}, \ref{2eq: Bessel kernel over C, polar}, \ref{2eq: Bessel kernel over C}) which have been derived so far, 
	one can connect  the Bessel kernel $J_{(\umu, \um)} (z)$   to the Bessel functions  $J(x; \usigma, \ulambda)$ of doubled rank $2n$. However, in contrast to the expression of $J_{(\umu, \udelta)} (\pm x)$ by a {\it finite} sum of $J \big(2 \pi x^{\frac 1 n}; \usigma, \umu \big)$ (see  (\ref{3eq: j (lambda, delta) and fundamental},  \ref{3def: Bessel function, R, 0}, \ref{3def: Bessel function, R})), which enables us to reduce  the study of $J_{(\umu, \udelta)} (x) $  to that of  $J(x; \usigma, \ulambda)$ given in \cite{Qi}, these connection formulae yield an expression of $J_{(\umu, \um)} \lp x e^{i \phi} \rp$ in terms of an {\it infinite} series involving the Bessel functions $J \big(2 \pi x^{\frac 1 n}; \usigma, \ulambda \big)$ of rank $2n$,  so a similar reduction for $J_{(\umu, \um)} (z)$ does not  exist from this approach. 
	
	In  \S \ref{sec: two connection formulae for J mu m}, we shall prove two alternative connection formulae that relate $J_{(\umu, \um)} (z)$ to  the two kinds of Bessel functions of rank $n$ and positive sign. These kinds of Bessel functions arise in \cite[\S 7]{Qi} as solutions of the Bessel equation of positive sign.

	

	\subsubsection{Asymptotics of Bessel kernels}\label{sec: sub asymptotic}
	
	Using the connection formulae between the Bessel kernel $J_{(\umu, \udelta)} ( x)$ and Bessel functions $J(x; \usigma, \ulambda)$ along with the asymptotics of the latter, the asymptotic of $J_{(\umu, \udelta)} (x)$ is readily established in \cite[Theorem 5.13, 9.3]{Qi}. With the help of the second connection formula  for $J_{(\umu, \um)} (z)$ in \S \ref{sec: second connection}, we shall present  in \S \ref{sec: asymptotics} the asymptotic of $J_{(\umu, \um)} (z) $ as an application of the asymptotic expansions of Bessel functions of the second kind \cite[Theorem 7.27]{Qi}.

	\subsubsection{Normalizations of indices} 
	Usually, it is convenient to normalize the indices in $J(x; \usigma, \ulambda)$,  $j_{(\umu, \udelta)} (x)$, $j_{(\umu, \um)} (x)$, $J_{(\umu, \udelta)} (x)$ and $J_{(\umu, \um)} (z)$ so that $\ulambda, \umu \in \BL^{n-1}$. Furthermore, without loss of generality, the assumptions $\delta_n = 0$ and $m_n = 0$ may also be imposed for $J_{(\umu, \udelta)} (x)$ and $J_{(\umu, \um)} (z)$ respectively.
	These normalizations are justified by Lemma {\rm \ref{3lem: normalize J(x; sigma, lambda)}}, \eqref{3eq: normalize j (lambda, delta)}, \eqref{3eq: normalize j (mu, m)}, Proposition \ref{3prop: properties of J, R} (1) and \ref{3prop: properties of J, C} (1).

\section{Fourier type integral transforms} 
\label{sec: Fourier type transforms}

In this section, we shall introduce an alternative perspective of Hankel transforms. 
We shall first show how to construct Hankel transforms from the Fourier transform and   Miller-Schmid transforms. From this, we shall express the Hankel transforms  $\Hld$ and $\Hmum$ in terms of certain Fourier type integral transforms, assuming that the components of $\Re \umu$ are {\it strictly} decreasing.


\subsection{The Fourier transform and rank-one Hankel transforms}

For either $\BF = \BR$ or $\BF = \BC$, we have seen in Example \ref{3ex: Bessel n=1, R} and \ref{3ex: Bessel n=1, C} that $J_{(0, 0)}$ is exactly the inverse Fourier kernel, namely
$$J_{(0, 0)} (x) = e (\Lambda (x)), \hskip 10 pt x \in \BF,$$ with $\Lambda (x)$ defined by \eqref{1eq: Lambda (x)}. Therefore, in view of Proposition \ref{3prop: properties of J, R} (3) and \ref{3prop: properties of J, C} (3), $\EH_{(0, 0)} $ is precisely the inverse Fourier transform over the Schwartz space $\Ssis^{(0, 0)} (\BFx) = \SS (\BF)$. The following lemma is a consequence of Theorem  \ref{3prop: H (lambda, delta)} and   \ref{3prop: H (mu, m)}.

\begin{lem}\label{5lem: Fourier}
Let $\upsilon \in \SS (\BF)$. If $\BF = \BR$, then the Fourier transform $\widehat \upsilon $ of $\upsilon$ can be determined by the following two identities
\begin{equation*}
\EM_{\delta} \widehat \upsilon (s) = (-)^\delta G_{\delta} (s) \EM_{\delta} \upsilon (1-s), \hskip 10 pt \delta \in \BZT.
\end{equation*}
If $\BF = \BC$, then the Fourier transform $\widehat \upsilon $ of $\upsilon$ can be determined by the following sequence of identities
\begin{equation*}
\EM_{- m} \widehat \upsilon (2s) = (-)^m G_{m} (s) \EM_{m} \upsilon (2 (1-s)), \hskip 10 pt m \in \BZ.
\end{equation*}
\end{lem}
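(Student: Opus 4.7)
The plan is to recognize the Fourier transform as (a twist of) the rank-one Hankel transform $\EH_{(0,0)}$ and then simply transcribe the defining Mellin identities from Theorems~\ref{3prop: H (lambda, delta)} and~\ref{3prop: H (mu, m)}. By Examples \ref{3ex: Bessel n=1, R} and \ref{3ex: Bessel n=1, C}, the rank-one Bessel kernel of trivial index is
\[
J_{(0,0)}(x) = e(\Lambda(x)), \qquad x \in \BF.
\]
Combining this with the integral representations in Propositions~\ref{3prop: properties of J, R}(3) and~\ref{3prop: properties of J, C}(3) (applied with $n=1$ and $\umu = 0$, which trivially satisfies \eqref{3eq: condition on lambda} and \eqref{3eq: condition on mu}), I obtain, for $\upsilon \in \SS(\BF)$,
\[
\EH_{(0,0)}\upsilon (x) \;=\; \int_{\BF^\times} \upsilon(y)\, e(\Lambda(xy))\, dy \;=\; \widehat{\upsilon}(-x).
\]

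Next, I would specialize the defining identities of Theorems~\ref{3prop: H (lambda, delta)} and~\ref{3prop: H (mu, m)} to rank one with trivial index. In the real case, taking $n=1$, $\umu = 0$, $\udelta = 0$ in \eqref{3eq: Hankel transform identity, R} gives
\[
\EM_\delta (\EH_{(0,0)}\upsilon)(s) \;=\; G_\delta(s)\, \EM_\delta \upsilon(1-s), \qquad \delta \in \BZT,
\]
while in the complex case, taking $n=1$, $\umu = 0$, $\um = 0$ in \eqref{3eq: Hankel transform identity, C} gives
\[
\EM_{-m}(\EH_{(0,0)}\upsilon)(2s) \;=\; G_m(s)\, \EM_m \upsilon(2(1-s)), \qquad m \in \BZ.
\]
Now I transfer these identities to $\widehat{\upsilon}(x) = \EH_{(0,0)}\upsilon(-x)$ via the change of variable $x \mapsto -x$ (respectively $z \mapsto -z$) in the defining integrals \eqref{1def: signed Mellin transform} and \eqref{1def: Mellin transform over complex}. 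The substitution produces a factor $\sgn(-x)^\delta = (-)^\delta$ in the real case and $[-z]^{-m} = (-)^m [z]^{-m}$ in the complex case (noting $\|-z\| = \|z\|$), which yields exactly the sign $(-)^\delta$ and $(-)^m$ appearing in the statement.

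Finally, uniqueness of $\widehat{\upsilon}$ as a solution to these identities is automatic: the Mellin transforms $\EM_\delta$ and $\EM_{-m}$ are injective on $\SS(\BF)$ by the Mellin inversion formulas of Lemma~\ref{lem: Mellin inversions}, and the gamma factors $G_\delta(s), G_m(s)$ are not identically zero. I do not anticipate any genuine obstacle here; the entire proof is a bookkeeping exercise in tracking the sign arising from the reflection $x \mapsto -x$ between the inverse Fourier transform and the Hankel transform $\EH_{(0,0)}$.
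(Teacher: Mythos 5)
Your proposal is correct and follows precisely the route the paper intends: identify $\EH_{(0,0)}$ as the inverse Fourier transform via $J_{(0,0)}(x) = e(\Lambda(x))$, specialize the defining identities \eqref{3eq: Hankel transform identity, R} and \eqref{3eq: Hankel transform identity, C} to rank one with trivial index, and pick up the signs $(-)^\delta$ and $(-)^m$ from the reflection $x \mapsto -x$ in the Mellin integrals. This is exactly what ``a consequence of Theorem~\ref{3prop: H (lambda, delta)} and~\ref{3prop: H (mu, m)}'' unwinds to, and your bookkeeping of the signs is right.
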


It is convenient for our purpose to introduce the renormalized rank-one Hankel transforms $\ES_{(\mu, \epsilon)}$ and $\ES_{(\mu, k)}$ as follows.

\begin{lem}\label{5cor: renormalize Hankel}
	Let $(\mu, \epsilon) \in \BC \times \BZT$ and $(\mu, k) \in \BC \times \BZ$. 
	
	{\rm (1).} For $\upsilon (x) \in \sgn (x)^{\epsilon} |x|^{\mu} \SS (\BR) $, define $\ES_{(\mu, \epsilon)} \upsilon (x) =    |x|^{ \mu}  \EH_{(\mu, \epsilon)} \upsilon (x)$. Then 
	\begin{equation}\label{5eq: Mellin S, R}
	\EM_{\delta} \ES_{(\mu, \epsilon)} \upsilon (s) = G_{\epsilon + \delta} (s) \EM_{\delta} \upsilon (1-s-\mu), \hskip 10 pt \delta \in \BZT,
	\end{equation}
	and $\ES_{(\mu, \epsilon)}$ sends $\sgn (x)^{\epsilon} |x|^{\mu} \SS (\BR)$   onto $\sgn (x)^{\epsilon} \SS (\BR)$ bijectively. Furthermore, 
	\begin{equation*}
	\ES_{(\mu, \epsilon)} \upsilon (x) =  \sgn(x)^{\epsilon} \int_{\BRx} \sgn (y)^\epsilon |y|^{-\mu} \upsilon (y) e (x y) d y   
	= \sgn (x)^\epsilon  \EF \varphi (-x),
	\end{equation*}
	with $\varphi (x) = \sgn(x)^\epsilon |x |^{-\mu} \upsilon (x) \in \SS (\BR).$
	
	{\rm(2).} For $\upsilon (z) \in [z]^{ k} \|z\|^{\mu} \SS (\BC) $, define $\ES_{(\mu, k)} \upsilon (z) =    \|z\|^{ \mu}  \EH_{(\mu, k)} \upsilon (z)$. Then 
	\begin{equation}\label{5eq: Mellin S, C}
	\EM_{-m} \ES_{(\mu, k)} \upsilon (2 s) = G_{k+m} (s) \EM_{m} \upsilon (2(1-s-\mu)), \hskip 10 pt m \in \BZ.
	\end{equation} 
	and $\ES_{(\mu, k)}$ sends $[z]^{ k} \|z\|^{\mu} \SS (\BC) $   onto $ [z]^{- k} \SS (\BC)$ bijectively. Furthermore,
	\begin{equation*}
	\begin{split}
	\ES_{(\mu, k)} \upsilon (z) =   [z]^{-k} \int_{\BCx} [u]^{-k} \|u\|^{-\mu} \upsilon (u) e (z u + \overline {z u}) d u
	= [z]^{-k}  \EF \varphi (-z),
	\end{split}
	\end{equation*}
	with $ \varphi (z) =  [z]^{-k} \| z \|^{-\mu} \upsilon (z) \in \SS (\BC) $.
	
\end{lem}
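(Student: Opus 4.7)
The strategy is to verify the Mellin-transform identities \eqref{5eq: Mellin S, R} and \eqref{5eq: Mellin S, C} by direct computation, deduce the bijectivity from Theorems \ref{3prop: H (lambda, delta)} and \ref{3prop: H (mu, m)}, and then extract the Fourier-integral representations by matching Mellin transforms with the help of Lemma \ref{5lem: Fourier} and Mellin inversion.

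For part (1), I would start from the rank-one specialization of the defining identity \eqref{3eq: Hankel transform identity, R}, which reads $\EM_\delta\EH_{(\mu,\epsilon)}\upsilon(s) = G_{(\mu,\epsilon+\delta)}(s)\,\EM_\delta\upsilon(1-s)$. Unfolding $G_{(\mu,\epsilon+\delta)}(s) = G_{\epsilon+\delta}(s-\mu)$ via \eqref{1def: G (lambda, delta)}, and noting that multiplication by $|x|^\mu$ shifts the signed Mellin argument by $\mu$, yields \eqref{5eq: Mellin S, R}. For the bijectivity, Theorem \ref{3prop: H (lambda, delta)} furnishes a bijection between $\Ssis^{(-\mu,\epsilon)}(\BRx) = \sgn(x)^\epsilon|x|^\mu\SS(\BR)$ and $\Ssis^{(\mu,\epsilon)}(\BRx) = \sgn(x)^\epsilon|x|^{-\mu}\SS(\BR)$ (using \eqref{3eq: Ssis (lambda, delta), R}); multiplying the image by $|x|^\mu$ then produces a bijection onto $\sgn(x)^\epsilon\SS(\BR)$.

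For the Fourier identity, write $\upsilon(x) = \sgn(x)^\epsilon|x|^\mu\varphi(x)$ with $\varphi\in\SS(\BR)$, so that $\sgn^\epsilon|\cdot|^{-\mu}\upsilon = \varphi$. A substitution $x\mapsto -x$ in the defining signed Mellin integral gives
$$\EM_\delta\bigl(\sgn(x)^\epsilon\widehat\varphi(-x)\bigr)(s) = (-)^{\epsilon+\delta}\EM_{\epsilon+\delta}\widehat\varphi(s),$$
and Lemma \ref{5lem: Fourier} applied to $\widehat\varphi$ turns the right-hand side into $G_{\epsilon+\delta}(s)\EM_{\epsilon+\delta}\varphi(1-s)$. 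Since $\EM_\delta\upsilon(s') = \EM_{\epsilon+\delta}\varphi(s'+\mu)$ by construction, this equals $G_{\epsilon+\delta}(s)\EM_\delta\upsilon(1-s-\mu)$, which matches \eqref{5eq: Mellin S, R}; Mellin inversion (Lemma \ref{lem: Mellin inversions}) then identifies the proposed Fourier integral with $\ES_{(\mu,\epsilon)}\upsilon$.

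Part (2) follows the same template, using Theorem \ref{3prop: H (mu, m)} in place of Theorem \ref{3prop: H (lambda, delta)} and the complex half of Lemma \ref{5lem: Fourier}. The one adjustment is that multiplication by $\|z\|^\mu$ shifts the argument of $\EM_{-m}$ by $2\mu$ rather than $\mu$, because \eqref{1def: Mellin transform over complex} pairs $\EM_{-m}$ with $\|z\|^{s/2}$. I do not anticipate any deep obstacle — the argument is essentially mechanical — but care is needed in tracking the signs $(-)^{\epsilon+\delta}$ and $(-)^{k+m}$ produced by the reflections $x\mapsto -x$ and $z\mapsto -z$ together with Lemma \ref{5lem: Fourier}, and in keeping the complex-Mellin factor-of-two conventions consistent.
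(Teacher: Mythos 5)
Your proof is correct, and it follows the route the paper implicitly intends: the Mellin identity \eqref{5eq: Mellin S, R} follows by unfolding $G_{(\mu,\epsilon+\delta)}(s)=G_{\epsilon+\delta}(s-\mu)$ in the $n=1$ case of Theorem \ref{3prop: H (lambda, delta)} and noting the $\mu$-shift caused by multiplication with $|x|^\mu$; bijectivity follows from the bijection $\Ssis^{(-\mu,\epsilon)}(\BRx)\to\Ssis^{(\mu,\epsilon)}(\BRx)$ together with the identification of these spaces as $\sgn(x)^\epsilon|x|^{\mp\mu}\SS(\BR)$; and the Fourier-integral form follows by matching the Mellin transform of $\sgn(x)^\epsilon\widehat\varphi(-x)$ (after tracking the $(-)^{\epsilon+\delta}$ signs through Lemma \ref{5lem: Fourier}) and invoking injectivity of $\EM^\BR$ on $\Ssis(\BRx)$ (Lemma \ref{2lem: Ssis to Msis, R}), which is what "Mellin inversion" must mean here since the functions lie in $\sgn(x)^\epsilon\SS(\BR)\not\subset\SS(\BRx)$ in general. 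The bookkeeping in part (2) with the factor of $2\mu$ from the complex Mellin normalization and the signs $(-)^{k+m}$ from $z\mapsto -z$ is likewise correct.
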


\delete{According to \eqref{3def: Bessel function, R, 0} and \eqref{2eq: Bessel kernel over C, polar}, one has the decomposition
\begin{equation*}
J_{(0, 0)} \lp \pm x \rp = \frac 1 2 \lp j_{(0, 0)} (x)  \pm j_{(0, 1)} (x) \rp, \hskip 10 pt x\in \BR _+, 
\end{equation*}
and the series expansion
\begin{equation*}
J_{(0, 0)} \lp x  e^{i \phi} \rp = \frac 1 {2 \pi} \sum_{m \in \BZ} j_{(0, m)} (x)  e^{i m \phi}, \hskip 10 pt x\in \BR _+, \ \phi \in \BR / 2 \pi \BZ.
\end{equation*}
Recall the formulae of $j_{(0, \delta)}$, $\delta \in \BZT$, and $j_{(0, m)}$, $m \in \BZ$, that is, \eqref{3eq: j (0, delta)} in Example \ref{ex: Bessel kernel, real, 0, n=1, 2} and \eqref{3eq: j (0, m)} in Example \ref{ex: Bessel kernel j 0 m, complex, n=1}.  Some calculations yield the following lemma.
}

\begin{lem}\label{5lem: S, R and C}
Let $(\mu, \epsilon) \in \BC \times \BZT$ and $(\mu, k) \in \BC \times \BZ$. 

{\rm (1).} Let $\delta \in \BZT$. Suppose that  $\varphi (x) \in x^{ \mu} \SS_{\delta + \epsilon} (\overline \BR_+)$ and $\upsilon (x) =  \sgn(x)^{\delta}  \varphi (|x|) $. Then
\begin{equation*}
\begin{split}
\ES_{(\mu, \epsilon)} \upsilon (\pm x) &=  (\pm)^{\delta } \int_{\BR_+} y^{- \mu} \varphi (y) j_{(0, \delta + \epsilon)} (x y) d y \\
& = 
\begin{cases}
\ds (\pm)^{\epsilon} 2 \int_{\BR_+} y^{- \mu} \varphi (y) \cos (x y) d y, &  \text { if } \delta = \epsilon,\\
\ds (\pm)^{\epsilon + 1} 2 i \int_{\BR_+} y^{- \mu} \varphi (y) \sin (x y) d y, &  \text { if } \delta = \epsilon + 1.
\end{cases}
\end{split}
\end{equation*}
The transform $\ES_{(\mu, \epsilon)}$ is a bijective map from  $\sgn (x)^{\epsilon} |x|^{\mu} \SS_{\delta} (\BR)$ onto $\sgn (x)^{\epsilon} \SS_{\delta} (\BR)$.

{\rm (2).} Let $m \in \BZ$.  Suppose that  $\varphi (x) \in x^{2 \mu} \SS_{-m-k} (\overline \BR_+)$ and $\upsilon (z) =  [z]^{- m }  \varphi (|z|) $. Then
\begin{equation*}
\begin{split}
\ES_{(\mu, k)} \upsilon \lp x e^{i \phi} \rp & = 2 e^{ i m \phi} \int_{\BR_+} y^{1 - 2 \mu} \varphi (y) j_{(0, m + k)} (x y)   d y \\
 & =  4 \pi i^{m+k} e^{ i m \phi} \int_{\BR_+}  y^{1 - 2 \mu} \varphi (y) J_{m+k} (4 \pi x y) d y.
\end{split}
\end{equation*}
The transform  $\ES_{(\mu, k)}$ is a bijective map from  $[z]^{ k} \|z\|^{\mu} \SS_m (\BC) $ onto $ [z]^{- k} \SS_{-m} (\BC)$.
\end{lem}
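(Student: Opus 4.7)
The plan is to attack both parts directly from the Fourier-integral presentations of $\ES_{(\mu,\epsilon)}$ and $\ES_{(\mu,k)}$ given at the end of Lemma \ref{5cor: renormalize Hankel}, substituting the parity-pure (resp. rotation-pure) form of $\upsilon$ and reducing to half-line integrals against $j_{(0,\cdot)}$ via \eqref{3eq: j (0, delta)} and \eqref{3eq: j (0, m)}. Alternatively one could use Corollary \ref{3cor: H = h, R} (resp. Corollary \ref{3cor: H = h, C}) to descend $\EH_{(\mu,\epsilon)}\upsilon$ (resp. $\EH_{(\mu,k)}\upsilon$) to $\hh_{(\mu,\epsilon+\delta)}\varphi$ (resp. $\hh_{(\mu,k+m)}\varphi$), apply the integral representation \eqref{3eq: integral kernel, R 0} (resp. \eqref{3eq: h mu m = int of j mu m}), and use the normalization \eqref{3eq: normalize j (lambda, delta)} (resp. \eqref{3eq: normalize j (mu, m)}) to trade $j_{(\mu,\cdot)}$ for $j_{(0,\cdot)}$. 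The rank-one condition \eqref{3eq: condition on (lambda, delta), 0} (resp.\ \eqref{3eq: condition on (mu, m), 0}) reads $2(\epsilon+\delta)+1>0$ (resp.\ $|m+k|+1>0$) and so is automatic, which is why there is no need to assume $\upsilon$ Schwartz.

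For part (1), insert $\upsilon(y)=\sgn(y)^\delta\varphi(|y|)$ into $\ES_{(\mu,\epsilon)}\upsilon(x)=\sgn(x)^\epsilon\int_{\BRx}\sgn(y)^\epsilon|y|^{-\mu}\upsilon(y)\,e(xy)\,dy$ and split the real-line integral into contributions from $y>0$ and $y<0$. The parity factor $\sgn(y)^{\epsilon+\delta}$ glues the two halves into a single integral over $\BR_+$ whose kernel is $e(xy)+(-)^{\epsilon+\delta}e(-xy)$, namely $2\cos(2\pi xy)$ when $\delta=\epsilon$ and $2i\sin(2\pi xy)$ when $\delta=\epsilon+1$. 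By \eqref{3eq: j (0, delta)} these are exactly $j_{(0,\delta+\epsilon)}(xy)$ (so the $2\pi$ should be read inside the trigonometric factor in the displayed formula); the outer $(\pm)^\delta$ then emerges from combining $\sgn(\pm x)^\epsilon$ with the parity of $\cos$ or $\sin$ in its argument.

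For part (2), insert $\upsilon(u)=[u]^{-m}\varphi(|u|)$ into $\ES_{(\mu,k)}\upsilon(z)=[z]^{-k}\int_{\BCx}[u]^{-k}\|u\|^{-\mu}\upsilon(u)e(zu+\overline{zu})\,du$ and pass to polar coordinates $u=ye^{i\theta}$ with $du=2y\,dy\,d\theta$. After the substitution $\theta'=\theta+\phi$, the angular integral becomes
\[
\int_0^{2\pi} e^{-i(k+m)\theta'}e^{4\pi i xy\cos\theta'}\,d\theta' \;=\; 2\pi\, i^{k+m}\, J_{k+m}(4\pi xy),
\]
by the Jacobi--Anger expansion displayed at the end of Example \ref{3ex: Bessel n=1, C}. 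The phase $e^{-ik\phi}\cdot e^{i(k+m)\phi}=e^{im\phi}$ then appears automatically, and rewriting $J_{k+m}(4\pi xy)$ via \eqref{3eq: j (0, m)} as $\frac{1}{2\pi}i^{-(k+m)}j_{(0,m+k)}(xy)$ gives the asserted identity in the two stated forms.

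The bijectivity statements reduce to the bijectivity of $\ES_{(\mu,\epsilon)}$ and $\ES_{(\mu,k)}$ on their full domains proved in Lemma \ref{5cor: renormalize Hankel}: the Fourier transform preserves parity on $\BR$ and converts the rotational mode $\alpha\mapsto\upsilon(ue^{i\alpha})$ into its conjugate on $\BC$, hence $\ES_{(\mu,\epsilon)}$ and $\ES_{(\mu,k)}$ restrict to bijections between the relevant parity/mode components. The only real work is bookkeeping---keeping parallel track of the signs $(\pm)^\delta$, $(\pm)^\epsilon$, the phases $i^{k+m}$, $e^{im\phi}$, and the normalization constants implicit in $du=2y\,dy\,d\theta$ and in the definition of $j_{(0,\cdot)}$---and once those are straightened out no further analytic input beyond Lemma \ref{5cor: renormalize Hankel} and the identifications \eqref{3eq: j (0, delta)}, \eqref{3eq: j (0, m)}, Example \ref{3ex: Bessel n=1, C} is needed.
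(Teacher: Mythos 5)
Your proposal is correct and follows precisely the route the paper intends (the paper states the lemma without proof, noting only that it follows from ``some calculations'' using the rank-one formulae for $j_{(0,\delta)}$ and $j_{(0,m)}$): the direct substitution into the Fourier-integral form of $\ES_{(\mu,\epsilon)}$ and $\ES_{(\mu,k)}$ from Lemma \ref{5cor: renormalize Hankel}, with the parity split over $\BR^\times$ for (1) and polar coordinates plus Jacobi--Anger for (2), is exactly right, and the bookkeeping you carry out ($(\pm)^\delta$ from $\sgn(\pm x)^\epsilon\cdot(-)^{\epsilon+\delta}$, the phase $e^{-ik\phi}e^{i(k+m)\phi}=e^{im\phi}$, and the normalization $du=2y\,dy\,d\theta$) all check out. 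You are also correct that the displayed formulas in part (1) of the lemma as stated carry a typographical slip---the trigonometric arguments should read $\cos(2\pi xy)$ and $\sin(2\pi xy)$ to be consistent with $j_{(0,0)}(x)=2\cos(2\pi x)$ and $j_{(0,1)}(x)=2i\sin(2\pi x)$ from \eqref{3eq: j (0, delta)}.
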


\subsection{Miller-Schmid transforms}

In \cite[\S 6]{Miller-Schmid-2004}, certain transforms over $\BR$, which play an important role in the proof of the \Voronoi summation formula in their subsequent work \cite{Miller-Schmid-2006, Miller-Schmid-2009}, are introduced by Miller and Schmid. Here, we shall first recollect their construction of these transforms with slight modifications, and then define similar transforms over $\BC$ in a parallel way.

\subsubsection{The Miller-Schmid transform $\ET _{(\mu, \epsilon)}$}

\begin{lem} \label{5lem: Miller-Schmid transforms, R}
Let $(\mu, \epsilon) \in \BC \times \BZ/2 \BZ $.

{\rm (1).}
For any $ \upsilon \in \Ssis (\BRx)$ there is a unique function $\Upsilon \in \Ssis (\BRx)$ satisfying the following two identities,
\begin{equation}\label{5eq: Miller-Schmid, R}
\EM _\delta \Upsilon (s ) = G_{ \epsilon + \delta } (s) \EM _\delta \upsilon (s + \mu), \hskip 10 pt \delta \in \BZ/2 \BZ.
\end{equation}
We  write $ \Upsilon = \ET_{(\mu, \epsilon)} \upsilon $ and call $\ET_{(\mu, \epsilon)}$ the Miller-Schmid transform over $\BR$ of index $(\mu, \epsilon)$.

{\rm (2).}
Let $\lambda \in \BC$. Suppose  $\upsilon (x) \in \sgn (x)^{\delta } |x|^{-\lambda} (\log |x|)^j \SS (\BR)$. If  $\Re \lambda <  \Re \mu - \frac 1 2$, then
\begin{equation}\label{5eq: Miller-Schmid and Fourier, R}
\begin{split}
\ET_{(\mu, \epsilon)} \upsilon (x)
& = \sgn (x)^{\epsilon} \int_{\BRx} \sgn (y)^{\epsilon} |y|^{-\mu} \upsilon \lp y\-\rp  e (x y) d^\times y   = \sgn (x)^{\epsilon} \EF \varphi \, (- x),
\end{split}
\end{equation}
with $\varphi (x) = \sgn(x)^\epsilon |x |^{-\mu-1}  \upsilon (x\-)$.

{\rm (3).} Suppose that $\Re \lambda < \Re \mu$. Then the integral in \eqref{5eq: Miller-Schmid and Fourier, R} is absolutely convergent and \eqref{5eq: Miller-Schmid and Fourier, R} remains valid for any  $\upsilon (x) \in \sgn (x)^{\delta } |x|^{-\lambda} (\log |x|)^j \SS (\BR)$.

{\rm (4).}
Suppose that $\Re \mu > 0$. Define the function space
\begin{equation*}
\Ssiss (\BRx) =  \sum_{\delta \in \BZT} \ \sum_{\Re \lambda \leq 0 } \ \sum_{j \in \BN}  \sgn(x)^{\delta} |x|^{-\lambda} (\log |x|)^j \SS (\BR).
\end{equation*}
Then the transform $\ET_{(\mu, \epsilon)}$ sends $\Ssiss (\BRx)$ into itself. Moreover, \eqref{5eq: Miller-Schmid and Fourier, R} also holds true for any $\upsilon \in \Ssiss (\BRx)$, wherein the integral absolutely converges.
\end{lem}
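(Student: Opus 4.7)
My plan is to construct $\ET_{(\mu,\epsilon)}\upsilon$ via the Mellin isomorphism of Lemma \ref{2lem: Ssis to Msis, R}, and then to identify it with the Fourier integral in \eqref{5eq: Miller-Schmid and Fourier, R} by a short Mellin-theoretic computation resting on Lemma \ref{5lem: Fourier}.

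For part (1), I would check that $G_{\epsilon+\delta}(s)\EM_\delta \upsilon(s+\mu)$ lies in $\Msis$ for each $\delta \in \BZT$: the uniform moderate growth of $G_{\epsilon+\delta}$ on vertical strips from \eqref{1eq: vertical bound, G (lambda, delta) (s)}, combined with the uniform rapid decay of $\EM_\delta\upsilon(s+\mu)$ guaranteed by Lemma \ref{2lem: Ssis to Msis, R}, yields the vertical decay of the product, while the poles are located in $(-\epsilon-\delta-2\BN) \cup (\lambda-\mu-\BN)$, a finite union with uniformly bounded orders. Lemma \ref{2lem: Ssis to Msis, R} then produces a unique $\Upsilon \in \Ssis(\BRx)$ realising \eqref{5eq: Miller-Schmid, R}.

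For part (2), set $f(y) = \sgn(y)^\epsilon |y|^{-\mu-1} \upsilon^\sharp(y)$. The change of variable $y \mapsto y\-$ in the signed Mellin integral gives
\begin{equation*}
\EM_{\epsilon+\delta} f(1-s) = \EM_\delta \upsilon(s+\mu),
\end{equation*}
while the sign change $x \mapsto -x$ gives $\EM_\delta\bigl[\sgn^\epsilon \widehat{f}(-\,\cdot\,)\bigr](s) = (-)^{\epsilon+\delta} \EM_{\epsilon+\delta}\widehat{f}(s)$. Chaining these identities with Lemma \ref{5lem: Fourier} recovers the Mellin identity \eqref{5eq: Miller-Schmid, R} for $\sgn(x)^\epsilon \widehat{f}(-x)$, so uniqueness from part (1) identifies this function with $\ET_{(\mu,\epsilon)}\upsilon$. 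The hypothesis $\Re\lambda < \Re\mu - \tfrac 12$ places $f$ in $L^2(\BR)$, which is used to extend Lemma \ref{5lem: Fourier} to this non-Schwartz $f$ by a Plancherel-approximation argument.

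For parts (3) and (4), absolute convergence under $\Re\lambda < \Re\mu$ is elementary: $\upsilon^\sharp$ decays rapidly at $y = 0$ and satisfies $|\upsilon^\sharp(y)| \lll |y|^{\Re\lambda}(\log|y|)^j$ at infinity, so the integrand is of size $|y|^{\Re\lambda - \Re\mu - 1}(\log|y|)^j$ there. Under this weaker hypothesis, Fubini directly justifies the Mellin computation of part (2) without passing through $L^2$. Part (4) reduces to pole bookkeeping: when $\Re\lambda \leq 0 < \Re\mu$, the poles of $G_{\epsilon+\delta}(s)\EM_\delta\upsilon(s+\mu)$ lie in $(-\epsilon-\delta-2\BN)\cup(\lambda-\mu-\BN)$, all in the closed left half-plane, so Lemma \ref{2lem: Ssis to Msis, R} places $\Upsilon$ back in $\Ssiss(\BRx)$, and the integral formula for $\upsilon \in \Ssiss(\BRx)$ is covered by part (3) since $\Re\lambda \leq 0 < \Re\mu$. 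The main obstacle is extending Lemma \ref{5lem: Fourier} to the non-Schwartz $f$ in part (2); this is the reason for the gap of $\tfrac 12$ between the hypotheses of (2) and (3), and where the careful $L^2$/approximation argument is needed.
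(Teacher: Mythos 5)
Your parts (1) and (4) match the paper's approach (use the Mellin isomorphism of Lemma \ref{2lem: Ssis to Msis, R} and track pole locations), and the computation of the pole set in (4) is correct. But for part (2) you take a genuinely different route from the paper, and it leaves a real gap. The paper does \emph{not} pass through Lemma \ref{5lem: Fourier} at all: instead it writes $\ET_{(\mu,\epsilon)}\upsilon(x)$ by Mellin inversion as the iterated integral $\frac{1}{4\pi i}\sum_\delta\int_{\BRx}\upsilon(y)|y|^\mu\,\sgn(xy)^\delta\int_{\EC_{(0,\delta+\epsilon)}}G_{\delta+\epsilon}(s)|y|^{s}|x|^{-s}\,ds\,d^\times y$, chooses the Barnes--Mellin contour $\EC_{(0,\delta+\epsilon)}$ from Definition \ref{3defn: C d lambda} so that it lies in the half-plane $\{\Re s>\Re(\lambda-\mu)\}$ (which makes the $d^\times y$ integral absolutely convergent), interchanges the integrals, and substitutes $y\mapsto y^{-1}$. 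Definition \ref{3defn: C d lambda} forces the asymptotic abscissa of $\EC_{(0,\delta+\epsilon)}$ to satisfy $\sigma<-\tfrac12$, so the compatibility condition becomes $\Re(\lambda-\mu)<-\tfrac12$; that is the true source of the $\tfrac12$-gap between (2) and (3). Your explanation --- that $\Re\lambda<\Re\mu-\tfrac12$ is ``what places $f$ in $L^2$'' --- is not right: $f=\sgn^\epsilon|\cdot|^{-\mu-1}\upsilon^\sharp$ already lies in $L^1\cap L^2$ under the weaker $\Re\lambda<\Re\mu+\tfrac12$, so $L^2$ membership cannot be the binding constraint.

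The substantive gap in your argument is the sentence asserting that Lemma \ref{5lem: Fourier} can be ``extended \dots by a Plancherel-approximation argument'' to this non-Schwartz $f$. That is exactly where the difficulty lives, and it is not a one-line matter: $f$ belongs to $|y|^{-\lambda+\mu+1}(\log|y|)^j\SS(\BR)$-type spaces (after inversion) rather than $\SS(\BR)$, so $\EM_{\epsilon+\delta}f$ is holomorphic only on a half-plane, and establishing the functional equation $\EM_{\epsilon+\delta}\widehat f(s)=(-1)^{\epsilon+\delta}G_{\epsilon+\delta}(s)\EM_{\epsilon+\delta}f(1-s)$ as an identity of meromorphic continuations (not merely almost-everywhere on a line) requires either a careful truncation/approximation scheme or re-proving the Tate local functional equation on the enlarged test space. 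The paper's contour argument sidesteps this entirely. Likewise, in part (3) the paper does not simply ``apply Fubini'': the hypothesis $\Re\lambda<\Re\mu$ only just fails to accommodate the contour constraint, so the paper instead establishes \eqref{5eq: Miller-Schmid and Fourier, R} for $\Re\mu>\Re\lambda+\tfrac12$ and extends it to $\Re\mu>\Re\lambda$ by analytic continuation in $\mu$, noting that both sides are analytic in $\mu$ and the right-hand integral already converges absolutely there. If you prefer your route through Lemma \ref{5lem: Fourier}, you should write out the approximation argument explicitly; as written, the key step is unproved.
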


\begin{proof}
Following the ideas in the proofs of Proposition \ref{3prop: h (lambda, delta)} and Theorem \ref{3prop: H (lambda, delta)}, one may prove (1).
Actually, the case here is much easier!

As for (2), we have
\begin{equation*}
\begin{split}
\ET_{(\mu, \epsilon)} \upsilon (x)
& = \frac 1 {4 \pi i} \sum_{\delta \in \BZT} \int_{\BRx} \upsilon (y) |y|^{\mu} \cdot \sgn (x y)^{\delta} \int_{ \EC _{ (0, \delta + \epsilon)} } G_{\delta + \epsilon} (s) |y|^{s } |x|^{-s} d s d^\times y\\
& = \int_{\BRx} \upsilon (y) |y|^{\mu} J_{(0, \epsilon)} \lp x y\-\rp d^\times y,
\end{split}
\end{equation*}
 provided that the double integral is absolutely convergent. In order to guarantee the convergence of the integral over $d^\times  y$, the integral contour $\EC _{ (0, \delta + \epsilon)}$ is required to lie in the right half-plane $\left\{ s : \Re s > \Re (\lambda - \mu) \right \}$. In view of Definition \ref{3defn: C d lambda}, such a choice of  $\EC _{(0, \delta + \epsilon)}$ is permissible since $\Re (\lambda - \mu) <  - \frac 1 2$ according to our assumption. Finally, the change of variables from $y$ to $y\-$, along with the formula $J_{(0, \epsilon)} (x) = \sgn (x)^{\epsilon} e (x)$, yields \eqref{5eq: Miller-Schmid and Fourier, R}. 

For the case  $\Re \lambda < \Re \mu$ in (3), the absolute convergence of the integral in \eqref{5eq: Miller-Schmid and Fourier, R} is obvious. The validity of \eqref{5eq: Miller-Schmid and Fourier, R} follows from the analyticity with respect to $\mu$.

Observe that, under the isomorphism established by $\EM_{\BR}$ in Lemma \ref{2lem: Ssis to Msis, R}, $\Ssiss (\BRx)$ corresponds to the subspace of $\Msis^{\BR}$ consisting of pairs of meromorphic functions $(H_0, H_1)$ such that the poles of both $H_0$ and $ H_1$ lie in the left half-plane $\left\{ s : \Re s \leq 0 \right\}$ (see Lemma \ref{2lem: Ssis delta, M delta}).
Then the first assertion in (4) is clear, since  the map that corresponds to  $\ET _{(\mu, \epsilon)}$ is given by $ (H_0 (s), H_1 (s)) \mapsto (G_{ \epsilon } (s) H_0 (s + \mu), G_{ \epsilon + 1 } (s) H_1 (s + \mu)) $ and sends the  subspace of $\Msis^{\BR}$ described above into itself. The second assertion  in (4) immediately follows from (3).
\end{proof}

Similar to Lemma \ref{5lem: S, R and C} (1), we have the following lemma.

\begin{lem}\label{5lem: T, R}
Let $(\mu, \epsilon)  \in \BC \times \BZT$ be such that $\Re \mu > 0$. For $\delta \in \BZT$ define $\Ssiss^{ \delta} (\BRx)$ to be the space of functions in $ \Ssiss (\BRx)$  satisfying the condition \eqref{1eq: delta condition, R}. For  $\upsilon \in \Ssiss^{ \delta} (\BRx)$, we write $\upsilon (x) =  \sgn(x)^{\delta }  \varphi (|x|) $. Then
\begin{equation*}
\begin{split}
\ET_{(\mu, \epsilon)} \upsilon (\pm x) &=  (\pm)^{\delta } \int_{\BR_+} y^{- \mu} \varphi \lp y\- \rp j_{(0, \delta + \epsilon)} (x y) d^\times y \\
& = 
\begin{cases}
\ds (\pm)^{\delta} 2 \int_{\BR_+} y^{- \mu} \varphi \lp y\- \rp \cos (x y) d^\times y, &  \text { if } \delta = \epsilon,\\
\ds (\pm)^{\delta} 2 i \int_{\BR_+} y^{- \mu} \varphi \lp y\- \rp \sin (x y) d^\times y, &  \text { if } \delta = \epsilon + 1.
\end{cases}
\end{split}
\end{equation*}
The transform $\ET_{(\mu, \epsilon)}$ sends $\Ssiss^{ \delta} (\BRx)$ into itself.
\end{lem}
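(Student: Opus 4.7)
The plan is to proceed in close parallel to the proof of Lemma \ref{5lem: S, R and C} (1), the only essential changes being the replacement of the measure $dy$ by the multiplicative Haar measure $d^\times y$ and of the function $\varphi(y)$ by $\varphi(y^{-1})$. Concretely, I would start from the integral representation in Lemma \ref{5lem: Miller-Schmid transforms, R} (4), which is available precisely because $\Re\mu>0$ guarantees absolute convergence on $\Ssiss(\BRx)$:
\begin{equation*}
\ET_{(\mu,\epsilon)}\upsilon(x) = \sgn(x)^{\epsilon}\int_{\BRx}\sgn(y)^{\epsilon}|y|^{-\mu}\upsilon(y^{-1})e(xy)\,d^\times y.
\end{equation*}

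Next, I would substitute $\upsilon(y) = \sgn(y)^{\delta}\varphi(|y|)$, so that $\upsilon(y^{-1})=\sgn(y)^{\delta}\varphi(|y|^{-1})$, and the sign factor combines to $\sgn(y)^{\epsilon+\delta}$. Splitting the integral over $\BRx$ into integrals over $\BR_+$ and $-\BR_+$ and changing variables $y\mapsto -y$ in the second piece will express $\ET_{(\mu,\epsilon)}\upsilon(\pm x)$ as
\begin{equation*}
(\pm)^{\epsilon}\int_{\BR_+} y^{-\mu}\varphi(y^{-1})\bigl[e(\pm xy)+(-)^{\epsilon+\delta}e(\mp xy)\bigr]d^\times y.
\end{equation*}
The bracket collapses to $2\cos(2\pi xy)$ when $\delta=\epsilon$ and to $\pm 2i\sin(2\pi xy)$ when $\delta=\epsilon+1$, and in either case factors out a further $(\pm)^{\delta-\epsilon}$; recognising the result as $(\pm)^{\delta}j_{(0,\delta+\epsilon)}(xy)$ via the formulae \eqref{3eq: j (0, delta)} gives the desired expression (in the same notational convention used in Lemma \ref{5lem: S, R and C} (1)).

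For the final assertion, the identity $\ET_{(\mu,\epsilon)}\upsilon(-x)=(-)^{\delta}\ET_{(\mu,\epsilon)}\upsilon(x)$ is visible from the formula just derived, so the image lies in the subspace cut out by the parity condition \eqref{1eq: delta condition, R}; combined with Lemma \ref{5lem: Miller-Schmid transforms, R} (4), which already provides $\ET_{(\mu,\epsilon)}\colon\Ssiss(\BRx)\to\Ssiss(\BRx)$, this yields $\ET_{(\mu,\epsilon)}(\Ssiss^{\delta}(\BRx))\subseteq\Ssiss^{\delta}(\BRx)$.

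There is no genuine obstacle here: the argument is essentially a bookkeeping exercise. The step that needs the most care is tracking how the inversion $y\mapsto y^{-1}$ interacts with the multiplicative measure and the sign decomposition, ensuring that the final integral is written against $d^\times y$ rather than $dy$; this is the only place where the present lemma differs materially from Lemma \ref{5lem: S, R and C} (1).
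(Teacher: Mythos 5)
Your proposal is correct and follows exactly the route the paper intends: the paper itself gives no explicit proof but introduces the lemma with ``Similar to Lemma \ref{5lem: S, R and C} (1),'' and your calculation---plugging $\upsilon(y) = \sgn(y)^\delta\varphi(|y|)$ into the absolutely convergent Fourier-type integral from Lemma \ref{5lem: Miller-Schmid transforms, R} (4), splitting $\BR^\times$ into $\BR_+$ and $-\BR_+$, and recognising the resulting trigonometric kernel as $j_{(0,\delta+\epsilon)}$---is precisely that bookkeeping. The parity argument for the final inclusion assertion is also the right one.
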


\subsubsection{The Miller-Schmid transform $\ET _{(\mu, k)}$}
In parallel to Lemma \ref{5lem: Miller-Schmid transforms, R}, the following lemma defines the Miller-Schmid transform $\ET _{(\mu, k)}$ over $\BC$ and gives its connection to the Fourier transform over $\BC$.

\begin{lem} \label{5lem: Miller-Schmid transforms, C}
Let $(\mu, k) \in \BC \times \BZ $.

{\rm (1).}
For any $ \upsilon \in \Ssis (\BCx)$ there is a unique function $\Upsilon \in \Ssis (\BCx)$ satisfying the following sequence of identities,
\begin{equation}\label{5eq: Miller-Schmid, C}
\EM _{-m} \Upsilon (2s ) = G_{ m + k } (s) \EM _{-m} \upsilon (2( s + \mu) ), \hskip 10 pt m \in \BZ.
\end{equation}
We  write $ \Upsilon = \ET_{(\mu, k)} \upsilon $ and call $\ET_{(\mu, k)}$ the Miller-Schmid transform  over $\BC$ of index $(\mu, k)$.

{\rm (2).}
Let $\lambda \in \BC$. If $\Re \lambda < 2\, \Re \mu$,  then for any $\upsilon (z) \in [z]^{m } |z|^{-\lambda} (\log |z|)^j \SS (\BC)$ we have
\begin{equation}\label{5eq: Miller-Schmid and Fourier, C}
\begin{split}
\ET_{(\mu, k)} \upsilon (z)
& = [z]^{k} \int_{\BCx} [u]^{k} \|u\|^{-\mu} \upsilon \lp u\-\rp  e (z u + \overline {z u}) d^\times u  = [z]^{k} \EF \varphi \, (- z),
\end{split}
\end{equation}
with $\varphi (z ) = [ z ]^k \| z \|^{-\mu-1}    \upsilon \lp z\-\rp$.

{\rm (3).} When $\Re \lambda < 2 \Re \mu$, the integral in \eqref{5eq: Miller-Schmid and Fourier, C} is absolutely convergent for any  $\upsilon (z) \in [z]^{m } |z|^{-\lambda} (\log |z|)^j \SS (\BC)$.

{\rm (4).}
Suppose that $\Re \mu > 0$. Define the function space 
\begin{equation*}
\Ssiss (\BCx) =  \sum_{m \in \BZ} \ \sum_{\Re \lambda \leq 0 } \ \sum_{j \in \BN}  [z]^{m} |z|^{-\lambda} (\log |z|)^j \SS (\BC).
\end{equation*}
Then the transform $\ET_{(\mu, k)}$ sends $\Ssiss  (\BCx)$ into itself. Moreover, \eqref{5eq: Miller-Schmid and Fourier, R} also holds true for any $\upsilon \in \Ssiss  (\BCx)$, wherein the integral absolutely converges.
\end{lem}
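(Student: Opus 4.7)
The strategy is to mirror the real-case argument of Lemma~\ref{5lem: Miller-Schmid transforms, R} step by step, using the isomorphism $\EM_{\BC}:\Ssis(\BCx)\xrightarrow{\sim}\Msis^{\BC}$ established in Lemma~\ref{2lem: Ssis to Msis, C} in place of Lemma~\ref{2lem: Ssis to Msis, R} and tracking decay in both $s$ and $m$.

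For (1), one translates the defining identity \eqref{5eq: Miller-Schmid, C} to the Mellin side: setting $H_{-m}(s)=\EM_{-m}\upsilon(s)$, we ask whether the sequence $\bigl(G_{m+k}(s/2)H_{-m}(s+2\mu)\bigr)_{m\in\BZ}$ lies in $\Msis^{\BC}$. The three standing hypotheses in the definition of $\Msis^{\BC}$ each survive the operation: (i) the poles remain in a finite union of sets of the form $\lambda-|m'|-2\BN$, since $G_{m+k}(s/2)$ contributes only poles at $s\in-|m+k|-2\BN$ and the shift by $2\mu$ of $H_{-m}$ contributes finitely many such progressions; (ii) the pole orders are uniformly bounded; (iii) the uniform moderate growth of $G_{m+k}(s)$ on vertical strips in both $s$ and $m$, given by \eqref{1eq: vertical bound, G (mu, m) (s)}, combined with the rapid joint decay of $H_{-m}(s+2\mu)$, yields the required joint rapid decay. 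Uniqueness and existence of $\Upsilon$ then follow from Lemma~\ref{2lem: Ssis to Msis, C}.

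For (2) and (3), the plan is to invert in polar coordinates using \eqref{1eq: Mellin inversion, C, polar}, substitute \eqref{5eq: Miller-Schmid, C}, and rescale $s\to 2s$. Each Fourier mode then reads
$$
\Upsilon_m(x)=\frac1{2\pi i}\int_{(\sigma)} G_{m+k}(s)\,\EM\upsilon_m(2(s+\mu))\,x^{-2s}\,ds,
$$
and inserting the integral representation of $\EM\upsilon_m$ and interchanging orders (valid under $\Re\lambda<2\Re\mu-1$ by the bound \eqref{1eq: vertical bound, G (mu, m) (s)} and rapid decay of the Mellin transform) gives, after the change of variables $y\mapsto y^{-1}$, the formula
$$
\Upsilon_m(x)=2\int_0^{\infty}\upsilon_m(y^{-1})\,y^{-2\mu}\,j_{(0,m+k)}(xy)\,d^\times y.
$$
Summing over $m$ and using \eqref{2eq: Bessel kernel over C, polar} with the reindexing $m'=m+k$ collapses the Fourier series to a rank-one Bessel kernel $J_{(0,0)}(z/u)=e(z/u+\overline{z/u})$ multiplied by a twist in $[z/u]^{\pm k}$. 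Combining this with the change of variables $u\mapsto u^{-1}$ produces the integral representation \eqref{5eq: Miller-Schmid and Fourier, C}, and the $\EF$-expression is then just a direct unwinding of \eqref{1eq: Fourier, R}. Part (3) follows from (2) by analytic continuation in $\mu$, since absolute convergence of the integral in \eqref{5eq: Miller-Schmid and Fourier, C} is manifest for $\Re\lambda<2\Re\mu$.

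For (4), the plan is to transport the condition $\Re\mu>0$ to the Mellin side. Under $\EM_{\BC}$, $\Ssiss(\BCx)$ corresponds to the subspace of $\Msis^{\BC}$ whose entries have all poles in the closed left half-plane $\{\Re s\leq 0\}$ (compare the analogous description in the proof of Lemma~\ref{5lem: Miller-Schmid transforms, R}(4)). The operation $H_{-m}(s)\mapsto G_{m+k}(s/2)H_{-m}(s+2\mu)$ preserves this property: the pole locus of $H_{-m}(s+2\mu)$ shifts leftward by $2\Re\mu>0$, while the extra poles introduced by $G_{m+k}(s/2)$ sit at $s\in-|m+k|-2\BN\subseteq\{\Re s\leq 0\}$. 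Hence $\ET_{(\mu,k)}\Ssiss(\BCx)\subseteq\Ssiss(\BCx)$. The extension of the integral formula to $\upsilon\in\Ssiss(\BCx)$ is then handled summand by summand using (3), since every element of $\Ssiss(\BCx)$ is a finite sum of pieces of the form $[z]^{m'}|z|^{-\lambda}(\log|z|)^j\SS(\BC)$ with $\Re\lambda\leq 0<2\Re\mu$.

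\textbf{Main obstacle.} The delicate step is the interchange of summation, contour integration and integration over $\BCx$ in (2). Unlike the real case, which involves only two sign parities $\delta\in\BZT$, here an infinite Fourier sum over $m\in\BZ$ must be absolutely resummed into the kernel $J_{(0,0)}$. This requires exploiting the uniform-in-$m$ rapid decay of $\EM_{-m}\upsilon$ built into $\Msis^{\BC}$ and the companion bound on $G_{m+k}(s)$ furnished by \eqref{1eq: vertical bound, G (mu, m) (s)} in tandem; without the full strength of these joint estimates, neither the interchange nor the termwise Bessel resummation could be justified.
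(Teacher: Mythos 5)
Your proposal follows the same overall route as the paper's proof, which simply adapts Lemma \ref{5lem: Miller-Schmid transforms, R} (the real case) along the isomorphism $\EM_{\BC}:\Ssis(\BCx)\xrightarrow{\sim}\Msis^{\BC}$ from Lemma \ref{2lem: Ssis to Msis, C}. The Mellin-side verification of (1), and the pole bookkeeping for (4), match the paper's one-line description; your condition ``all poles at $\Re s\le 0$'' is equivalent, inside $\Msis^{\BC}$, to the paper's more explicit statement that the poles of $H_m$ lie in $\{\Re s\le\min\{M-|m|,0\}\}$ for a single $M$ (because in $\Msis^{\BC}$ the poles are already confined to finitely many progressions $\lambda-|m|-2\BN$, so one may take $M=\max\Re\lambda$), but it is worth stating the $\min\{M-|m|,0\}$ form since it makes the shift by $G_{m+k}(s/2)$ and by $2\mu$ transparent to check.

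For (2), the resummation you sketch is the right idea, but the ``twist in $[z/u]^{\pm k}$'' must be pinned down, and this is where one should not just copy the printed formula. Carrying the reindexing $m'=m+k$ through $\upsilon_m(y^{-1})$, $j_{(0,\ell)}(x)=\int_0^{2\pi}e^{i\ell\theta}e(2x\cos\theta)\,d\theta$ and \eqref{2eq: Bessel kernel over C, polar}, the factor $e^{-im\theta}e^{im\phi}$ becomes $e^{im'(\phi-\theta)}e^{-ik(\phi-\theta)}$, so the twist is $[z/u]^{-k}=[z]^{-k}[u]^{k}$ and the kernel collapses to $[z]^{-k}[u]^{k}\|u\|^{\mu}J_{(0,0)}(z/u)$; after $u\mapsto u^{-1}$ one obtains $[z]^{-k}\int_{\BCx}[u]^{-k}\|u\|^{-\mu}\upsilon(u^{-1})e(zu+\overline{zu})\,d^\times u$. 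The exponent $-k$ is forced: it is the only choice consistent with $G_{m+k}(s)$ acting on $\EM_{-m}$ in \eqref{5eq: Miller-Schmid, C} and with the index $m+k$ in Lemma \ref{5lem: T, C}. Be alert that the displayed \eqref{5eq: Miller-Schmid and Fourier, C} has $+k$, which does not reproduce $j_{(0,m+k)}$ when tested on $\upsilon=[z]^m\varphi(|z|)$.

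Finally, the stricter $\Re\lambda<2\Re\mu-1$ you use for the Fubini interchange is unnecessary in the complex case. By \eqref{1eq: vertical bound, G (mu, m) (s)}, $G_{m+k}(s)\lll(|\Im s|+|m+k|+1)^{2\Re s-1}$, so the $s$-integral over $\EC_{(0,m+k)}$ is already absolutely convergent as soon as $\Re s<0$, while the $y$-integral requires $2\Re s>\Re\lambda-2\Re\mu-|m-m'|$; both can be satisfied on the same contour precisely when $\Re\lambda<2\Re\mu$. Thus (2) and (3) hold simultaneously, with no analytic-continuation step; the two-stage argument is only forced in the real case, where $G_\delta(s)\lll(|\Im s|+1)^{\Re s-1/2}$ pushes the contour to $\Re s<-\frac12$ and opens the $\frac12$-gap between the hypotheses of (2) and (3).
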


\begin{proof}
Following literally the same ideas in the proof of Lemma \ref{5lem: Miller-Schmid transforms, R}, one may show this lemma without any difficulty. We only remark that, via the isomorphism $\EM_{\BC}$ in Lemma \ref{2lem: Ssis to Msis, C}, $\Ssiss  (\BCx)$ corresponds to the subspace of $\Msis^{\BC}$ consisting of sequences $\lpp H_m\rpp $ such that the poles of each $H_{m}$ lie in the left half-plane $\big\{ s : \Re s \leq \min \{ M - |m|, 0 \} \big\}$ for some $M \in \BN$ (see Lemma \ref{2lem: Ssis m, M -m}).
\end{proof}

\begin{lem}\label{5lem: T, C}
Let $(\mu, k)  \in \BC \times \BZ$ be such that $\Re \mu > 0$. For $m \in \BZ $ define $\Ssiss^{  m} (\BCx)$ to be the space of functions in $ \Ssiss  (\BCx)$  satisfying the condition \eqref{1eq: m condition, C}. For  $\upsilon \in \Ssiss^{   m} (\BCx)$, we write $\upsilon (z) =  [z]^{m }  \varphi (|z|) $. Then
\begin{align*}
\ET_{(\mu, k)} \upsilon \lp x e^{i \phi} \rp & = 2 e^{ i m \phi} \int_{\BR_+} y^{- 2 \mu} \varphi \lp y\- \rp j_{(0, m + k)} (x y)  d^\times y \\
& =  4 \pi i^{m+k} e^{ i m \phi} \int_{\BR_+}  y^{- 2 \mu} \varphi \lp y\- \rp J_{m+k} (4 \pi x y)    d^\times y.
\end{align*}
The transform $\ET_{(\mu, k)}$ sends $\Ssiss^{  m} (\BCx)$ into itself.
\end{lem}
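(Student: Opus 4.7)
The argument will mirror the real case (Lemma~\ref{5lem: T, R}) almost verbatim, with the angular averaging on $\BR/2\pi\BZ$ taking the place of the averaging over the sign group $\{+,-\}$. The starting point is the Fourier integral representation from Lemma~\ref{5lem: Miller-Schmid transforms, C}~(4), which is available because $\Re\mu>0$ and $\Ssiss^m(\BCx)\subset\Ssiss(\BCx)$:
\[
\ET_{(\mu,k)}\upsilon(z)=[z]^k\int_{\BCx}[u]^k\|u\|^{-\mu}\upsilon(u^{-1})\,e(zu+\overline{zu})\,d^\times u,
\]
with the integral absolutely convergent. Writing $\upsilon(z)=[z]^m\varphi(|z|)$, I would substitute $z=xe^{i\phi}$, $u=ye^{i\theta}$, use $[u^{-1}]=[u]^{-1}$, $|u^{-1}|=y^{-1}$, $\|u\|^{-\mu}=y^{-2\mu}$, $e(zu+\overline{zu})=e^{4\pi i xy\cos(\phi+\theta)}$, and $d^\times u=2\,d^\times y\,d\theta$.

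After the change of variable $\theta\mapsto \theta'=\theta+\phi$ the $\phi$-dependence collapses into a single prefactor; this simultaneously shows that $\ET_{(\mu,k)}\upsilon$ is $m$-homogeneous in the angular variable (so it lies in $\Ssiss^m(\BCx)$) and separates the radial and angular integrals. The angular integral is then evaluated by the Jacobi--Anger expansion $e^{iz\cos\theta}=\sum_{n\in\BZ}i^n J_n(z)e^{in\theta}$ quoted at the end of Example~\ref{3ex: Bessel n=1, C}: a single term survives and gives a Bessel function $J_{m+k}(4\pi xy)$ together with the correct $i^{m+k}$ phase. Finally, \eqref{3eq: j (0, m)} rewrites $2\pi i^{m+k}J_{m+k}(4\pi xy)=j_{(0,m+k)}(xy)$, yielding both stated forms of the integral. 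The invariance of $\Ssiss^m(\BCx)$ under $\ET_{(\mu,k)}$ can alternatively be read off directly from the defining identities \eqref{5eq: Miller-Schmid, C}: since $\upsilon_{m'}=0$ for $m'\neq m$, the Mellin transform $\EM_{-m'}\upsilon$ vanishes for $m'\neq m$, forcing $\EM_{-m'}\ET_{(\mu,k)}\upsilon$ and hence $(\ET_{(\mu,k)}\upsilon)_{m'}$ to vanish for $m'\neq m$.

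The main technical point I expect to need care with is the bookkeeping of phase factors when matching the outcome of the angular integral with the normalization of $j_{(0,m+k)}$. A clean way to avoid sign pitfalls would be to carry out the Mellin-transform version of the argument in parallel: apply \eqref{5eq: Miller-Schmid, C} with only the Fourier mode $m'=m$ surviving, invert Mellin via Lemma~\ref{2lem: Mellin isomorphism, Msis lambda j}, and use the change of variable $y\mapsto y^{-1}$ in $\EM\varphi(2(s+\mu))$ to recognize the remaining contour integral as $j_{(0,m+k)}(xy)$ via its Barnes--Mellin definition \eqref{3def: Bessel kernel j mu m}. This Mellin-side derivation supplies an independent cross-check on the phase factor produced by Jacobi--Anger, and the two approaches together complete the proof.
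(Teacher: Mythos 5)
Your strategy is sound, and it is essentially the one the paper intends (the paper never writes out a proof of this lemma --- it is presented as the complex analogue of Lemma \ref{5lem: S, R and C} (2) and Lemma \ref{5lem: T, R}, which are also stated without proof). But the ``sign pitfall'' you anticipated is real, and it is worth pinning down exactly where it is. If you substitute polar coordinates into \eqref{5eq: Miller-Schmid and Fourier, C} \emph{as printed}, with $[z]^k[u]^k$ in the integrand, then with $\upsilon(u^{-1})=[u]^{-m}\varphi(|u|^{-1})$ the total angular factor is $e^{i(k-m)\theta}$, and Jacobi--Anger picks out the mode $n=m-k$, yielding
\[
\ET_{(\mu,k)}\upsilon\lp x e^{i\phi}\rp = 4\pi\, i^{\,m-k}\, e^{im\phi}\int_{\BR_+} y^{-2\mu}\varphi\lp y^{-1}\rp J_{m-k}(4\pi xy)\, d^\times y ,
\]
which differs from the claimed $j_{(0,m+k)}$ whenever $m$ and $k$ are both nonzero. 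Your Mellin-side cross-check is precisely what resolves this: starting from \eqref{5eq: Miller-Schmid, C}, only the mode $m'=m$ survives, giving $\EM\Upsilon_m(2s)=G_{m+k}(s)\,\EM\varphi\lp 2(s+\mu)\rp$; the substitution $t\mapsto t^{-1}$ inside $\EM\varphi$ and an interchange of integrals (legitimate since $\Re\mu>0$ keeps the $\EC$-contour far enough to the right to make everything absolutely convergent) produce
\[
\Upsilon_m(x) = 2\int_{\BR_+} t^{-2\mu}\varphi\lp t^{-1}\rp \left( \frac{1}{2\pi i}\int_{\EC_{(0,m+k)}} G_{m+k}(s)\,(xt)^{-2s}\,ds \right) d^\times t = 2\int_{\BR_+} t^{-2\mu}\varphi\lp t^{-1}\rp j_{(0,m+k)}(xt)\, d^\times t,
\]
which is the stated formula. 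Comparing the two calculations reveals that the exponents in \eqref{5eq: Miller-Schmid and Fourier, C} should be $[z]^{-k}$ and $[u]^{-k}$, which is also what the parallel formula for $\ES_{(\mu,k)}$ in Lemma \ref{5cor: renormalize Hankel}~(2) carries and what one derives from \eqref{5eq: Miller-Schmid, C} directly. With that corrected sign, your Jacobi--Anger route indeed gives $2\pi i^{\,m+k}J_{m+k}(4\pi xy)$ as you expected. One small addition to your closing paragraph: the Mellin argument that $\EM_{-m'}\upsilon\equiv0$ for $m'\neq m$ forces the same for $\Upsilon$ only shows $\Upsilon$ satisfies the $m$-condition; to conclude $\Upsilon\in \Ssiss^m(\BCx)$ you must also invoke Lemma \ref{5lem: Miller-Schmid transforms, C}~(4) (where $\Re\mu>0$ is used) to get membership in $\Ssiss(\BCx)$.
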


\subsection{Fourier type integral transforms}

In the following, we shall derive the Fourier type integral transform expressions for $\Hld$ and $\Hmum$ from the Fourier transform (more precisely, the renormalized rank-one Hankel transforms) and the Miller-Schmid transforms.

\subsubsection{The Fourier type transform expression for $\Hld$}

Let $(\umu, \udelta) \in \BC^n\times (\BZT)^n$. Following \cite[(6.51)]{Miller-Schmid-2006}, for $\upsilon (x) \in \sgn (x)^{\delta_n} |x|^{\mu_n} \SS (\BR) $, we consider
\begin{equation}\label{5eq: composite T S, R}
\Upsilon (x) = |x|^{-\mu_1} \ET_{(\mu_1-\mu_2, \delta_1)}\, {\sstyle \circ} \, ...\, {\sstyle \circ}\, \ET_{(\mu_{n-1}-\mu_{n}, \delta_{n-1})}\, {\sstyle \circ}\, \ES_{(\mu_n, \delta_n)} \upsilon (x).
\end{equation}
According to Lemma \ref{5cor: renormalize Hankel} (1) and Lemma \ref{5lem: Miller-Schmid transforms, R} (1), $\ES_{(\mu_n, \delta_n)} \upsilon (x)$ lies in the space $\sgn(x)^{\delta_n} \SS (\BR) $ ($\subset \Ssis(\BRx)$), whereas each Miller-Schmid transform sends $\Ssis(\BRx)$ into itself.
Thus, one can apply the Mellin transform $\EM_\delta$ to both sides of \eqref{5eq: composite T S, R}. Using \eqref{5eq: Mellin S, R} and \eqref{5eq: Miller-Schmid, R}, some calculations show that the application of $\EM_\delta$ converts \eqref{5eq: composite T S, R} exactly  into the identities in \eqref{3eq: Hankel transform identity, R} which defines $\Hld$. Therefore, $\Upsilon = \Hld \upsilon$.

\begin{thm}\label{5thm: Fourier type transform, R}
\cite[(1.3)]{Miller-Schmid-2009}. 
Let $(\umu, \udelta) \in \BC^n\times (\BZT)^n$ be such that $\Re \mu_1  > ... > \Re \mu_{n-1} > \Re \mu_n$. Suppose  $\upsilon (x) \in \sgn (x)^{\delta_n} |x|^{\mu_n} \SS (\BR) $. Then
\begin{equation} \label{5eq: Fourier type integral, R}
\Hld \upsilon (x) = \frac 1 {|x|} \int_{\BR^{\times n}} \upsilon \lp \frac {x_1 ... x_n} x \rp \lp \prod_{l      = 1}^{n }  \sgn (x_l     )^{ \delta_{l     }} |x_{l     }|^{ - \mu_{l     }} e \lp x_{l     } \rp \rp dx_n ... d x_1,
\end{equation}
where the integral converges when performed as iterated integral in the indicated order $d x_n d x_{n-1} ... d x_1$, starting from $d x_n$, then $d x_{n-1}$, ..., and finally $d x_1$.
\end{thm}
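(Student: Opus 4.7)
The plan is to take the composition identity
\begin{equation*}
\Hld \upsilon(x) = |x|^{-\mu_1}\, \ET_{(\mu_1-\mu_2,\, \delta_1)} \,{\sstyle \circ}\, \cdots \,{\sstyle \circ}\, \ET_{(\mu_{n-1}-\mu_n,\, \delta_{n-1})} \,{\sstyle \circ}\, \ES_{(\mu_n,\, \delta_n)} \upsilon(x),
\end{equation*}
established in the paragraph preceding Theorem \ref{5thm: Fourier type transform, R} via the Mellin identities of Lemma \ref{5cor: renormalize Hankel}(1) and Lemma \ref{5lem: Miller-Schmid transforms, R}(1), and unwind each factor into an explicit integral. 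The strict decrease $\Re \mu_1 > \cdots > \Re \mu_n$ ensures $\Re(\mu_l - \mu_{l+1}) > 0$ for every $l$, which is precisely the hypothesis of Lemma \ref{5lem: Miller-Schmid transforms, R}(4). Hence each $\ET_{(\mu_l-\mu_{l+1},\, \delta_l)}$ preserves the space $\Ssiss(\BRx)$ and admits an absolutely convergent integral representation on it.

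I would proceed by induction on $n$, unfolding from innermost to outermost. By Lemma \ref{5cor: renormalize Hankel}(1), the innermost factor $\ES_{(\mu_n,\, \delta_n)} \upsilon$ lies in $\sgn(x)^{\delta_n}\SS(\BR)\subset \Ssiss(\BRx)$, and its explicit Fourier-type formula produces the $dx_n$-integral after a linear substitution absorbing the preceding variable. Each successive application of Lemma \ref{5lem: Miller-Schmid transforms, R}(4) introduces an additional outer integration over $x_{n-1},\ldots,x_1$. A telescoping change of variables, of the form $y_1 = x_1/x$ and $y_{l+1} = x_{l+1}/(x y_1 \cdots y_l)$ style (the case $n=2$ corresponds to $y = x_1/x$, $z = x_1 x_2/x$), collapses the cumulative argument of $\upsilon$ to $x_1 x_2 \cdots x_n /x$, converts each oscillatory factor into $e(x_l)$, and, after combining Jacobians with the outer prefactor $|x|^{-\mu_1}\sgn(x)^{\delta_1}$ produced by the outermost Miller-Schmid transform, leaves exactly the kernel $|x|^{-1} \prod_{l=1}^n \sgn(x_l)^{\delta_l} |x_l|^{-\mu_l}$ appearing in \eqref{5eq: Fourier type integral, R}. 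The telltale cancellation $\sgn(x)^{2\delta_1}=1$ and the telescoping of the $|x|^{\pm \mu_l}$ factors are already visible in the rank-two calculation.

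The chief obstacle is not conceptual but the bookkeeping of Jacobians, parities $\sgn(\cdot)^{\delta_l}$, and powers of $|x|,|x_l|$ through the chain of substitutions, together with the fact that the resulting $n$-fold integral is, in general, only iteratedly and not absolutely convergent. The absolute convergence of each individual outer integration is, however, exactly the content of Lemma \ref{5lem: Miller-Schmid transforms, R}(4), which legitimises performing the integrals in the prescribed order $dx_n\, dx_{n-1} \cdots dx_1$ without a global appeal to Fubini. As a sanity check, an alternative route would be to establish \eqref{5eq: Fourier type integral, R} first in a subregion of indices where sufficient decay of $\upsilon$ and the factors $|x_l|^{-\mu_l}$ forces absolute convergence of the full multiple integral, and then extend by analytic continuation in $\umu$ throughout the open cone $\Re \mu_1 > \cdots > \Re \mu_n$, relying on the analyticity statements in Proposition \ref{3prop: properties of J, R}(2).
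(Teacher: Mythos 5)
Your proposal is correct and follows essentially the same route as the paper: start from the composition identity \eqref{5eq: composite T S, R}, observe that $\ES_{(\mu_n,\delta_n)}\upsilon$ lands in $\Ssiss(\BRx)$, invoke Lemma \ref{5lem: Miller-Schmid transforms, R}(4) (using $\Re(\mu_l-\mu_{l+1})>0$) to preserve $\Ssiss(\BRx)$ and to get absolutely convergent integral expressions for each $\ET$, and then carry out the telescoping change of variables. The paper does this in one shot rather than by induction, but the substitutions and lemmas cited are the same, and your $n=2$ example $y=x_1/x$, $z=x_1x_2/x$ matches the paper's change of variables $x_1=xy_1$, $x_{l+1}=y_l^{-1}y_{l+1}$.
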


\begin{proof}
We first observe that $\ES_{(\mu_n, \delta_n)} \upsilon (x) \in \sgn(x)^{\delta_n} \SS (\BR) \subset \Ssiss (\BRx)$. For each $ l      = 1 ,..., n-1$, since $\Re (\mu_{l     } -\mu_{l     +1}) > 0$, Lemma \ref{5lem: Miller-Schmid transforms, R} (4) implies that the transform $\ET_{(\mu_{l     } -\mu_{l     +1}, \delta_{l     })}$ sends the space $\Ssiss (\BRx)$ into itself. According to Lemma \ref{5cor: renormalize Hankel} (1) and Lemma \ref{5lem: Miller-Schmid transforms, R} (3), $\ES_{(\mu_n, \delta_n)} $ and all the $\ET_{(\mu_{l     } -\mu_{l     +1}, \delta_{l     })}$ in \eqref{5eq: composite T S, R}  may be expressed as integral transforms, which are absolutely convergent. From these, the right hand side of \eqref{5eq: composite T S, R} turns into the integral,
\begin{equation*}
\begin{split}
\int_{\BR^{\times n}}  \sgn (x)^{\delta_1} |x|^{-\mu_1} e \lp x y_1 \rp 
\lp \prod_{l      = 1}^{n-1}  \sgn (y_l     )^{\delta_{l      + 1} + \delta_{l     }} |y_{l     }|^{\mu_{l      + 1} - \mu_{l     } - 1} e \lp y_{l     }\- y_{l     +1} \rp \rp & \\
 \sgn (y_n)^{\delta_n} |y_n|^{-\mu_n} \upsilon (y_n) & d y_{n}  ...  d y_{1},
\end{split}
\end{equation*}
which converges as iterated integral.
Our proof is completed upon making the change of variables $x_1 = x y_1$, $x_{l      + 1} = y_l     \- y_{l      + 1}$, $ l      = 1, ..., n-1$.
\end{proof}

We have the following corollary to Theorem \ref{5thm: Fourier type transform, R}, which can also be seen from Lemma \ref{5lem: S, R and C} (1) and Lemma \ref{5lem: T, R}.

\begin{cor}\label{5cor: H, R}
Let $(\umu, \udelta) \in \BC^n\times (\BZT)^n$ and  $\delta \in \BZT$. Assume that $\Re \mu_1 > ... > \Re \mu_{n-1} > \Re \mu_n$. Let $\varphi (x) \in x^{  \mu_n} \SS_{ \delta + \delta_n} (\overline \BR_+)$ and $\upsilon (x) =  \sgn (x)^{ \delta }  \varphi (|x|) $. Then
\begin{equation} \label{5eq: Fourier type integral, R, 1}
\Hmum \upsilon \lp \pm x  \rp = \frac { (\pm)^{\delta}} {x} \int_{\BR _+ ^n} \varphi \lp \frac {x_1 ... x_n} x \rp \lp \prod_{l      = 1}^{n }   x_{l     } ^{ -  \mu_{l     } } j_{(0, \delta_{l     } + \delta)} (x_{l     }) \rp d x_n ... d x_1,
\end{equation}
with $x \in \BR _+$. Here the iterated integration is performed in the  indicated order.
\end{cor}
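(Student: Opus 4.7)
I would deduce the corollary directly from Theorem \ref{5thm: Fourier type transform, R}, which expresses $\Hld\upsilon$ as the iterated Fourier-type integral \eqref{5eq: Fourier type integral, R}. The idea is simply to fold each of the $n$ signed integrals over $\BR^\times$ into an integral over $\BR_+$, using the parity assumption on $\upsilon$, and to assemble the oscillating kernels into $j_{(0,\delta_l+\delta)}$.

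Concretely, substituting $\upsilon(y) = \sgn(y)^{\delta}\varphi(|y|)$ into \eqref{5eq: Fourier type integral, R} and using $\sgn(x_1\cdots x_n/x)^{\delta} = \sgn(x)^\delta \prod_{l}\sgn(x_l)^\delta$ for $\delta\in\BZT$, one extracts $\sgn(x)^\delta/|x|$ as a prefactor and is left, inside the integral, with
\[
\varphi\!\lp\frac{|x_1|\cdots|x_n|}{|x|}\rp \prod_{l=1}^n \sgn(x_l)^{\delta_l+\delta}|x_l|^{-\mu_l}\, e(x_l).
\]
Since $\varphi$ depends only on the moduli $|x_l|$, I would then split each $\int_{\BR^\times}dx_l$ into its positive and negative halves and change variables $x_l\mapsto -x_l$ on the latter. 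The two halves combine to produce
\[
e(x_l) + (-)^{\delta_l+\delta} e(-x_l) = j_{(0,\delta_l+\delta)}(x_l),
\]
which matches the explicit formulae $j_{(0,0)}(x) = 2\cos(2\pi x)$ and $j_{(0,1)}(x)=2i\sin(2\pi x)$ recorded in Example \ref{ex: Bessel kernel, real, 0, n=1, 2}. After relabelling the dummy variables $x_l$ and specialising $x$ to $\pm x$ with $x\in\BR_+$ (so that $\sgn(\pm x)^\delta = (\pm)^\delta$), one arrives at \eqref{5eq: Fourier type integral, R, 1}.

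The only subtle point is convergence: Theorem \ref{5thm: Fourier type transform, R} guarantees only iterated convergence in the order $dx_n\cdots dx_1$, not absolute convergence, so the sign-split and pairing must be performed one variable at a time, starting from the innermost $dx_n$. For each fixed choice of the outer variables the inner one-dimensional integral $\int_{\BR^\times}\sgn(x_l)^{\delta_l+\delta}|x_l|^{-\mu_l}\, e(x_l) f(|x_l|)\, dx_l$ still converges (by the same oscillation that secures convergence in Theorem \ref{5thm: Fourier type transform, R}), so the pairing is legitimate at each step and the iterated convergence on $\BR_+^n$ follows automatically. Finally, as remarked in the excerpt, the same identity can alternatively be obtained from Corollary \ref{3cor: H = h, R}, expanding $\hh_{(\umu,\udelta+\delta\ue^n)}$ as $|x|^{-\mu_1}\,\ET_{(\mu_1-\mu_2,\delta_1)}\circ\cdots\circ\ET_{(\mu_{n-1}-\mu_n,\delta_{n-1})}\circ\ES_{(\mu_n,\delta_n)}$ and inserting the $\BR_+$-integral representations from Lemmas \ref{5lem: S, R and C} and \ref{5lem: T, R}; this serves as a cross-check that the iterated integration makes sense. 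The only place I anticipate having to be careful is in tracking the parities $\delta_l+\delta$ and the sign $(\pm)^\delta$ prefactor through the computation.
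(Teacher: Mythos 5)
Your derivation is correct and matches the paper's intended argument: the paper presents this as an immediate consequence of Theorem \ref{5thm: Fourier type transform, R} (or, alternatively, of Lemmas \ref{5lem: S, R and C} (1) and \ref{5lem: T, R}), and your folding of each $\int_{\BR^\times}dx_l$ into $\int_{\BR_+}dx_l$ using the parity of $\upsilon$ and the identity $e(x_l)+(-)^{\delta_l+\delta}e(-x_l)=j_{(0,\delta_l+\delta)}(x_l)$ is exactly the computation the paper leaves to the reader. You are also right to note that the folding must be done one variable at a time starting from the innermost integral, since the $n$-fold integral is only iterated (not absolutely) convergent; this is legitimate because each single-variable step in the composition of Theorem \ref{5thm: Fourier type transform, R} is absolutely convergent.
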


\subsubsection{The Fourier type transform expression for $\Hmum$}

Let $(\umu, \um) \in \BC^n \times \BZ^n$. Using Lemma \ref{5cor: renormalize Hankel} (2) and Lemma \ref{5lem: Miller-Schmid transforms, C} (1), especially \eqref{5eq: Mellin S, C} and \eqref{5eq: Miller-Schmid, C}, one may show that
\begin{equation}\label{5eq: composite T S, C}
\Hmum \upsilon (z) = \|z\|^{-\mu_1} \ET_{(\mu_1-\mu_2, m_1)} \, {\sstyle \circ} \,  ... \, {\sstyle \circ} \,  \ET_{(\mu_{n-1}-\mu_{n}, m_{n-1})} \, {\sstyle \circ} \,  \ES_{(\mu_n, m_n)} \upsilon (z).
\end{equation}

\begin{thm}\label{5thm: Fourier type transform, C}
Let $(\umu, \um) \in \BC^n\times \BZ^n$ be such that $\Re \mu_1 > ... > \Re \mu_{n-1} > \Re \mu_n$. Suppose  $\upsilon (z) \in [z]^{m_n} \|z\|^{\mu_n} \SS (\BC) $. Then
\begin{equation} \label{5eq: Fourier type integral, C}
\Hmum \upsilon (z) = \frac 1 {\|z\|} \int_{\BC^{\times n}} \lp \prod_{l      = 1}^{n }  [z_l     ]^{- m_{l     }} \|z_{l     }\|^{ - \mu_{l     }} e \lp z_{l     } + \overline {z_l     } \rp \rp \upsilon \lp \frac {z_1 ... z_n} z \rp dz_n ... d z_1,
\end{equation}
where the integral converges when performed as iterated integral in the indicated order. 
\end{thm}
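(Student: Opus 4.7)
The plan is to mimic the proof of Theorem \ref{5thm: Fourier type transform, R} mutatis mutandis, using the composition formula \eqref{5eq: composite T S, C} in place of \eqref{5eq: composite T S, R}, and the integral representations provided by Lemma \ref{5cor: renormalize Hankel} (2) and Lemma \ref{5lem: Miller-Schmid transforms, C} (3), (4) in place of their real counterparts. The hypothesis $\Re \mu_1 > \ldots > \Re \mu_n$ plays exactly the same role, ensuring that each $\Re(\mu_l - \mu_{l+1}) > 0$ so that the Miller-Schmid transforms $\ET_{(\mu_l - \mu_{l+1}, m_l)}$ preserve the appropriate ``simple-singularity'' space over $\BC^\times$.

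First I would verify that the intermediate spaces line up. Since $\upsilon \in [z]^{m_n}\|z\|^{\mu_n}\SS(\BC)$, Lemma \ref{5cor: renormalize Hankel} (2) gives $\ES_{(\mu_n, m_n)} \upsilon \in [z]^{-m_n}\SS(\BC)$; by the definition of $\Ssiss(\BCx)$ in Lemma \ref{5lem: Miller-Schmid transforms, C} (4), this is contained in $\Ssiss(\BCx)$. Then, since $\Re(\mu_l - \mu_{l+1}) > 0$, Lemma \ref{5lem: Miller-Schmid transforms, C} (4) applies: each transform $\ET_{(\mu_l - \mu_{l+1}, m_l)}$ sends $\Ssiss(\BCx)$ into itself, and on this space it is given by the absolutely convergent integral in \eqref{5eq: Miller-Schmid and Fourier, C}. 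Likewise $\ES_{(\mu_n, m_n)}$ is given by the integral in Lemma \ref{5cor: renormalize Hankel} (2).

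Next I would substitute all these integral expressions into \eqref{5eq: composite T S, C}, unwinding from the innermost transform outward. After writing out the $n$ integrals, the right-hand side of \eqref{5eq: composite T S, C} becomes
\begin{equation*}
\begin{split}
\int_{\BC^{\times n}} [z]^{-m_1} \|z\|^{-\mu_1} e(z u_1 + \overline{zu_1})
\lp \prod_{l = 1}^{n-1} [u_l]^{m_{l} - m_{l+1}} \|u_l\|^{\mu_{l+1} - \mu_l - 1} & e\lp u_l^{-1} u_{l+1} + \overline{u_l^{-1} u_{l+1}} \rp \rp \\
 [u_n]^{-m_n} \|u_n\|^{-\mu_n} \upsilon(u_n) & \, du_n \cdots du_1,
\end{split}
\end{equation*}
with iterated convergence guaranteed at each stage by Lemmas \ref{5cor: renormalize Hankel} (2) and \ref{5lem: Miller-Schmid transforms, C} (3), (4).

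Finally, the change of variables $z_1 = z u_1$ and $z_{l+1} = u_l^{-1} u_{l+1}$ for $l = 1, \ldots, n-1$ (so that $u_l = z_1 \cdots z_l / z$ and $u_n = z_1 \cdots z_n / z$) converts the product of exponential factors into $\prod_{l=1}^n e(z_l + \overline{z_l})$, collects the factors of $[z_l]^{-m_l} \|z_l\|^{-\mu_l}$, and produces the single Jacobian $\|z\|^{-1}$ (the factor $\|z\|^{-\mu_1}$ on the outside of \eqref{5eq: composite T S, C} combines with $\|z\|^{\mu_1}$ from unwinding the telescoping in the $u$-variables, leaving only $\|z\|^{-1}$). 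The only mild subtlety, which I expect to be the main accounting step rather than a real obstacle, is keeping track of the character $[z]^{m}$-factors: the telescoping $\prod [u_l]^{m_l - m_{l+1}}$ together with the outer $[z]^{-m_1}$ and the inner $[u_n]^{-m_n}$ recombines precisely into $\prod_{l=1}^n [z_l]^{-m_l}$ after the substitution. The resulting integral is exactly \eqref{5eq: Fourier type integral, C}, with convergence in the prescribed iterated order inherited directly from the iterated application of the lemmas.
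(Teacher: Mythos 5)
Your plan is exactly the one-line suggestion in the paper's proof, and the norm-factor, exponential, and Jacobian bookkeeping all check out. The genuine gap is precisely the ``accounting step'' you chose not to verify: with the factors you wrote down, the telescoping does \emph{not} produce $\prod_{l=1}^n[z_l]^{-m_l}$. Writing $[u_l] = \prod_{j \le l}[z_j]\cdot[z]^{-1}$, the character part of your unwound integrand, $[z]^{-m_1}\prod_{l=1}^{n-1}[u_l]^{m_l - m_{l+1}}\cdot[u_n]^{-m_n}$, carries exponent of $[z_j]$ equal to $\sum_{l=j}^{n-1}(m_l - m_{l+1}) - m_n = m_j - 2m_n$ for $j \le n-1$, and exponent of $[z]$ equal to $2(m_n - m_1)$, neither of which is what is needed. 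The culprit is that $[u\-] = [u]\-$ over $\BC$, unlike $\sgn(y\-) = \sgn(y)$ over $\BR$: when $u_l\-$ is fed into the inner transform and its character prefactor is pulled outside, the sign of the $m_{l+1}$-contribution to the exponent of $[u_l]$ is \emph{positive}. Your $[u_l]^{m_l - m_{l+1}}$ is a face-value transcription of the real-case $\sgn(y_l)^{\delta_l + \delta_{l+1}}$ rewritten as ``$\delta_l - \delta_{l+1}$,'' which is legitimate only because $+1 = -1$ in $\BZ/2\BZ$. The correct exponent on $[u_l]$ is $m_{l+1} - m_l$ for each $l = 1,\ldots,n-1$; with it one gets $\sum_{l=j}^{n-1}(m_{l+1}-m_l) - m_n = -m_j$ and an exponent $0$ on $[z]$, as required.

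One further remark, likely at the root of the confusion: the printed integral formula \eqref{5eq: Miller-Schmid and Fourier, C} for $\ET_{(\mu,k)}$ is inconsistent with the Mellin definition \eqref{5eq: Miller-Schmid, C}. Taking $\EM_{-m}$ of $[z]^{k}\int_{\BCx}[u]^{k}\|u\|^{-\mu}\upsilon(u\-)e(zu+\overline{zu})\,d^\times u$ and substituting $w = zu$ produces $G_{k-m}(s)\EM_{-m}\upsilon(2(s+\mu))$, whereas \eqref{5eq: Miller-Schmid, C} requires $G_{m+k}(s)$; since $G_p$ depends on $|p|$, these differ whenever $km \ne 0$. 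The version compatible with \eqref{5eq: Miller-Schmid, C}, and hence with \eqref{5eq: composite T S, C} and with \eqref{5eq: Fourier type integral, C}, has $[z]^{-k}$ and $[u]^{-k}$ in the integrand. Your unwound integrand carries the $[z]^{-m_1}$ prefactor of this corrected formula but the $[u_l]^{m_l - m_{l+1}}$ factors of a naive reading of the printed one, so it is internally inconsistent with either; only the corrected version telescopes to the theorem's stated kernel.
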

\begin{proof}
One applies the same arguments in the proof of Theorem \ref{5thm: Fourier type transform, R} using Lemma \ref{5cor: renormalize Hankel} (2) and Lemma \ref{5lem: Miller-Schmid transforms, C} (3, 4). 
\end{proof}

Lemma \ref{5lem: S, R and C} (2) and Lemma \ref{5lem: T, C} yield the following corollary.

\begin{cor}\label{5cor: H, C}
Let $(\umu, \um) \in \BC^n\times \BZ^n$ and  $m \in \BZ$. Assume that $\Re \mu_1 > ... > \Re \mu_{n-1} > \Re \mu_n$. Let  $\varphi (x) \in x^{2 \mu_n} \SS_{-m-m_n} (\overline \BR_+)$ and $\upsilon (z) =  [z]^{- m }  \varphi (|z|) $. Then
\begin{equation} \label{5eq: Fourier type integral, C, 1}
\Hmum \upsilon \lp x e^{i \phi} \rp = 2^n \frac  {e^{i m \phi}} {x^{2}} \int_{\BR _+ ^n} \varphi \lp \frac {x_1 ... x_n} x \rp \lp \prod_{l      = 1}^{n }   x_{l     } ^{ - 2 \mu_{l     } + 1} j_{(0, m_{l     } + m)} (x_{l     }) \rp d x_n ... d x_1,
\end{equation}
with $x \in \BR _+$ and $\phi \in \BR/ 2\pi \BZ$. Here the iterated integration is performed in the indicated order.
\end{cor}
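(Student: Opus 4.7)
The proof will derive the claimed formula directly from the composition identity \eqref{5eq: composite T S, C}, by applying Lemma \ref{5lem: S, R and C}(2) to the inner factor $\ES_{(\mu_n,m_n)}$ and then Lemma \ref{5lem: T, C} iteratively to each Miller--Schmid factor $\ET_{(\mu_l-\mu_{l+1},m_l)}$ for $l=n-1,\ldots,1$. These two lemmas are tailored exactly to inputs of the form $[z]^{\pm m}\varphi(|z|)$, which is the shape of $\upsilon$ and is preserved at every stage.

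First, since $\upsilon(z)=[z]^{-m}\varphi(|z|)$ with $\varphi\in x^{2\mu_n}\SS_{-m-m_n}(\overline\BR_+)$, Lemma \ref{5lem: S, R and C}(2) (taking that lemma's $m$ to be our $m$ and $k=m_n$) yields
\begin{equation*}
\ES_{(\mu_n,m_n)}\upsilon(x_n e^{i\phi_n})=2e^{im\phi_n}\int_{\BR_+}y_n^{1-2\mu_n}\varphi(y_n)\,j_{(0,m+m_n)}(x_n y_n)\,dy_n,
\end{equation*}
a function of Fourier mode $+m$ lying in $\Ssiss^m(\BCx)$. The hypothesis $\Re\mu_l-\Re\mu_{l+1}>0$ then permits iterative application of Lemma \ref{5lem: T, C}: at each step the resulting function keeps mode $+m$ and stays in $\Ssiss^m(\BCx)$, and one picks up an additional integration of the form $2\int y_l^{-2(\mu_l-\mu_{l+1})}(\cdots)(y_l^{-1})\,j_{(0,m+m_l)}(x_l y_l)\,d^\times y_l$.

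After the $n$ transforms and multiplication by $\|z\|^{-\mu_1}=x^{-2\mu_1}$ as prescribed by \eqref{5eq: composite T S, C}, one is left with an $n$-fold iterated integral with prefactor $2^n x^{-2\mu_1}e^{im\phi}$. A sequence of substitutions of the form $y_l\mapsto y_{l-1}y_l$ (from $l=n$ down to $l=2$), followed by $y_1\mapsto xy_1$ at the end, relabels the variables so that the exponent of each $y_l$ collapses to $1-2\mu_l$, the Bessel kernel argument in the $l$-th slot becomes $j_{(0,m+m_l)}(x_l)$ with independent radial variable $x_l$, and the argument of $\varphi$ becomes $x_1\cdots x_n/x$; the accumulated Jacobians combine with the prefactor to give precisely $2^n e^{im\phi}/x^2$, matching the statement.

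\textbf{The main obstacle} is the bookkeeping across the $n$ nested substitutions: verifying at each intermediate stage that the function remains in $\Ssiss^m(\BCx)$ so Lemma \ref{5lem: T, C} applies, and identifying the correct chain of changes of variable which decouples the Bessel kernels into a product while producing both the argument $x_1\cdots x_n/x$ of $\varphi$ and the clean exponent $-2\mu_l+1$ of $x_l$. A conceptually cleaner alternative, should the chain of substitutions become unwieldy, is to invoke Theorem \ref{5thm: Fourier type transform, C} directly and pass to polar coordinates $z_l=x_l e^{i\phi_l}$, using $dz_l=2x_l\,dx_l\,d\phi_l$; the $n$ angular integrations then reduce to $\int_0^{2\pi}e^{-i(m+m_l)\phi_l}e^{4\pi i x_l\cos\phi_l}\,d\phi_l=j_{(0,m+m_l)}(x_l)$ by the Jacobi--Anger identity (cf. Example \ref{3ex: Bessel n=1, C}), and these compact-domain integrations may be freely interchanged with the iterated radial integrations whose convergence is guaranteed by Theorem \ref{5thm: Fourier type transform, C}.
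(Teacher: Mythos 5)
Your proposal is correct and follows essentially the same route as the paper: the paper states that the corollary follows from Lemma \ref{5lem: S, R and C}(2) and Lemma \ref{5lem: T, C}, which is precisely your plan of iterating those two lemmas on the composition identity \eqref{5eq: composite T S, C} and then decoupling the nested integral by the chain of substitutions (your alternative route via Theorem \ref{5thm: Fourier type transform, C} and polar coordinates is also in the spirit of the paper's parallel remark preceding Corollary \ref{5cor: H, R}). One small slip: with $y_l = y_{l-1}x_l$ for $l \ge 2$ and the outermost kernel $j_{(0,m+m_1)}(xy_1)$, the correct final relabeling is $x_1 = xy_1$, so the last substitution should read $y_1 \mapsto y_1/x$ rather than $y_1 \mapsto xy_1$; this does not affect the final formula you write, which is correct, and the Jacobian and exponent bookkeeping then give $\prod_l x_l^{-2\mu_l+1}$, the argument $x_1\cdots x_n/x$ of $\varphi$, and the prefactor $2^n e^{im\phi}/x^2$ as claimed.
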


\section{Integral representations of Bessel kernels}\label{sec: integral representations}

In the previous article  \cite{Qi}, when  $n \geq 2$, the formal integral representation of the Bessel function $J(x; \usigma, \umu)$ is obtained in symbolic manner from the Fourier type integral in Theorem \ref{5thm: Fourier type transform, R}, where the assumption $\Re\mu_1 > ... > \Re \mu_n$ is simply ignored. It is however more straightforward to derive the formal integral representation of the Bessel kernel $J_{(\umu, \udelta)} (x)$ from  Theorem \ref{5thm: Fourier type transform, R}. This should be  well understood,  since $J_{(\umu, \udelta)} (x)$ is a finite combination of $J\big(2\pi |x|^{\frac 1 n}; \usigma, \umu \big)$.

Similarly, Theorem \ref{5thm: Fourier type transform, C}  also yields a  formal integral representation of $J_{(\umu, \um)} (z)$. It turns out that one can naturally transform this formal integral into an integral that is absolutely convergent, given that the index $\umu$ satisfies certain conditions. The main reason for the absolute convergence is that $j_{(0, m)} (x) = 2 \pi i^{m } J_{m} (4 \pi x)$ (see  \eqref{3eq: j (0, m)}) decays proportionally to  $\frac 1 {\sqrt x}$ at infinity (in comparison, $j_{(0, \delta)} (x)$ is equal to either  $2 \cos (2\pi x)$ or $ 2 i \sin (2\pi x)$). 


\addtocontents{toc}{\protect\setcounter{tocdepth}{1}}

\subsection*{Assumptions and notations} 

Let  $n \geq 2$. Assume that $\umu \in \BL^{n-1}$. 
\begin{notation}\label{not: d, nu}
	Let $d = n-1$. Let the pairs of tuples, $\umu \in \BL^d $ and $\unu \in \BC^d$, $\udelta \in (\BZT)^{d+1}$ and $\boldsymbol \epsilon \in (\BZT)^d$, $\um \in \BZ^{d+1}$  and $\uk \in \BZ^d$, be subjected to the following relations 
	\begin{equation*}
	\nu_{l     } = \mu_{l     } - \mu_{d+1}, \hskip 10 pt \epsilon_{l     } = \delta_{l     } + \delta_{d+1}, \hskip 10 pt  k_{l     } = m_{l     } - m_{d+1},
	\end{equation*}
	for $l      = 1, ..., d.$
\end{notation}

Instead of Hankel transforms, we shall be interested in their Bessel kernels. Therefore, it is convenient to further  assume that the weight functions are Schwartz, namely,  $\varphi \in \SS (\BR_+)$ and $\upsilon \in \SS (\BFx)$. 
According to (\ref{3eq: integral kernel, R 0}, \ref{3eq: h mu m = int of j mu m}), Proposition \ref{3prop: properties of J, R} (3) and \ref{3prop: properties of J, C} (3), for such Schwartz functions $\varphi$ and $\upsilon$, 
\begin{align} 
\label{6eq: Hankel = integral of j} & \hld \varphi  (x ) =  \int_{\BR_+} \varphi (y) j_{(\umu, \udelta)} ( xy ) d y, \hskip 18 pt \hmum \varphi (x)  = 2 \int_{\BR_+}  \varphi (y) j_{(\umu, \um)} ( xy )  y d y, \\
\label{6eq: Hankel = integral of Bessel kernel} &  \Hld \upsilon (x) = \int_{\BR ^\times} \upsilon (y) J_{(\umu, \udelta)} (xy ) d y, \hskip 15 pt \Hmum  \upsilon (z) = \int_{\BC^\times} \upsilon (u) J_{(\umu, \um)} (z u ) d u,
\end{align}
with the index $(\umu, \udelta) \in \BC^n \times (\BZT)^n$ or $(\umu, \um) \in \BC^n \times \BZ^n$ being arbitrary.



\addtocontents{toc}{\protect\setcounter{tocdepth}{2}}

\subsection{\texorpdfstring{The formal integral  $J_{\unu, \boldsymbol \epsilon} ( x,\pm )$}{The formal integral  $J_{\nu, \epsilon} ( x,\pm )$}}\label{sec: formal integral, R}

To motivate the definition of  $J_{\unu, \boldsymbol \epsilon} ( x,\pm )$, we shall do certain operations on the Fourier type integral \eqref{5eq: Fourier type integral, R}  in Theorem \ref{5thm: Fourier type transform, R}. In the meanwhile, we shall forget the assumption $\Re\mu_1 > ... > \Re \mu_n$, which is required for convergence.

Upon making the change of variables, $x_{n} = (x_1 ... x_{n-1})\- x y$, $x_{l     } = |xy|^{\frac 1 {n}} y_{l     }\-$, $ l      = 1, ..., n-1$, one converts \eqref{5eq: Fourier type integral, R}   into
\begin{align*}
\Hld \upsilon (x) = \int_{\BR^{\times n}} \upsilon \lp y \rp &\ \sgn (xy)^{\delta_{n}} 
\lp \prod_{l      = 1}^{n-1 }  \sgn (y_l     )^{ \delta_{l     } + \delta_n} |y_{l     }|^{ \mu_{l     } - \mu_{n} - 1} \rp \\
&  e \lp |xy|^{\frac 1 n} \lp \sgn(x y) \cdot y_1 ... y_{n-1} + \sum_{l      = 1}^{n-1} y_{l     }\-  \rp \rp d y d y_{n-1} ... d y_1.
\end{align*}
In symbolic notation, moving the integral over $d y$ to the outermost place and comparing the resulted integral with the right hand side of the first formula in \eqref{6eq: Hankel = integral of Bessel kernel}, the Bessel kernel $J_{(\umu,\udelta)} (x)$ is then represented by the following formal integral over $d y_{n-1} ... d y_{1}$,
\begin{align*}
\sgn (x)^{\delta_{n}} 
\int_{\BR^{\times \, n-1}}  &  \lp \prod_{l      = 1}^{n-1 }  \sgn (y_l     )^{ \delta_{l     } + \delta_n} |y_{l     }|^{ \mu_{l     } - \mu_{n} - 1} \rp  \\
&\hskip 20 pt e \lp |x|^{\frac 1 n} \lp \sgn(x) \cdot y_1 ... y_{n-1} + \sum_{l      = 1}^{n-1} y_{l     }\-  \rp \rp d y_{n-1} ... d y_1.
\end{align*}
We define the formal integral
\begin{equation}
\begin{split}
J_{\unu, \boldsymbol \epsilon} (x, \pm) = \int_{\BR^{\times d}} 
  \lp \prod_{l      = 1}^{d}  \sgn (y_l     )^{ \epsilon_{l     } } |y_{l     }|^{ \nu_{l     } - 1} \rp e^{ i x \lp \pm  y_1 ... y_{d} + \sum_{l      = 1}^{d} y_{l     }\-  \rp} d y_{d} ... d y_1, \hskip 5 pt x \in \BR_+.
\end{split}
\end{equation}
Thus, in view of Notation \ref{not: d, nu}, we have $ J_{(\umu,\udelta)} (\pm  x  ) = (\pm)^{\delta_{d+1}}  
J_{\unu, \boldsymbol \epsilon} \big( 2 \pi x^{\frac 1 {d+1}}, \pm \big)$ in symbolic notation.

\subsection{\texorpdfstring{The formal integral  $j_{\unu, \udelta} (x)$}{The formal integral  $j_{\nu, \delta} (x)$}} 
For $\unu \in \BC^d$ and $ \udelta \in (\BZT)^{d+1}$, we define the formal integral
\begin{equation}
\begin{split}
j_{\unu, \udelta} (x ) = \int_{ \BR_+ ^{d}} &  j_{(0, \delta_{d+1})} \lp x y_{1} ... y_d \rp \prod_{l      = 1}^{d} y_{l     }^{ \nu_{l     } - 1} j_{(0, \delta_{l     })} \lp x y_{l     } \- \rp   
  d y_{d} ... d y_1, \hskip 10 pt x \in \BR_+.
\end{split}
\end{equation}
We may derive the symbolic identity  $j_{(\umu, \udelta)} (x) =  
j_{\unu, \udelta} \big( x^{\frac 1 {d+1}} \big)$ from Corollary \ref{3cor: H = h, R} and  \ref{5cor: H, R}, combined with the first formula in \eqref{6eq: Hankel = integral of j}.



\subsection{\texorpdfstring{The integral  $J_{\unu, \uk} ( x, u )$}{The integral  $J_{\nu, k} ( x, u )$}}

First of all, proceeding in the same way as in \S \ref{sec: formal integral, R}, from the Fourier type integral \eqref{5eq: Fourier type integral, C} in Theorem \ref{5thm: Fourier type transform, C}, we can deduce the symbolic equality $ J_{(\umu,\um)} \lp  x  e^{i\phi} \rp = e^{- i m_{d+1} \phi} 
J_{\unu, \uk} \big( 2 \pi x^{\frac 1 {d+1}}, e^{i\phi} \big)$, with the definition of the formal integral,
\begin{equation}\label{5eq: formal integral, C, 0}
	\begin{split}
		J_{\unu, \uk} ( x, u ) =    \int_{\BC^{\times d}}
		 \lp \prod_{l      = 1}^{d}  [u_{l     }]^{k_{l     } } \|u_{l     }\|^{ \nu_{l     } - 1} \rp  &  e^{ i x \Lambda \lp u u_1 ... u_{d} + \sum_{l      = 1}^{d} u_{l     }\-  \rp  }  d u_{d} ... d u_1, \\ 
		 & \hskip 60 pt x \in \BR_+,\, u \in \BC, |u| = 1.
	\end{split}
\end{equation}
Here, we recall that $\Lambda (z) = z + \overline z$. 

In the polar coordinates, we write $u_{l     } = y_{l     } e^{i \theta_{l     }}$ and $u = e^{i \phi}$. Moving the integral over the torus $(\BR/ 2 \pi \BZ)^d$ inside, in symbolic manner, the integral above turns into
\begin{equation*}
\begin{split}
2^d \int_{\BR_+^d} \hskip -3 pt \int_{(\BR/ 2 \pi \BZ)^d} \hskip -3 pt
\lp \prod_{l      = 1}^{d} y_{l     }^{ 2 \nu_{l     } - 1} \hskip -2 pt \rp  \hskip -2 pt e^{i \sum_{l      = 1}^d k_{l     } \theta_l    + 2 i  x  \lp y_1 ... y_{d} \cos \lp  \sum_{l      = 1}^d \theta_{l     } + \phi \rp + \sum_{l      = 1}^{d} y_{l     }\- \cos \theta_l        \rp } d \theta_{d} ... d \theta_1 d y_{d} ... d y_1.
\end{split}
\end{equation*}
Let us introduce the following definitions
\begin{align}
\label{6eq: Theta (theta, y; x, phi)}
\Theta_{\uk} (\utheta, \uy; x, \phi) = 2 x  y_1 ... y_{d} & \cos \lp \textstyle \sum_{l      = 1}^d \theta_{l     } + \phi \rp + \sum_{l      = 1}^d \lp k_{l     } \theta_l      + 2 x y_{l     }\- \cos \theta_l      \rp ,\\
	\label{6eq: Jk (y;x, phi)}
	 J_{\uk}(\uy; x, \phi ) & = 
	 \int_{(\BR/ 2 \pi \BZ)^d} e^{i \Theta_{\uk} (\utheta, \uy; x, \phi) } d\utheta, \\
p_{2 \unu} & (\uy) = \prod_{l      = 1}^d   y_{l     }^{ 2 \nu_l      - 1},
\end{align}
with  $\uy = (y_1, ..., y_d)$, $\utheta = (\theta_1, ..., \theta_d)$.
Then \eqref{5eq: formal integral, C, 0} can be symbolically rewritten as
\begin{equation}
\label{6eq: formal integral, C}
J_{\unu, \uk} \lp x, e^{  i \phi} \rp = 2^d \int_{\BR_+^d} p_{2 \unu} (\uy) J_{\uk}(\uy; x, \phi ) d \uy, \hskip 10 pt x\in \BR_+, \, \phi \in \BR/2\pi \BZ.
\end{equation}

\begin{thm}\label{6thm: formal integral, C}
Let  $(\umu, \um) \in \BL^d \times \BZ^{d+1}$ and $(\unu, \uk) \in \BC^d \times \BZ^d$  satisfy the relations given in Notation {\rm \ref{not: d, nu}}. Suppose  $\unu \in \bigcup_{a \in \left[-\frac 1 2, 0 \right]} \left\{ \unu \in \BC^d : - \frac 1 2 < 2 \Re \nu_{l     } + a < 0 \text{ for all } l      = 1,..., d \right\}$. 

{\rm(1).}  The integral in \eqref{6eq: formal integral, C} converges absolutely. Subsequently, we shall therefore use \eqref{6eq: formal integral, C} as the definition of $J_{\unu, \uk} \lp x, e^{  i \phi} \rp$. 

{\rm(2).}  We have the {\rm ({\it genuine})} identity
$$ J_{(\umu,\um)} \lp  x  e^{i\phi} \rp = e^{- i m_{d+1} \phi} 
J_{\unu, \uk} \big( 2 \pi x^{\frac 1 {d+1}}, e^{i\phi} \big).$$
\end{thm}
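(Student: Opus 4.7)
The plan is to prove both assertions by first decomposing the torus integral $J_{\uk}(\uy;x,\phi)$ into an absolutely convergent Fourier series in the torus variables, and then matching Fourier modes in $\phi$ with the series expansion \eqref{2eq: Bessel kernel over C, polar} of $J_{(\umu,\um)}$.

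For part (1), the first step is to apply the Jacobi--Anger identity $e^{iz\cos\alpha} = \sum_{m\in\BZ} i^m J_m(z)e^{im\alpha}$ to each of the $d+1$ cosines appearing in the phase $\Theta_{\uk}(\utheta,\uy;x,\phi)$ of \eqref{6eq: Theta (theta, y; x, phi)}. Orthogonality of characters on $(\BR/2\pi\BZ)^d$ collapses the $d$-fold torus integral in \eqref{6eq: Jk (y;x, phi)} and yields
\begin{equation*}
J_{\uk}(\uy;x,\phi) = (2\pi)^d i^{|\uk|}\sum_{m\in\BZ} i^{(d+1)m}\, e^{im\phi}\, J_m(2xy_1\cdots y_d)\prod_{l=1}^{d} J_{k_l+m}(2xy_l^{-1}),
\end{equation*}
where $|\uk|=\sum_l k_l$. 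Inserting this into \eqref{6eq: formal integral, C} and invoking the standard bound $|J_\nu(z)| \lll \min\{z^{-1/2},(z/2)^{|\nu|}/|\nu|!\}$, one estimates each term by analyzing $y_l^{2\Re\nu_l-1}|J_{k_l+m}(2xy_l^{-1})|$ at $0$ and $\infty$: near $y_l\to 0$ the $z^{-1/2}$ bound gives integrand $O(y_l^{2\Re\nu_l - 1/2})$, while near $y_l\to\infty$ the Taylor bound gives $O(y_l^{2\Re\nu_l-1-|k_l+m|})$. The hypothesis $-\tfrac12 < 2\Re\nu_l + a < 0$ is designed precisely to make both endpoints integrable simultaneously: for $a=0$ one directly obtains $\Re\nu_l\in(-\tfrac14,0)$ so that every term is absolutely integrable, and for $a\in(-\tfrac12,0)$ the symmetry $u_l\mapsto u_l^{-1}$ in the original formula \eqref{5eq: formal integral, C, 0} (which exchanges $\nu_l,k_l\leftrightarrow -\nu_l,-k_l$ and reverses the roles of the product and sum terms in the phase) reduces to a neighbouring case, after which the combined bounds over all $m$ still sum absolutely thanks to the factorial decay $1/|k_l+m|!$ in the Taylor bound.

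For part (2), I would match $\phi$-Fourier coefficients on the two sides. By \eqref{2eq: Bessel kernel over C, polar}, the $(m-m_{d+1})$-th coefficient of $J_{(\umu,\um)}(xe^{i\phi})$ is $\tfrac{1}{2\pi} j_{(\umu,\,\um+(m-m_{d+1})\ue^n)}(x)$, while the $(m-m_{d+1})$-th coefficient of $e^{-im_{d+1}\phi}J_{\unu,\uk}(2\pi x^{1/n},e^{i\phi})$ is obtained from the convergent expansion of part (1). Substituting $J_k(4\pi y) = j_{(0,k)}(y)/(2\pi i^k)$ from \eqref{3eq: j (0, m)} and using $k_l + m = m_l + (m - m_{d+1})$, this coefficient becomes a constant multiple of
\begin{equation*}
\int_{\BR_+^d} p_{2\unu}(\uy)\, j_{(0,m)}\bigl(x^{1/n} y_1\cdots y_d\bigr)\prod_{l=1}^{d} j_{(0,\,m_l+(m-m_{d+1}))}(x^{1/n}/y_l)\, d\uy.
\end{equation*}
Under the change of variables $x_l = x^{1/n}/y_l$ for $l=1,\dots,d$ and $x_{d+1} = x^{1/n} y_1\cdots y_d$, this matches exactly the iterated integral representation of $j_{(\umu,\,\um+(m-m_{d+1})\ue^n)}(x)$ furnished by Corollary \ref{5cor: H, C} (equivalently, the Fourier-type expression of $\Hmum$ in Theorem \ref{5thm: Fourier type transform, C}) in the regime where $\Re\mu_1>\cdots>\Re\mu_n$. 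Since both sides depend analytically on $\umu$ (for the Bessel kernel this is Proposition \ref{3prop: properties of J, C} (2)), the identity extends from the strictly decreasing region of $\umu$ to the entire region described by the hypothesis on $\unu$.

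The main obstacle will be the rigorous interchange of the Jacobi--Anger sum with the $\uy$-integration across the full hypothesis region: for intermediate $a\in(-\tfrac12,0)$, individual terms of the series are not uniformly absolutely integrable and require either a Mellin contour-shift argument to absorb a factor $y_l^a$ or the inversion symmetry noted above to fold the integral into a convergent piece. Once this technical point is settled, the matching in (2) follows from a routine Mellin--Barnes comparison via \eqref{3def: Bessel kernel j mu m} together with analytic continuation in $\umu$.
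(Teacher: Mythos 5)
Your overall strategy is the same as the paper's: expand $J_{\uk}(\uy;x,\phi)$ in $\phi$-Fourier modes (your Jacobi--Anger computation reproduces, after the reflection $J_{-n}=(-1)^nJ_n$, precisely the series of Lemma \ref{6lem: series expansion of Jk}, which the paper obtains by combining Bessel's integral $j_{(0,m)}(x)=\int_{\BR/2\pi\BZ}e^{im\theta+4\pi ix\cos\theta}d\theta$ with the expansion $e^{4\pi ix\cos\phi}=\tfrac1{2\pi}\sum_m j_{(0,m)}(x)e^{im\phi}$), interchange with $\int_{\BR_+^d}d\uy$, identify the result as $J_{\unu,\um}(x,e^{i\phi})$, and then match $\phi$-coefficients with \eqref{2eq: Bessel kernel over C, polar}; the matching step is exactly the content of Propositions \ref{6prop: j mu = j nu} and \ref{6prop: J mu m = J nu m}, done via Corollary \ref{5cor: H, C} and analytic continuation in $\umu$. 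The step you flag as the main obstacle is where the argument is genuinely incomplete, \emph{even for $a=0$}. You bound $\prod_l y_l^{2\Re\nu_l-1}|J_{k_l+m}(2xy_l\-)|$ factor by factor and then invoke the factorial decay $1/|k_l+m|!$ to sum over $m$; but the Taylor/Poisson--Lommel bound that supplies that factorial is only valid when $y_l\-$ is bounded, i.e.\ on $y_l\ge 1$. On $(0,1]$ the relevant bound is $|J_{k_l+m}(z)|\lll(|k_l+m|+1)z^{-1/2}$, which \emph{grows} in $m$. After splitting each $y_l$-axis at $1$ one therefore gets $2^d$ sub-integrals, and the one over $(0,1]^d$ carries no $m$-decay at all from the $\prod_l$. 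What saves it is the factor $J_m(2xy_1\cdots y_d)$ you left unexamined: on $(0,1]^d$ its argument is at most $2x$, so it admits its own Poisson--Lommel bound $\lll(\pi x)^{|m|}/\Gamma(|m|+\tfrac12)$ and supplies the missing factorial. This case distinction ($\urho=\urho^-$ vs.\ $\urho\neq\urho^-$), coupled with the $m$-uniform bounds in Lemma \ref{6lem: crude bound for j m, with m} and Stirling's formula, is the substance of Lemma \ref{6lem: bound for j nu m + m e} and Proposition \ref{6prop: J nu m (x, u)}, which you would need to reproduce.

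Separately, your proposed fix via the inversion $u_l\mapsto u_l\-$ does not reduce intermediate $a$ to $a=0$ when $d\ge 2$. In \eqref{5eq: formal integral, C, 0} that substitution indeed sends $\|u_l\|^{\nu_l-1}du_l\mapsto\|u_l\|^{-\nu_l-1}du_l$, but it also swaps the product $uu_1\cdots u_d$ with the sum $\sum u_l\-$ in the phase, so the image integral is not of the form $J_{\unu',\uk'}$ for any $(\unu',\uk')$ (the exception $d=1$, where one genuinely gets $\nu\mapsto-\nu$ and $m_1\leftrightarrow m_2$, is misleading). The paper handles all $a\in[-\tfrac12,0]$ uniformly by the same hypercube estimate, simply using $j_{(0,m)}(x)\lll((|m|+1)^{-2}x)^a$ for the $j_{(0,m_{d+1}+m)}$ factor on each $I_\urho$ with $\urho\neq\urho^-$; no reduction between values of $a$ is required.
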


\subsection{\texorpdfstring{The integral  $j_{\unu, \um} (x)$}{The integral  $j_{\nu, m} (x)$}}
Let us consider the integral $j_{\unu, \um} (x)$ defined by
\begin{equation}\label{6def: integral j nu m}
	\begin{split}
	j_{\unu, \um} (x) =	2^d \int_{\BR_+^{ d}} j_{(0, m_{d+1})} \left(  x  y_1 ... y_{d} \right) 
		  \prod_{l      = 1}^{d} y_l      ^{2\nu_l      - 1} j_{(0, m_l     )} \left( x y_{l     }\- \right)   
		    d y_{d} ... d y_1,
	\end{split}
\end{equation}
with $\unu \in \BC^d $ and $\um \in \BZ^{d+1}$.

\subsubsection{Absolute convergence of  $j_{\unu, \um} (x)$}
In contrast to the real case, where the integral $j_{\unu, \udelta} (x)$ never absolutely converges, $j_{\unu, \um} (x)$ is actually absolutely convergent, if each component of $\unu$ lies in   certain vertical strips of width at least $\frac 1 4$. 

\begin{defn}\label{6def: hyper-strip Sd}
	For $\boldsymbol{a}, \boldsymbol{b} \in \BR^d$ such that $a_{l     } < b_{l     }$ for all $l      = 1, ..., d$, we define the open hyper-strip $\BS^d (\boldsymbol a,  \boldsymbol{b}) = \left\{ \unu \in \BC^d : \Re \nu_{l     } \in (a_{l     }, b_{l     }) \right\}$. We write $ \BS^d (a ,  b ) = \BS^d (a \ue^d,  b \ue^d)$ for simplicity.
\end{defn}

\begin{prop}\label{6prop: convergence of j nu m}
	Let $(\unu, \um) \in \BC^d \times \BZ^{d+1}$. The integral $j_{\unu, \um} (x)$ defined above by \eqref{6def: integral j nu m} absolutely converges if $\unu \in \bigcup_{a \in \left[- \frac 1 2 , |m_{d+1}| \right]} \BS^d \left(\frac 1 2 \left(- \frac 1 2 - a\right) \ue^d, \frac 1 2 \lp \left\| \um^d \right\| - a \ue^d \rp \right)$, with $\um^d = (m_1, ..., m_d)$ and $\left\| \um^d \right\| = (|m_1|, ..., |m_d|)$.
\end{prop}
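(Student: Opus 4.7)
The plan is to exploit a uniform power-law bound on the coupling factor $j_{(0, m_{d+1})}$ in \eqref{6def: integral j nu m} so that the $d$-dimensional integral will decouple into a product of independent one-dimensional integrals, whose convergence can then be checked by splitting at $u_l = 1$ and applying the elementary asymptotics of $j_{(0, m_l)}$ at $0$ and at $\infty$.

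Concretely, I would first recall from \eqref{3eq: j (0, m)} that $j_{(0, m)}(t) = 2\pi i^m J_m(4\pi t)$. The standard asymptotics $J_m(y) \sim (y/2)^{|m|}/\Gamma(|m|+1)$ as $y \to 0$ and $J_m(y) = O(y^{-1/2})$ as $y \to \infty$ yield the two pointwise bounds $|j_{(0, m)}(t)| \lll_m t^{|m|}$ for $t \leq 1$ and $|j_{(0, m)}(t)| \lll_m t^{-1/2}$ for $t \geq 1$. Interpolating between these exponents produces the key uniform estimate
\begin{equation*}
|j_{(0, m)}(t)| \lll_{m, \gamma} t^{\gamma}, \hskip 10pt t > 0, \text{ whenever } \gamma \in [-1/2, |m|].
\end{equation*}

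Given $a \in [-1/2, |m_{d+1}|]$ as in the hypothesis, I would apply the uniform bound with $\gamma = a$ to the coupling factor, obtaining $|j_{(0, m_{d+1})}(xy_1 \cdots y_d)| \lll x^a \prod_{l=1}^d y_l^a$. Inserting this into \eqref{6def: integral j nu m}, the integrand splits as a product of single-variable factors, yielding
\begin{equation*}
|j_{\unu, \um}(x)| \lll x^a \prod_{l=1}^d \int_{\BR_+} y_l^{2\Re\nu_l + a - 1}\, |j_{(0, m_l)}(xy_l^{-1})|\, dy_l.
\end{equation*}
Changing variables $u_l = xy_l^{-1}$ recasts the $l$-th factor as a constant times $x^{2\Re\nu_l + a}\int_0^\infty u_l^{-2\Re\nu_l - a - 1}\, |j_{(0, m_l)}(u_l)|\, du_l$, whose convergence I would verify by splitting $(0, \infty) = (0, 1] \cup [1, \infty)$ and applying the two pointwise bounds on $j_{(0, m_l)}$ separately: convergence on $(0, 1]$ demands $\Re\nu_l < (|m_l| - a)/2$, and convergence on $[1, \infty)$ demands $\Re\nu_l > (-1/2 - a)/2$, which together are exactly the strip condition $\unu \in \BS^d\bigl(\tfrac{1}{2}(-\tfrac{1}{2}-a)\ue^d,\, \tfrac{1}{2}(\|\um^d\| - a\ue^d)\bigr)$.

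No serious obstacle is expected; the argument is elementary once the interpolation observation is in hand. The only point worth highlighting is that the admissible range $[-1/2, |m_{d+1}|]$ for $\gamma$ in the uniform bound on $j_{(0, m_{d+1})}$ matches \emph{exactly} the range of the parameter $a$ in the union over which the hypothesis is stated, and it is precisely this matching that makes the statement sharp in the given form.
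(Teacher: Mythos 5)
Your argument is correct and follows essentially the same route as the paper: the paper likewise applies the interpolated bound $j_{(0,m_{d+1})}(t)\lll_m t^a$ (this is Lemma \ref{6lem: crude bound for j m} (2)) to the coupling factor so the integral decouples, then applies the two pointwise bounds of Lemma \ref{6lem: crude bound for j m} (1) to the remaining factors $j_{(0,m_l)}(xy_l^{-1})$—only it organizes the computation by partitioning $\BR_+^d$ into $2^d$ hyper-cubes $I_{\urho}$ at $y_l=1$ rather than by your change of variables $u_l = xy_l^{-1}$ followed by a split at $u_l = 1$. The two bookkeeping schemes yield the identical convergence conditions $-\tfrac12 - a < 2\Re\nu_l < |m_l| - a$, so the only substantive difference is that the paper's version also keeps track of the resulting power of $x$ (needed later for Lemma \ref{6lem: bound for j nu m + m e}), which your version discards since pure convergence is all the proposition asserts.
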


To show this, we first recollect some well-known facts concerning $J_m (x)$, as $j_{(0, m)} (x) = 2 \pi i^{m } J_{m} (4 \pi x)$ in view of  \eqref{3eq: j (0, m)}. 

Firstly, for $m \in \BN$, we have the Poisson-Lommel integral representation (see \cite[3.3 (1)]{Watson})
\begin{equation}
J_{m} (x) = \frac {\lp \frac 1 2x \rp^m } {\Gamma \left( m + \frac 1 2\right) \Gamma \left(\frac 1 2\right)} \int_0^{\pi} \cos (x \cos \theta) \sin^{2 m} \theta d \theta.
\end{equation}
This yields the bound 
\begin{equation}\label{6eq: bound < x m}
|J_{m} (x) | \leq \frac {\sqrt \pi \lp \frac 1 2x \rp^{|m|} } {\Gamma \left( |m| + \frac 1 2\right) },
\end{equation}
for $m \in \BZ$.
Secondly, the asymptotic expansion of $J_m (x)$ (see \cite[7.21 (1)]{Watson}) 
provides the estimate
\begin{equation}\label{6eq: bound < x -1/2}
J_m (x) \lll_m x^{- \frac 1 2}.
\end{equation}
Combining these, we then arrive at the following lemma. 
\begin{lem}\label{6lem: crude bound for j m}
Let $m$ be an integer.

{\rm(1).} We have 
the estimates 
$$j_{(0, m)} (x) \lll_m x^{|m|}, \hskip 10 pt j_{(0, m)} (x) \lll_m x^{- \frac 1 2}.$$

{\rm (2).} More generally, for any $a \in \left[- \frac 1 2, |m|\right]$, we have the estimate
$$j_{(0, m)} (x) \lll_m x^{a}.$$
\end{lem}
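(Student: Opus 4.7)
The statement is a direct consequence of the identity $j_{(0,m)}(x) = 2\pi i^m J_m(4\pi x)$ recorded in \eqref{3eq: j (0, m)}, together with the two standard estimates \eqref{6eq: bound < x m} and \eqref{6eq: bound < x -1/2} for the $J$-Bessel function that were recalled just before the lemma. No further analytic input is required.

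For part (1), I would substitute the Poisson--Lommel bound \eqref{6eq: bound < x m} into the identity $j_{(0,m)}(x) = 2\pi i^m J_m(4\pi x)$ to obtain
\begin{equation*}
|j_{(0,m)}(x)| \leq \frac{2\pi^{3/2}(2\pi x)^{|m|}}{\Gamma(|m|+\tfrac{1}{2})} \lll_m x^{|m|},
\end{equation*}
which is the first bound. The second bound $j_{(0,m)}(x) \lll_m x^{-1/2}$ follows in the same way from \eqref{6eq: bound < x -1/2}.

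For part (2), the two bounds in (1) are combined by splitting at $x = 1$. When $x \leq 1$ and $a \leq |m|$ we have $x^{|m|} = x^{a}\cdot x^{|m|-a} \leq x^{a}$, so the first bound in (1) yields $j_{(0,m)}(x) \lll_m x^{a}$ on this range. When $x \geq 1$ and $a \geq -\tfrac{1}{2}$ we have $x^{-1/2} = x^{a}\cdot x^{-1/2-a} \leq x^{a}$, so the second bound in (1) yields $j_{(0,m)}(x) \lll_m x^{a}$ on this range. For $a \in [-\tfrac{1}{2}, |m|]$ both ranges are covered and the claim follows. There is no real obstacle here; the only point to verify is that the interval $[-\tfrac{1}{2}, |m|]$ appearing in the statement is exactly the set of exponents for which both range estimates are simultaneously available.
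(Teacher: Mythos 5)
Your proof is correct and takes exactly the approach the paper intends: the paper recalls the Poisson--Lommel bound \eqref{6eq: bound < x m} and the asymptotic bound \eqref{6eq: bound < x -1/2} immediately before the lemma and then simply states ``Combining these, we then arrive at the following lemma,'' leaving the split at $x=1$ implicit. Your write-up supplies precisely those omitted details.
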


\begin{proof}[Proof of Proposition \ref{6prop: convergence of j nu m}]
	
	We divide $\BR_+ = (0, \infty)$ into the union of two intervals, $ I_- \cup I_+ = (0, 1] \cup [1, \infty)$. Accordingly,  the integral in \eqref{6def: integral j nu m}  is partitioned into $2^d$ many integrals, each of which is supported on some hyper-cube $I_{\urho} = I_{\varrho_1 } \times ... \times I_{\varrho_d }$ for $\urho 
	\in \{+ , -\}^d$. For each such integral, we estimate  $j_{(0, m_l     )} \left(x y_{l     }\-\right)$ using the first or the second estimate in Lemma \ref{6lem: crude bound for j m} (1) according as $\varrho_{l     } = +$ or  $\varrho_{l     } = -$ and apply  the bound in Lemma \ref{6lem: crude bound for j m} (2) for $j_{(0, m_{d+1})} (x y_1 ... y_d) $. In this way, for any $a \in \left[- \frac 1 2, |m_{d+1} | \right]$, one has
	\begin{align*}
	&  2^d \int_{\BR_+^{ d}} 
		\left|j_{(0, m_{d+1})} \left(  x  y_1 ... y_{d} \right) \right| 
	 \prod_{l      = 1}^{d} \left|y_l      ^{2\nu_l      - 1} j_{(0, m_l     )} \left( x y_{l     }\- \right) \right| 
    d y_{d} ... d y_1 
	\\
	\lll &\ \sum_{ \urho \in \{+, -\}^d } x^{ \sum_{ l       \in L_+ (\urho)} |m_l     | - \frac 1 2 \left|L_-(\urho)\right| + a} 
	I_{2 \unu +   a \ue^d, \um^d} ( \urho ),
	\end{align*}
with the auxiliary definition
\begin{equation*} 
 I_{\ulambda, \uk} ( \urho ) = \int_{ I_{\urho} } \lp \prod_{ l       \in L_+ (\urho)}  y_l      ^{ \Re \lambda_l      - |k_{l     } | - 1} \rp \lp  \prod_{ l      \in L_- (\urho)}  y_l      ^{  \Re \lambda_l      - \frac 1 2} \rp d y_{d} ... d y_1, \hskip 5 pt (\ulambda, \uk) \in \BC^d \times \BZ^d,
\end{equation*}
and $L_\pm (\urho) = \left\{ l      : \varrho_{l     } = \pm \right\}$. The implied constant depends only on $\um$ and $ d$. It is clear that all the integrals $I_{2 \unu +   a \ue^d, \um^d} ( \urho )$ absolutely converge if  $ - \frac 1 2   < 2 \, \Re \nu_{l     } + a < |m_l      | $ for all $l      = 1,..., d$. The proof is then completed. 
\end{proof}

\begin{rem}\label{6rem: d=1, j}
When $d = 1$, one may apply the two estimates in Lemma  {\rm \ref{6lem: crude bound for j m} (1)} to $j_{(0, m_{2})} (x y )$ in the similar fashion as $j_{(0, m_1)} \left( x y \- \right)$. Then
	\begin{equation*}
\begin{split}
2 \int_{0 }^{\infty} 
\left|y ^{2\nu - 1} j_{(0, m_1)} \left( x y \- \right)  j_{(0, m_{2})} \left(  x  y \right) \right|  &  d y  \\
\lll_{\, m_1, m_2}  \ x^{|m_1| -\frac 1 2} &
\int_{1}^\infty   y ^{2\Re \nu - |m_1| - \frac 3 2} d y + x^{|m_2| -\frac 1 2} \int_{0}^1  y ^{2 \Re \nu + |m_2| - \frac 1 2} d y .
\end{split}
\end{equation*}
Since both integrals above absolutely converge if $\, - |m_2| - \frac 1 2 < 2 \Re \nu <  |m_1| + \frac 1 2$, this also proves Proposition {\rm \ref{6prop: convergence of j nu m}} in the case $d=1$.
\end{rem}

\subsubsection{Equality between $j_{(\umu, \um)} (x) $ and $ j_{\unu, \um} \big (x^{\frac 1 {d+1}} \big)$}

\begin{prop}\label{6prop: j mu = j nu}
Let $(\unu, \um) \in \BC^d \times \BZ^{d+1}$ be as in Proposition {\rm \ref{6prop: convergence of j nu m}} so that the integral $j_{\unu, \um} (x)$ absolutely converges. Suppose that $\umu \in \BL^{d}$ and $\unu \in \BC^d$ satisfy the relations given in Notation {\rm \ref{not: d, nu}}.  Then we have the identity
$$j_{(\umu, \um)} (x) = j_{\unu, \um} \big (x^{\frac 1 {d+1}} \big).$$
\end{prop}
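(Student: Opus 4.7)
I will verify the identity at the level of Mellin transforms. The Mellin--Barnes representation \eqref{3def: Bessel kernel j mu m} gives $\EM j_{(\umu,\um)}(2s) = \tfrac12 G_{(\umu,\um)}(s)$, and in particular $\EM j_{(0,m)}(2t) = \tfrac12 G_m(t)$ for the rank-one factors. So the task reduces to showing
\[
\EM\!\left[j_{\unu,\um}\!\left(x^{1/n}\right)\right](2s) \;=\; \tfrac12\, G_{(\umu,\um)}(s),
\]
and then invoking Mellin inversion. The elementary change $u=x^{1/n}$ yields $\EM[f(x^{1/n})](t) = n\,\EM f(nt)$, so it suffices to establish $\EM j_{\unu,\um}(2ns) = \tfrac{1}{2n}\, G_{(\umu,\um)}(s)$ on some nonempty vertical strip.

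\textbf{Key calculation.} Starting from \eqref{6def: integral j nu m}, I will substitute $u_l = x y_l^{-1}$ ($l=1,\dots,d$) in the inner $d$-dimensional integral, which turns $\prod y_l^{2\nu_l-1}\,\prod j_{(0,m_l)}(xy_l^{-1})$ into $x^{2|\unu|-d}\prod u_l^{-2\nu_l-1}j_{(0,m_l)}(u_l)$ together with a Jacobian $x^d/(u_1\cdots u_d)^2$. Using $|\umu|=0$ (so $|\unu|=-n\mu_{d+1}$), the total power of $x$ in $\int j_{\unu,\um}(x)x^{2ns-1}dx$ becomes $x^{2n(s-\mu_{d+1})-1}$. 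Next I will substitute $v = x^n/(u_1\cdots u_d)$ for $x$, with Jacobian $dx = \tfrac{u_1\cdots u_d}{n}(v u_1\cdots u_d)^{-(n-1)/n}dv$; this converts $j_{(0,m_{d+1})}(xy_1\cdots y_d) = j_{(0,m_n)}(v)$ and multiplies by a factor $\tfrac1n v^{2(s-\mu_n)-1}(u_1\cdots u_d)^{2(s-\mu_n)}$. Since $\mu_n+\nu_l = \mu_l$, the full integrand factorizes across $v,u_1,\dots,u_d$, and Fubini yields
\[
\EM j_{\unu,\um}(2ns) \;=\; \frac{2^d}{n}\prod_{l=1}^n \EM j_{(0,m_l)}\bigl(2(s-\mu_l)\bigr) \;=\; \frac{2^d}{n\cdot 2^n}\,G_{(\umu,\um)}(s) \;=\; \frac{1}{2n}\,G_{(\umu,\um)}(s),
\]
which (as noted above) is exactly what is needed.

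\textbf{Convergence and analytic continuation.} The rank-one Mellin factor $\EM j_{(0,m_l)}(2(s-\mu_l))$ converges absolutely precisely in the strip $-|m_l|/2 < \Re(s-\mu_l) < 1/4$, by the two bounds in Lemma \ref{6lem: crude bound for j m}. I will first restrict attention to a nonempty open subset of indices $\umu \in \BL^{d}$ (and the corresponding $\unu$) for which the intersection of these $n$ strips is nonempty \emph{and} $\unu$ lies in the convergence domain of Proposition \ref{6prop: convergence of j nu m}; on this set the Fubini interchange above is fully justified, so the Mellin identity, and hence $j_{(\umu,\um)}(x) = j_{\unu,\um}(x^{1/n})$, holds pointwise. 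Both sides are analytic in $\umu$: the left side is entire in $\umu$ by the Mellin--Barnes representation \eqref{3def: Bessel kernel j mu m} (equivalently by the $n=d+1$ analogue of the regularity statement used for $j_{(\umu,\udelta)}$), while the right side is holomorphic in $\unu$ throughout the hypothesis region of Proposition \ref{6prop: convergence of j nu m}, by standard differentiation under the integral sign combined with the uniform majorants from Lemma \ref{6lem: crude bound for j m}. The identity therefore extends to the full range stated in Proposition \ref{6prop: j mu = j nu}.

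\textbf{Main obstacle.} The non-trivial step is the Fubini/convergence bookkeeping: locating an explicit nonempty open sub-region of the hypothesis of Proposition \ref{6prop: convergence of j nu m} on which \emph{all} the intermediate absolute integrals converge simultaneously, so that the double change of variables $(y_l\mapsto u_l,\;x\mapsto v)$ is rigorously justified before analytic continuation is invoked. Once that is in hand, the algebraic identification of exponents---which crucially uses the normalization $|\umu|=0$ and the definitions $\nu_l = \mu_l - \mu_{d+1}$---is routine.
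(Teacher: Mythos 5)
Your proposal is correct, and the route is genuinely different from the paper's. The paper proves the identity by integrating both sides against an arbitrary test function $\varphi\in\SS(\BR_+)$: it invokes Corollaries \ref{3cor: H = h, C} and \ref{5cor: H, C} (the latter being the Fourier-type iterated-integral expression of $\Hmum$, built in \S\ref{sec: Fourier type transforms} from the composition $\ET_{(\mu_1-\mu_2,m_1)}\circ\cdots\circ\ES_{(\mu_n,m_n)}$) together with the second formula in \eqref{6eq: Hankel = integral of j}, obtains the equality of the two pairings with $\varphi$ on the chamber $\Re\mu_1>\cdots>\Re\mu_{d+1}$, intersects this with the convergence domain of Proposition \ref{6prop: convergence of j nu m} to get $\tfrac14>\Re\nu_1>\cdots>\Re\nu_d>0$, and then analytically continues. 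You instead prove the equality of Mellin transforms directly: two changes of variables reduce $\EM j_{\unu,\um}(2ns)$ to the product $\tfrac{2^d}{n}\prod_{l=1}^n\EM j_{(0,m_l)}\bigl(2(s-\mu_l)\bigr)=\tfrac{1}{2n}G_{(\umu,\um)}(s)$, matching the Mellin--Barnes definition \eqref{3def: Bessel kernel j mu m}; Mellin inversion then gives the pointwise identity on a nonempty open set of $\unu$, and analytic continuation finishes. Your approach has the advantage of being self-contained at the level of the Mellin--Barnes definitions — it bypasses all of \S\ref{sec: Fourier type transforms} — and of isolating the exact strip of absolute convergence $\mu_l-|m_l|/2<\Re s<\mu_l+1/4$ factor by factor. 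What the paper's route buys is a direct conceptual link to the decomposition of $\Hmum$ into a chain of rank-one Hankel and Miller--Schmid transforms, which is the organizing principle of \S\ref{sec: integral representations}. One small presentational caveat: in your first change of variables, $\prod_l y_l^{2\nu_l-1}$ transforms to $x^{2|\unu|-d}\prod_l u_l^{\,1-2\nu_l}$; it is only after absorbing the Jacobian factor $x^d\prod_l u_l^{-2}$ that the combined $u_l$-exponent becomes $-2\nu_l-1$, so the exponent and the Jacobian should not both appear simultaneously as you wrote them. The subsequent computation is consistent with the correct combined exponent, so this is only a writing slip and not a gap.
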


\begin{proof}
Some change of variables turns the integral in Corollary \ref{5cor: H, C} into 
\begin{equation*}
	2^{d+1} e^{i m \phi} \int_{\BR_+^{ d+1}} \varphi (y)  \,  
	 j_{(0, m_{d+1})} \big(  (xy)^{\frac 1 {d+1}} y_1 ... y_{d} \big)
	  \prod_{l      = 1}^{d} y_l      ^{2\nu_l        - 1} j_{(0, m_l     )} \big( (xy)^{\frac 1 {d+1}} y_{l     }\- \big)   
	   y  d y d y_{d} ... d y_1. 
\end{equation*}
Corollary   \ref{3cor: H = h, C} and \ref{5cor: H, C}, along with the second formula in \eqref{6eq: Hankel = integral of j}, yield 
\begin{equation*}
	\begin{split} 
	&2 \int_{\BR_+} \varphi (y) j_{(\umu, \um)} ( xy )   y d y = \\
	& 2^{d+1} \int_{\BR_+^{ d+1}} \varphi (y)  
	 j_{(0, m_{d+1})} \big(  (xy)^{\frac 1 {d+1}}  y_1 ... y_{d} \big)
	  \prod_{l      = 1}^{d} y_l      ^{2\nu_l        - 1} j_{(0, m_l     )} \big( (xy)^{\frac 1 {d+1}} y_{l     }\- \big)   
	   y d y d y_{d} ... d y_1,
	 \end{split}
\end{equation*}
for any $\varphi \in \SS (\BR_+)$, provided that $\Re \mu_1 > ... > \Re \mu_{d+1} $ or equivalently $ \Re \nu_1  > ... > \Re \nu_d > 0$. In view of Proposition {\rm \ref{6prop: convergence of j nu m}}, the integral on the right hand side is absolute convergent at least when $\frac 1 4 > \Re \nu_1  > ... > \Re \nu_d > 0$. Therefore, the asserted equality holds on the domain $\left\{ \unu\in \BC^d : \frac 1 4 > \Re \nu_1  > ... > \Re \nu_d > 0 \right\}$ and remains valid on the whole domain of convergence for $j_{\unu, \um} (x) $ given in Proposition {\rm \ref{6prop: convergence of j nu m}} due to the principle of analytic continuation.
\end{proof}

\subsubsection{An auxiliary lemma}

\begin{lem}\label{6lem: bound for j nu m + m e}
	Let $(\unu, \um) \in \BC^d \times \BZ^{d+1}$ and $m \in \BZ$. Set $A = \max_{l     =1,..., d+1} \left\{  |m_{l     } | \right\}$.
Suppose $\unu \in \bigcup_{a \in \left[-\frac 1 2, 0 \right]} \BS^d \left(- \frac 1 4 -\frac 1 2 a, - \frac 1 2 a \right)$. We have the estimate 
\begin{align*}
& \hskip 27 pt 2^d \int_{\BR_+^{ d}} 
\left|j_{(0, m_{d+1} + m)} \left(  x  y_1 ... y_{d} \right) \right| 
\prod_{l      = 1}^{d} \left|y_l      ^{2\nu_l      - 1} j_{(0, m_l      + m)} \left( x y_{l     }\- \right) \right|   
d y_{d} ... d y_1  \\
& \lll_{\, \um,\, d}   \sum_{  {\scriptstyle \urho \,\neq\, \urho_- } } \lp \frac {2 \pi e x} {|m| + 1} \rp^{\left| L _+ (\urho)\right|  |m|} (|m| + 1)^{2 \left|L_-(\urho)\right| + A  \left|L_+ (\urho)\right| } \\
& \hskip  126 pt x^{ - \frac 1 2 \left|L_-(\urho)\right|} \max \left\{ x^{\left|L_+ (\urho)\right| A}, x^{- \left|L_+ (\urho)\right| A - \frac 1 2} \right\} \\
& \hskip 27 pt + \lp \frac {2 \pi e x} {|m| + 1} \rp^{ |m|} (|m| + 1)^{ A } x^{ - \frac d 2} \max \left\{ x^{ A}, x^{-A} \right\},
\end{align*}
	where $\urho \in \{+, -\}^d$, $\urho_- = (-, ..., -)$ and $L_\pm (\urho) = \left\{ l      : \varrho_{l     } = \pm \right\}$.  
\end{lem}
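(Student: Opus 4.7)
The plan is to decompose the integration domain $\BR_+^d$ into the $2^d$ hyper-cubes $I_{\urho} = I_{\varrho_1} \times \cdots \times I_{\varrho_d}$ indexed by $\urho \in \{+,-\}^d$, where $I_+ = [1,\infty)$ and $I_- = (0,1]$, and to bound the integrand separately on each $I_\urho$, choosing on each cube the estimate best suited to the sizes of the arguments that appear in the various Bessel factors. On $I_\urho$ one has $x y_l^{-1} \leq x$ when $l \in L_+(\urho)$ and $x y_l^{-1} \geq x$ when $l \in L_-(\urho)$, which dictates which of the two qualitative regimes of $j_{(0,m_l+m)}$ to exploit.

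For each $l \in L_+(\urho)$ I would apply Lemma \ref{3lem: bound of the Bessel kernel, C} with $n = 1$ to the factor $j_{(0,m_l+m)}(x y_l^{-1})$, which yields a bound of the shape $\lp \frac{2\pi e x y_l^{-1}}{|m|+1} \rp^{|m|} (|m|+1)^{O(A)+\epsilon} \max\{(x y_l^{-1})^{B_+}, (x y_l^{-1})^{B_-}\}$; this is the mechanism that produces both the exponent $|L_+(\urho)||m|$ on the factor $2\pi ex/(|m|+1)$ and the coefficient $(|m|+1)^{A|L_+(\urho)|}$. For each $l \in L_-(\urho)$ I would use the large-argument bound $|j_{(0,m_l+m)}(xy_l^{-1})| \lll (xy_l^{-1})^{-1/2}$ from Lemma \ref{6lem: crude bound for j m}(1), which contributes the $x^{-|L_-(\urho)|/2}$ factor as well as the $(|m|+1)^{2|L_-(\urho)|}$ coming from the implied constant being $|m|$-uniform only up to polynomial growth. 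The center factor $j_{(0,m_{d+1}+m)}(xy_1\cdots y_d)$ must be handled in two different ways: when $\urho \neq \urho^-$ one has already extracted $|L_+(\urho)|$ copies of $|m|$-decay, so it suffices to estimate the center factor by the crude bound in Lemma \ref{6lem: crude bound for j m}(2), giving the $\max\{x^{|L_+(\urho)| A}, x^{-|L_+(\urho)| A - 1/2}\}$ term after absorbing the $y_l$-dependence from the $L_+(\urho)$ factors. When $\urho = \urho^-$ instead there is no $|m|$-decay yet, so one must apply Lemma \ref{3lem: bound of the Bessel kernel, C} (with $n=1$) to the center factor to produce the single $\lp 2\pi ex/(|m|+1)\rp^{|m|}$ and $(|m|+1)^A$.

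After inserting these estimates, the $y_l$-integrals split into one-dimensional integrals of the form $\int_1^\infty y_l^{2\Re\nu_l - 1 - |m| - B}\,dy_l$ for $l \in L_+(\urho)$ and $\int_0^1 y_l^{2\Re \nu_l - 1/2}\,dy_l$ for $l \in L_-(\urho)$, both of which converge for $\unu$ in the hypothesized union of strips; the first integral moreover contributes a factor of size $O((|m|+1)^{-1})$ which is harmlessly absorbed. Tracking the total powers of $x$ and $|m|+1$ accumulated from the factors and the integration yields the claimed bound. The main obstacle will be bookkeeping: one has to carry both $B_+$ and $B_-$ through all factors, group the two resulting extreme exponents so that they collapse into the single $\max\{x^{|L_+(\urho)| A}, x^{-|L_+(\urho)| A - 1/2}\}$ (respectively $\max\{x^A, x^{-A}\}$ in the case $\urho=\urho^-$), and check uniformity of the implied constants in the small-$\epsilon$ parameter, echoing the bookkeeping already performed in the proof of Lemma \ref{3lem: bound of the Bessel kernel, C}.
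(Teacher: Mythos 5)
Your overall scaffolding matches the paper exactly: decompose $\BR_+^d$ into the $2^d$ hyper-cubes $I_\urho$, choose small-argument bounds for $l \in L_+(\urho)$, large-argument bounds for $l \in L_-(\urho)$, the interpolated Lemma \ref{6lem: crude bound for j m, with m}(2) for the central factor when $\urho \neq \urho^-$, and a small-argument bound for the central factor when $\urho = \urho^-$. This is precisely the template the paper follows, modeled on Proposition \ref{6prop: convergence of j nu m}.

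However, your choice of Lemma \ref{3lem: bound of the Bessel kernel, C} (with $n=1$) for the $l \in L_+(\urho)$ factors is a genuine gap. That lemma, in the $n=1$, $\umu = 0$, $\um = (m_l)$ case, gives a small-argument exponent of $B_- = -|m_l|$, so after multiplying by the $\lp 2\pi e x y_l^{-1}/(|m|+1)\rp^{|m|}$ prefactor, the bound for small $xy_l^{-1}$ reads $x^{|m| - |m_l| - 2\epsilon}\, y_l^{-|m| + |m_l| + 2\epsilon}$. The $y_l$-integral over $[1, \infty)$ against $y_l^{2\Re\nu_l - 1}$ then requires $2\Re\nu_l < |m| - |m_l| - 2\epsilon + \text{(decay from other factors)}$; when $|m_l| > |m|$ this forces $\Re\nu_l$ to be strictly more negative than the hypothesis $\unu \in \bigcup_a \BS^d\lp -\tfrac14 - \tfrac a 2, -\tfrac a 2 \rp$ permits (indeed $\Re\nu_l$ is allowed to be positive when $a < 0$), so the integral diverges. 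The underlying reason is that Lemma \ref{3lem: bound of the Bessel kernel, C} is tailored for uniformity over the \emph{shift} $m$ and pays for it with a $\max\{x^{B_+}, x^{B_-}\}$ loss; for the specific kernel $j_{(0, m_l + m)}$, which is a genuine $J$-Bessel function smooth at the origin, the sharp small-argument decay is $z^{|m_l + m|}$, not $z^{|m| - |m_l|}$. This is exactly what the first estimate in Lemma \ref{6lem: crude bound for j m, with m}(1), the Poisson--Lommel bound $j_{(0,m)}(x) \lll (2\pi x)^{|m|}/\Gamma\lp|m|+\tfrac12\rp$, provides, and what the paper uses: with $|m| = |m_l + m|$ the $y_l$-integral over $[1,\infty)$ becomes $\int_1^\infty y_l^{2\Re\nu_l - 1 - |m_l+m| + a}\,dy_l$, which converges for $\Re\nu_l < -a/2$ as needed.

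There is also a minor inaccuracy in your accounting of the $(|m|+1)^{2|L_-(\urho)|}$ factor. You attribute it to the large-argument bound being ``$|m|$-uniform only up to polynomial growth,'' but the second estimate in Lemma \ref{6lem: crude bound for j m, with m}(1), $j_{(0,m)}(x) \lll (|m|+1)/\sqrt{x}$ with an absolute implied constant, contributes exactly \emph{one} power of $|m_l+m|+1$ per $L_-$ factor. The additional $(|m_l+m|+1)^{-2a}$ per factor (which in the worst case $a = -\tfrac12$ makes the total exponent $2$) emerges from the bookkeeping of the interpolated bound Lemma \ref{6lem: crude bound for j m, with m}(2) applied to the central factor, together with the $y_l$-integral estimates; you also need the $m$-uniform version Lemma \ref{6lem: crude bound for j m, with m} rather than Lemma \ref{6lem: crude bound for j m}, which you cite, since the latter has $m$-dependent implied constants and would undermine the uniformity that is the whole point.
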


Firstly, we require the bound \eqref{6eq: bound < x m}  for $J_m (x)$.
Secondly, we observe that when $x \geq (|m| + 1)^2$ the bound \eqref{6eq: bound < x -1/2} for $J_m (x)$ can be improved  so that the implied constant becomes absolute. This follows from the asymptotic expansion of $J_m (x)$ given in \cite[\S 7.13.1]{Olver}.
Moreover, we have Bessel's integral representation (see \cite[2.2 (1)]{Watson})
\begin{equation}\label{6eq: Bessel integral}
J_m (x) = \frac 1 {2 \pi} \int_0^{2 \pi} \cos \lp m \theta - x \sin \theta \rp d \theta,
\end{equation}
which yields the bound
\begin{equation}\label{6eq: bound < 1}
|J_m (x)| \leq 1.
\end{equation}
We then have the following lemma (compare \cite[Proposition 8]{Harcos-Michel}).

\begin{lem}\label{6lem: crude bound for j m, with m}
Let $m$ be an integer.

{\rm (1).} The following two estimates hold
$$j_{(0, m)} (x) \lll \frac { \lp  2 \pi x \rp^{|m|} } {\Gamma \left( |m| + \frac 1 2\right) }, \hskip 10 pt j_{(0, m)} (x) \lll \frac {|m| + 1} {\sqrt x}, $$
with absolute implied constants.

{\rm (2).} For any $a \in \left[- \frac 1 2, 0 \right]$ we have the estimate
$$j_{(0, m)} (x) \lll   \lp {(|m|+1)^{- 2 } } {x} \rp^{ a},$$
with absolute implied constant.
\end{lem}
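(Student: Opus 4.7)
\medskip

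My plan is to reduce everything to well-known bounds on the $J$-Bessel function via the identity
$$j_{(0, m)} (x) = 2\pi i^{|m|} J_{|m|} (4\pi x)$$
from \eqref{3eq: j (0, m)}, and to keep track of constants carefully so that nothing depends on $m$. Three inputs will be used: the Poisson--Lommel estimate \eqref{6eq: bound < x m}, the trivial bound \eqref{6eq: bound < 1} from Bessel's integral \eqref{6eq: Bessel integral}, and the sharpened asymptotic \cite[\S 7.13.1]{Olver} giving $|J_m(y)| \leq C y^{-1/2}$ with an absolute constant $C$ whenever $y \geq (|m|+1)^2$ (as quoted in the second paragraph preceding the statement).

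For the first estimate of (1), applying \eqref{6eq: bound < x m} at $y = 4\pi x$ yields
$$|j_{(0, m)}(x)| \leq 2\pi \cdot \frac{\sqrt{\pi}(2\pi x)^{|m|}}{\Gamma(|m|+\frac 1 2)},$$
which is the required bound with an absolute constant. For the second estimate, split into two regimes. If $4\pi x \geq (|m|+1)^2$, the sharpened asymptotic gives $|J_{|m|}(4\pi x)| \lll (4\pi x)^{-1/2}$ with absolute implied constant, hence $|j_{(0,m)}(x)| \lll 1/\sqrt{x} \leq (|m|+1)/\sqrt{x}$. If instead $4\pi x < (|m|+1)^2$, the inequality $\sqrt{4\pi x} < |m|+1$ combined with $|J_{|m|}(4\pi x)| \leq 1$ from \eqref{6eq: bound < 1} gives
$$|j_{(0,m)}(x)| \leq 2\pi \leq 2\pi \cdot \frac{|m|+1}{\sqrt{4\pi x}} = \sqrt{\pi}\cdot\frac{|m|+1}{\sqrt{x}}.$$

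For (2), I interpolate between the two absolute bounds $|j_{(0,m)}(x)| \lll 1$ and $|j_{(0,m)}(x)| \lll (|m|+1)/\sqrt{x}$. Writing $y = (|m|+1)/\sqrt{x}$, the asserted bound reads $|j_{(0,m)}(x)| \lll y^{-2a}$ with $-2a \in [0,1]$. If $y \geq 1$ then $y^{-2a} \geq 1$ and the first absolute bound suffices; if $y \leq 1$ then $y^{-2a} \geq y$ (since $-2a \leq 1$) and the second absolute bound suffices. Taking the maximum of the two implied constants finishes the proof.

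The only subtle point, and the reason the lemma is stated as a separate improvement of Lemma \ref{6lem: crude bound for j m}, is the uniformity of the implied constants in $m$. This subtlety lies entirely in the large-argument regime $4\pi x \geq (|m|+1)^2$ and is resolved by the sharpened form of the asymptotic expansion for $J_m$; the small-argument regime is handled trivially by \eqref{6eq: bound < 1}, and the interpolation is then a one-line case distinction.
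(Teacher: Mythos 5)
Your proof is correct and uses exactly the three ingredients the paper introduces immediately before the lemma (the Poisson--Lommel bound, the sharpened large-argument asymptotic from Olver giving an absolute constant for $x \geq (|m|+1)^2$, and the trivial bound $|J_m| \leq 1$ from Bessel's integral); the paper itself omits the write-up, and your case split and interpolation correctly supply the missing details in the intended way.
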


\begin{proof}[Proof of Lemma \ref{6lem: bound for j nu m + m e}] Our proof here is similar to that of Proposition  \ref{6prop: convergence of j nu m}, except that
\begin{itemize}
\item [-]   Lemma \ref{6lem: crude bound for j m, with m} (1) and (2) are applied in place of Lemma  \ref{6lem: crude bound for j m}  (1) and (2) respectively to bound $j_{(0, m_l      + m)} \left( x y_{l     }\- \right) $ and $j_{(0, m_{d+1} + m)} \left(  x  y_1 ... y_{d} \right) $, and
\item [-]      the first estimate in Lemma \ref{6lem: crude bound for j m, with m} (1) is used for $j_{(0, m_{d+1} + m)} \left(  x  y_1 ... y_{d} \right)$  in the case $\urho = \urho_- $. 
\end{itemize}
In this way, one obtains the following estimate 
	\begin{align*}
	&  \ 2^d \int_{\BR_+^{ d}} 
	\left|j_{(0, m_{d+1} + m)} \left(  x  y_1 ... y_{d} \right) \right|    
	\prod_{l      = 1}^{d} \left|y_l      ^{2\nu_l      - 1} j_{(0, m_l      + m)} \left( x y_{l     }\- \right) \right|
	d y_{d} ... d y_1 
	\\
	\lll  \ & \sum_{ {\scriptstyle \urho \neq \urho_- } } \frac {\prod_{l       \in L_- (\urho)}  (|m_l      + m| + 1)^{ 1 - 2 a } } {\prod_{l       \in L_+ (\urho)} \Gamma \left( |m_l      + m| + \frac 1 2\right) (|m_l      + m| + 1)^{2a} } \\
	& \hskip 77 pt 
	 (2 \pi x)^{  \sum_{ l       \in L_+ (\urho)} |m_l      + m | - \frac 1 2 \left|L_-(\urho)\right| + a } I_{2 \unu +   a \ue^d, \um^{d } + m \ue^{d }} (\urho) \\
	& + \frac {\prod_{l     =1}^d  (|m_{l     } + m| + 1) } { \Gamma \left( |m_{d+1} + m| + \frac 1 2\right) } 
	 (2 \pi x)^{ - \frac {d} 2 +  |m_{d+1} + m | } I_{2 \unu +   |m_{d+1} + m| \ue^d } (\urho_-),
	\end{align*}
    \footnote{When $\urho = \urho_-$, $\uk$ does not occur in the definition of $I_{\ulambda, \uk} ( \urho_- )$ and   is therefore suppressed from the subscript.}with $a \in \left[- \frac 1 2, 0\right] $. Now the implied constant above depends only on $d$.
	Suppose that $- \frac 1 2 - a < 2 \, \Re \nu_{l     } < - a $ for all $l      = 1, ..., d$, then   the integrals $I_{2 \unu +  a \ue^d, \um^{d } + m \ue^{d }} (\urho)$ and $I_{2 \unu +   |m_{d+1} + m| \ue^d } (\urho_-)$   are absolutely convergent and   of size $O_d \lp \prod_{l      \in L_+ (\urho)}( |m_{l     } + m| +1 )\- \rp$ and $O_d \lp  (|m_{d+1} + m| +1 )^{- d} \rp$ respectively. A final estimation using  Stirling's asymptotic formula yields our asserted bound.
\end{proof}

\begin{rem}\label{6rem: d=1, j m + m e}
	In the case $d=1$, modifying over the ideas in Remark {\rm \ref{6rem: d=1, j}}, one may show the  slightly improved estimate
	\begin{align*}
	2 \int_{0 }^{\infty} 
	\left|y ^{2\nu - 1} j_{(0, m_1 + m)} \left( x y \- \right) j_{(0, m_{2} + m)} \left(  x  y \right) \right| &    d y   \\
	\lll_{m_1, m_2} \lp \frac {2 \pi e x} {|m| + 1} \rp^{ |m|} & (|m| + 1)^{ A }  x^{ - \frac 1 2} \max \left\{ x^{A}, x^{- A} \right\},
	\end{align*}
	given that $|\Re \nu | < \frac 1 4$, with $A = \max \left\{ |m_1|, |m_2| \right\}$.
\end{rem}

\subsection{\texorpdfstring{The series of integrals $J_{\unu, \um} (x, u)$}{The series of integrals $J_{\nu, m} (x, u)$}}
We define the following series of integrals,
\begin{equation}\label{6def: J nu m (x, u)}
\begin{split}
& J_{\unu, \um} (x, u) = \frac 1 {2 \pi} \sum_{m \in \BZ}  u^m  j_{ \unu, \um + m\ue^n } (x)   \\
= \ & \frac {2^{d-1}} { \pi} \sum_{m \in \BZ}  u^m \int_{\BR_+^{ d}}  
j_{(0, m_{d+1} + m)} \left(  x  y_1 ... y_{d} \right)    
\prod_{l      = 1}^{d} y_l      ^{2\nu_l      - 1} j_{(0, m_l      + m)} \left( x y_{l     }\- \right)
 d y_{d} ... d y_1,
\end{split}
\end{equation}
with $x \in \BR_+$ and $u \in \BC$, $|u| = 1$.

\subsubsection{Absolute convergence of $J_{\unu, \um} (x, u)$}

We have the following  direct consequence of Lemma \ref{6lem: bound for j nu m + m e}.

\begin{prop}\label{6prop: J nu m (x, u)}
	Let $(\unu, \um) \in \BC^d \times \BZ^{d+1}$. The series of integrals $J_{\unu, \um} (x, u)$ defined by \eqref{6def: J nu m (x, u)} is absolutely convergent if  $\unu \in \bigcup_{a \in \left[-\frac 1 2, 0 \right]} \BS^d \left(- \frac 1 4 -\frac 1 2 a, - \frac 1 2 a \right)$. 
\end{prop}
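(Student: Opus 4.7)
The plan is to reduce the statement to a pointwise estimate in $m$ obtained from Lemma \ref{6lem: bound for j nu m + m e} and then verify that the resulting bound is summable over $m \in \BZ$. Since $|u|=1$, absolute convergence of the series of integrals defining $J_{\unu, \um}(x,u)$ is equivalent to the finiteness of
\begin{equation*}
\sum_{m \in \BZ} \int_{\BR_+^{d}} \left| j_{(0, m_{d+1}+m)}(x y_1 \cdots y_d) \right| \prod_{l=1}^{d} y_l^{2\Re \nu_l - 1} \left| j_{(0, m_l + m)}(x y_l^{-1}) \right| dy_d \cdots dy_1,
\end{equation*}
so it suffices to bound the inner multi-integral and sum the bound.

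The hypothesis $\unu \in \bigcup_{a \in [-1/2, 0]} \BS^d(-\tfrac14 - \tfrac12 a, -\tfrac12 a)$ is exactly the one under which Lemma \ref{6lem: bound for j nu m + m e} applies. Feeding it in, the inner integral is majorized by a finite sum of terms, each of which displays a factor of the form $\bigl( 2\pi e x / (|m|+1) \bigr)^{k|m|}$ with integer exponent $k \geq 1$, multiplied by a polynomial factor $(|m|+1)^{C}$ (with $C$ depending only on $d$ and $\um$) and a quantity that depends on $x$ but not on $m$. Indeed, for each $\urho \neq \urho^-$ one has $|L_+(\urho)| \geq 1$, and the remaining ``$\urho = \urho^-$'' term in the bound carries the factor $\bigl(2\pi e x/(|m|+1)\bigr)^{|m|}$ explicitly.

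For each fixed $x > 0$, once $|m|+1 > 4\pi e x$ the super-exponential factor is bounded by $2^{-|m|}$, which easily dominates every polynomial prefactor $(|m|+1)^{C}$. Thus each of the finitely many contributions in Lemma \ref{6lem: bound for j nu m + m e} is summable over $m \in \BZ$, whence the sum of inner integrals converges. By Fubini–Tonelli this gives absolute convergence of the series of integrals defining $J_{\unu, \um}(x,u)$.

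There is no real obstacle: the proof is a direct bookkeeping from Lemma \ref{6lem: bound for j nu m + m e}. The only point requiring attention is to notice that the lemma's bound always contains a strictly positive exponent of $\bigl(2\pi e x / (|m|+1)\bigr)^{|m|}$ — so that the decay in $m$ is genuinely super-exponential rather than merely polynomial — which is ensured by the case split $\urho \neq \urho^-$ vs.\ $\urho = \urho^-$ in the statement of the lemma.
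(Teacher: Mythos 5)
Your proof is correct and follows exactly the route the paper intends: the paper's own proof consists of the single sentence ``We have the following simple consequence of Lemma \ref{6lem: bound for j nu m + m e},'' and you have simply written out the bookkeeping that sentence suppresses. Your key observation — that every term in the bound of Lemma \ref{6lem: bound for j nu m + m e} carries a factor $\left(2\pi e x/(|m|+1)\right)^{k|m|}$ with $k\geq 1$ (either $k=|L_+(\urho)|\geq 1$ for $\urho\neq\urho^-$, or $k=1$ explicitly for $\urho=\urho^-$), which decays super-exponentially and so dominates the polynomial prefactors $(|m|+1)^{C}$ for any fixed $x$ — is precisely what makes the sum over $m$ converge, and the Tonelli step is legitimate because all quantities are nonnegative.
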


\subsubsection{Equality between $J_{(\umu, \um)} \lp x e^{i \phi} \rp $ and $J_{\unu, \um} \big(x^{\frac 1 {d+1}}, e^{i \phi} \big)$}

In view of Proposition \ref{6prop: j mu = j nu} along with \eqref{2eq: Bessel kernel over C, polar} and \eqref{6def: J nu m (x, u)}, the following proposition is readily established.

\begin{prop}\label{6prop: J mu m = J nu m}
	Let $(\unu, \um) \in \BC^d \times \BZ^{d+1}$. Suppose that $ \unu $ satisfies the condition in Proposition {\rm \ref{6prop: J nu m (x, u)}} so that $J_{\unu, \um} (x, u)$ is absolutely convergent. Then, given that $\umu $ and $\unu$ satisfy the relations   in Notation {\rm \ref{not: d, nu}}, we have the identity
	$$J_{(\umu, \um)} \lp x e^{i \phi} \rp = J_{\unu, \um} \big(x^{\frac 1 {d+1}}, e^{i \phi} \big), $$
	with $x \in \BR_+$ and $\phi \in \BR/2\pi \BZ$.
\end{prop}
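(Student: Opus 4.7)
\textbf{Proof plan for Proposition \ref{6prop: J mu m = J nu m}.} The plan is to deduce the identity term-by-term from the rank-one statement Proposition \ref{6prop: j mu = j nu} via the Fourier expansion \eqref{2eq: Bessel kernel over C, polar}. The crucial observation is that the substitution $\um \mapsto \um + m\ue^n$ leaves $\uk = \um^d - m_{d+1} \ue^d$ invariant (all components of $\um$ are shifted by the same integer), so the pair $(\unu, \uk)$ associated to $(\umu, \um + m\ue^n)$ by Notation \ref{not: d, nu} coincides with the one associated to $(\umu, \um)$.

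First, I would verify that for every $m \in \BZ$ the hypothesis of Proposition \ref{6prop: convergence of j nu m} is satisfied by $(\unu, \um + m\ue^n)$. Indeed, the convergence range in Proposition \ref{6prop: convergence of j nu m} is a union over $a \in [-1/2, |m_{d+1} + m|]$ of hyper-strips $\BS^d\big(\frac 1 2(-\frac 1 2 - a)\ue^d, \frac 1 2(\|\um^d + m\ue^d\| - a \ue^d)\big)$, which only grows as $|m|$ increases; hence the (uniform in $m$) hypothesis of Proposition \ref{6prop: J nu m (x, u)}, namely $\unu \in \bigcup_{a \in [-1/2, 0]} \BS^d(-\frac 1 4 - \frac a 2, -\frac a 2)$, is already contained in each of these ranges (take the same $a \in [-1/2,0]$ and note the upper endpoint $-a/2 \leq \frac 1 2(|m_l + m| - a)$ and the lower endpoint is identical). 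Thus $j_{\unu, \um + m\ue^n}(x^{1/n})$ is well-defined as an absolutely convergent integral for every $m$, and Proposition \ref{6prop: j mu = j nu} applies to give
\begin{equation*}
j_{(\umu, \um + m \ue^n)}(x) = j_{\unu, \um + m\ue^n}\big(x^{\frac 1 n}\big), \qquad m \in \BZ.
\end{equation*}

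Next, I would combine this with the defining Fourier expansion \eqref{2eq: Bessel kernel over C, polar} of $J_{(\umu, \um)}$ and the definition \eqref{6def: J nu m (x, u)} of $J_{\unu, \um}$:
\begin{equation*}
J_{(\umu, \um)}\lp x e^{i\phi} \rp = \frac 1 {2\pi} \sum_{m \in \BZ} j_{(\umu, \um + m \ue^n)}(x) e^{i m \phi} = \frac 1 {2\pi} \sum_{m \in \BZ} j_{\unu, \um + m\ue^n}\big(x^{\frac 1 n}\big) e^{i m \phi} = J_{\unu, \um}\big(x^{\frac 1 n}, e^{i\phi}\big).
\end{equation*}
The interchange is legitimate because, by Proposition \ref{6prop: J nu m (x, u)}, the series on the right is absolutely convergent under the given hypothesis on $\unu$; the absolute convergence of the series on the left is already secured by the general theory of $J_{(\umu, \um)}$ (Lemma \ref{3lem: bound of the Bessel kernel, C}).

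The main (mild) obstacle is bookkeeping: checking that the convergence condition on $\unu$ assumed in Proposition \ref{6prop: J nu m (x, u)} is strong enough to guarantee that each term $j_{\unu, \um + m\ue^n}(x^{1/n})$ falls within the domain of Proposition \ref{6prop: convergence of j nu m}, uniformly in $m$, so that Proposition \ref{6prop: j mu = j nu} can be applied term-by-term. Once that is in place, the proposition follows immediately by comparing the two Fourier expansions.
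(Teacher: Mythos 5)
Your proposal is correct and follows exactly the route the paper indicates for this proposition: apply Proposition \ref{6prop: j mu = j nu} term-by-term to the shifted indices $\um + m\ue^n$ (after verifying that the convergence hypothesis of Proposition \ref{6prop: convergence of j nu m} is met for every $m$), then compare the two Fourier expansions \eqref{2eq: Bessel kernel over C, polar} and \eqref{6def: J nu m (x, u)}. The paper states this without elaboration ("readily established"); your bookkeeping — noting that $\uk$ is unchanged under the shift and that the hyper-strip from Proposition \ref{6prop: J nu m (x, u)} is contained in the one from Proposition \ref{6prop: convergence of j nu m} for every $m$ — correctly fills in the details.
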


\subsection{Proof of Theorem \ref{6thm: formal integral, C}}

\begin{lem}\label{6lem: series expansion of Jk}
Let $ \uk \in \BZ^d $ and recall the integral $J_{\uk}(\uy; x, \phi )$ defined by {\rm (\ref{6eq: Theta (theta, y; x, phi)}, \ref{6eq: Jk (y;x, phi)})}. We have  the following absolutely convergent  series expansion of $J_{\uk}(\uy; x, \phi )$
\begin{equation} \label{6eq: series expansion of J k}
J_{\uk}(\uy; 2 \pi x, \phi ) =  \frac {1} { 2 \pi} \sum_{m \in \BZ}  e^{i m\phi} 
j_{(0,  m)} \left(  x  y_1 ... y_{d} \right) \prod_{l      = 1}^{d}  j_{(0, k_l      + m)} \left( x y_{l     }\- \right).
\end{equation}
\end{lem}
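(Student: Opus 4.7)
The plan is to write $\Theta_{\uk}(\utheta,\uy;2\pi x,\phi)$ as the sum of the linear phase $\sum_l k_l\theta_l$, the ``mixed'' phase $4\pi xy_1\cdots y_d\cos(\sum_l\theta_l+\phi)$, and the single-variable phases $4\pi xy_l^{-1}\cos\theta_l$, then apply the Jacobi--Anger expansion
\begin{equation*}
e^{iz\cos\alpha}=\sum_{m\in\BZ}i^m J_m(z)e^{im\alpha}
\end{equation*}
to each cosine exponential. Recalling from Example \ref{ex: Bessel kernel j 0 m, complex, n=1} that $j_{(0,m)}(x)=2\pi i^m J_m(4\pi x)$, this identity rewrites $e^{i\cdot 4\pi x u\cos\alpha}=\tfrac1{2\pi}\sum_m j_{(0,m)}(xu)e^{im\alpha}$, so each of the $d+1$ Jacobi--Anger expansions contributes a factor $(2\pi)^{-1}$ and an index $m$ (for the mixed phase) or $n_l$ (for the $l$-th single phase).

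Multiplying the $d+1$ expansions and combining with $e^{i\sum_l k_l\theta_l}$, I get a $(d+1)$-fold series whose $\utheta$-dependence is the single character $\exp\bigl(i\sum_l(k_l+m+n_l)\theta_l\bigr)$ (and a global factor $e^{im\phi}$). Integrating over the torus $(\BR/2\pi\BZ)^d$ and using character orthogonality forces $n_l=-(k_l+m)$ for every $l$, each surviving integral contributing $2\pi$. The powers of $2\pi$ simplify to $(2\pi)^d/(2\pi)^{d+1}=1/(2\pi)$, and one obtains the stated expansion after replacing $j_{(0,-(k_l+m))}(xy_l^{-1})$ by $j_{(0,k_l+m)}(xy_l^{-1})$ via the elementary identity $j_{(0,-m)}=j_{(0,m)}$ (an immediate consequence of $J_{-m}=(-)^m J_m$).

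The only nontrivial point is the interchange of the $(d+1)$-fold sum with the integration over the torus, which requires absolute convergence uniform in $\utheta$. For this, I would use the power series bound $|J_\nu(z)|\leq (|z|/2)^{|\nu|}/\Gamma(|\nu|+1)\cdot e^{|z|/2}$ (or, equivalently, Lemma \ref{6lem: crude bound for j m, with m}(1)): for fixed $x$ and $\uy$, the general term of the multiple series is majorised by
\begin{equation*}
\frac{(2\pi xy_1\cdots y_d)^{|m|}}{|m|!}\prod_{l=1}^d\frac{(2\pi xy_l^{-1})^{|k_l+m|}}{|k_l+m|!},
\end{equation*}
which is summable over $(m,n_1,\dots,n_d)\in\BZ^{d+1}$ since factorials dominate any geometric growth. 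This legitimises the term-by-term integration and completes the proof; none of the steps produces a genuine obstacle, the only care required being the quantitative decay just described.
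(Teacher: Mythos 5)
Your proof is correct, and it reaches the identity by the route dual to the one in the paper. You start from the left-hand side $J_{\uk}(\uy;2\pi x,\phi)=\int_{(\BR/2\pi\BZ)^d}e^{i\Theta_{\uk}}d\utheta$, split $e^{i\Theta_{\uk}}$ into $d+1$ cosine exponentials plus the linear phase $e^{i\sum_l k_l\theta_l}$, expand each cosine exponential by Jacobi--Anger, multiply out, and let orthogonality of characters on the torus collapse the $(d+1)$-fold sum to a single $m$-sum, with the constraint $n_l=-(k_l+m)$ and the elementary evenness $j_{(0,-m)}=j_{(0,m)}$ finishing the computation. The paper goes the other way: it begins with the series on the right, rewrites each $j_{(0,k_l+m)}(xy_l^{-1})$ as the circle integral $\int e^{i(k_l+m)\theta_l+4\pi ixy_l^{-1}\cos\theta_l}d\theta_l$, moves the $m$-sum inside the $\utheta$-integral, and applies a \emph{single} Jacobi--Anger synthesis to resum $\sum_m e^{im(\sum_l\theta_l+\phi)}j_{(0,m)}(xy_1\cdots y_d)$ back into $e^{4\pi i xy_1\cdots y_d\cos(\sum_l\theta_l+\phi)}$, recovering $J_{\uk}$. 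Both versions rest on the same pair of dual facts (the integral representation of $j_{(0,m)}$ and the Fourier/Jacobi--Anger expansion), and both justify the interchange by the factorial decay in Lemma \ref{6lem: crude bound for j m, with m}\,(1). The paper's direction is marginally leaner because only one series resummation is needed and the torus orthogonality never has to be invoked explicitly; your direction makes the appearance of the $m$-sum more visibly an orthogonality relation, at the cost of multiplying $d+1$ series. One small slip in bookkeeping: the majorant you write down (with exponents $|k_l+m|$) is the \emph{post}-orthogonality term, which should be summed only over $m\in\BZ$, while the quantity you need to be absolutely summable before interchanging integral and sum is the $(d+1)$-fold series indexed by $(m,n_1,\dots,n_d)\in\BZ^{d+1}$, each factor majorised by $C^{|n_l|}/|n_l|!$ and $C'^{|m|}/|m|!$; either reading is indeed summable, so this is cosmetic, but it is worth stating the pre-orthogonality bound since that is what legitimises the term-by-term integration.
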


\begin{proof}
In view of Example \ref{3ex: Bessel n=1, C}, we have the integral representation 
$$j_{(0, m)} (x) = \int_{\BR/2\pi\BZ} e^{i m \theta + 4\pi i x \cos \theta } d \theta $$
as well as the Fourier series expansion
$$e^{4 \pi i x \cos \phi} = \frac 1 {2 \pi} \sum_{m \in \BZ} j_{(0, m)} (x) e^{i m \phi}.$$
Therefore
\begin{align*}
& \frac {1} { 2 \pi} \sum_{m \in \BZ}  e^{i m\phi} 
j_{(0,  m)} \left(  x  y_1 ... y_{d} \right) \prod_{l      = 1}^{d}  j_{(0, k_l      + m)} \left( x y_{l     }\- \right)\\
= \, & \frac {1} { 2 \pi} \sum_{m \in \BZ}  e^{i m\phi} j_{(0,  m)} \left(  x  y_1 ... y_{d} \right)
\int_{(\BR/2\pi\BZ)^d} e^{i m \sum_{l     =1}^d \theta_{l     } } e^{i \sum_{l      = 1}^d \lp i k_{l     } \theta + 4 \pi i x y_{l     }\- \cos \theta_{l     } \rp  }  d \theta_d ... d \theta_1 \\
= \ & \int_{(\BR/2\pi\BZ)^d} \lp \frac {1} { 2 \pi} \sum_{m \in \BZ}  e^{i m \lp \sum_{l     =1}^d \theta_{l     } + \phi \rp} j_{(0,  m)} \left(  x  y_1 ... y_{d} \right) \rp e^{i \sum_{l      = 1}^d \lp i k_{l     } \theta + 4 \pi i x y_{l     }\- \cos \theta_{l     } \rp  }  d \theta_d ... d \theta_1\\
= \ & \int_{(\BR/2\pi\BZ)^d} e^{4 \pi i x  y_1 ... y_{d} \cos \lp \sum_{l     =1}^d \theta_{l     } + \phi \rp } e^{i \sum_{l      = 1}^d \lp i k_{l     } \theta + 4 \pi i x y_{l     }\- \cos \theta_{l     } \rp  }  d \theta_d ... d \theta_1.
\end{align*}
The absolute convergence required for the validity of each equality above is justified by the first estimate of $j_{(0, m)} (x)$ in Lemma \ref{6lem: crude bound for j m, with m} (1). The proof is completed, since the last line is exactly the definition of  $J_{\uk}(\uy; 2 \pi x, \phi )$.
\end{proof}

Inserting the series expansion of $J_{\uk}(\uy; 2 \pi x, \phi ) $ in Lemma \ref{6lem: series expansion of Jk} into the integral in \eqref{6eq: formal integral, C} and interchanging the order of integration and summation, one arrives exactly at the series of integrals $J_{\unu, (\uk, 0)} \lp x, e^{i \phi} \rp = e^{ - i m_{d+1} \phi}  J_{\unu, \um} \lp x, e^{i \phi} \rp$.  The first assertion on   absolute convergence in Theorem \ref{6thm: formal integral, C}   follows immediately from Proposition \ref{6prop: J nu m (x, u)}, whereas the  identity in the second assertion is a direct consequence of Proposition \ref{6prop: J mu m = J nu m}.

\subsection{The rank-two case ($d=1$)}

\subsubsection{The real case}
The formal integral representation $J_{\nu, \epsilon} ( 2 \pi \sqrt x, \pm )$ of the Bessel kernel $J_{\left(\frac 1 2 \nu, -\frac 1 2 \nu \right), (\epsilon, 0)} (\pm x)$ is reduced to the following integral representations of classical Bessel functions
\begin{align*}
\pm \pi i e^{\pm \frac 1 2 \pi i \nu } H^{(1, 2)}_\nu (2 x)  = \int_0^\infty y^{\nu - 1} e^{\pm  i x (y + y\-)} d y, \hskip 5 pt 
2 e^{\pm \frac 1 2 \pi i \nu } K_{\nu} (2 x)  = \int_0^\infty y^{\nu - 1} e^{\pm i x (y - y\-)} d y,
\end{align*}
which are only (conditionally) convergent when $|\Re \nu| < 1$ (see \cite[\S 2.3.2]{Qi}).

\subsubsection{The complex case}

\begin{lem}\label{6lem: d = 1, Jk}
	Let  $k \in \BZ$. Recall from {\rm (\ref{6eq: Theta (theta, y; x, phi)}, \ref{6eq: Jk (y;x, phi)})} the definition $$J_k (y; x, \phi) = \int_0^{2 \pi}  e^{i k \theta + 2 i x y\- \cos \theta + 2 i x y \cos (\theta + \phi)} d \theta, \hskip 10 pt x, y \in (0, \infty), \, \phi \in [0, 2 \pi) .$$
	Define $Y(y, \phi) = \left|y\-  +  y e^{i \phi}\right| = \sqrt {y^{-2} + 2 \cos \phi + y^2 } $, $\Phi (y, \phi) = \arg (y\-  +  y e^{i \phi})$ and $E(y, \phi) = e^{i \Phi(y, \phi)}$. 
	Then  
	\begin{equation}\label{6eq: d=1, Jk}
	J_k (y; x, \phi) = 2 \pi i^k E(y, \phi)^{-k} J_k \lp 2 x Y(y, \phi) \rp .
	\end{equation}
\end{lem}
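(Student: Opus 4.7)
The plan is entirely computational: the identity should follow from collapsing the two cosine terms in the exponential into a single cosine and then recognizing Bessel's integral representation.

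First I would rewrite the exponent of the integrand as a real part. Since
\[
y^{-1}\cos\theta + y\cos(\theta+\phi) = \Re\bigl( (y^{-1} + y e^{i\phi}) e^{i\theta} \bigr),
\]
and by definition $y^{-1} + y e^{i\phi} = Y(y,\phi)\, E(y,\phi) = Y e^{i\Phi}$, the two oscillatory terms combine into the single term $2xY\cos(\theta+\Phi)$. Consequently
\[
J_k(y;x,\phi) = \int_0^{2\pi} e^{ik\theta + 2ixY\cos(\theta + \Phi)}\, d\theta.
\]

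Next I would translate $\theta \mapsto \theta - \Phi$. The integrand is $2\pi$-periodic, so the shift does not change the integration domain, and one obtains
\[
J_k(y;x,\phi) = e^{-ik\Phi(y,\phi)} \int_0^{2\pi} e^{ik\theta + 2ixY(y,\phi)\cos\theta}\, d\theta.
\]

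Finally I would invoke the Jacobi--Anger identity $e^{iz\cos\theta} = \sum_{m\in\BZ} i^m J_m(z) e^{im\theta}$, which gives
\[
\int_0^{2\pi} e^{iz\cos\theta + ik\theta}\, d\theta = 2\pi\, i^{-k} J_{-k}(z) = 2\pi\, i^{k} J_k(z),
\]
using $J_{-k}(z) = (-)^k J_k(z)$ and $i^{-k}(-)^k = i^k$. Applied with $z = 2xY(y,\phi)$, this yields
\[
J_k(y;x,\phi) = 2\pi\, i^k\, E(y,\phi)^{-k} J_k\bigl(2xY(y,\phi)\bigr),
\]
which is the claim \eqref{6eq: d=1, Jk}.

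There is no real obstacle here: the only points requiring a modicum of care are the sign/phase accounting in the Jacobi--Anger step (to make sure $i^{-k}J_{-k}(z)$ collapses correctly to $i^k J_k(z)$ for integer $k$) and noting that the periodicity of the integrand legitimizes the shift $\theta\mapsto\theta-\Phi$ even though $\Phi(y,\phi)$ depends on $y$ and $\phi$.
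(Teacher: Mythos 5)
Your proof is correct and takes essentially the same route as the paper: both collapse $y^{-1}\cos\theta + y\cos(\theta+\phi)$ into $Y\cos(\theta+\Phi)$ via the real-part observation and then invoke the standard Bessel integral $\int_0^{2\pi} e^{ik\theta + iz\cos\theta}\,d\theta = 2\pi i^k J_k(z)$. The only cosmetic difference is that you derive that integral from Jacobi--Anger while the paper cites it directly.
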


\begin{proof}
	\eqref{6eq: d=1, Jk} follows immediately from the identity
	$$2 \pi i^k J_k ( x) = \int_{0}^{2 \pi} e^{i k \theta + i x \cos \theta } d \theta, $$
	along with the observation $$y\- \cos \theta + y \cos (\theta + \phi) = \Re \lp y\- e^{i\theta} + y e^{i (\theta + \phi)} \rp = Y(y, \phi) \cos \lp \theta + \Phi (y, \phi) \rp.$$
\end{proof}

\begin{prop}\label{6prop: d=1, J v k}
	Let $\nu \in \BC$ and $k \in \BZ$. Recall the definition of $J_{\nu, k} \lp x, e^{  i \phi} \rp$ given by \eqref{6eq: formal integral, C}. Then 
	\begin{equation}\label{6eq: d = 1 J nu k (x, e i phi)}
	J_{\nu, k} \lp x, e^{  i \phi} \rp = 4 \pi i^k \int_0^\infty y^{2 \nu - 1} \left[ y\-  +  y e^{i \phi} \right]^{-k} J_k \lp 2 x \left|y\-  +  y e^{i \phi}\right| \rp d y,
	\end{equation}
	with $x \in (0, \infty)$ and $\phi \in [0, 2 \pi)$. Here, we recall the notation $[z] =   {z} / {|z|}$. The integral in \eqref{6eq: d = 1 J nu k (x, e i phi)} converges when $|\Re \nu| < \frac 3 4$ and the  convergence is absolute if and only if $|\Re \nu| < \frac 1 4$. Moreover, it is analytic with respect to $\nu $ on the open vertical strip $\BS\lp - \frac 3 4, \frac 3 4 \rp$.
\end{prop}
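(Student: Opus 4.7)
The plan is to first derive the identity on the narrower strip $|\Re\nu| < 1/4$ where both sides of \eqref{6eq: d = 1 J nu k (x, e i phi)} are absolutely convergent, and then to extend the convergence of the right-hand side to the wider strip $\BS(-3/4, 3/4)$ via a single integration by parts exploiting the oscillation of the Bessel function. Specializing the definition \eqref{6eq: formal integral, C} to $d = 1$ gives $J_{\nu, k}(x, e^{i\phi}) = 2\int_0^\infty y^{2\nu-1} J_k(y; x, \phi) dy$, and by Theorem \ref{6thm: formal integral, C} this integral is absolutely convergent on $|\Re\nu| < 1/4$. Direct substitution of Lemma \ref{6lem: d = 1, Jk} for the inner circle integral $J_k(y; x, \phi)$ then yields \eqref{6eq: d = 1 J nu k (x, e i phi)} on the same strip.

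To characterize the strip of absolute convergence, I would use the asymptotic $J_k(z) = \sqrt{2/(\pi z)}\cos(z - k\pi/2 - \pi/4) + O(z^{-3/2})$ as $z \to \infty$ together with $Y(y, \phi)^2 = y^{-2} + 2\cos\phi + y^2$, so that $Y \sim y$ as $y \to \infty$ and $Y \sim y\-$ as $y \to 0$. The integrand of \eqref{6eq: d = 1 J nu k (x, e i phi)} is then of size $y^{2\Re\nu - 3/2}$ at infinity and $y^{2\Re\nu - 1/2}$ at zero, yielding absolute convergence precisely on $|\Re\nu| < 1/4$, as stated.

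For conditional convergence on the wider strip, I would invoke the Bessel recurrence $\frac{d}{dz}\bigl[z^{k+1} J_{k+1}(z)\bigr] = z^{k+1} J_k(z)$ applied with $z = 2xY(y, \phi)$, and differentiate $Y^2 = y^{-2} + 2\cos\phi + y^2$ to obtain $YY' = y - y^{-3}$. Together these express $J_k(2xY)$ as $\frac{1}{2xY' Y^{k+1}} \frac{d}{dy}\bigl[Y^{k+1} J_{k+1}(2xY)\bigr]$. Integration by parts then shifts the derivative onto $y^{2\nu-1}[y\- + ye^{i\phi}]^{-k}/\bigl(2xY' Y^{k+1}\bigr)$, and a power count using the same asymptotics shows the new integrand is of size $y^{2\Re\nu - 5/2}$ at infinity and $y^{2\Re\nu + 1/2}$ at zero, while the boundary contributions at $0$ and $\infty$ vanish provided $|\Re\nu| < 3/4$. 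Uniform absolute convergence of this new integral on compact subsets of $\BS(-3/4, 3/4)$ then gives the conditional convergence of \eqref{6eq: d = 1 J nu k (x, e i phi)} on that strip and the analyticity in $\nu$ via dominated convergence.

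The main obstacle is the turning point at $y = 1$, where $Y'(y, \phi) = 0$ and the factor $1/(2xY')$ develops a simple pole. To circumvent it, I would introduce a smooth cut-off $\chi(y)$ that vanishes on a small neighborhood of $y = 1$ and equals $1$ outside a slightly larger one, and apply the integration by parts above only to the product $\chi(y) \cdot (\text{integrand})$, on whose support $Y'$ is bounded away from $0$. The complementary piece $(1 - \chi(y)) \cdot (\text{integrand})$ is supported on a compact annulus bounded away from $0$ and $\infty$, where the integrand is smooth and bounded, so it is manifestly integrable and entire in $\nu$. Since both pieces together agree with the original integral on $|\Re\nu| < 1/4$ where absolute convergence was already established, analytic continuation yields the identity and the remaining assertions on all of $\BS(-3/4, 3/4)$.
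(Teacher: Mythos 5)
Your proposal is correct and establishes the same result, but the method of continuation past $|\Re\nu| < \tfrac14$ is genuinely different from the paper's. You integrate by parts directly against the Bessel function, using the recurrence $\frac{d}{dz}\bigl[z^{k+1}J_{k+1}(z)\bigr] = z^{k+1}J_k(z)$ with $z = 2xY(y,\phi)$; this forces you to divide by $Y'$, which vanishes at the turning point $y = 1$, and you patch this with a smooth cutoff isolating a compact neighborhood of $y=1$ where everything is bounded and entire. The paper instead first substitutes the asymptotic expansion $J_k(x) = \sqrt{2/(\pi x)}\cos(x - k\pi/2 - \pi/4) + O(x^{-3/2})$, disposes of the error term (absolutely convergent already for $\Re\nu < \tfrac34$), and for the leading oscillatory term extracts the factor $e^{\pm 2ixy}$ from $e^{\pm 2ixY(y,\phi)}$ — exploiting that $Y - y \to 0$ as $y\to\infty$ — and integrates by parts against $e^{\pm 2ixy}$. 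By the symmetry $y\mapsto y^{-1}$ the paper confines attention to $y\ge 2$, so the turning point never arises. Both routes give the same strip $\BS(-\tfrac34,\tfrac34)$ and the same analyticity conclusion; yours is slightly closer to a standard Bessel-function manipulation and is more self-contained, while the paper's is shorter because the compact middle region and the turning point are absorbed into the symmetry reduction rather than handled by a cutoff. One small point: your power counts and boundary-term analysis are correct, but you should verify explicitly that the two pieces (cutoff and complement) sum to a function analytic in $\nu$ on all of $\BS(-\tfrac34,\tfrac34)$ and agreeing with the original integral on the narrow strip, which you invoke by ``analytic continuation''; that step is fine and is precisely what the paper also relies on when it says the analyticity is ``obvious.''
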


\begin{proof}
	 \eqref{6eq: d = 1 J nu k (x, e i phi)} follows immediately from Lemma \ref{6lem: d = 1, Jk}. 
	 
	 As for the convergence, since one arrives at an integral of the same form with $\nu, \phi$ replaced by $- \nu, - \phi$ if    the variable is changed from $y$ to $y\-$, it suffices to consider the integral 
	 $$\int_2^\infty y^{2 \nu - 1} e^{-i k \Phi (y, \phi)} J_k \lp 2 x Y(y, \phi) \rp d y,$$ 
	 for $\Re \nu < \frac 3 4$. 
	 We have the following  asymptotic of $J_k (x)$ (see \cite[7.21 (1)]{Watson})
	 $$ J_k (x) = \lp \frac  2 {\pi x } \rp^{\frac 1 2} \cos \lp x - \tfrac 12 k \pi - \tfrac 1 4 \pi \rp + O_k \big( {x^{- \frac 3 2} } \big).$$
	 The error term contributes an absolutely convergent integral when $\Re \nu < \frac 3 4$, whereas the integral coming from the main term absolutely converges if and only if  $\Re \nu < \frac 1 4$. We are now reduced to the integral
	 \begin{align*}
	 \int_2^\infty  y^{2 \nu - 1} e^{-i k \Phi (y, \phi)} \lp x Y(y, \phi) \rp^{- \frac 1 2} e^{\pm  2 i x Y(y, \phi) } d y.
	 \end{align*}
	In order to see the convergence, we split out $e^{\pm 2 i x y}$ from $e^{\pm  2 i x Y(y, \phi) }$ and put $f_{\nu, k} (y; x, \phi) = y^{2 \nu - 1} e^{-i k \Phi (y, \phi)} \lp xY(y, \phi) \rp^{- \frac 1 2} e^{\pm  2 i x (Y(y, \phi) - y) }$. Partial integration turns the above  integral into
	$$\mp \frac{1} {2 i x^{\frac 3 2}} \lp 2^{2 \nu - 1} e^{-i k \Phi (2, \phi)} Y(2, \phi)^{- \frac 1 2} e^{\pm  2 i x  Y(2, \phi)  } + \int_2^\infty \lp \partial f_{\nu, k}/ \partial y \rp (y; x, \phi) e^{\pm 2 i x y}	d y \rp.$$
Some calculations show that $\lp \partial f_{\nu, k}/ \partial y \rp (y; x, \phi) \lll_{\nu, k, x} y^{2\Re \nu - \frac 5 2}$ for $y \geq 2$, and hence the integral in the second term is absolutely convergent when $\Re \nu < \frac 3 4$. With the above arguments, the analyticity with respect to $\nu$ is  obvious.
\end{proof}
\begin{cor}\label{6cor: d=1, J mu m}
Let $\mu \in \BS \lp - \frac 3 8, \frac 3 8 \rp$ and $m \in \BZ$. We have
	\begin{equation}\label{6eq: d = 1 J mu m (x, e i phi)}
	J_{\lp  \mu, - \mu, m, 0\rp } \lp x e^{  i \phi} \rp = 4 \pi i^m \int_0^\infty y^{4 \mu - 1} \left[ y\-  +  y e^{i \phi} \right]^{-m} J_m \lp 4 \pi \sqrt x \left|y\-  +  y e^{i \phi}\right| \rp d y,
	\end{equation}
	with $x \in (0, \infty)$ and $\phi \in [0, 2 \pi)$. The integral in \eqref{6eq: d = 1 J mu m (x, e i phi)} converges if $|\Re \mu| < \frac 3 8$ and absolutely converges if and only if $|\Re \mu| < \frac 1 8$. 
\end{cor}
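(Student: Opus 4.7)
The plan is to obtain the corollary as an essentially immediate specialization of the two main results of this subsection, namely Proposition~\ref{6prop: J mu m = J nu m} (the rank--$n$ identity $J_{(\umu, \um)}(x e^{i\phi}) = J_{\unu, \um}(x^{1/n}, e^{i\phi})$, combined with Theorem~\ref{6thm: formal integral, C} to identify $J_{\unu, \um}$ with $J_{\unu, \uk}$) and Proposition~\ref{6prop: d=1, J v k} (the closed-form integral for $J_{\nu, k}(x, e^{i\phi})$ in the rank-two case). The only real work is matching indices and enlarging the range in $\mu$ by analyticity.

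First I would specialize to $n = 2$, so $d = 1$, and take $\umu = (\mu, -\mu) \in \BL^{1}$, $\um = (m, 0) \in \BZ^{2}$. Following Notation~\ref{not: d, nu}, this gives $\unu = (\nu_{1}) = (\mu_{1} - \mu_{2}) = (2\mu)$ and $\uk = (k_{1}) = (m_{1} - m_{2}) = (m)$, while $m_{d+1} = m_{2} = 0$. Theorem~\ref{6thm: formal integral, C}(2) (with the factor $e^{-i m_{d+1}\phi} = 1$) then yields
\begin{equation*}
J_{(\mu,\,-\mu,\,m,\,0)}\bigl(x e^{i\phi}\bigr) \,=\, J_{2\mu,\,m}\bigl(2\pi \sqrt{x},\, e^{i\phi}\bigr),
\end{equation*}
provided $2\mu$ lies in the domain of absolute convergence of Proposition~\ref{6prop: J nu m (x, u)}, i.e., $|\Re\mu| < 1/8$.

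Next I would substitute the closed-form expression \eqref{6eq: d = 1 J nu k (x, e i phi)} of Proposition~\ref{6prop: d=1, J v k}, taken at $\nu = 2\mu$, $k = m$, and with $x$ replaced by $2\pi\sqrt{x}$. This produces exactly the right-hand side of \eqref{6eq: d = 1 J mu m (x, e i phi)}, and establishes the identity for $|\Re\mu| < 1/8$. The convergence and absolute-convergence statements in the range $|\Re\mu| < 3/8$ and $|\Re\mu| < 1/8$, respectively, follow verbatim from the corresponding statements of Proposition~\ref{6prop: d=1, J v k} under the substitution $\nu = 2\mu$.

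The main (very mild) obstacle is that Proposition~\ref{6prop: J mu m = J nu m} is only formulated on the domain of absolute convergence of $J_{\unu, \um}$, whereas we want the identity on the larger strip $|\Re\mu| < 3/8$. Here I would invoke analyticity: by Proposition~\ref{3prop: properties of J, C}(2) the left-hand side of \eqref{6eq: d = 1 J mu m (x, e i phi)} is entire in $\mu$, and by the last sentence of Proposition~\ref{6prop: d=1, J v k} the right-hand side is analytic in $\mu$ on $\BS(-3/8, 3/8)$. Since they agree on the nonempty open strip $|\Re\mu| < 1/8$, the principle of analytic continuation extends the identity to all of $\BS(-3/8, 3/8)$, completing the proof.
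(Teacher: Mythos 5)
Your proposal is correct and follows essentially the same route as the paper's own proof: establish the identity for $|\Re\mu| < 1/8$ from Theorem~\ref{6thm: formal integral, C} (via the $d=1$ identification $\nu = 2\mu$, $k = m$ and Proposition~\ref{6prop: d=1, J v k}), then extend to $\BS(-3/8, 3/8)$ by analytic continuation using the analyticity of the right-hand side stated at the end of Proposition~\ref{6prop: d=1, J v k}. The only difference is that you spell out the index matching more explicitly than the paper does.
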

\begin{proof}
From Theorem \ref{6thm: formal integral, C}, we see that \eqref{6eq: d = 1 J mu m (x, e i phi)} holds for $\BS \lp - \frac 1 8, \frac 1 8 \rp$. In view of Proposition \ref{6prop: d=1, J v k}, the right hand side of \eqref{6eq: d = 1 J mu m (x, e i phi)} is analytic in $\mu$ on $\BS \lp - \frac 3 8, \frac 3 8 \rp$, and therefore it is allowed to extend the domain of equality from $\BS \lp - \frac 1 8, \frac 1 8 \rp$ onto  $\BS \lp - \frac 3 8, \frac 3 8 \rp$.
\end{proof}

\section{\texorpdfstring{Two connection formulae for $J_{(\umu, \um)} (z) $}{Two connection formulae for $J_{(\mu, m)} (z)$}}\label{sec: two connection formulae for J mu m}

In this section, we shall prove two formulae for $J_{(\umu, \um)} (z)$ in connection with the two kinds of Bessel functions of rank $n$ and {positive sign}. These Bessel functions arise as solutions of Bessel equations in \cite[\S 7]{Qi} and their relations have been unraveled in \cite[\S 8.2]{Qi}. Our motivation is based on the following self-evident identity for the rank-one example
$$ e(z + \overline z) = e(z) e(\overline z). $$


\subsection{The first connection formula}

For  $\varsigma \in \{+, -\}$, $\ulambda \in \BC^n$ and $l      = 1,..., n$, we define the following series of ascending powers of $z$ (see \cite[\S 7.1]{Qi})
\begin{equation}\label{4eq: Bessel of the first kind}
J_{l     } (z; \varsigma, \ulambda) = \sum_{m=0}^\infty \frac { (\varsigma i^n)^m  z^{ n (- \lambda_{l      } + m)} } { \prod_{k = 1}^n \Gamma \lp  { \lambda_{ k } - \lambda_{l     }}  + m + 1 \rp}, \hskip 10 pt z \in \BU.
\end{equation}
$J_{l     } (z; \varsigma, \ulambda)$ is called a {\it Bessel function of the first kind}, $n$ , $\varsigma$  and $\ulambda$ its   rank, sign and index, respectively. Since the definition \eqref{4eq: Bessel of the first kind} is valid for any $\ulambda \in \BC^n$, the assumption $\ulambda \in \BL^{n-1}$ that we imposed in \cite{Qi} is rather superfluous. Also, we have the following formula in the same fashion as \eqref{4eq: normalize J} in Lemma \ref{3lem: normalize J(x; sigma, lambda)},
	\begin{equation}
	J_{l     } \left(z; \varsigma, \ulambda - \lambda \ue^n \right) = z^{  n \lambda} J_{l     } (z; \varsigma, \ulambda).
	\end{equation}

\begin{thm}\label{4thm: connection formula}
	Let $(\umu, \um) \in \BL^{n-1} \times \BZ^n$. We have
	\begin{equation}\label{4eq: connection formula}
	\begin{split}
	J_{(\umu, \um)} (z) 
	= \left(2 \pi^2\right)^{n-1}   \sum_{l      = 1}^n S_{l     } (\umu, \um) { J_l      \big( 2\pi z^{\frac 1 n}  ; +, \umu + \tfrac 1 2 \um \big) 
		J_l      \big( 2\pi \overline z^{\frac 1 n}  ; +, \umu - \tfrac 1 2 \um \big)  },
	\end{split}
	\end{equation}
	with $S_{l     } (\umu, \um) = \prod_{k \neq l     } (\pm i)^{m_l      - m_k} / \sin \lp \pi \lp   \mu_{l     } - \mu_k \pm \frac 1 2 (m_l      - m_k) \rp \rp $.
	Here, $z^{\frac 1 n}$ is the principal  $n$-th root of $z$, that is $\lp {x e^{i \phi}} \rp^{\frac 1 n} = x^{\frac 1 n} e^{\frac 1 n i \phi}$. The expression on the right hand side of \eqref{4eq: connection formula} is independent on the choice of the argument of $z$ modulo $2 \pi$. It is understood that the right hand side should be replaced by its limit if $ (\umu, \um)$ is not generic with respect to the order $\preccurlyeq$ on $\BC \times \BZ$ in the sense of Definition {\rm \ref{3defn: ordered set}}.
\end{thm}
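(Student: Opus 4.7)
The plan is to verify \eqref{4eq: connection formula} by extracting, for each $m\in\BZ$, the $m$-th Fourier coefficient (in the polar variable $\phi$, where $z=xe^{i\phi}$) of both sides and matching them. By \eqref{2eq: Bessel kernel over C, polar}, the $m$-th Fourier coefficient of the left-hand side is $(2\pi)^{-1} j_{(\umu, \um+m\ue^n)}(x)$. On the right-hand side, the product $J_l(2\pi z^{\frac 1 n};+,\umu+\frac12\um)\,J_l(2\pi\overline z^{\frac 1 n};+,\umu-\frac12\um)$ is expanded by the power series \eqref{4eq: Bessel of the first kind} into a double sum over $(\kappa',\rho')\in\BN^2$ producing $e^{i(-m_l+\kappa'-\rho')\phi}$; the Fourier mode $e^{im\phi}$ is therefore extracted by the condition $\kappa'-\rho'=m_l+m$, leaving a single series indexed by $\kappa:=\min(\kappa',\rho')$ whose $\kappa$-th term is supported at the $x$-exponent $-2\mu_l+|m_l+m|+2\kappa$.

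I would first assume that $(\umu,\um)$ is generic with respect to the order $\preccurlyeq$ on $\BC\times\BZ$ from Definition \ref{3defn: ordered set}, so that every pole of $G_{(\umu,\um+m\ue^n)}(s)$ is simple. Left-shifting the contour in the Mellin--Barnes integral \eqref{3def: Bessel kernel j mu m} past the poles $s=\mu_l-\frac12|m_l+m|-\kappa$ (for $l=1,\ldots,n$ and $\kappa\in\BN$)---the shift being justified by the Stirling-type estimates underlying Lemma \ref{3lem: bound of the Bessel kernel, C}---one obtains
\begin{equation*}
j_{(\umu,\um+m\ue^n)}(x)=\sum_{l=1}^{n}\sum_{\kappa=0}^{\infty} R_{l,\kappa}^{(m)}\,x^{-2\mu_l+|m_l+m|+2\kappa},
\end{equation*}
where the residue $R_{l,\kappa}^{(m)}$ is computed using \eqref{1def: G m (s)}: the pole of $G_{m_l+m}(s-\mu_l)$ contributes $(-1)^\kappa/[\kappa!\,\Gamma(1+|m_l+m|+\kappa)]$ together with the phase $i^{|m_l+m|}$ and a power of $2\pi$, while for $k\neq l$ the factor $G_{m_k+m}(s-\mu_k)$ evaluated at the pole produces the Gamma quotient $\Gamma(\mu_l-\mu_k-\frac{|m_l+m|-|m_k+m|}{2}-\kappa)/\Gamma(1+\mu_k-\mu_l+\frac{|m_l+m|+|m_k+m|}{2}+\kappa)$, a phase $i^{|m_k+m|}$, and a further power of $2\pi$.

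To match this with the coefficient coming from the product expansion on the right-hand side, I would apply Euler's reflection formula $\Gamma(z)\Gamma(1-z)=\pi/\sin(\pi z)$ to the numerator Gamma factor, turning it into $\pi/\sin(\pi(\mu_l-\mu_k-\frac{|m_l+m|-|m_k+m|}{2}))$ multiplied by a reciprocal Gamma factor and a sign $(-1)^\kappa$. Splitting into the four sub-cases determined by the signs of $m_l+m$ and $m_k+m$ resolves the absolute values, and in every sub-case the pair of reciprocal Gamma factors in the denominator coincides with $\Gamma(\mu_k-\mu_l+\frac{m_k-m_l}{2}+\kappa+m+m_l+1)^{-1}\Gamma(\mu_k-\mu_l-\frac{m_k-m_l}{2}+\kappa+1)^{-1}$, which is precisely the $k$-th contribution to the denominator coming from the product of the two $J_l$-series. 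The sine factor and the phases $i^{|m_k+m|-|m_l+m|}=(\pm i)^{m_l-m_k}$ together assemble into $S_l(\umu,\um)$---the sign ambiguity ``$\pm$'' in the statement reflecting the choice of sub-case---and a tally of the remaining powers of $(-1)$, $i$, $2\pi$, and $\pi$ produces the overall constant $(2\pi^2)^{n-1}$.

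The main obstacle will be the combinatorial bookkeeping in the last step, where one must verify that signs and phases combine consistently across the four sub-cases and that the residual $\kappa$-dependent signs cancel (using in particular that $2n\kappa$ and $(n-1)(\kappa+1)$ combine with $(-1)^\kappa$ to give $1$). Once the generic case is settled, the general formula \eqref{4eq: connection formula} follows by analytic continuation in $\umu\in\BL^{n-1}$: the left-hand side is analytic on $\BC^n$ by Proposition \ref{3prop: properties of J, C}(2), the individual $J_l$-factors on the right are entire, and the apparent singularities of $S_l(\umu,\um)$ at non-generic indices are canceled by the coincidence of the corresponding $l$-summands, producing---in the standard Frobenius manner---the logarithmic terms $(\log\|z\|)^j$ required on the left by the space $\Ssis^{(\umu,\um),m}(\BCx)$ from \eqref{3eq: Ssis (mu, m) m, C}.
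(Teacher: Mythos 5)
Your proposal follows essentially the same route as the paper: shift the Mellin--Barnes contour in \eqref{3def: Bessel kernel j mu m} to pick up simple residues in the generic case, apply Euler's reflection formula to the resulting Gamma quotient, recognize the residue series as a product of two Bessel series of the first kind, and pass to the limit for non-generic $(\umu,\um)$. The only organizational difference is that you fix $m$ and match Fourier coefficients one mode at a time, which is what forces the four-sub-case sign analysis you flag as your main obstacle; the paper instead keeps the $m$-sum, reindexes via $\beta=\alpha+|m_l+m|$ with a split at $m=-m_l$, and uses the clean observation that of the two quantities $\tfrac12\lp|m_l+m|\pm|m_k+m|\rp$, one is always $\pm\tfrac12(m_l-m_k)$ and the other is $|m_l+m|\mp\tfrac12(m_l-m_k)$, with a single global sign governed by whether $m\geq -m_l+1$, so that the absolute values and attendant phases $i^{|m_k+m|-|m_l+m|}$ resolve uniformly across all $k$ and all $m$ without separate sub-cases.
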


\begin{proof}
	Recall from (\ref{1def: G m (s)}, \ref{1def: G (mu, m)}, \ref{3def: Bessel kernel j mu m}, \ref{2eq: Bessel kernel over C, polar}) that 
	\begin{equation*}
	\begin{split}
	J_{(\umu, \um)} \lp x e^{i \phi} \rp = &  \, (2 \pi)^{n-1} \sum_{m = - \infty}^\infty i^{\sum_{k = 1}^n |m_k + m| } e^{i m \phi} \\
	& \frac 1 {2\pi i} \int_{\EC_{\left(\umu, \um + m \ue^n\right)}} \lp \prod_{l     =1}^n  \frac {\Gamma \lp s - \mu_l      + \frac 1 2 {|m_l     +m|}   \rp} {\Gamma \lp 1 - s + \mu_l      + \frac 1 2 {|m_l     +m|}   \rp}  \rp 
	\lp (2 \pi)^n x \rp^{-2s} d s.
	\end{split}
	\end{equation*}
	Assume first that $(\umu, \um)$ is generic  with respect to the order $\preccurlyeq$ on $\BC \times \BZ$. The sets  of poles of the gamma factors in the above integral are $\left\{ \mu_l      - \frac 1 2 {|m_l     +m|}  - \alpha \right \}_{\alpha \in \BN}$, $l      = 1,..., n$. With the generic assumption, the integrand has only \textit{simple} poles. We left shift the integral contour of each integral in the series and pick up the residues from these poles. The contribution from the residues at the poles of the $l     $-th gamma factor is the following absolutely convergent double series,
	\begin{equation*}
	\begin{split}
	(2 \pi)^{n-1} \sum_{m = -\infty}^\infty  i^{\sum_{k = 1}^n |m_k + m| } e^{i m \phi}  \sum_{\alpha = 0}^\infty &  \frac { (-)^\alpha  \lp (2 \pi)^n x \rp^{- 2 \mu_l      + |m_l     +m| + 2 \alpha}} {\alpha ! (\alpha + |m_l     +m|)!  } \\
	&   \prod_{k \neq l     }  \frac {\Gamma \lp \mu_l      - \mu_k - \frac 1 2 ( {|m_l     +m| - |m_k + m|} ) - \alpha \rp} {\Gamma \lp 1 - \mu_l      + \mu_k + \frac 1 2 ( {|m_l     +m| + |m_k + m| } ) + \alpha \rp}    .
	\end{split}
	\end{equation*}
	Euler's reflection formula of the Gamma function turns this into
	\begin{align*}
	& \frac {\left(2 \pi^2\right)^{n-1}  } {\prod_{k \neq l     } i^{  m_l      - m_k} \sin \lp \pi \lp   \mu_{l     } - \mu_k - \frac 1 2 (m_l      - m_k) \rp \rp} \\
	& \hskip 10 pt \sum_{m = -\infty}^\infty  i^{n |m_l     +m|}  e^{i m \phi} \sum_{\alpha = 0}^\infty  \frac { (-)^{n \alpha}  \lp (2 \pi)^n x \rp^{- 2 \mu_l      + |m_l     +m| + 2 \alpha}}   {\prod_{k = 1}^n \prod_{\pm} \Gamma \lp 1 - \mu_l      + \mu_k + \frac 1 2 ( {|m_l     +m| \pm |m_k + m| } ) + \alpha \rp } .
	\end{align*}
	We now interchange the order of summations, truncate the sum over $m$ between $- m_l     $ and $- m_l      + 1$ and make the change of indices $\beta = \alpha + |m_l     +m|$. With the observation that, no matter what $m_k$ is, 
	one of $\frac 1 2 ( {|m_l     +m| + |m_k + m| } )$ and $\frac 1 2 ( {|m_l     +m| - |m_k + m| } )$ is equal to $ \frac 1 2 (m_l      - m_k)$ and the other to $|m_l      + m | - \frac 1 2 (m_l      - m_k)$ if $m \geq - m_l      + 1$, whereas the signs in front of the two $\frac 1 2 (m_l      - m_k)$ are changed if  $m \leq - m_l      $,
	the double series in the expression above turns into
	{  \begin{align*}
		\sum_{\alpha = 0}^\infty \sum_{\beta = \alpha + 1}^\infty  \frac
		{ i^{n (\alpha + \beta)} e^{i (\beta - \alpha -m_l     ) \phi} \lp (2 \pi)^n x \rp^{- 2 \mu_l      + \alpha + \beta}} 
		{ \prod_{k =1}^n   \Gamma \lp 1 -  \mu_l      + \mu_k + \frac 1 2 (m_l      - m_k) + \alpha   \rp  \Gamma \lp 1 -  \mu_l      + \mu_k - \frac 1 2 (m_l      - m_k) + \beta   \rp   } &  \\
		+ \sum_{\alpha = 0}^\infty \sum_{\beta = \alpha }^\infty  \frac
		{ i^{n (\alpha + \beta)} e^{i (\alpha - \beta - m_l     ) \phi} \lp (2 \pi)^n x \rp^{- 2 \mu_l      + \alpha + \beta}} 
		{ \prod_{k = 1}^n   \Gamma \lp 1 -  \mu_l      + \mu_k - \frac 1 2 (m_l      - m_k) + \alpha   \rp  \Gamma \lp 1 -  \mu_l      + \mu_k + \frac 1 2 (m_l      - m_k) + \beta   \rp } &, 
		\end{align*} }
	which is then equal to
	$$\sum_{\alpha = 0}^\infty \sum_{\beta = 0}^\infty  \frac
	{ i^{n (\alpha + \beta)} e^{i (\beta - \alpha -m_l     ) \phi} \lp (2 \pi)^n x \rp^{- 2 \mu_l      + \alpha + \beta}} 
	{ \prod_{k  = 1}^n   \Gamma \lp 1 -  \mu_l      + \mu_k + \frac 1 2 (m_l      - m_k) + \alpha   \rp  \Gamma \lp 1 -  \mu_l      + \mu_k - \frac 1 2 (m_l      - m_k) + \beta   \rp   }. $$
	This double series is clearly independent on the choice of $\phi$ modulo $2 \pi$, and splits exactly as the product  $$J_l      \big( 2\pi x^{\frac 1 n} e^{\frac 1 n i \phi}; +, \umu + \tfrac 1 2 \um \big) 
	J_l      \big( 2\pi x^{\frac 1 n} e^{- \frac 1 n i \phi}; +, \umu - \tfrac 1 2 \um \big) .$$ 
	This proves \eqref{4eq: connection formula} in the case when $(\umu, \um)$ is generic. As for the nongeneric case, one just passes  to the limit.
\end{proof}

\subsection{The second connection formula} \label{sec: second connection}

According to \cite[\S 7.3.2]{Qi}, {\it Bessel functions of the second kind} are solutions of Bessel equations defined according to their asymptotics at infinity. 
To remove the restriction $\ulambda \in \BL^{n-1}$ on the definition of $J(z; \ulambda ; \xi) $, with $ \xi $ a $2 n$-th root of unity, we simply impose the additional condition
\begin{equation}
	J \left(z;  \ulambda - \lambda \ue^n ; \xi \right) = z^{  n \lambda} J(z; \ulambda ; \xi).  
\end{equation}

\begin{rem}
Let $ \xi $ be an $n$-th root of $\varsigma 1$. We may also use the following formula as an alternative definition of $J \left(z;  \ulambda  ; \xi \right)$ {\rm({\it compare \cite[Corollary 8.5]{Qi}})}
	\begin{equation} \label{5eq: connection 1st and 2nd, 1}
J(z; \ulambda ; \xi) =  \sqrt n \lp \frac {\pi  } {2 }  \rp^{\frac {n-1} 2}   (- i\xi)^{\frac {n-1} 2 + |\ulambda|} \\
 \sum_{l      = 1}^{n}  \big( i  \overline \xi \big)^{n \lambda_l     } S_l      (\ulambda) J_l      ( z; \varsigma , \ulambda).
\end{equation}
where $\lp   - i \xi  \rp^{\frac {n-1} 2 + |\ulambda|} = e^{ \lp \frac {n-1} 2 + |\ulambda| \rp \lp - \frac 1 2 \pi i + i \arg \xi \rp }  $ and $\big( i  \overline \xi \big)^{n \lambda_l     } = e^{ \frac 1 2 \pi i n \lambda_l      - i n \lambda_l      \arg \xi }$ by convention, and $S_l      (\ulambda) = \prod_{k \neq l     } 1/ \sin \lp \pi (\lambda_l      - \lambda_{k} ) \rp\-$.
\end{rem}

Given an integer $a$, define $\xi _{a, j} = e^{ 2 \pi i \frac { {  j  + a  -  1}  } n}$, $j = 1, ..., n$. Let $\sigma_{l     , d} (\ulambda) $, $d = 0, 1, ..., n-1$, $ l      = 1, ..., n$, denote the elementary symmetric polynomial in $e^{- 2 \pi i \lambda_1}, ..., \widehat {e^{- 2 \pi i \lambda_l     }}, ..., e^{- 2 \pi i \lambda_n}$ of degree $d$. It follows from \cite[Corollary 8.7]{Qi} that
\begin{equation}\label{5eq: connection 1st and 2nd, 2}
\begin{split}
J_l      ( z; + , \ulambda ) = \frac {e^{\frac 3 4 \pi i \lp  (n-1) +  2 |\ulambda| \rp}} {\sqrt n (2\pi)^{\frac {n-1} 2}}  & e^{\pi i \lp \frac 1 2 n + 2 a - 2 \rp \lambda_l     } \\
& \sum_{j=1}^n (-)^{n-j} \xi_{a, j} ^{- \frac {n-1} 2 - |\ulambda|}  \sigma_{l     , n-j} (\ulambda) J \lp z; \ulambda; \xi_{a, j} \rp.
\end{split}
\end{equation} 
In addition, 
we shall require the definition 
$$\tau_{l     } (\ulambda) = \prod_{k \neq l     }    \lp e^{-2\pi i \lambda_m} - e^{-2\pi i \lambda_k} \rp = (- 2i)^{n-1} e^{-\pi i |\ulambda|}  e^{- \pi i (n-2) \lambda_l      }  \prod_{k\neq l     }  \sin \lp \pi (\lambda_l      - \lambda_{k} ) \rp.$$ 

We introduce the column vectors of the two kinds of Bessel functions 
\begin{align*}
X (z; \ulambda) =  \big(  J_l      (z; +, \ulambda)  \big)_{l     =1}^n, \hskip 10 pt   Y_a (z; \ulambda) =  \big(  J (z;   \ulambda; \xi_{a, j})  \big)_{j=1}^n,
\end{align*}
and the matrices
\begin{align*}
\varSigma (\ulambda) & = \big( \sigma_{l     , n-j} (\ulambda) \big)_{l     , j=1}^n, \\
E_a (\ulambda) & = \mathrm{diag}\lp e^{\pi i \lp \frac 1 2 n + 2 a - 2 \rp \lambda_l     } \rp_{l      = 1}^n, \hskip 10 pt D_a (\ulambda) = \mathrm{diag}\Big(  (-)^{n-j} \xi_{a, j}^{- \frac {n-1} 2 - |\ulambda| } \Big)_{j = 1}^n.
\end{align*}
Then the formula \eqref{5eq: connection 1st and 2nd, 2} may be written as
\begin{equation}\label{5eq: connection 1st and 2nd, matrix form}
X  (z; \ulambda) = \frac {e^{\frac 3 4 \pi i \lp  (n-1) +  2 |\ulambda| \rp}} {\sqrt n (2\pi)^{\frac {n-1} 2}} \cdot E_a(\ulambda) \varSigma (\ulambda) D_a(\ulambda) Y_a  (z; \ulambda).
\end{equation}

We now formulate \eqref{4eq: connection formula} as  
\begin{align} \label{4eq: connection formula, matrix}
J_{(\umu, \um)} (z)  = (-)^{ |\um|} e^{- \frac 1 2 \pi i (n-1)} \left(  4 \pi^2\right)^{n-1} \cdot 
{^t X\left( 2 \pi z^{\frac 1 n}  ; \ulambda_{(\umu, \um)}^+ \right)}  S_{(\umu, \um)}
X \left( 2\pi \overline z^{\frac 1 n}  ; \ulambda_{(\umu, \um)}^- \right),
\end{align}
with $\ulambda^\pm_{(\umu, \um)} = \umu \pm \tfrac 1 2 \um$ and
$$S_{(\umu, \um)} =  \mathrm{diag} \lp  \tau_{l     } \lp  \ulambda_{(\umu, \um)}^\pm \rp \- e^{- \pi i \lp  (n-2) \mu_l      \mp m_l     \rp } \rp_{l     =1}^n  .$$
We insert into \eqref{4eq: connection formula, matrix} the formulae of  $X\left( 2 \pi z^{\frac 1 n}  ; \ulambda_{(\umu, \um)}^+ \right)$ and $X \left( 2\pi \overline z^{\frac 1 n}  ; \ulambda_{(\umu, \um)}^- \right)$ given by \eqref{5eq: connection 1st and 2nd, matrix form}, with $\ulambda = \ulambda_{(\umu, \um)}^+$, $a=0$ in the former and $\ulambda = \ulambda_{(\umu, \um)}^-$, $a= 1 - r $, for $r = 0, 1, ..., n$, in the latter. 
Then follows the formula
\begin{equation}\label{7eq: second matrix form}
\begin{split}
& J_{(\umu, \um)}  (z) = (-)^{(n - 1) + |\um|} \frac  {\lp 2 \pi  \rp^{n-1}}  n    \\
& {^t Y_0 \left(\hskip -1 pt 2 \pi z^{\frac 1 n}  ; \ulambda_{(\umu, \um)}^+ \hskip -1 pt  \right) } \hskip -1 pt  D_0\left( \hskip -1 pt  \ulambda_{(\umu, \um)}^+ \hskip -1 pt  \right) \hskip -1 pt 
{^{t \hskip -1 pt } \varSigma_{(\umu, \um)}  } R_{(\umu, \um)}  \varSigma_{(\umu, \um)} 
D_{1-r} \hskip -1 pt  
\left( \hskip -1 pt  \ulambda_{(\umu, \um)}^- \hskip -1 pt  \right) Y_{1-r} \left( \hskip -1 pt  2 \pi \overline z^{\frac 1 n}  ; \ulambda_{(\umu, \um)}^- \hskip -1 pt  \right) \hskip -1 pt ,
\end{split}
\end{equation}
where 
\begin{align*}
\varSigma_{(\umu, \um)} & = \varSigma \lp \ulambda_{(\umu, \um)}^+ \rp = \varSigma \lp \ulambda_{(\umu, \um)}^- \rp, \\
R_{(\umu, \um)} & =  E_0 \lp \ulambda_{(\umu, \um)}^+  \rp S_{(\umu, \um)} E_{1-r} \lp \ulambda_{(\umu, \um)}^- \rp  = \mathrm{diag} \lp  \tau_{l     } \lp \ulambda_{(\umu, \um)}^\pm  \rp\- e^{- 2 \pi i r \lambda_{(\umu, \um), l     }^\pm } \rp_{l     =1}^n.
\end{align*}
We are therefore reduced to computing the matrix ${^t \varSigma_{(\umu, \um)}  } R_{(\umu, \um)}  \varSigma_{(\umu, \um)} $. For this, we have the following lemma.

\begin{lem}\label{7lem: matrix}
	Let  $\ux = (x_1, ..., x_n) \in \BC^n$ be a generic $n$-tuple  in the sense that all its components are distinct.
	Let $\sigma_{l     , d} $, respectively $\sigma_d$, denote the elementary symmetric polynomial in $x_1, ..., \widehat {x_l     }, ..., x_n$,  respectively  $x_1, ... , x_n$, of degree $d$, and let $\tau_l      = \prod_{h \neq l     } (x_l      - x_h) $. Define the matrices $\varSigma = \big( \sigma_{l     , n-j} \big)_{l     , j=1}^n$, $X = \mathrm{diag} \lp  x_l      \rp_{l     =1}^n$ and $T = \mathrm{diag} \lp  \tau_l     \-  \rp_{l     =1}^n$. 
	Then, for any $r = 0, 1, ..., n$, the matrix $ {^t \varSigma  } X^r T \varSigma$ can be written as
	\begin{equation*}
	\begin{pmatrix}
		(-)^{n - r } A & 0 \\
		0 &  (-)^{n-r+1} B
	\end{pmatrix},
	\end{equation*}
	where
	\begin{equation*}
	A = \begin{pmatrix}
		0 & \cdots & 0 & \sigma_{n} \\
		\vdots & \iddots & \iddots & \vdots\\
		0 & \iddots & \iddots & \sigma_{n-r+2} \\
		\sigma_{n} & \cdots & \sigma_{n-r+2} & \sigma_{n-r+1 }
	\end{pmatrix}, \hskip 10 pt
	B = \begin{pmatrix}
		 \sigma_{n-r-1} &  \sigma_{n-r-2} & \cdots & \sigma_0 \\
		  \sigma_{n-r-2} & \iddots & \iddots & 0  \\
		\vdots & \iddots &  \iddots & \vdots   \\
		 \sigma_0 & 0 & \cdots & 0
	\end{pmatrix}.
	\end{equation*}
	More precisely, the $(k, j)$-th entry $a_{k, j}$, $k, j = 1,..., r$, of $A$ is given by
	\begin{equation*}
	a_{k, j} = \left\{
	\begin{split}
	& \sigma_{n+r - k - j +1} \hskip 5 pt \text{ if } k + j \geq  r + 1, \\ 
	& 0 \hskip 51 pt \text{if otherwise},
	\end{split} \right.
	\end{equation*}
	whereas the $(k, j)$-th entry $b_{k, j}$, $k, j = 1,..., n-r$, of $B$ is given by
	\begin{equation*}
	b_{k, j} = \left\{
	\begin{split}
	& \sigma_{ n- r - k - j + 1 }  \hskip 5 pt \text{ if } k + j \leq n-r+1, \\ 
	& 0  \hskip 51 pt \text{if otherwise}.
	\end{split}\right.
	\end{equation*}

	\delete{
	{\rm(1).} If $n = 2 r $, then the matrix $ {^t \varSigma  } X^r T \varSigma$ can be written as
	\begin{equation*}
	\begin{pmatrix}
		(-)^{r } A & 0 \\
		0 &  (-)^{r+1} B
	\end{pmatrix},
	\end{equation*}
	where 
	\begin{equation*}
	A = \begin{pmatrix}
		0 & \cdots & 0 & \sigma_{2r} \\
		\vdots & \iddots & \iddots & \vdots\\
		0 & \iddots & \iddots & \sigma_{r + 2} \\
		\sigma_{2r} & \cdots & \sigma_{r+2} & \sigma_{r+1}
	\end{pmatrix}, \hskip 10 pt
	B = \begin{pmatrix}
		 \sigma_{r-1} &  \sigma_{r-2} & \cdots & \sigma_0 \\
		  \sigma_{r-2} & \iddots & \iddots & 0  \\
		\vdots & \iddots &  \iddots & \vdots   \\
		 \sigma_0 & 0 & \cdots & 0
	\end{pmatrix},
	\end{equation*}
	or more precisely, the $(k, j)$-th entries $a_{k, j}$ and $b_{k, j}$, $k, j = 1,..., r$, of $A$ and $B$ are given by
	\begin{equation*}
	a_{k, j} = \left\{
	\begin{split}
	& \sigma_{3 r - k - j + 1}, \hskip 5 pt \text{ if } k + j \geq r + 1, \\ 
	& 0 \hskip 49 pt \text{if otherwise},
	\end{split} \right. \hskip 10 pt
	b_{k, j} = \left\{
	\begin{split}
	& \sigma_{ r - k - j + 1}, \hskip 5 pt \text{ if } k + j \leq r + 1, \\ 
	& 0 \hskip 46 pt \text{if otherwise},
	\end{split} \right.
	\end{equation*}
	respectively.

		{\rm(2).} If $n = 2 r - 1$, then the matrix $ {^t \varSigma  } X^r T \varSigma$ can be written as
	\begin{equation*}
	\begin{pmatrix}
		(-)^{r -1 } A & 0 \\
		0 &  (-)^{r} B
	\end{pmatrix},
	\end{equation*}
	where 
	\begin{equation*}
	A = \begin{pmatrix}
		0 & \cdots & 0 & \sigma_{2r-1} \\
		\vdots & \iddots & \iddots & \vdots\\
		0 & \iddots & \iddots & \sigma_{r + 1} \\
		\sigma_{2r-1} & \cdots & \sigma_{r+1} & \sigma_{r }
	\end{pmatrix}, \hskip 10 pt
	B = \begin{pmatrix}
		 \sigma_{r-2} &  \sigma_{r-3} & \cdots & \sigma_0 \\
		  \sigma_{r-3} & \iddots & \iddots & 0  \\
		\vdots & \iddots &  \iddots & \vdots   \\
		 \sigma_0 & 0 & \cdots & 0
	\end{pmatrix},
	\end{equation*}
	or more precisely, the $(k, j)$-th entry $a_{k, j}$, $k, j = 1,..., r$, of $A$ is given by
	\begin{equation*}
	a_{k, j} = \left\{
	\begin{split}
	& \sigma_{3 r - k - j } \hskip 5 pt \text{ if } k + j \geq r + 1, \\ 
	& 0 \hskip 35 pt \text{if otherwise},
	\end{split} \right.
	\end{equation*}
	whereas the $(k, j)$-th entry $b_{k, j}$, $k, j = 1,..., r-1$, of $B$ is given by
	\begin{equation*}
	b_{k, j} = \left\{
	\begin{split}
	& \sigma_{ r - k - j  }  \hskip 5 pt \text{ if } k + j \leq r  , \\ 
	& 0  \hskip 31 pt \text{if otherwise}.
	\end{split}\right.
	\end{equation*}
}
	\end{lem}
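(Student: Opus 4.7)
The plan is a direct computation via residue calculus and coefficient extraction from a two-variable generating polynomial. First I would write the $(k,j)$-th entry of ${^t\varSigma}\,X^r T\,\varSigma$ as
$$
M_{k,j} \;=\; \sum_{l=1}^n \frac{x_l^{\,r}\,\sigma_{l,n-k}\,\sigma_{l,n-j}}{\tau_l},
$$
and package these into the generating polynomial
$$
F(y,z) \;=\; \sum_{k,j=1}^n M_{k,j}\,y^{n-k} z^{n-j} \;=\; \sum_{l=1}^n \frac{x_l^{\,r}\,\sigma_l(y)\,\sigma_l(z)}{\tau_l},
$$
where $\sigma_l(t)=\prod_{h\neq l}(1+x_h t)=\sum_{d=0}^{n-1}\sigma_{l,d} t^d$. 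Using $\sigma_l(t)=E(t)/(1+x_l t)$ with $E(t)=\prod_h(1+x_h t)=\sum_{d=0}^n \sigma_d t^d$, one factors out $E(y)E(z)$ and is reduced to evaluating $\sum_l x_l^{\,r}/[(1+x_l y)(1+x_l z)\tau_l]$.

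Next I would evaluate that sum by residue calculus on the rational function
$$
G(\zeta)\;=\;\frac{\zeta^{\,r}}{(1+y\zeta)(1+z\zeta)\prod_{h=1}^n(\zeta-x_h)},
$$
whose residues at the simple poles $\zeta=x_l$ are precisely the summands. Since $r\le n$, $G(\zeta)=O(|\zeta|^{r-n-2})$ at infinity, so the sum of all finite residues vanishes; this identifies the desired sum with $-\mathrm{Res}_{\zeta=-1/y}G-\mathrm{Res}_{\zeta=-1/z}G$. A short computation using $\prod_h(-1/y-x_h)=(-1)^n y^{-n}E(y)$ and the corresponding identity at $-1/z$ collapses everything to the closed form
$$
F(y,z)\;=\;\frac{(-1)^{n-r}}{y-z}\bigl[z^{n-r} E(y)-y^{n-r} E(z)\bigr].
$$
I would sanity-check this formula in the case $n=2$ (where the matrix is $2\times2$) to make sure the signs are right before moving on.

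Finally I would extract $[y^{n-k} z^{n-j}]F(y,z)$. Writing $\alpha=n-r$, the difference $z^\alpha E(y)-y^\alpha E(z)=\sum_d\sigma_d(z^\alpha y^d-y^\alpha z^d)$ vanishes at $d=\alpha$, and for $d<\alpha$ (respectively $d>\alpha$) the factor $y^d z^d(z^{\alpha-d}-y^{\alpha-d})$ (respectively $y^\alpha z^\alpha(y^{d-\alpha}-z^{d-\alpha})$) is divisible by $y-z$ via $y^N-z^N=(y-z)\sum_{i=0}^{N-1}y^{N-1-i}z^i$. Reading off the monomial $y^{n-k}z^{n-j}$ from the two resulting finite sums, a straightforward case analysis on the index ranges shows that $M_{k,j}=0$ unless either $k,j\le r$ (contribution from $d<\alpha$) or $k,j\ge r+1$ (contribution from $d>\alpha$), and in those two regimes one obtains $M_{k,j}=(-1)^{n-r}\sigma_{n+r+1-k-j}$ and $M_{k,j}=(-1)^{n-r+1}\sigma_{n+r+1-k-j}$ respectively, subject in each case to the evident non-negativity bound on the index of $\sigma$, yielding exactly the two anti-diagonal blocks $(-1)^{n-r}A$ and $(-1)^{n-r+1}B$ described in the lemma.

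The main obstacle is the bookkeeping in this last step: tracking the support constraints $k+j\ge r+1$, $k+j\le n+r+1$, together with the ranges of the summation index coming from $(y^N-z^N)/(y-z)$, so that the two zero off-diagonal blocks and the two nonzero anti-diagonal blocks emerge simultaneously with the correct signs. Once the closed form for $F(y,z)$ is in hand, the remainder is routine algebra with no deeper combinatorial input; the residue step, though short, is what makes the whole argument work and should be written out carefully.
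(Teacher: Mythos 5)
Your proof is correct, and it takes a genuinely different route from the paper's. The paper appeals to the Lagrange interpolation formula (via the auxiliary result that $(T\varSigma)^{-1} = \big((-)^{n-j}x_l^{j-1}\big)_{j,l}$) and thereby reduces the claim to the simpler matrix identity ${^t\varSigma}\,X^r = \begin{pmatrix}(-)^{n-r}A & 0\\ 0 & (-)^{n-r+1}B\end{pmatrix}\big((-)^{n-j}x_l^{j-1}\big)_{j,l}$, which is then verified entry-by-entry by reading off coefficients of $x^{k-1}$ and $x^{2n-k}$ from two elementary polynomial expansions of $\prod_{h\neq l}(x-x_h)$. You instead compute the entries $\sum_l x_l^r\sigma_{l,n-k}\sigma_{l,n-j}/\tau_l$ directly by packaging them into the generating polynomial $F(y,z)$, collapsing the $l$-sum via the residue theorem applied to $G(\zeta)=\zeta^r/\big[(1+y\zeta)(1+z\zeta)\prod_h(\zeta-x_h)\big]$ (the decay condition $r\le n$ is exactly what makes the residue at infinity vanish), and extracting coefficients from the closed form $F(y,z)=(-1)^{n-r}\big[z^{n-r}E(y)-y^{n-r}E(z)\big]/(y-z)$. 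I checked your residue computation and the final coefficient extraction; both are correct and yield precisely the stated block structure, sizes, signs, and anti-diagonal pattern for all $r=0,\dots,n$. The paper's proof is shorter but rests on knowing the inverse of $T\varSigma$; yours is self-contained and, once the closed form for $F$ is written down, the block structure drops out mechanically, which is arguably more illuminating. One small slip in your write-up: the $d<\alpha$ part of $\big[z^\alpha E(y)-y^\alpha E(z)\big]/(y-z)$ consists of monomials $y^a z^b$ with $a,b\le\alpha-1=n-r-1$, so it populates the block $k,j\ge r+1$ (and $d>\alpha$ populates $k,j\le r$), which is the reverse of the parenthetical labels you gave; your stated values of $M_{k,j}$ and signs in each regime are nevertheless correct, so this is only a transposition in the prose.
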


\begin{proof}[Proof of Lemma \ref{7lem: matrix}]

	Appealing to the Lagrange interpolation formula, we find in \cite[Lemma 8.6]{Qi}  that the inverse of $T \varSigma$ is equal to the matrix $U = \lp (-)^{n - j} x_{l     }^{j-1} \rp_{j, l      = 1}^n $. Therefore, it suffices to show that
	\begin{equation*}
	 {^t \varSigma  } X^r = \begin{pmatrix}
		(-)^{n-r } A & 0 \\
		0 &  (-)^{n-r+1} B
	\end{pmatrix} U.
	\end{equation*}
	This is equivalent to the following two collections of identities,
	\begin{align*}
	   \sum_{j=r-k+1}^{r} (-)^{r+j} \sigma_{n+r - k - j +1}  x_l     ^{j-1}  &= \sigma_{l     , n-k } x_l     ^r, \hskip 10 pt k = 1, ..., r, \\
	   \sum_{j=1}^{n - r - k +1} (-)^{j-1} \sigma_{ n- r - k - j + 1}  x_l     ^{r+j-1}  &= \sigma_{l     , n-r-k} x_l     ^r , 
	 \hskip 10 pt k = 1, ..., n-r,
	\end{align*} 
	 which are further equivalent to
	 \begin{align*}
	 \sum_{j=1}^{k } (-)^{k+j } \sigma_{ n - j+1}  x_l     ^{j - k -1}  & = \sigma_{l     , n-k},   \hskip 10 pt  k = 1, ..., r, \\
	 \sum_{j=1}^{k } (-)^{ j - 1} \sigma_{k - j}  x_l     ^{j-1}  & = \sigma_{l     , k - 1}, \hskip 10 pt  k = 1, ..., n-r.
	\end{align*}
The last two identities can be easily seen, actually for all $k = 1,..., n$, from computing the coefficients of $x^{k-1}$ and $x^{2n - k}$ on the two sides of  
	\begin{align*}
	\prod_{h \neq l     } (x - x_h) & = \lp  \sum_{p=0}^\infty x_{l     }^p x^{-p-1} \rp \prod_{h = 1}^{n} (x - x_h), \\
	\lp x^{n} - x_l     ^{n} \rp \prod_{h \neq l     } (x - x_h) & = \lp \sum_{p=1}^{n } x_{l     }^{ p - 1} x^{n-p}  \rp \prod_{h = 1}^{n} (x - x_h),
	\end{align*}
	respectively.
\end{proof}

Applying Lemma \ref{7lem: matrix} with $x_{l     } = e^{- 2\pi i \lambda_{(\umu, \um), l     }^\pm  } = (-)^{m_l     } e^{- 2\pi i  \mu_{l     } }$ to the formula  \eqref{7eq: second matrix form},
we arrive at the following theorem.

\begin{thm}
	\label{7thm: second formula}
	Let $(\umu, \um) \in \BL^{n-1} \times \BZ^n$ and $r \in \{ 0, 1, .., n \}$. 
	Define $\xi _{ j} = e^{ 2 \pi i \frac { {  j -  1}  } n}$, $\zeta_j = e^{ 2 \pi i \frac { {  j -  r}  } n}$, and  denote by $\sigma_{(\umu, \um)}^d $ the elementary symmetric polynomial in $(-)^{m_1} e^{- 2 \pi i  \mu_1 }$, $...$, $(-)^{m_n} e^{- 2 \pi i  \mu_n }$ of degree $d$, with $j = 1, ..., n$ and $d = 0, 1, ..., n$. Then we have
	\begin{equation}\label{7eq: J (z)}
	\begin{split}
	J _{(\umu, \um)} (z) & =  (-)^{ |\um|} \frac {(2 \pi)^{n-1}} n \mathop{\sum\sum}_{\substack{ k, j = 1, ..., r \\ k+j \geq r+1 }}  
	C_{k, j} (\umu, \um)  \\ 
	& \hskip 92 pt
	J \big( 2 \pi z^{\frac 1 n} ; \umu + \tfrac 1 2 \um; \xi_k \big) J \big( 2 \pi \overline z^{\frac 1 n} ; \umu - \tfrac 1 2 \um; \zeta_j \big) \\
	& + (-)^{ |\um|} \frac {(2 \pi)^{n-1}} n \mathop{\sum\sum}_{\substack{ k, j = 1, ..., n-r \\ k+j \leq n-r+1 }}  D_{k, j} (\umu, \um)   \\
	& \hskip 74 pt
	J \big( 2 \pi z^{\frac 1 n} ; \umu + \tfrac 1 2 \um; \xi_{r+k} \big) J \big( 2 \pi \overline z^{\frac 1 n} ; \umu - \tfrac 1 2 \um; \zeta_{r+j} \big).
	\end{split}
	\end{equation}
\delete{	\begin{equation}\label{7eq: J -}
	\begin{split}
	J^-_{(\umu, \um)} (z) = &\, (-)^{ |\um|} \frac {(2 \pi)^{n-1}} n \mathop{\sum\sum}_{\substack{ k, j = 1, ..., r \\ k+j \geq r+1 }}  C^-_{k, j} (\umu, \um)  \\ 
	& \hskip 82 pt
	J \big( 2 \pi z^{\frac 1 n} ; \umu + \tfrac 1 2 \um; \xi_k \big) J \big( 2 \pi \overline z^{\frac 1 n} ; \umu - \tfrac 1 2 \um; \zeta_j \big)  ,
	\end{split}
	\end{equation}
	\begin{equation}\label{7eq: J +}
	\begin{split}
	J^+_{(\umu, \um)} (z) = &\, (-)^{ |\um|} \frac {(2 \pi)^{n-1}} n \mathop{\sum\sum}_{\substack{ k, j = 1, ..., n-r \\ k+j \leq n-r+1 }}  D_{k, j} (\umu, \um)   \\
	& \hskip 64 pt
	J \big( 2 \pi z^{\frac 1 n} ; \umu - \tfrac 1 2 \um; \xi_{r+k} \big) J \big( 2 \pi \overline z^{\frac 1 n} ; \umu - \tfrac 1 2 \um; \zeta_{r+j} \big),
	\end{split}
	\end{equation}	
}
	with
	\begin{align}
	\label{7eq: C-} C _{k, j} (\umu, \um) & =  (-)^{ r + k+j + 1} \xi_k^{-\frac {n-1} 2 - \frac 1 2 |\um|} \zeta_j^{-\frac {n-1} 2 + \frac 1 2 |\um|} \sigma_{(\umu, \um) }^{n+r-k-j+1}, \\
	\label{7eq: C+} D _{k, j} (\umu, \um) & = (-)^{ r+k+j } \xi_{r+k}^{-\frac {n-1} 2 - \frac 1 2 |\um|} \zeta_{r+j}^{-\frac {n-1} 2 + \frac 1 2 |\um|} \sigma_{(\umu, \um) }^{n-r-k-j+1}.
	\end{align}
\end{thm}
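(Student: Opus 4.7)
The plan is to combine the first connection formula \eqref{4eq: connection formula, matrix} with the transition formula \eqref{5eq: connection 1st and 2nd, matrix form} between Bessel functions of the first and second kinds, and then apply Lemma \ref{7lem: matrix} to carry out the resulting matrix computation.

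First, I will write the first connection formula in the compact matrix form \eqref{4eq: connection formula, matrix} displayed in the text, and then substitute into each of the two vector factors the expression \eqref{5eq: connection 1st and 2nd, matrix form} for $X(z;\ulambda)$ in terms of $Y_a(z;\ulambda)$. The key point is that we have the freedom to choose a different $a$ in each of the two substitutions: I will take $a=0$ for the factor $X\big(2\pi z^{1/n};\ulambda^+_{(\umu,\um)}\big)$ and $a=1-r$ for the factor $X\big(2\pi \overline z^{1/n};\ulambda^-_{(\umu,\um)}\big)$. After collecting prefactors and using that $\varSigma\big(\ulambda^+_{(\umu,\um)}\big)=\varSigma\big(\ulambda^-_{(\umu,\um)}\big)$ (since the two indices differ only by signs of the half-integers $m_l/2$, which are absorbed into the variables $(-)^{m_l}e^{-2\pi i\mu_l}$), I arrive at the intermediate identity \eqref{7eq: second matrix form}, in which all that remains is to identify the middle matrix $^t\varSigma_{(\umu,\um)} R_{(\umu,\um)}\varSigma_{(\umu,\um)}$.

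Next, I will set $x_l=e^{-2\pi i \lambda^\pm_{(\umu,\um),l}}=(-)^{m_l}e^{-2\pi i\mu_l}$, so that with $X=\mathrm{diag}(x_l)$ and $T=\mathrm{diag}(\tau_l^{-1})$ as in Lemma \ref{7lem: matrix} we have the clean identification $R_{(\umu,\um)}=X^r T=TX^r$ (the two commute since they are diagonal), and $\varSigma_{(\umu,\um)}$ equals the matrix $\varSigma$ of Lemma \ref{7lem: matrix} applied to these $x_l$. Lemma \ref{7lem: matrix} then gives explicitly the block-diagonal decomposition of $^t\varSigma X^rT\varSigma$, with the upper $r\times r$ block involving $\sigma_{(\umu,\um)}^{n+r-k-j+1}$ (supported on the anti-diagonal $k+j\ge r+1$) and the lower $(n-r)\times(n-r)$ block involving $\sigma_{(\umu,\um)}^{n-r-k-j+1}$ (supported on $k+j\le n-r+1$). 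Substituting this block decomposition into \eqref{7eq: second matrix form} and reading off coordinates of $Y_0\big(2\pi z^{1/n};\ulambda^+_{(\umu,\um)}\big)$ and $Y_{1-r}\big(2\pi \overline z^{1/n};\ulambda^-_{(\umu,\um)}\big)$ produces the two double sums of \eqref{7eq: J (z)}; the two contributions correspond precisely to the $A$-block and $B$-block of Lemma \ref{7lem: matrix}.

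The only nontrivial obstacle is the bookkeeping of prefactors. I will need to track carefully the exponential factors coming from $D_0(\ulambda^+_{(\umu,\um)})$ and $D_{1-r}(\ulambda^-_{(\umu,\um)})$, which contribute the powers $\xi_k^{-(n-1)/2-|\um|/2}$ and $\zeta_j^{-(n-1)/2+|\um|/2}$ appearing in \eqref{7eq: C-} and \eqref{7eq: C+}; the signs $(-)^{n-j}$ from $D_a$ and $(-)^{n-r}$ (resp.\ $(-)^{n-r+1}$) from the block decomposition in Lemma \ref{7lem: matrix}, which combine to give the overall $(-)^{r+k+j+1}$ (resp.\ $(-)^{r+k+j}$) in the coefficients; and the prefactor $(-)^{(n-1)+|\um|}(2\pi)^{n-1}/n$, which emerges after combining the factor $(-)^{|\um|}e^{-\pi i(n-1)/2}(4\pi^2)^{n-1}$ from \eqref{4eq: connection formula, matrix} with the two factors $e^{(3/4)\pi i((n-1)+2|\ulambda^\pm|)}/\big(\sqrt n (2\pi)^{(n-1)/2}\big)$ from \eqref{5eq: connection 1st and 2nd, matrix form} and using $|\ulambda^+_{(\umu,\um)}|+|\ulambda^-_{(\umu,\um)}|=2|\umu|=0$. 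This step is routine once one writes all factors in the form $e^{\pi i(\cdots)}$, but it is where errors are most likely to creep in. As usual, the genericity assumption on $(\umu,\um)$ may be dropped at the end by continuity, since both sides of \eqref{7eq: J (z)} are analytic in $\umu$.
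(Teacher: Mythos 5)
Your proposal is correct and follows essentially the same route as the paper: write the first connection formula in the matrix form \eqref{4eq: connection formula, matrix}, substitute \eqref{5eq: connection 1st and 2nd, matrix form} with the asymmetric choices $a=0$ and $a=1-r$ in the two factors, identify $R_{(\umu,\um)}=TX^r$ with $x_l=(-)^{m_l}e^{-2\pi i\mu_l}$, and invoke Lemma \ref{7lem: matrix} to read off the two double sums. The bookkeeping observations you flag (the $\varSigma$ coincidence, the $D_a$ and block-sign factors, and $|\ulambda^+|+|\ulambda^-|=0$) are exactly the computations the paper carries out.
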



\begin{lem}\label{7lem: C and D}
	We retain the notations in Theorem {\rm \ref{7thm: second formula}}. Moreover, we define  $\mathfrak I (\umu) = \max \left\{ \left| \Im \mu_l      \right| \right\}$.
	
	{\rm (1.1).} For $k=1, ..., r$, we have 
	$ C_{k, r-k+1} (\umu, \um) = (- \overline\xi_k)^{ |\um|}.$
	
	{\rm (1.2).} Let $k, j = 1, ... , r$ be such that $k + j \geq r + 2$. Denote $p = k+j-r-1$. We have the estimate
	\begin{equation*}
	|C_{k, j} (\umu, \um)| \leq {n \choose p} \exp\big(  2 \pi \min \left\{  n-p, p \right\} \mathfrak I (\umu) \big).
	\end{equation*}
	
	{\rm (2.1).} For $k=1, ..., n-r$, we have 
	$D_{k, n-r-k+1} (\umu, \um) =  (- \overline\xi_{k+r})^{ |\um|}.$
	
	{\rm (2.2).} Let $k, j = 1, ... , n-r$ be such that $k + j \leq n-r  $. Denote $p = n - r - k - j + 1$. We have the estimate
	\begin{equation*}
	|D_{k, j} (\umu, \um)| \leq {n \choose p} \exp\big(   2 \pi \min \left\{  n-p, p \right\} \mathfrak I (\umu) \big).
	\end{equation*}	
	
\end{lem}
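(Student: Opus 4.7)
The statements (1.1), (1.2), (2.1), (2.2) are routine unpackings of the definitions \eqref{7eq: C-}, \eqref{7eq: C+} of $C_{k,j}(\umu,\um)$ and $D_{k,j}(\umu,\um)$, combined with two simple observations specific to the present setting.

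The first key observation is that the roots of unity appearing in $C_{k,j}$ and $D_{k,j}$ satisfy elementary multiplicative relations. Directly from $\xi_k = e^{2\pi i(k-1)/n}$ and $\zeta_j = e^{2\pi i(j-r)/n}$, one computes that $\zeta_{r-k+1}\cdot\xi_k = 1$ and $\zeta_{n-k+1}\cdot\xi_{r+k} = 1$. The first relation handles (1.1); since in that range $n+r-k-j+1 = n$ and the normalization $\umu \in \BL^{n-1}$ gives $|\umu|=0$, we have $\sigma_{(\umu,\um)}^n = \prod_l (-)^{m_l}e^{-2\pi i\mu_l} = (-)^{|\um|}$. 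Plugging this together with $\zeta_{r-k+1}^{-\frac{n-1}{2}+\frac 12|\um|} = \xi_k^{\frac{n-1}{2}-\frac 12|\um|}$ into \eqref{7eq: C-} and collecting the sign $(-)^{2r+2} = 1$ yields $C_{k,r-k+1}(\umu,\um) = (-\overline\xi_k)^{|\um|}$. The analogous calculation for (2.1) uses $\sigma_{(\umu,\um)}^0 = 1$ and the relation $\zeta_{n-k+1} = \xi_{r+k}^{-1}$; modulo possible parity conventions on the sign, this gives (2.1).

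The second key observation, used for the bounds (1.2) and (2.2), is that the constraint $|\umu|=0$ forces $\sum_l\Im\mu_l = 0$, which produces a two-sided control on $|\sigma_{(\umu,\um)}^d|$. Indeed, $\sigma_{(\umu,\um)}^d = \sum_{|S|=d}\prod_{l\in S}(-)^{m_l}e^{-2\pi i\mu_l}$ is a sum of $\binom{n}{d}$ terms, and each term has modulus $\exp\bigl(2\pi\sum_{l\in S}\Im\mu_l\bigr)$. Since $\sum_{l\in S}\Im\mu_l = -\sum_{l\notin S}\Im\mu_l$, the absolute value of the exponent is simultaneously $\leq 2\pi d\,\mathfrak I(\umu)$ and $\leq 2\pi(n-d)\mathfrak I(\umu)$, hence
\begin{equation*}
|\sigma_{(\umu,\um)}^d|\;\leq\;\binom{n}{d}\exp\bigl(2\pi\min\{d,n-d\}\,\mathfrak I(\umu)\bigr).
\end{equation*}
Since $|\xi_k| = |\zeta_j| = 1$ (all exponents appearing are real multiples of $i$ times real quantities, because $\um\in\BZ^n$ makes $|\um|$ an integer and $\xi_k,\zeta_j$ are roots of unity), the modulus of $C_{k,j}$ reduces to $|\sigma_{(\umu,\um)}^{n+r-k-j+1}|$. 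Setting $d = n+r-k-j+1 = n-p$ gives $\min\{d,n-d\} = \min\{n-p,p\}$ and $\binom{n}{d} = \binom{n}{p}$, which is exactly (1.2). For (2.2), $d = n-r-k-j+1 = p$ directly gives the same bound.

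I expect no serious obstacle; the only thing to watch is that in (1.1)/(2.1) one must be careful with the overall sign $(-)^{r+k+j+1}$ versus $(-)^{r+k+j}$ (the ``$+1$'' differs between $C$ and $D$) and the parity of $|\um|$, so that the final expression combines correctly into the stated $(-\overline\xi_k)^{|\um|}$ or $(-\overline\xi_{k+r})^{|\um|}$. This is just bookkeeping, but is the one place where a sign could go astray. Everything else is immediate from the explicit definitions of $\xi_k,\zeta_j,\sigma_{(\umu,\um)}^d$ and the normalization $\umu\in\BL^{n-1}$.
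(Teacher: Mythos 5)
Your proof is correct, and since the paper states this lemma without proof, your argument fills the gap in exactly the intended way: identify the root-of-unity cancellations $\zeta_{r-k+1}\xi_k=1$ and $\zeta_{n-k+1}\xi_{r+k}=1$, use $|\umu|=0$ to evaluate $\sigma^n_{(\umu,\um)}=(-)^{|\um|}$ and to get the two-sided control $\big|\sum_{l\in S}\Im\mu_l\big|\le\min\{|S|,n-|S|\}\,\mathfrak I(\umu)$, and observe that the unimodular fractional powers of roots of unity drop out of the modulus. For (2.1), the sign bookkeeping you flagged does work out: with the convention $\xi_{a,j}^\alpha=e^{2\pi i\alpha(j+a-1)/n}$, one finds $\zeta_{n-k+1}^{-\frac{n-1}{2}+\frac 12|\um|}=(-)^{n-1}(-)^{|\um|}e^{-2\pi i\left(\frac{n-1}{2}-\frac 12|\um|\right)(r+k-1)/n}$, and multiplying by $\xi_{r+k}^{-\frac{n-1}{2}-\frac 12|\um|}$ and the prefactor $(-)^{r+k+j}=(-)^{n+1}$ collapses to $(-)^{2n}(-)^{|\um|}\overline\xi_{r+k}^{|\um|}=(-\overline\xi_{k+r})^{|\um|}$, as claimed.
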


\subsection{The rank-two case}

\begin{example}\label{3prop: n=2, C}
	Let $\mu \in \BC$ and $m \in \BZ$. 
	
	If we define 
	\begin{equation}\label{7def: J mu m (z), n=2, C}
	J_{\mu, m} (z) = J_{- 2\mu - \frac 12 m } \lp  z \rp J_{- 2\mu + \frac 12 m  } \lp  {\overline z} \rp,
	\end{equation}
	then
	\begin{equation}\label{3eq: n=2, C}
	J_{(\mu, - \mu, m, 0)} \lp z \rp \hskip -1 pt =  \hskip -3 pt
	\left\{ 
	\begin{split}
	& \hskip -3 pt \frac {2 \pi^2} {\sin (2\pi \mu)} [\sqrt z]^{-m} \lp J_{\mu, m} (4 \pi \sqrt z) \hskip -1 pt - \hskip -1 pt  J_{-\mu, -m} (4 \pi \sqrt z) \rp \hskip 2 pt \text {if } m \text{ is even},\\
	& \hskip -3 pt \frac {2 \pi^2 i} {\cos (2\pi \mu)} [\sqrt z]^{-m} \lp J_{\mu, m} (4 \pi \sqrt z) \hskip -1 pt + \hskip -1 pt J_{-\mu, -m} (4 \pi \sqrt z) \rp \hskip 2 pt \text {if }  m \text{ is odd},
	\end{split}
	\right.
	\end{equation} 
	which should be interpreted in the way as in Theorem {\rm \ref{4thm: connection formula}}. We remark that the generic case is when  $4 \mu \notin 2\BZ + m$.
	
	On the other hand, using the connection formulae {\rm (\cite[3.61 (1, 2)]{Watson})}
	\begin{equation*}
	 J_\nu (z) = \frac {H_\nu^{(1)} (z) + H_\nu^{(2)} (z)} 2, \hskip 10 pt 
	 J_{-\nu} (z) =  \frac {e^{\pi i \nu} H_\nu^{(1)} (z) + e^{-\pi i \nu} H_\nu^{(2)} (z) } 2,
	\end{equation*}
	one obtains
	\begin{equation}
	 J_{(\mu, - \mu, m, 0)} (z) = \pi^2 i [\sqrt z]^{-m}  \lp e^{2 \pi i \mu} H^{(1)}_{\mu, m} \lp 4 \pi \sqrt z \rp + (-)^{m+1} e^{- 2 \pi i \mu} H^{(2)}_{\mu, m} \lp 4 \pi \sqrt z \rp \rp,
	\end{equation}
	with the definition
	\begin{equation}\label{7def: H (1, 2) mu m (z), n=2, C}
	H^{(1, 2)}_{\mu, m} (z) = H^{(1, 2)}_{2 \mu + \frac 1 2 m} \lp   z \rp  H^{(1, 2)}_{2 \mu - \frac 1 2 m} \lp  { \overline z} \rp.
	\end{equation}
\end{example}

\delete{
\begin{lem} 
	Let  $\ux = (x_1, ..., x_n) \in \BC^n$ be a generic $n$-tuple  in the sense that all its components are distinct.
	Let $\sigma_{l     , d} $, respectively $\sigma_d$, denote the elementary symmetric polynomial in $x_1, ..., \widehat {x_l     }, ..., x_n$,  respectively  $x_1, ... , x_n$, of degree $d$, and let $\tau_l      = \prod_{k \neq l     } (x_l      - x_k) $. Define the matrices $\varSigma = \big( \sigma_{l     , n-j} \big)_{l     , j=1}^n$, $X = \mathrm{diag} \lp  x_l      \rp_{l     =1}^n$ and $T = \mathrm{diag} \lp  \tau_l     \-  \rp_{l     =1}^n$. Then the $(k, j)$-th entry $a_{k, j}$ of $A = {^t \varSigma  } X T \varSigma$ is given by 
	\begin{equation*}
	a_{k, j} = 
	\begin{cases}
	(-)^{n-1} \sigma_n, & \text{ if } k = j = 1, \\
	(-)^{n } \sigma_{n - k - j + 2}, & \text{ if } k, j  \geq 2 \text{ and } n+2 \geq k+j,\\
	0 & \text{if otherwise}. 
	\end{cases}
	\end{equation*}
	The explicit form of  $A$  is 
	\begin{equation*}
	\begin{pmatrix}
		(-)^{n-1} \sigma_n & 0 & 0 & \cdots & 0 \\
		0 & (-)^{n } \sigma_{n-2} & (-)^{n } \sigma_{n-3} & \cdots & (-)^{n} 1 \\
		0 &  (-)^{n } \sigma_{n-3} & \iddots & \iddots & 0  \\
		\vdots & \vdots &  \iddots & \iddots & \vdots \\
		0 & (- )^{n } 1 & 0 & \cdots & 0
	\end{pmatrix}.
	\end{equation*}
\end{lem}

\begin{proof}[Proof of Lemma \ref{7lem: matrix}]
	We find in \cite[Lemma 8.6]{Qi} by means of the Lagrange interpolation formula that the inverse of $T \varSigma$ is equal to the matrix $U = \lp (-)^{n-j} x_{l     }^{j-1} \rp_{j, l      = 1}^n $. Therefore, it suffices to show that
	\begin{equation*}
	 {^t \varSigma  } X = A U.
	\end{equation*}
	This is equivalent to the following identities,
	\begin{align*}
	 \sigma_n = \sigma_{l     , n-1} x_{l      }, \hskip 10 pt \sum_{j=2}^{n-k + 2}\sigma_{ n-k - j +2} (-)^{j} x_l     ^{j-1}  = \sigma_{l     , n-k} x_l      , \hskip 10 pt k = 2, ..., n,
	\end{align*}
	 which can be easily seen.
\end{proof}

Applying Lemma \ref{7lem: matrix} with $x_{l     } = e^{- 2\pi i \lambda_{(\umu, \um), l     }^\pm  } =  e^{- 2\pi i \lp \mu_{l     } \pm \frac 1 2 m_l      \rp }$ to the formula  \eqref{7eq: second matrix form},
we arrive at the following theorem.

\begin{thm}
	Let $(\umu, \um) \in \BL^{n-1} \times \BZ^n$. We define $\xi _{ j} = e^{ 2 \pi i \frac { {  j -  1}  } n}$ and  denote by $\sigma_{(\umu, \um)}^d $ the elementary symmetric polynomial in $e^{- 2 \pi i \lp \mu_1 \pm \frac 1 2 m_1 \rp}, ... , e^{- 2 \pi i  \lp \mu_n \pm \frac 1 2 m_n \rp }$ of degree $d$, with $j = 1, ..., n$ and $d = 0, 1, ..., n$. Then we have
	\begin{equation}
	\begin{split}
	J_{(\umu, \um)} (z) = &\ \frac {(2 \pi)^{n-1}} n \bigg( J \big( 2 \pi z^{\frac 1 n} ; \umu - \tfrac 1 2 \um; 1 \big) J \big( 2 \pi \overline z^{\frac 1 n} ; \umu + \tfrac 1 2 \um; 1 \big)  \\
	& + (-)^{(n-1) + |\um|}   \mathop{\sum\sum}_{\substack{k, j \geq 2\\ k+j \leq n+2 }}  (-)^{k+j} \xi_k^{-\frac {n-1} 2 + \frac 1 2 |\um|} \xi_j^{-\frac {n-1} 2 - \frac 1 2 |\um|} \sigma_{(\umu, \um) }^{n-k-j+2}  \\
	& \hskip 100 pt
	J \big( 2 \pi z^{\frac 1 n} ; \umu - \tfrac 1 2 \um; \xi_k \big) J \big( 2 \pi \overline z^{\frac 1 n} ; \umu + \tfrac 1 2 \um; \xi_j \big) \bigg).
	\end{split}
	\end{equation}
\end{thm}
}


\delete{
	\begin{proof}
		Recall from (\ref{1def: G m (s)}, \ref{1def: G (mu, m)}, \ref{3def: Bessel kernel j mu m}, \ref{2eq: Bessel kernel over C, polar}) that 
		\begin{equation*}
		\begin{split}
		& J_{(\mu, - \mu, m, 0)} \lp x e^{i \phi} \rp = 2 \pi \sum_{k = - \infty}^\infty i^{|m+k| + |k|} e^{ik \phi} \\
		& \frac 1 {2\pi i} \int_{\EC_{(\mu, - \mu, m+k, k)}} 
		\frac {\Gamma \lp s - \mu + \frac {|m+k|} 2 \rp} {\Gamma \lp 1 - s + \mu + \frac {|m+k|} 2 \rp} 
		\frac {\Gamma \lp s + \mu + \frac {| k|} 2 \rp} {\Gamma \lp 1 - s - \mu + \frac {| k|} 2 \rp} 
		\lp (2 \pi)^2 x \rp^{-2s} d s.
		\end{split}
		\end{equation*}
		
		Assume first that $m$ is even and $2 \mu \notin \BZ$. The sets of poles of the two gamma factors in the above integral, that is $\left\{ \mu - \frac {|m+k|} 2 - \alpha \right \}_{\alpha \in \BN}$ and $\left\{ - \mu - \frac {| k|} 2 - \alpha \right \}_{\alpha \in \BN}$, do not intersect, and therefore the integrand has only \textit{simple} poles. Left shift the integral contour of each integral in the sum and pick up the residues from these poles. The contribution from the residues at the poles of the first gamma factor is the following absolutely convergent double series,
		\begin{equation*}
		\begin{split}
		2 \pi \sum_{k = -\infty}^\infty \ \sum_{\alpha = 0}^\infty (-)^\alpha i^{|m+k| + |k|} e^{ik \phi} \frac { \Gamma \lp 2 \mu - \alpha + \frac {|k| - |m+k|} 2 \rp \lp (2 \pi)^2 x \rp^{- 2 \mu + |m+k| + 2 \alpha}} {\alpha ! (\alpha + |m+k|)! \Gamma \lp 1 - 2 \mu + \alpha + \frac {|k| + |m+k|} 2 \rp}.
		\end{split}
		\end{equation*}
		Euler's reflection formula of the Gamma function turns this into
		\begin{equation*}
		\begin{split}
		& \frac {2 \pi^2} {\sin (2\pi \mu)} \sum_{k = -\infty}^\infty \sum_{\alpha = 0}^\infty (-)^{|m+k|} e^{ik \phi}  \\
		& \frac {\lp (2 \pi)^2 x \rp^{- 2 \mu + |m+k| + 2 \alpha}} {\alpha ! (\alpha + |m+k|)!  \Gamma \lp 1 - 2 \mu + \alpha - \frac {|k| - |m+k|} 2 \rp \Gamma \lp 1 - 2 \mu + \alpha + \frac {|k| + |m+k|} 2 \rp}.
		\end{split}
		\end{equation*}
		Interchanging the order of summations and making the change of indices $\beta = \alpha + |m+k|$, we arrive at
		\begin{equation*}
		\begin{split}
		& \frac {2 \pi^2} {\sin (2\pi \mu)} \Bigg( \sum_{\alpha = 0}^\infty \sum_{\beta = \alpha + 1}^\infty  \frac { (-)^{\alpha + \beta} e^{i (\beta - \alpha -m) \phi} \lp (2 \pi)^2 x \rp^{- 2 \mu + \alpha + \beta}} {\alpha ! \beta!  \Gamma \lp 1 - 2 \mu + \alpha + \frac {m} 2 \rp \Gamma \lp 1 - 2 \mu + \beta - \frac {m} 2 \rp} \\
		& \hskip 45 pt + \sum_{\alpha = 0}^\infty \sum_{\beta = \alpha }^\infty  \frac { (-)^{\alpha + \beta} e^{i (\alpha - \beta - m) \phi} \lp (2 \pi)^2 x \rp^{- 2 \mu + \alpha + \beta}} {\alpha ! \beta!  \Gamma \lp 1 - 2 \mu + \alpha - \frac {m} 2 \rp \Gamma \lp 1 - 2 \mu + \beta + \frac {m} 2 \rp} \Bigg)\\
		= & \ \frac {2 \pi^2} {\sin (2\pi \mu)} \sum_{\alpha = 0}^\infty \sum_{\beta = 0}^\infty  \frac { (-)^{\alpha + \beta} e^{i (\beta - \alpha - m) \phi} \lp (2 \pi)^2 x \rp^{- 2 \mu + \alpha + \beta}} {\alpha ! \beta!  \Gamma \lp 1 - 2 \mu + \alpha + \frac {m} 2 \rp \Gamma \lp 1 - 2 \mu + \beta - \frac {m} 2 \rp}\\
		= & \ \frac {2 \pi^2} {\sin (2\pi \mu)} e^{- \frac 1 2 i m \phi} J_{- 2\mu - \frac m 2} \big( 4 \pi \sqrt x e^{\frac 1 2 i \phi} \big) J_{- 2\mu + \frac m 2} \big( 4 \pi \sqrt x e^{- \frac 1 2 i \phi} \big).
		\end{split}
		\end{equation*}
		Similarly, the residues at the poles of the first gamma factor contribute 
		$$ - \frac {2 \pi^2} {\sin (2\pi \mu)} e^{- \frac 1 2 i m \phi} J_{ 2\mu + \frac m 2} \big( 4 \pi \sqrt x e^{\frac 1 2 i \phi} \big) J_{ 2\mu - \frac m 2} \big( 4 \pi \sqrt x e^{- \frac 1 2 i \phi} \big).$$
		This can be easily seen via the shift of indices from $k$ to $k-m$. Thus, the first line of \eqref{3eq: n=2, C} is proven. In the nongeneric case $m \in 2\BZ$ and $ 2 \mu \in \BZ$, \eqref{3eq: n=2, C} still holds by passing to the limit.
		
		For $m$ odd, similar arguments show the second line of  \eqref{3eq: n=2, C}.
	\end{proof}
}

\section{\texorpdfstring{The asymptotic expansion of $J_{(\umu, \um)} (z) $}{The asymptotic expansion of $J_{(\mu, m)} (z) $}}\label{sec: asymptotics}



The asymptotic of  $J_{(\umu, \udelta)} (x)$ has  already been established in  \cite[Theorem 5.13, 9.4]{Qi}. In the following, we shall present the asymptotic expansion of $J_{(\umu, \um)} (z) $. 

\vskip 5 pt

First of all, we have the following proposition on the asymptotic expansion of $J (z; \ulambda; \xi)$, which is in substance \cite[Theorem 7.27]{Qi}. 


\begin{prop}\label{8prop: asymptotic}
	Let $\ulambda \in \BC^n$ and define $\mathfrak C (\ulambda) = \max \left\{ \left| \lambda_l      - \frac 1 n |\ulambda| \right| + 1 \right\}$. Let $\xi $ be a  $2 n$-th root of unity.  For a small positive constant $ \vartheta $, say $0 < \vartheta < \frac 1 2 \pi $, we define the sector
	\begin{equation*}
	\BS'_{\xi } (\vartheta) = \left\{ z : \left| \arg z - \arg ( i \overline \xi) \right|  < \pi + \frac {\pi} n - \vartheta \right \}.
	\end{equation*} 
	For a positive integer $A $, we have  the asymptotic expansion
	\begin{equation*}
	J (z; \ulambda; \xi)   =    e^{i n \xi z} z^{- \frac { n-1} 2 - |\ulambda| }   \lp     \sum_{\alpha=0}^{A-1}   (i \xi)^{-\alpha} B_\alpha   \left(  \ulambda - \tfrac 1 n |\ulambda| \ue^n   \right) z^{-\alpha}   + O_{A, \,\vartheta,\, n} \lp \fC (\ulambda)^{2 A} |z|^{- A} \rp   \rp
	\end{equation*}
	for all $z \in \BS'_{\xi} (\vartheta)$ such that $ |z| \ggg_{A, \vartheta, n} \fC (\ulambda)^2 $. Here $B_{\alpha } (\ulambda) $  is a certain symmetric polynomial function in $\ulambda \in \BL^{n-1}$ of degree $2 \alpha$, with $B_0 (\ulambda) = 1$.
\end{prop}

\begin{lem}\label{8lem: asymptotic}
	Let $r$ be a positive integer. Suppose that either $ n = 2r$ or $n = 2r-1$. 
	Put $\vartheta_n = \frac 1 {\, n\,} \pi$ if $n = 2 r$ and $\vartheta_n = \frac 1 {2 n} \pi$ if $n = 2r-1$.
	 For a given constant $0 < \vartheta < \vartheta_n$ define the sector
	\begin{equation*}
	\BS_n (\vartheta) =  \left\{ \begin{split}
	\left\{ z : - \frac \pi 2 - \frac \pi {n} + \vartheta < \arg z < - \frac \pi 2 + \frac {3 \pi} {n} - \vartheta \right\} & \hskip 10 pt \text{ if } n = 2 r, \\
	\left\{ z : - \frac \pi 2 - \frac \pi {n} + \vartheta < \arg z < - \frac \pi 2 + \frac {2 \pi} {n} - \vartheta \right\} & \hskip 10 pt \text{ if } n = 2 r - 1, 
	\end{split} \right.
	\end{equation*}
Let $(\umu, \um) \in \BL^{n-1} \times \BZ^n$ and define $\mathfrak C (\umu, \um) = \max \left\{ |\mu_l     | + 1, \left|m_l      - \frac 1 n |\um| \right| + 1 \right\}$. Define $\xi _{ j} = e^{ 2 \pi i \frac { {  j -  1}  } n}$ and $\zeta_j = e^{ 2 \pi i \frac { {  j -  r}  } n}$ for $j=1, ..., n$.  
Then, for any $z \in \BS_n (\vartheta) $ such that $|z| \ggg_{A, \vartheta, n} \fC (\umu, \um) ^2$, we have
\begin{align*}
& J  \lp 2 \pi z  ; \umu + \tfrac 1 2 \um; \xi_k \rp J \lp 2 \pi \overline z  ; \umu - \tfrac 1 2 \um; \zeta_j \rp =  \frac { e \lp n \lp  \xi_k z + \zeta_j \overline z \rp  \rp }   { (2\pi)^{n-1} |z|^{n-1} [z]^{|\um|} }  \\
& \hskip 50 pt
\lp \mathop{\sum\sum}_{\substack{ \alpha,\, \beta = 0, ..., A-1 \\ \alpha + \beta \leq A-1 }} (i \xi_k)^{-\alpha} (i \zeta_j)^{-\beta}  B_{\alpha, \beta} (\umu, \um) z^{-\alpha} \overline z^{-\beta} + O_{A, \vartheta, n} \lp\fC (\umu, \um)^{2 A} |z|^{-A} \rp \rp,
\end{align*}
with 
\begin{equation*}
B_{\alpha, \beta} (\umu, \um) = B_\alpha \lp \umu + \tfrac 1 2 \um - \tfrac 1 {2 n} |\um| \ue^n \rp B_\beta \lp \umu - \tfrac 1 2 \um + \tfrac 1 {2 n} |\um| \ue^n \rp, \hskip 10 pt \alpha, \beta \in \BN,
\end{equation*}
where $B_\alpha (\ulambda)$ 
is the polynomial function in $\ulambda$ of degree $2 \alpha$ given in Proposition {\rm \ref{8prop: asymptotic}}.
\end{lem}

\begin{proof}


Recall that, for an integer $a$, we defined $\xi _{a, j} = e^{ 2 \pi i \frac { {  j  + a  -  1}  } n}$. Note that $\xi_j = \xi _{0, j}$ and $\zeta_j = \xi _{1-r, j}$. It is clear that
\begin{equation*}
\bigcap_{j=1}^n \BS'_{\xi_{a, j} } (\vartheta) =  \left\{ z : 
- \frac \pi 2  -   \frac { 2a+1 } n \pi + \vartheta <  \arg z < - \frac \pi 2  -    \frac { 2a-3 } n \pi - \vartheta \right \}.
\end{equation*}
We denote this sector by $\BS' _{ a } (\vartheta)$. 
Observe that, when $n = 2 r$ or $2r-1$, the intersection $\BS' _{ 0 } (\vartheta) \cap \overline {\BS' _{ 1-r } (\vartheta) } $ is exactly the sector $\BS_n (\vartheta)$. In other words,  for all $j = 1, ..., n$, $z \in \BS'_{\xi_{ j} } (\vartheta) $ and $ \overline z \in \BS'_{\zeta_{ j} } (\vartheta) $ both hold if $z \in \BS_n (\vartheta)$. Therefore, Proposition \ref{8prop: asymptotic} can be applied to yield the asymptotic expansion of  $J \lp 2 \pi z  ; \umu + \tfrac 1 2 \um; \xi_k \rp J \lp 2 \pi \overline z  ; \umu - \tfrac 1 2 \um; \zeta_j \rp$ as above. 
\end{proof}

\begin{rem}  In view of our choice of $\vartheta$, the sector $\BS_{n} (\vartheta) $ is of angle at least $ \frac 2 {\,n\,} \pi$, and therefore the sector $\BS_{n} (\vartheta)^n = \left\{ z^n : z \in \BS_{n} (\vartheta) \right\}$ covers  the whole  $\BC \smallsetminus \{0\}$. 
	
\end{rem}
\begin{lem} \label{8lem: positive im}
	
		Let notations be as in Lemma {\rm \ref{8lem: asymptotic}}. 
	
	{\rm (1.1).} For $k = 1,..., r$, we have 
	\begin{equation*}
	\Im \lp \xi_{k} z + \zeta_{r-k+1} \overline z \rp = 0.
	\end{equation*}

	{\rm (1.2).} Let $k, j = 1, ... , r$ be such that $k + j \geq r + 2$. For any $z \in \BS_n (\vartheta)$, we have 
	\begin{equation*}
	\Im \lp \xi_{k} z + \zeta_j \overline z \rp \geq 2 \sin \lp\frac {k+j-r-1} n \pi\rp \sin \vartheta \cdot |z|.
	\end{equation*}
	
	{\rm (2.1).} For $k = 1,..., n - r$, we have 
	\begin{equation*}
	\Im \lp \xi_{k+r} z + \zeta_{n-k+1} \overline z \rp = 0.
	\end{equation*}
	
	{\rm (2.2).} Let $k, j = 1, ... , n-r$ be such that $k + j \leq n-r  $. For any $z \in \BS_n (\vartheta)$, we have 
	\begin{equation*}
	\Im \lp \xi_{k + r} z + \zeta_{j+r} \overline z \rp \geq 
	\left\{ \begin{split}
	& 2\sin \lp\frac {n-r -k-j +1} n \pi \rp \sin \vartheta \cdot |z|, \hskip 43 pt \text{ if } n=2r, \\
	& 2\sin \lp\frac {n-r-k-j +1} n \pi \rp \sin \lp \frac \pi n + \vartheta \rp \cdot |z|,  \hskip 10 pt \text{ if } n=2r -1.
	\end{split} \right.
	\end{equation*}
	
\end{lem}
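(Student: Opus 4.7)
The plan is a direct computation. Writing $z = |z|e^{i\phi}$ and applying the sum-to-product identity $\sin A + \sin B = 2\sin\tfrac{A+B}{2}\cos\tfrac{A-B}{2}$ with $A = \phi + 2\pi(k-1)/n$ and $B = -\phi + 2\pi(j-r)/n$, a short calculation gives
\begin{equation*}
\Im(\xi_k z + \zeta_j \overline z) = 2|z|\sin\!\left(\frac{\pi(k+j-r-1)}{n}\right)\cos\!\left(\phi + \frac{\pi(k-j+r-1)}{n}\right),
\end{equation*}
and the identical manipulation with indices shifted by $r$ yields
\begin{equation*}
\Im(\xi_{k+r} z + \zeta_{j+r} \overline z) = 2|z|\sin\!\left(\frac{\pi(k+j+r-1)}{n}\right)\cos\!\left(\phi + \frac{\pi(k-j+r-1)}{n}\right).
\end{equation*}
Parts (1.1) and (2.1) then drop out immediately: when $j = r-k+1$ the first sine factor equals $\sin 0 = 0$, and when $j = n-r-k+1$ the second equals $\sin\pi = 0$.

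For (1.2), the constraints $k, j \in [1, r]$ and $k+j \geq r+2$ force $k, j \geq 2$, hence $k - j \in [2-r, r-2]$ and $k+j-r-1 \in [1, r-1]$; the latter makes the sine factor positive and equal to $\sin(\pi(k+j-r-1)/n)$. To bound the cosine factor from below on $\BS_n(\vartheta)$, I would substitute the explicit endpoints of $\BS_n(\vartheta)$, translate by $\pi(k-j+r-1)/n$, and verify by direct arithmetic that in both cases $n = 2r$ and $n = 2r-1$ the translated argument lies in $[-\pi/2 + \vartheta,\, \pi/2 - \vartheta]$; the two required inequalities simplify to $k-j \geq 2-r$ and $k-j \leq r-2$ (with equality at the extremes $k-j = \pm(r-2)$), which hold by the range above. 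Hence the cosine is at least $\cos(\pi/2 - \vartheta) = \sin\vartheta$, as claimed.

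For (2.2) the strategy is identical, only the range of $k - j$ shrinks. When $n = 2r$, the constraints $k, j \in [1, r]$ and $k+j \leq r$ again force $k - j \in [2-r, r-2]$, and the previous analysis applies verbatim, producing the bound $\sin\vartheta$. When $n = 2r-1$, however, the sector $\BS_n(\vartheta)$ is asymmetric (its nominal center is $-\pi/2 + \pi/(2n)$ rather than $-\pi/2 + \pi/n$), and the constraints now allow only $k - j \in [3-r, r-3]$. A parallel endpoint calculation shows that the translated argument lies in $[-\pi/2 + \pi/n + \vartheta,\, \pi/2 - \pi/n - \vartheta]$, with equality achieved exactly at $k - j = \pm(r-3)$; consequently the cosine factor is only bounded below by $\cos(\pi/2 - \pi/n - \vartheta) = \sin(\pi/n + \vartheta)$, which matches the stated constant. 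The sine factor $\sin(\pi(k+j+r-1)/n) = \sin(\pi(n-k-j-r+1)/n)$ is positive and equals the asserted expression.

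The main (minor) obstacle is precisely the asymmetry of $\BS_n(\vartheta)$ in the odd case $n = 2r-1$, which is what forces the two different constants appearing in part (2.2); everything else is mechanical angular bookkeeping following the two sum-to-product identities above.
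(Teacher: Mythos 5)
Your proposal is correct and follows the same strategy as the paper: factor the sum $\xi_k z + \zeta_j\overline z$ (equivalently, apply sum-to-product to the imaginary parts) to isolate a fixed sine factor times a cosine of a translated argument, bound $|k-j|$ from the index constraints, and then verify by direct arithmetic that the translated argument stays within $[-\pi/2 + c\,\vartheta', \pi/2 - c\,\vartheta']$ for the appropriate constant. The paper writes out only part (1) in the case $n=2r$ and asserts the rest follows similarly; you have carried out the other cases, correctly identifying that in (2.2) with $n = 2r-1$ the admissible range of $k-j$ shrinks to $[3-r, r-3]$ while the $\phi$-half-width drops to $3\pi/(2n) - \vartheta$, and that these two reductions together account exactly for the extra $\pi/n$ in the final constant $\sin(\pi/n + \vartheta)$.
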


\begin{proof}
	We shall only prove (1.1) and (1.2) in the case $n = 2 r$. The other cases follow in exactly the same way.
	
	Write $z = x e^{i \phi}$.  
	Since
	\begin{align*}
	\xi_{k} z + \zeta_j \overline z & = xe^{2 \pi i \frac {k-1} {2r} + i \phi} + xe^{2 \pi i \frac {j-r} {2r} -i \phi} \\
	& = xe^{\pi i \frac {k+j - r - 1} {2r} } \lp e^{ \pi i \frac {k-j+r-1} {2r} + i \phi}  + e^{ - \pi i \frac {k-j+r-1} {2r} - i \phi} \rp,
	\end{align*}
	(1.1) is then obvious (we also note that $\zeta_{r-k+1} = \overline \xi_k $), whereas (1.2) is equivalent to
	\begin{equation}\label{8eq: sin cos}
	\cos \lp  \frac {k-j+r-1} {2 r} \pi + \phi \rp \geq  \sin \vartheta.
	\end{equation}
Observe that the condition $z \in \BS_{2r} (\vartheta)$ amounts to 
\begin{equation*}
\left|\phi + \frac \pi 2  -\frac \pi {2r} \right| < \frac {\pi} r  - \vartheta.
\end{equation*} Moreover, under our assumptions on $k$ and $j$ in (1.2), one has $|k-j| \leq r-2$.    Consequently, these yield  the following estimate
\begin{align*}
\left|\frac {k-j+r-1} {2 r} \pi + \phi \right| \leq \frac {r-2} {2 r} \pi + \frac \pi r  - \vartheta = \frac \pi 2 - \vartheta.
\end{align*}
Thus \eqref{8eq: sin cos} is proven.
\end{proof}

\begin{rem}
	In cases other than those listed in Lemma {\rm \ref{8lem: positive im}}, $\Im \lp \xi_{k} z + \zeta_{j} \overline z \rp  $ can not always be nonnegative for all $z \in \BS_n (\vartheta)$. 
	Fortunately, these cases are excluded from the second connection formula for $J_{(\umu, \um)} (z)$ in Theorem {\rm \ref{7thm: second formula}}.
\end{rem}

Now the asymptotic expansion of $J _{(\umu, \um)} \left(z \right)$ can   be readily established using Theorem \ref{7thm: second formula} along with Lemma \ref{7lem: C and D}, \ref{8lem: asymptotic} and \ref{8lem: positive im}.

\begin{thm}\label{8thm: asymptotic}
Denote by $\BX_n  $ the set of $n$-th roots of unity. Let $(\umu, \um) \in \BL^{n-1} \times \BZ^n$ and define $\mathfrak C (\umu, \um) = \max \left\{ |\mu_l     | + 1, \left|m_l      - \frac 1 n |\um| \right| + 1 \right\}$. Let $A$ be a positive integer. Then
\begin{equation*}
\begin{split}
J _{(\umu, \um)} \left(z^n \right)   = \sum_{\xi \in \BX_n } \frac { e \big( n \big( \xi z +   \overline {\xi z } \big) \big) }   { n |z|^{n-1} [\xi z]^{|\um|} }   
\lp \mathop{\sum\sum}_{\substack{ \alpha,\, \beta = 0, ..., A-1 \\ \alpha + \beta \leq A-1 }} i^{- \alpha - \beta} \xi^{ -\alpha + \beta} B_{\alpha, \beta} (\umu, \um) z^{-\alpha} \overline z^{-\beta} \rp & \\
  \hskip 75 pt + O_{A,  n} \lp\fC (\umu, \um)^{2 A} |z|^{-A-n+1} \rp   & ,
\end{split}
\end{equation*}
if $|z| \ggg_{A, n}  \fC (\umu, \um)^2$, with the coefficient $B_{\alpha, \beta} (\umu, \um)$ given 
in Lemma {\rm \ref{8lem: asymptotic}}. 

\end{thm}

We may also prove the following elaborate version of Theorem \ref{8thm: asymptotic}.

\begin{thm}\label{8thm: asymptotic, complex, 2}
	Let notations be as in  Lemma {\rm \ref{8lem: asymptotic}} and Theorem {\rm \ref{8thm: asymptotic}}.
	Let $\mathfrak I (\umu) = \max \left\{ \left| \Im \mu_l      \right| \right\}$. Then we may write 
	 \begin{align*}
	 J _{(\umu, \um)} \left(z^n \right) = \sum_{\xi \in \BX_n } \frac { e \big( n \big( \xi z +   \overline {\xi z } \big) \big) }   { n |z|^{n-1} [\xi z]^{|\um|} } W_{(\umu, \um)} \lp  z , \xi \rp + E_{(\umu, \um)} (z),
	 \end{align*} 
	 such that
	 \begin{align*}
	 W_{(\umu, \um)} \lp z , \xi \rp =    
	 \mathop{\sum\sum}_{\substack{ \alpha,\, \beta = 0, ..., A-1 \\ \alpha + \beta \leqslant A-1 }} i^{- \alpha - \beta} \xi^{- \alpha - \beta}  B_{\alpha, \beta} (\umu, \um) z^{-\alpha } \overline z^{-\beta }  + O_{A,  n} \lp\fC (\umu, \um)^{2 A} |z|^{-A } \rp ,
	 \end{align*} 
	 and
	 \begin{align*}
	 E_{ (\umu, \um) } (z) = O_{ n} \big( |z|^{- n + 1} \exp \lp 2 \pi \mathfrak I (\umu)   - 4 \pi n \sin \lp \tfrac 1 {\,n\,} \pi \rp \sin   \vartheta   |z| \rp \big),
	 \end{align*} 
	for $z \in  \BS_n (\vartheta) $ with $|z| \ggg_{A, n}  \fC (\umu, \um)^2$.  Moreover, $ E_{ (\umu, \um) } (z) \equiv 0 $ when $n = 1, 2$.
\end{thm}

\delete{
In Theorem \ref{8thm: asymptotic, complex, 2}, $W_{(\umu, \um)} (z) = J  \lp 2 \pi z  ; \umu + \tfrac 1 2 \um; 1 \rp J \lp 2 \pi \overline z  ; \umu - \tfrac 1 2 \um; 1 \rp$. It should be noted that
\begin{align*}
J  \lp 2 \pi z  ; \umu + \tfrac 1 2 \um; \xi_k \rp 
J \big( 2 \pi \overline z  ; \umu - \tfrac 1 2 \um; \overline \xi_k \big) = J  \lp 2 \pi \xi_k z  ; \umu + \tfrac 1 2 \um; 1 \rp 
J \big( 2 \pi \overline \xi_k\overline z  ; \umu - \tfrac 1 2 \um; 1 \big),
\end{align*}
which is due to \cite[Lemma 7.22]{Qi}.
}


\section{Hankel transforms from the representation theoretic viewpoint}\label{sec: Hankel, Ichino-Templier}
We shall start with a brief review of Hankel transforms over an \textit{archimedean} local field\footnote{For a nonarchimedean local field, Hankel transforms can also be constructed in the same way.} in the work of Ichino and Templier \cite{Ichino-Templier} on the \Voronoi summation formula. 
For the theory of $L$-functions and local functional equations over a local field the reader is referred to Cogdell's survey \cite{Cogdell}. We shall then study  Hankel transforms using the Langlands classification.
For this,  Knapp's article \cite{Knapp} is used as our reference, with some change of notations for our convenience.
\vskip 5 pt

Let $\BF$ be an archimedean local field with normalized absolute value $\|\, \| = \|\, \|_{\BF}$ defined as in \S \ref{sec: R+, Rx and Cx}, 
and let $\psi $ be a given additive character on $\BF$.  For $s \in \BC$, let $\omega_s $ denote the character $\omega_s (x) = \|x\|^s$. Let $\eta (x) = \sgn (x)$ for $x \in \BRx$ and  $\eta (z) = [z]$ for $z \in \BCx$. 


Suppose for the moment $n \geq 2$. Let $\pi $ be an infinite dimensional irreducible admissible generic representation of $\GL_n( \BF)$\footnote{Since $\pi $ is a local component of an irreducible cuspidal automorphic representation in \cite{Ichino-Templier},  \cite{Ichino-Templier} also assumes that $\pi$ is unitary. However, if one only considers the local theory, this assumption is not necessary.}, and $\mathcal W (\pi, \psi)$ be the $\psi$-Whittaker model of $\pi$. Denote by $\omega_\pi$ the central character of $\pi$. Recall that the $\gamma$-factor $\gamma (s, \pi , \psi )$ of $\pi$ is given by
\begin{equation*}
\gamma (s, \pi , \psi ) = \epsilon (s, \pi , \psi ) \frac {L(1 - s, \widetilde \pi )} { L(s,\pi ) }
\end{equation*}
where $\widetilde \pi $ is the contragradient representation of $\pi$,  $\epsilon (s, \pi , \psi )$ and $L( s, \pi)$ are the $\epsilon$-factor and the $L$-function of $\pi$ respectively. 

To a smooth compactly supported function $w$ on $\BF^{\times}$ we associate a dual function $\widetilde w$ on $\BF^{\times}$ defined by \cite[(1.1)]{Ichino-Templier},
\begin{equation}\label{4eq: Ichino-Templier}
\begin{split}
  \int_{\BF^\times} \widetilde w (x) \chiup (x)\- & \|x\|^{s - \frac {n-1} 2} d^\times x \\
& = \chiup (-1)^{n-1} \gamma (1-s, \pi \otimes \chiup, \psi ) \int_{\BF^\times} w (x) \chiup (x) \|x\|^{1 - s - \frac {n-1} 2} d^\times x,
\end{split}
\end{equation}
for all $s$ of real part sufficiently large and all unitary multiplicative characters $\chiup $ of $\BF^\times$. (\ref{4eq: Ichino-Templier}) is independent of the chosen Haar measure $d^\times x$ on $\BF^\times$, and uniquely defines $\widetilde w $ in terms of $\pi$, $\psi$ and $w$. We shall let the Haar measure be given as in \S \ref{sec: R+, Rx and Cx}. We call $\widetilde w $ the Hankel transform of $w$ associated with $\pi$.

According to \cite[Lemma 5.1]{Ichino-Templier}, there exists a smooth Whitaker function $W \in \mathcal W (\pi, \psi)$ so that
\begin{equation}\label{4eq: w = Whittaker}
w(x) = W   
\begin{pmatrix}
x & \\
  & I_{n-1}
\end{pmatrix},
\end{equation}
for all $x \in \BF^\times$. Denote by $\varw_n$ the  $n$-by-$n$ permutation matrix whose anti-diagonal entries are $1$, that is, the longest Weyl element of rank $n$,
and define
\begin{equation*}
\varw_{n, 1} = \begin{pmatrix}
1 & \\
  & \varw_{n-1}
\end{pmatrix}.
\end{equation*}
In the theory of integral representations of Rankin-Selberg $L$-functions, \eqref{4eq: Ichino-Templier} amounts to the local functional equations of 
zeta integrals for $\pi \otimes \chiup$, with
\begin{equation}\label{4eq: tilde w for n = 2}
\widetilde w (x) = \widetilde W 
\begin{pmatrix}
x & \\
  & 1
\end{pmatrix}  = 
W \lp \varw_2
\begin{pmatrix}
x\- & \\
  & 1
\end{pmatrix}\rp,
\end{equation}
if $n = 2$, and
\begin{equation}\label{4eq: tilde w for n > 2}
\widetilde w (x) = \int_{\BF^{n-2}} \widetilde W \lp
\begin{pmatrix}
x &  &  \\
y & I_{n-2} & \\
  &  & 1
\end{pmatrix} \varw_{n, 1}
\rp d y_{\psi},
\end{equation}
if $n \geq 3$, where $\widetilde W \in \mathcal W(\widetilde \pi, \psi\-)$ is the dual Whittaker function defined by $\widetilde W (g) = W(\varw_n \cdot {^t g\-})$, for $g \in \GL_n( \BF)$, and $d x_\psi$ denotes the self-dual additive Haar measure on $\BF$ with respect to $\psi$. See \cite[Lemma 2.3]{Ichino-Templier}.

\vskip 5 pt

The constraint that $\pi$ be \textit{infinite dimensional} and {\it generic} is actually dispensable for defining the Hankel transform via \eqref{4eq: Ichino-Templier}. In the following, we shall assume that $\pi$ is any irreducible admissible representation of $\GL_n(\BF)$. Moreover, we shall also include the case $n=1$. It will be seen that, after renormalizing  the functions $w$ and $\widetilde w$, the Hankel transform defined by \eqref{4eq: Ichino-Templier} turns into the Hankel transform given by \eqref{3eq: Hankel transform identity, R} or \eqref{3eq: Hankel transform identity, C}.
For this, we shall apply the Langlands classification for irreducible admissible representations of $\GL_n(\BF)$.

\subsection{Hankel transforms over $\BR$} 

 Suppose $\BF = \BR$. Recall that $\|\, \|_\BR = |\ |$ is the ordinary absolute value. 
For $  r     \in \BR^\times$ let $\psi (x) = \psi_  r     (x) = e (  r     x)$.

According to \cite[\S 3, Lemma]{Knapp}, every finite dimensional semisimple representation $\varphi$ of the Weil group of $\BR$ may be decomposed into irreducible representations of dimension one or two. The one-dimensional representations are parametrized by $ (\mu, \delta) \in \BC \times \BZT$. We denote by $\varphi_{(\mu, \delta)}$ the representation given by $(\mu, \delta)$. $\varphi_{(\mu, \delta)}$ corresponds to the representation $\chiup_{(\mu, \delta)} = \omega_{\mu} \eta^{\delta}   $ of $\GL_1( \BR)$ under the Langlands correspondence over $\BR$. The irreducible two-dimensional representations are parametrized by $(\mu, m) \in \BC \times \BN_+$. We denote by $\varphi_{( \mu, m)}$ the representation given by $( \mu, m)$. $\varphi_{( \mu, m)}$ corresponds to the representation $\sigma(m) \otimes \omega_{\mu}(\det) $ of $\GL_2( \BR)$, where $\sigma(m)$ denotes the discrete series representation of weight $m$. 

In view of the formulae \cite[(3.6, 3.7)]{Knapp}\footnote{The formulae in \cite[(3.6, 3.7)]{Knapp} are for $\psi_1$. The relation between the epsilon factors $\epsilon (s, \pi , \psi_r )$ and $\epsilon (s, \pi , \psi )$ is given in \cite[\S 3]{Tate} (see in particular \cite[(3.6.6)]{Tate}).} of   $L$-functions and $\epsilon$-factors, the definitions  of $G_\delta$ and $G_m$ in \eqref{1def: G delta} and \eqref{1def: G m (s)}, along with the formula \eqref{1f: G m (s) = G 1 G delta(m)}, we deduce that
\begin{equation}\label{4eq: gamma, character}
\begin{split}
 \gamma ( s, \varphi_{( \mu, \delta)} , \psi)  
= \sgn (  r     )^{\delta}  |  r     |^{s  + \mu - \frac 1 2 } G_{\delta} (1 - s  - \mu),
\end{split}
\end{equation} 
whereas
\begin{equation}\label{4eq: gamma, discrete}
\begin{split}
\gamma (s, \varphi_{( \mu, m)} , \psi) & = \sgn (  r     )^{\delta(m)+1} |  r     |^{ 2s + 2\mu - 1} i G_m(1 - s - \mu),
\end{split}
\end{equation}
and
\begin{equation}\label{4eq: gamma, discrete, 2}
\begin{split}
\gamma (s, \varphi_{( \mu, m)} , \psi) & = \gamma  ( s, \varphi_{ ( \mu + \frac 12 {m} ,\, \delta(m) + 1  )} , \psi ) \gamma ( s, \varphi_{ ( \mu - \frac 12 {m} ,\, 0  )} , \psi ) \\
& = \gamma  ( s, \varphi_{( \mu + \frac 12 {m} ,\, \delta (m) )} , \psi ) \gamma  ( s, \varphi_{( \mu - \frac 12 {m} ,\, 1 )} , \psi ).
\end{split}
\end{equation}
To $ \varphi_{( \mu, m)}$ we shall attach either one of the following two parameters 
\begin{equation}\label{4eq: two parameters, discrete}
\textstyle \big( \mu + \frac 12 {m} , \mu - \frac 12 {m} , \delta (m) + 1, 0 \big), \ \big(\mu + \frac 12{m}  , \mu - \frac 12 {m}  , \delta (m), 1 \big).
\end{equation}

\begin{rem}\label{4rem: discrete series}
\eqref{4eq: gamma, discrete, 2} reflects the isomorphism $\varphi_{( 0, m)} \otimes \varphi_{( 0, 1)} \cong \varphi_{( 0, m)}$ of representations of the Weil group {\rm ({\it here $(0, 1)$ is an element of $\BC \times \BZT$})}, as well as 
the isomorphism $\sigma(m) \otimes \eta \cong \sigma(m)$ of representations of $\GL_2(\BR)$. 
\end{rem}

For $\varphi$ reducible, 
$\gamma (s, \varphi , \psi)$ is the product of the $\gamma$-factors of  the irreducible constituents of $\varphi$. Suppose $\varphi$ is $n$-dimensional. It follows from (\ref{4eq: gamma, character}, \ref{4eq: gamma, discrete}, \ref{4eq: gamma, discrete, 2}) that there is a parameter $(\umu, \udelta) \in \BC^{n} \times (\BZT)^n$ attached to $\varphi$ such that
\begin{equation}\label{4eq: gamma factor of phi}
\gamma ( s, \varphi, \psi) = \sgn(  r     )^{|\udelta|} |  r     |^{n\lp s - \frac 1 2 \rp + |\umu|} G_{(\umu, \udelta)} (1 - s ).
\end{equation}
The irreducible constituents of $\varphi$ are unique up to permutation, but, in view of the two different parameters  attached to $\varphi_{(\mu, m)}$ in \eqref{4eq: two parameters, discrete}, the  parameter $(\umu, \udelta)$ attached to $\varphi$ may not.

Suppose that $\pi$ corresponds to $\varphi $ under the Langlands correspondence over $\BR$. We have $\gamma ( s, \pi, \psi) = \gamma ( s, \varphi, \psi)$. It is known that $\pi$ is an irreducible constituent of the principal series representation unitarily induced from the character $ \bigotimes _{l      = 1}^n \chiup_{(\mu_l     , \delta_l     )}$ of the Borel subgroup. 
In particular, 
\begin{align}\label{4eq: central char, R}
\omega_\pi (x) =  \omega_{|\umu|} (x) \eta^{|\udelta|} (x) = \sgn (x)^{|\udelta|} |x|^{|\umu|}.
\end{align}

Now let $\chiup = \chiup_{(0,\delta)} = \eta^{\delta}$ in \eqref{4eq: Ichino-Templier}, $\delta \in \BZT$.
In view of \eqref{4eq: gamma factor of phi} and \eqref{4eq: central char, R}, one has the following expression of the $\gamma$-factor in   \eqref{4eq: Ichino-Templier},
\begin{equation}\label{4eq: gamma factor real case}
\gamma (1-s, \pi \otimes \eta^{\delta} , \psi) = \omega_\pi (  r     ) \big( \sgn (  r     )^{ \delta} |  r     |^{ \frac 1 2 - s} \big)^n  G_{(\umu, \udelta + \delta \ue^n)}  (s ).
\end{equation}
Some calculations show that (\ref{4eq: Ichino-Templier}) is exactly translated into \eqref{3eq: Hankel transform identity, R} if one let
\begin{equation}\label{4eq: upsilon and w, real case}
\begin{split}
\upsilon (x) & =  \omega_\pi (  r     ) w \big(  |  r     |^{- \frac n 2}x \big) |x|^{ - \frac {n-1} 2}, \\
 \Upsilon (x) & = \widetilde w \big( (-)^{n-1} \sgn (  r     )^{ n } |  r     |^{- \frac n 2}x \big) |x|^{ - \frac {n-1} 2}.
\end{split}
\end{equation}
Then, \eqref{3eq: Hankel transform, with Bessel kernel, R} can be reformulated as
\begin{equation}\label{4eq: Hankel transform tilde w and w, real case}
\widetilde w \big((-)^{n-1} x \big) = \omega_\pi (  r     ) |  r     |^{\frac n 2} |x|^{\frac {n-1} 2} \int_{\BR ^\times} w (y) J_{(\umu, \udelta)} (   r     ^n xy ) |y|^{1 - \frac {n-1} 2} d^\times y.
\end{equation}

\subsection{Hankel transforms over $\BC$} \label{sec: Hankel transform over C}
Suppose $\BF = \BC$. Recall that $\|\, \|_\BC = \|\, \| = |\  |^2$, where $|\  |$ denotes the ordinary absolute value. 
For $  r     \in \BC^\times$ let $\psi (z) = \psi_  r     (z) = e (  r     z + \overline {  r     z})$.

The Langlands classification and correspondence for $\GL_n( \BC)$ are less complicated. First of all,
the Weil group of $\BC$ is simply $\BC^\times$. Any $n$-dimensional semisimple representation $\varphi$ of the Weil group $\BC^\times$ is the direct sum of one-dimensional representations. The one-dimensional representations are of the form $\chiup_{( \mu, m)}  = \omega_{\mu} \eta^m  $, with $ ( \mu, m) \in \BC \times \BZ.$
In view of the formulae \cite[(4.6, 4.7)]{Knapp} of $L(s, \chiup_{( \mu, m)})$ and $\epsilon (s, \chiup_{( \mu, m)}, \psi)$ as well as the definition  of  $G_m$ in \eqref{1def: G m (s)}, we have
\begin{equation}
\gamma (s, \chiup_{( \mu, m)}, \psi) = [  r     ]^m \|  r     \|^{ s + \mu - \frac 1 2 } G_m (1-s-\mu).
\end{equation}
Thus $\varphi$ is parametrized by some $(\umu, \um)\in \BC^{n} \times \BZ^n$ and
\begin{equation}\label{4eq: gamma factor of chi, complex case}
\gamma (s,  \varphi , \psi) = [  r     ]^{|\um|}  \|  r     \|^{n \lp s - \frac 1 2\rp + |\umu|} G_{(\umu, \um)} (1-s).
\end{equation}
This parametrization is unique up to permutation, in contrast to the case $\BF = \BR$.

If $\pi$ corresponds to $\varphi $ under the Langlands correspondence over $\BC$, then one has $\gamma ( s, \pi, \psi) = \gamma ( s, \varphi, \psi)$. Moreover, $\pi$ is an irreducible constituent of the principal series representation unitarily induced from the character $ \bigotimes _{l      = 1}^n \chiup_{(\mu_l     , m_l     )}$ of the Borel subgroup. Note that 
\begin{align}\label{4eq: central char, C}
\omega_\pi  (z) = \omega_{|\umu|} (z) \eta^{|\um|} (z) = [z]^{|\um|} \|z\|^{|\umu|}. 
\end{align}

Now let $\chiup = \chiup_{(0, m)} =\eta^m$ in \eqref{4eq: Ichino-Templier}, $m \in \BZ $. Then \eqref{4eq: gamma factor of chi, complex case}  and \eqref{4eq: central char, C} imply
\begin{equation}\label{4eq: gamma factor, complex case}
\gamma (1-s,  \pi \otimes \eta^m, \psi) = \omega_\pi (  r     ) \big(  [  r     ]^m \|  r     \|^{ \frac 1 2 - s } \big)^n  G_{(\umu, \um + m \ue^n)} (s).
\end{equation}
By putting  
\begin{equation}\label{4eq: upsilon = w Upsilon = tilde w, complex}
\begin{split}
\upsilon (z) &= \omega_\pi (  r     ) w \big( \|  r     \|^{- \frac n 2} z \big) \|z\| ^{ - \frac {n-1} 2}, \\
 \Upsilon (z) &=   \widetilde w \big( (-)^{n-1} [  r     ]^{- n} \|  r     \|^{-  \frac n 2}  z \big) \|z\| ^{ - \frac {n-1} 2},
\end{split}
\end{equation}
the identity (\ref{4eq: Ichino-Templier}) is translated into \eqref{3eq: Hankel transform identity, C}, and \eqref{3eq: Hankel transform, with Bessel kernel, C} can be reformulated as
\begin{equation}\label{4eq: Hankel transform tilde w and w, complex case}
\widetilde w \lp (-)^{n-1}  z \rp = \omega_\pi (  r     )  \|  r     \|^{\frac n 2} \| z\|^{\frac {n-1} 2} \int_{\BC^\times } w \lp u \rp J_{(\umu, \um)} (   r     ^n z u) \| u\|^{1 - \frac {n-1} 2} d^\times u.
\end{equation}



\subsection{Some new notations} Let $\pi$ be an irreducible admissible representation of $\GL_n(\BF)$. 
For $\BF = \BR$, respectively $\BF = \BC$, if $\pi$ is parametrized by $(\umu, \udelta)$, respectively $(\umu, \um)$, we shall denote  simply by $J_{\pi}$ the Bessel kernel $J_{(\umu, \udelta)}$, respectively $J_{(\umu, \um)}$.
Thus, \eqref{4eq: Hankel transform tilde w and w, real case} and \eqref{4eq: Hankel transform tilde w and w, complex case} can be uniformly combined into one formula
\begin{equation}\label{4eq: Hankel transform tilde w and w}
\widetilde w \lp (-)^{n-1}  x \rp = \omega_\pi (  r     )  \|  r     \|^{\frac n 2} \| x\|^{\frac {n-1} 2} \int_{\BF^\times } w \lp y \rp J_{\pi} (   r     ^n x y) \| y\|^{1 - \frac {n-1} 2} d^\times y.
\end{equation}

Proposition \ref{3prop: properties of J, R} (1) and \ref{3prop: properties of J, C} (1) are translated into the following lemma. 
\begin{lem}\label{4lem: normalizing the index}
Let $\pi$ be an irreducible admissible representation of $\GL_n(\BF)$, and let $\chiup $ be a character on $\BFx $. We have
$
J_{\chiup \otimes \pi}(x) = \chiup \- (x) J_{\pi} (x).
$
\end{lem}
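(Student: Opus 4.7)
The plan is to identify the Langlands parameter of the twist $\chiup \otimes \pi$ and then invoke the homogeneity of the Bessel kernels under index shifts, which has already been recorded in Proposition \ref{3prop: properties of J, R} (1) and Proposition \ref{3prop: properties of J, C} (1).

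First I would observe that any character $\chiup$ of $\BF^\times$ is, in the notation of \S \ref{sec: Hankel, Ichino-Templier}, of the form $\chiup_{(\mu, \delta)} = \omega_\mu \eta^\delta$ with $(\mu, \delta) \in \BC \times \BZT$ when $\BF = \BR$, and of the form $\chiup_{(\mu, m)} = \omega_\mu \eta^m$ with $(\mu, m) \in \BC \times \BZ$ when $\BF = \BC$. Next, as recalled in \S \ref{sec: Hankel, Ichino-Templier}, a representation $\pi$ with Langlands parameter $(\umu, \udelta)$ (respectively $(\umu, \um)$) arises as an irreducible constituent of the principal series unitarily induced from the character $\bigotimes_{l = 1}^n \chiup_{(\mu_l, \delta_l)}$ (respectively $\bigotimes_{l = 1}^n \chiup_{(\mu_l, m_l)}$) of the Borel subgroup. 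Twisting the whole induction by $\chiup$ simply translates each inducing character, so $\chiup \otimes \pi$ is a constituent of the principal series attached to $\bigotimes_{l = 1}^n \chiup_{(\mu_l + \mu,\, \delta_l + \delta)}$ (respectively $\bigotimes_{l = 1}^n \chiup_{(\mu_l + \mu,\, m_l + m)}$). Hence $\chiup \otimes \pi$ has Langlands parameter $(\umu + \mu \ue^n, \udelta + \delta \ue^n)$ (respectively $(\umu + \mu \ue^n, \um + m \ue^n)$).

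To conclude, I would feed these shifted parameters into the homogeneity identities. Proposition \ref{3prop: properties of J, R} (1), applied with $\mu$ and $\delta$ replaced by $-\mu$ and $-\delta$, yields in the real case
\[
J_{\chiup \otimes \pi}(x) = J_{(\umu + \mu \ue^n,\, \udelta + \delta \ue^n)}(x) = |x|^{-\mu} \sgn(x)^{\delta} J_\pi(x) = \chiup\-(x)\, J_\pi(x),
\]
using that $\sgn(x)^{\delta} = \sgn(x)^{-\delta}$ in $\BZT$. An identical manipulation based on Proposition \ref{3prop: properties of J, C} (1) handles the complex case, giving $J_{\chiup \otimes \pi}(z) = \|z\|^{-\mu} [z]^{-m} J_\pi(z) = \chiup\-(z) J_\pi(z)$.

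The only point needing a word of comment is the non-uniqueness of the real Langlands parameter caused by the two admissible choices \eqref{4eq: two parameters, discrete} for each two-dimensional factor $\varphi_{(\mu, m)}$. This does not cause a problem because both choices are already known to produce the same Bessel kernel $J_\pi$ (this is built into the definition of $J_\pi$ via the relations in Lemma \ref{1lem: complex and real gamma factors}), and the twist by $\chiup$ acts identically on the two parameter choices, so the conclusion is independent of which representative is picked. This step is routine rather than an obstacle; the substantive content is the identification of the shifted Langlands parameter, which is immediate from the induced-representation description of the correspondence.
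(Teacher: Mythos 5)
Your proof is correct and matches the paper's intended approach: the paper leaves this lemma unproved, treating it as an immediate consequence of the homogeneity identities in Proposition \ref{3prop: properties of J, R} (1) and Proposition \ref{3prop: properties of J, C} (1) once one observes that twisting $\pi$ by $\chiup$ shifts the Langlands parameter by $(\mu\ue^n,\delta\ue^n)$ (resp.\ $(\mu\ue^n,m\ue^n)$), exactly as you spell out. Your closing remark about the two admissible real parameter choices for a two-dimensional factor being transported identically by the twist is a sensible sanity check, though indeed routine.
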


\begin{rem}\label{4rem: G/Z}
Let $\mathrm Z_n$ denote the center of $\GL_n$. In view of Lemma {\rm \ref{4lem: normalizing the index}}, no generality will be lost if one only considers $J_{\pi}$ for irreducible admissible  representations $\pi$ of $\GL_n (\BF)/ \mathrm Z_n(\BR _+)$.
\end{rem}

Let $\varphi$ be the $n$-dimensional semisimple representation of the Weil group of $\BF$ corresponding to $\pi$ under the Langlands correspondence over $\BF$.

If $\BF = \BR$,  the function space $\Ssis^{(\umu, \udelta)} (\BRx)$ depends on the choice of the parameter $(\umu, \udelta)$ attached to $\varphi$, if some discrete series $\varphi_{(\mu, m)}$ occurs in its decomposition. Thus, one needs to redefine the function spaces  for Hankel transforms according to the Langlands classification rather than   the above parametrization. For this, let $n_1, n_2 \in \BN$, $(\umu^1, \udelta^1) \in \BC^{n_1} \times (\BZT)^{n_1}$ and $(\umu^2, \um^2) \in \BC^{n_2} \times \BN_+ ^{n_2}$ be such that $n_1 + 2 n_2 = n$ and $\varphi = \bigoplus_{l      = 1}^{n_1} \varphi_{(\mu^1_{l     },\, \delta^1_{l     })} \oplus \bigoplus_{l      = 1}^{n_2} \varphi_{(\mu^1_{l     },\, m^2_{l     })}$.
We define the function space $\Ssis^{\pi} (\BRx) = \Ssis^{\varphi} (\BRx)$ to be
\begin{equation}\label{4eq: Ssis phi, R}
 \Ssis^{(- \umu^1,\, \udelta^1)} (\BRx) + \sum_{\delta \in \BZT} \sgn(x)^\delta \Ssis^{( - \umu^2 + \frac 1 2 \um^2, \boldsymbol 0 )} (\BRx),
\end{equation}
where $\Ssis^{( \umu ,\, \udelta)} (\BRx)$ is defined by \eqref{3eq: Ssis (lambda, delta), R}.

\begin{lem}\label{4lem: Ssis pi}
$ \Ssis^{\pi} (\BRx)$ is the sum of $\Ssis^{(- \umu, \udelta)} (\BRx)$ for all the parameters $(\umu, \udelta)$ attached to $\pi$.
\end{lem}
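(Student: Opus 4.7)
The plan is to reduce the asserted equality to the individual constituents of $\varphi$ in the Langlands decomposition and then verify the rank-two discrete series case by a direct manipulation of the spaces $\Ssis^{(\umu, \udelta)}(\BRx)$.

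By the definition \eqref{3eq: Ssis (lambda, delta), R}, the space $\Ssis^{(\umu, \udelta)}(\BRx)$ is additive under concatenation of the index $(\umu, \udelta)$ up to possible higher powers of $\log|x|$ arising when positions coincide under $\sim$. Since the principal series block contributes a single attached parameter $(\umu^1, \udelta^1)$, identical on both sides of the asserted equality, and since every parameter attached to $\pi$ has the same principal series portion, it suffices to establish the lemma separately on each two-dimensional component $\varphi_{(\mu, m)}$. For such a component the two parameters listed in \eqref{4eq: two parameters, discrete} are $A = \bigl(\mu + \frac{m}{2},\, \mu - \frac{m}{2},\, \delta(m)+1,\, 0\bigr)$ and $B = \bigl(\mu + \frac{m}{2},\, \mu - \frac{m}{2},\, \delta(m),\, 1\bigr)$.

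Because $m \equiv \delta(m) \pmod 2$, the four pairs $(\mu \pm \frac{m}{2}, *)$ appearing in $A$ or $B$ are pairwise inequivalent under $\sim$, so \eqref{3eq: Ssis (lambda, delta), R} yields
\begin{align*}
\Ssis^{-A}(\BRx) &= \sgn(x)^{\delta(m)+1} |x|^{\mu + m/2} \SS(\BR) + |x|^{\mu - m/2} \SS(\BR), \\
\Ssis^{-B}(\BRx) &= \sgn(x)^{\delta(m)} |x|^{\mu + m/2} \SS(\BR) + \sgn(x)\, |x|^{\mu - m/2} \SS(\BR).
\end{align*}
Since $\{\delta(m), \delta(m)+1\} = \BZT$, summing the two lines produces
$$
\Ssis^{-A}(\BRx) + \Ssis^{-B}(\BRx) = \sum_{\delta \in \BZT} \sgn(x)^\delta \bigl( |x|^{\mu + m/2} \SS(\BR) + |x|^{\mu - m/2} \SS(\BR) \bigr).
$$

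The main step is then to absorb the $|x|^{\mu + m/2}$ contribution into $\sum_{\delta} \sgn(x)^\delta |x|^{\mu - m/2} \SS(\BR)$, which is precisely the discrete-series summand $\sum_{\delta} \sgn(x)^\delta \Ssis^{(-\mu + m/2,\, 0)}(\BRx)$ appearing on the right of \eqref{4eq: Ssis phi, R}. The key algebraic identity is $|x|^m = \sgn(x)^m x^m$: for every $f \in \SS(\BR)$ and $m \in \BN_+$,
$$
|x|^{\mu + m/2} f(x) = |x|^{\mu - m/2}\, \sgn(x)^m \bigl( x^m f(x) \bigr) \in \sum_{\delta \in \BZT} \sgn(x)^\delta |x|^{\mu - m/2} \SS(\BR),
$$
since $x^m f(x) \in \SS(\BR)$. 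Combining this with the obvious reverse inclusion settles the rank-two case, and concatenation across the constituents of $\varphi$ yields the lemma. The main obstacle I anticipate is the bookkeeping of logarithmic enhancements that appear when positions coincide across different constituents; I expect these to cancel out by a symmetry argument, since such coincidences multiply both sides identically.
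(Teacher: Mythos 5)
Your proof is correct and takes essentially the same route as the paper's: both hinge on the identity $|x|^m = \sgn(x)^m x^m$, giving the inclusion $|x|^{\mu+m/2}\SS(\BR) \subset \sgn(x)^m |x|^{\mu-m/2}\SS(\BR)$, applied componentwise to each discrete-series block, and both leave the concatenation bookkeeping (log-power coincidences across constituents) to the reader.

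One small slip: the claim that the four pairs $(\mu \pm \tfrac m2, *)$ occurring in $A$ and $B$ are \emph{pairwise} inequivalent under $\sim$ is false. For instance, $(\mu - \tfrac m2, 0)$ from $A$ and $(\mu + \tfrac m2, \delta(m))$ from $B$ differ by $m - \delta(m) \in 2\BZ$ and so are equivalent; likewise $(\mu + \tfrac m2, \delta(m)+1)$ and $(\mu - \tfrac m2, 1)$. What your two displayed decompositions of $\Ssis^{-A}(\BRx)$ and $\Ssis^{-B}(\BRx)$ actually need is only inequivalence \emph{within} the single tuple $A$ (and separately within $B$), which does hold since $m - \delta(m) - 1$ is odd. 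Since $\Ssis^{(\umu,\udelta)}$ is assembled from a single tuple, the cross-parameter coincidences never enter, so the argument is unaffected; just restate the hypothesis accordingly.
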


\begin{proof}
For $\delta \in \BZT$ and $j \in \BN$, we have the inclusion $$\sgn (x)^{\delta + \delta(m)} |x|^{\mu + \frac 12 m } (\log |x|)^j \SS (\BR) \subset \sgn (x)^{\delta} |x|^{\mu - \frac 12 m  } (\log |x|)^j \SS (\BR).$$
It follows that
\begin{equation*}
\begin{split}
& \sum_{\delta \in \BZT} \lp \sgn (x)^{\delta + \delta(m)} |x|^{\mu + \frac 12 m } (\log |x|)^j \SS (\BR) + \sgn (x)^{\delta + 1} |x|^{\mu - \frac 12 m } (\log |x|)^j \SS (\BR) \rp\\
& = \sum_{\delta \in \BZT} \sgn (x)^{\delta } |x|^{\mu - \frac 12 m } (\log |x|)^j \SS (\BR).
\end{split}
\end{equation*}
Then it is easy to verify this lemma by definitions.
\end{proof}

If $\BF = \BC$, we put
\begin{equation}\label{4eq: Ssis phi, C}
\Ssis^{\pi} (\BCx) = \Ssis^{\varphi} (\BCx) = \Ssis^{(- \umu ,\, -\um)} (\BCx).
\end{equation}

Let $d = [\BF : \BR]$. For each character $\chiup$ on $\BFx/\BR _+$ we define the Mellin transform $\EM_{\chiup}$ of a function $\upsilon \in \Ssis (\BFx)$ by 
\begin{equation} \label{4def: Mellin transform over F}
\EM _{\chiup} \upsilon (s) = \int_{\BF^\times} \upsilon (x) \chiup (x) \|x\|^{\frac 1 d s } d^\times x.
\end{equation} 

\begin{thm}
Let $\pi$ be an irreducible admissible representation of $\GL_n(\BF)$. Suppose $\upsilon \in \Ssis^{\pi} (\BFx)$. Then there exists a unique $\widetilde \upsilon \in  \Ssis^{\widetilde \pi} (\BFx)$ satisfying the following identity
\begin{equation*}
\EM_{\chiup\-} \widetilde \upsilon (d s) = \gamma (1-s, \pi \otimes \chiup, \psi_1) \EM_{\chiup } \upsilon (d (1-s))
\end{equation*}
for all  characters $\chiup$ on $\BFx/\BR _+$. We write $\EH_{\pi} \upsilon  = \widetilde \upsilon$ and call $\widetilde \upsilon$ the normalized Hankel transform of  $\upsilon$ over $\BFx$ associated with $\pi$. Moreover, we have the Hankel inversion formula
\begin{equation*} 
\EH_{\pi} \upsilon  = \widetilde \upsilon, \hskip 10 pt \EH_{\widetilde \pi} \widetilde \upsilon  = \upsilon.
\end{equation*}
\end{thm}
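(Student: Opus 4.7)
The plan is to reduce the theorem to Propositions~\ref{3prop: H (lambda, delta)} and~\ref{3prop: H (mu, m)} by translating the defining identity, phrased in terms of $\gamma$-factors, into the Mellin-transform formulation of the earlier Hankel transforms. The characters of $\BFx/\BR_+$ are $\eta^\delta$ ($\delta \in \BZT$) when $\BF = \BR$ and $\eta^m$ ($m \in \BZ$) when $\BF = \BC$, and a comparison of \eqref{1def: signed Mellin transform}, \eqref{1def: Mellin transform over complex} with \eqref{4def: Mellin transform over F} gives $\EM_{\eta^\delta} = \EM_\delta$ and $\EM_{\eta^m} = \EM_m$. I would then choose any parameter $(\umu, \udelta)$ (respectively $(\umu, \um)$) attached to the Langlands parameter of $\pi$; setting $r=1$ in \eqref{4eq: gamma factor real case} and \eqref{4eq: gamma factor, complex case} yields
\begin{align*}
\gamma(1-s, \pi \otimes \eta^\delta, \psi_1) & = G_{(\umu, \udelta + \delta \ue^n)}(s),\\
\gamma(1-s, \pi \otimes \eta^m, \psi_1) & = G_{(\umu, \um + m\ue^n)}(s),
\end{align*}
so the defining identity of the theorem collapses to \eqref{3eq: Hankel transform identity, R} in the real case and to \eqref{3eq: Hankel transform identity, C} in the complex case.

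Next I would decompose $\upsilon \in \Ssis^\pi(\BFx)$: in the real case, use the description \eqref{4eq: Ssis phi, R} together with Lemma~\ref{4lem: Ssis pi} to write $\upsilon = \sum_k \upsilon_k$ with $\upsilon_k \in \Ssis^{(-\umu_k, \udelta_k)}(\BRx)$ for various parameters $(\umu_k, \udelta_k)$ of $\pi$; in the complex case, \eqref{4eq: Ssis phi, C} gives directly $\upsilon \in \Ssis^{(-\umu, -\um)}(\BCx)$. Apply $\EH_{(\umu_k, \udelta_k)}$ or $\EH_{(\umu, \um)}$ from Theorems~\ref{3prop: H (lambda, delta)} and~\ref{3prop: H (mu, m)} to each summand and add the outputs to define $\widetilde\upsilon$. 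Since the contragredient $\widetilde \pi$ is parametrised by $(-\umu_k, \udelta_k)$ (resp.\ $(-\umu, -\um)$), each piece $\EH_{(\umu_k, \udelta_k)}\upsilon_k$ lies in $\Ssis^{(\umu_k, \udelta_k)}(\BRx) \subset \Ssis^{\widetilde\pi}(\BRx)$, and similarly in the complex case, so that $\widetilde\upsilon \in \Ssis^{\widetilde\pi}(\BFx)$. Uniqueness of $\widetilde\upsilon$ and independence of the chosen decomposition both reduce to the injectivity of $\EM_{\BR}$ on $\Ssis(\BRx)$ (Lemma~\ref{2lem: Ssis to Msis, R}) and of $\EM_{\BC}$ on $\Ssis(\BCx)$ (Lemma~\ref{2lem: Ssis to Msis, C}): a function in $\Ssis^{\widetilde\pi}(\BFx)$ is determined by the collection of its Mellin transforms against characters of $\BFx/\BR_+$, and that collection is uniquely fixed by the defining identity. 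The inversion formula is then obtained by applying \eqref{3eq: Hankel inversion, R} or \eqref{3eq: Hankel inversion, C} piece by piece.

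The one delicate point will be the real case when $\pi$ has discrete-series constituents, for the parameter $(\umu, \udelta)$ is then non-unique: a single $\varphi_{(\mu, m)}$ admits the two choices listed in \eqref{4eq: two parameters, discrete}. However, \eqref{4eq: gamma, discrete, 2} guarantees that both yield the same $\gamma$-factor, and a short direct calculation using \eqref{1f: G m (s) = G 1 G delta(m)} shows that both in fact produce identical factors $G_{(\umu, \udelta + \delta \ue^n)}(s)$ for every $\delta \in \BZT$. Combined with Lemma~\ref{4lem: Ssis pi} on the function-space side, this ensures that the Hankel transforms $\EH_{(\umu, \udelta)}$ arising from the two parameters agree on their common domain, so that the construction of $\widetilde\upsilon$ is unambiguous and the theorem follows.
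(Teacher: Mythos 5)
Your proposal is correct and follows essentially the same route as the paper's (very terse) proof: reduce to Theorems~\ref{3prop: H (lambda, delta)} and~\ref{3prop: H (mu, m)} by translating the $\gamma$-factor identity into the Mellin-transform formulation at $r=1$, and in the real case use Lemma~\ref{4lem: Ssis pi} to handle the non-uniqueness of the parametrization when discrete-series constituents occur. The additional verification you carry out—that both parameter choices in \eqref{4eq: two parameters, discrete} yield identical factors $G_{(\umu,\udelta+\delta\ue^n)}(s)$ for each $\delta\in\BZT$ via \eqref{1f: G m (s) = G 1 G delta(m)}—is a useful explicit check that the paper leaves implicit.
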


\begin{proof}
If $\BF = \BR$, 
this follows from  Theorem  \ref{3prop: H (lambda, delta)}, combined with Lemma \ref{4lem: Ssis pi}. If $\BF = \BC$, this is simply a translation of Theorem \ref{3prop: H (mu, m)}.
\end{proof}


\section{Bessel functions for $\GL_2 (\BF)$}\label{sec: Bessel, GL2(F)}


Let $n=2$ and retain the notations from \S \ref{sec: Hankel, Ichino-Templier} except for the different choice of the Weyl element
$\varw_2 = \begin{pmatrix}
 & -1 \\
1&
\end{pmatrix}$, which is more often used  for $\GL_2$ in the literature.

Let $\pi$ be an infinite dimensional irreducible admissible representation  of $\GL_2( \BF)$\footnote{It is well-known that a representation of $\GL_2 (\BF)$ satisfying these conditions is generic.}. Using (\ref{4eq: w = Whittaker}, \ref{4eq: tilde w for n = 2}), one may rewrite \eqref{4eq: Hankel transform tilde w and w} as follows,
\begin{equation}\label{7eq: Hankel transform, W}
W   
\begin{pmatrix}
    & 1\\
- x\- &  
\end{pmatrix} 
= \omega_\pi (  r     )  \|  r     \|  \int_{\BF^\times }  \| x y\|^{  \frac {1} 2} J_{\pi} (   r^2 x y) W 
\begin{pmatrix}
 y & \\
  & 1
\end{pmatrix} 
 d^\times y,
\end{equation}
for $W \in \mathcal W (\pi, \psi_r) $.
We define 
\begin{equation}\label{7eq: EJ}
\EuScript J_{\pi, \psi_r } (x) = \omega_\pi ( r  )  \|  r     \| \sqrt {\|x\|} J_{\pi} (  r     ^2 x).
\end{equation}
We call $\EuScript J_{\pi, \psi } (x)$ the {\it Bessel function associated with $\pi$ and $\psi $}. The formula \eqref{7eq: Hankel transform, W} then reads
\begin{equation}\label{7eq: Hankel transform, W, 2}
W   
\begin{pmatrix}
& 1\\
- x\- &  
\end{pmatrix}  
= \int_{\BF^\times }  \EJ_{\pi, \psi } ( x y) W 
\begin{pmatrix}
y & \\
& 1
\end{pmatrix} 
 d^\times y.
\end{equation}
Moreover, with the observation
\begin{equation*}
W   
\begin{pmatrix}
    & 1\\
- x\- &  
\end{pmatrix}   
=
\omega_\pi (-x)\- W \lp
\begin{pmatrix}
 x & \\
  & 1
\end{pmatrix} 
\varw_2 \rp,
\end{equation*}
\eqref{7eq: Hankel transform, W, 2} turns into
\begin{equation}\label{7eq: Weyl element action}
W \lp  
\begin{pmatrix}
 x & \\
 &  1
\end{pmatrix} 
 \varw_2 \rp 
= \omega_\pi (  -x  ) \int_{\BF^\times } \EJ_{\pi, \psi } ( x y) 
W 
\begin{pmatrix}
 y & \\
  & 1
\end{pmatrix} 
 d^\times y.\footnote{In the real case, this identity is given in \cite[Theorem 4.1]{CPS}. 
}
\end{equation}
Thus \eqref{7eq: Weyl element action} indicates that the action of the Weyl element $\varw_2$ on the Kirillov model 
$$\mathcal K (\pi, \psi) = \left\{ w(x) = W   \begin{pmatrix}
 x & \\
  & 1
\end{pmatrix}   : W \in \mathcal W (\pi, \psi) \right\}$$
is essentially a Hankel transform. From this perspective, the Hankel inversion formula follows from the simple identity $\varw_2^2 = I_2$. This may be seen from the following lemma.
\begin{lem}\label{7lem: n=2, Bessel} Let $\pi$ be an irreducible admissible representation of $\GL_2(\BF)$. Then we have
$
J_{\widetilde \pi} (x) = \omega_{\pi}(x) J_{\pi}(x).
$
\end{lem}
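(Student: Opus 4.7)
The plan is to reduce the identity to an equality of signed Mellin transforms and then appeal to Mellin inversion. By construction (\ref{3def: Bessel function, R, 0}, \ref{2eq: Bessel kernel over C, polar}) together with the Mellin--Barnes representations (\ref{2def: Bessel kernel, 1}, \ref{3def: Bessel kernel j mu m}), $J_\pi$ is characterized by the fact that its Mellin coefficients are, up to elementary constants, the gamma factors of $\pi\otimes\chiup$ appearing in \S \ref{sec: Hankel, Ichino-Templier}, namely $G_{(\umu,\,\udelta + \delta' \ue^2)}(s)$ or $G_{(\umu,\,\um + m' \ue^2)}(s)$ as $\chiup$ ranges over characters of $\BFx/\BR_+$. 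Writing the Langlands parameter of $\pi$ as $(\umu,\udelta)$ in the real case and $(\umu,\um)$ in the complex case, the contragredient $\widetilde\pi$ has parameter $(-\umu,\udelta)$, respectively $(-\umu,-\um)$, and the central character is $\omega_\pi(x)=\sgn(x)^{|\udelta|}|x|^{|\umu|}$, respectively $\omega_\pi(z)=[z]^{|\um|}\|z\|^{|\umu|}$.

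First I would settle the real case. The two Mellin coefficients to compare are
\begin{equation*}
\EM_{\delta'}J_{\widetilde\pi}(s)\ \propto\ \prod_{l=1}^{2}G_{\delta_l+\delta'}(s+\mu_l),
\end{equation*}
read off directly from the definition of $J_{(-\umu,\udelta)}$, against
\begin{equation*}
\EM_{\delta'}(\omega_\pi J_\pi)(s)\ =\ \EM_{\delta'+|\udelta|}J_\pi(s+|\umu|)\ \propto\ \prod_{l=1}^{2}G_{\delta_l+\delta'+|\udelta|}(s+|\umu|-\mu_l).
\end{equation*}
These match factor by factor after the reindexing $l\mapsto 3-l$, thanks to the two accidents special to rank two: $|\umu|-\mu_l=\mu_{3-l}$ and $\delta_l+|\udelta|\equiv \delta_{3-l}\pmod 2$. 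The complex case is parallel, with $\EM_{-m'}$ in place of $\EM_{\delta'}$ and with the even symmetry $G_m=G_{-m}$ from \eqref{1def: G m (s)} supplying the analogous cancellation after the shift produced by $\omega_\pi$. Mellin inversion (Lemma \ref{2lem: Sis and Msis, R+} and Lemma \ref{2lem: Ssis to Msis, C}) then converts these Mellin identities into the asserted pointwise equality on $\BFx$.

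The main thing to watch is the non-uniqueness of the parameter $(\umu,\udelta)$ in the real case when a discrete series constituent appears, as recorded in \eqref{4eq: two parameters, discrete}: the two permissible labels differ by simultaneously adjusting the two coordinates of $\udelta$. This does not disrupt the argument because $J_\pi$ is intrinsic to $\pi$ (it depends only on the $\gamma$-factors of $\pi\otimes\chiup$), and the crucial congruence $\delta_l+|\udelta|\equiv \delta_{3-l}\pmod 2$ is insensitive to such a uniform change of $\udelta$. The only remaining effort, which I would fold into a short bookkeeping step, is tracking the elementary factors (powers of $2$, $\pi$) that appear in the definitions of $\EM_\delta$, $\EM_m$ relative to $\EM$; these are the same on both sides of the proposed identity and so drop out of the final comparison.
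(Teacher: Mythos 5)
Your proof is correct, and it is essentially the paper's argument unrolled onto the Mellin side. The paper routes through Proposition \ref{3prop: properties of J, R} (1) (respectively \ref{3prop: properties of J, C} (1)), which states $J_{(\umu - \mu \ue^n,\, \udelta - \delta \ue^n)}(x) = \sgn(x)^\delta |x|^\mu J_{(\umu,\udelta)}(x)$, together with the permutation invariance of the Bessel kernel, and this is precisely the Mellin shift plus $l \mapsto 3 - l$ swap you carry out explicitly (alternatively, as the paper's Remark following the lemma notes, $\widetilde\pi \cong \omega_\pi^{-1} \otimes \pi$ for $n=2$ and Lemma \ref{4lem: normalizing the index} do the same). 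One small imprecision: in the complex case you do not actually need the evenness $G_m = G_{-m}$ — after the shift by $|\um|$ the label $m_l + m' - |\um|$ literally equals $m' - m_{3-l}$, so the cancellation is again the bare index swap, with no "accident"; the complex case is in fact simpler than the real one because there is no residual $\bmod\ 2$ bookkeeping and the parameter $(\umu,\um)$ is unique up to permutation.
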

\begin{proof}
This follows from some straightforward calculations using Proposition \ref{3prop: properties of J, R} (1) and \ref{3prop: properties of J, C} (1). 
\end{proof}

\begin{rem}
	The representation theoretic viewpoint of Lemma {\rm \ref{7lem: n=2, Bessel}} is the isomorphism $\widetilde \pi \cong \omega\- \otimes \pi$. With this, Lemma {\rm \ref{7lem: n=2, Bessel}} is a direct consequence of Lemma {\rm \ref{4lem: normalizing the index}}.
\end{rem}

Finally, we shall summarize the formulae of the Bessel functions associated with infinite dimensional irreducible {\it unitary} representations of $\GL_2( \BF) $. First of all, in view of Lemma \ref{4lem: normalizing the index} and Remark \ref{4rem: G/Z}, one may assume without loss of generality that $\pi$ is trivial on $\mathrm Z_2 (\BR_+)$. Moreover, with the simple observation 
\begin{equation}\label{7eq: psi a and psi 1}
\EuScript J_{\pi, \psi_r } (x) = \omega_{\pi} (r        )  \EuScript J_{\pi, \psi_1 }  (r^2 x  ),
\end{equation}
it is sufficient to consider the Bessel function $\EuScript J_{\pi }  = \EuScript J_{\pi, \psi_1 } $  associated with $\psi_1$.


\subsection{Bessel functions for $\GL_2(\BR)$}

Under the Langlands correspondence, we have the following classification of infinite dimensional irreducible unitary representations  of $\GL_2( \BR)/\mathrm Z_2 (\BR_+)$.
\begin{itemize}
\item[-] (principal series and the limit of discrete series) $\varphi _{(i t,  \epsilon + \delta)} \oplus \varphi _{(- it,  \epsilon)}$, with $t \in \BR$ and $ \epsilon, \delta \in \BZT$,
\item[-] (complementary series) $\varphi _{(t,  \epsilon)} \oplus \varphi _{(- t,  \epsilon)}$, with $t \in \lp 0, \frac 1 2\rp$ and $ \epsilon \in \BZT$,
\item[-] (discrete series) $\varphi_{(0, m)}$, with $m \in \BN_+$.
\end{itemize}
Here, in the first case, the corresponding representation is a limit of discrete series  if $t = 0$ and $\delta = 1$ and  a principal series representation if otherwise. We shall write the corresponding representations  as
$\eta^{ \epsilon} \otimes \pi^+ (i t)$ if $\delta = 0$, $\eta^{ \epsilon} \otimes \pi^- (i t)$ if $\delta = 1$,
$\eta^{ \epsilon} \otimes \pi ( t)$
and $\sigma (m)$, respectively. We have
\begin{equation}
\omega_{\pi^+ (i t)} = 1, \ \ \omega_{\pi^- (i t)} = \eta, \ \  \omega_{\pi ( t)} = 1, \ \ \omega_{\sigma (m)} = \eta ^{m+1}.
\end{equation}
Furthermore, we have the equivalences $ \pi^+ (i t) \cong \pi^+ (- i t)$ and $ \pi^- (i t) \cong \eta \otimes \pi^- (- i t)$. 

As a consequence of Example \ref{3ex: Bessel n=1, R}, we have the following proposition.

\begin{prop} 

\  

{\rm (1).} Let $t \in \BR $. We have for $x \in \BR _+$
\begin{equation*} 
\begin{split}
\EJ_{\pi^+ (i t) } ( x) &= \frac {\pi i} {\sinh (\pi t)} \sqrt x \lp J_{2it} (4\pi \sqrt x ) - J_{-2it} (4\pi \sqrt x ) \rp,\\
\EJ_{\pi^+ (i t) } (-x) &= 4 \cosh (\pi t)  \sqrt x K_{2it} (4 \pi  \sqrt x ),
\end{split}
\end{equation*}
where it is understood that when $t = 0$ the right hand side of the first formula should be replaced by its limit, and
\begin{equation*} 
\begin{split}
\EJ_{\pi^- (i t) } ( x) &= \frac {\pi i} {\cosh (\pi t)}  \sqrt x \lp J_{2it} (4\pi   \sqrt x) + J_{-2it} (4\pi \sqrt x ) \rp,\\
\EJ_{\pi^- (i t) } ( -x) &= 4 \sinh (\pi t)  \sqrt x K_{2it} (4 \pi \sqrt x ).
\end{split}
\end{equation*}

{\rm (2).} Let $t \in \lp 0, \frac 1 2\rp$. We have for $x \in \BR _+$
\begin{equation*} 
\begin{split}
\EJ_{\pi ( t) } ( x) &= - \frac {\pi} {\sin  (\pi t)}   \sqrt x \lp J_{2t} (4\pi   \sqrt x ) - J_{-2t} (4\pi  \sqrt x ) \rp,\\
\EJ_{\pi ( t) } ( -x) &= 4 \cos (\pi t) \sqrt x K_{2t} (4 \pi  \sqrt x ).
\end{split}
\end{equation*}

{\rm (3).} Let $m \in \BN_+$. We have for $x \in \BR _+$
\begin{equation*} 
\EJ_{\sigma (m) } ( x) =  2 \pi i^{m+1} \sqrt x J_{m} \big(4 \pi \sqrt x \big),
\hskip 10 pt
\EJ_{\sigma (m) } ( -x) = 0.
\end{equation*}
\end{prop}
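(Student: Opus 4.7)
The plan is to combine three ingredients already established in the paper: (a) the definition $\EJ_\pi(x)=\sqrt{|x|}\,J_\pi(x)$ that results from specialising \eqref{7eq: EJ} to $r=1$, so that in the real case $\|x\|=|x|$; (b) the Langlands parameters of each of the four families, read off from the classification list given just before the proposition; and (c) the closed-form expressions for $J_{(\mu,-\mu,\delta,0)}(\pm x)$ and for the discrete-series kernel $J_{(\frac12 m,-\frac12 m,\,\delta(m)+1,0)}(\pm x)$ worked out in Example~\ref{3ex: Bessel n=1, R}.

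Concretely, I would first identify the parameter attached to $\pi$ under the Langlands correspondence in each case. From the list preceding the proposition, the representations $\eta^\epsilon\otimes\pi^\pm(it)$, $\eta^\epsilon\otimes\pi(t)$, $\sigma(m)$ correspond respectively to $\varphi_{(it,\epsilon+\delta)}\oplus\varphi_{(-it,\epsilon)}$ (with $\delta=0$ or $1$), $\varphi_{(t,\epsilon)}\oplus\varphi_{(-t,\epsilon)}$, and $\varphi_{(0,m)}$. Setting $\epsilon=0$ (the case treated in the proposition), the parameters $(\umu,\udelta)$ attached to $\pi^+(it)$, $\pi^-(it)$ and $\pi(t)$ are $(it,-it,0,0)$, $(it,-it,1,0)$ and $(t,-t,0,0)$, while for $\sigma(m)$ one may take either $(\tfrac12 m,-\tfrac12 m,\delta(m)+1,0)$ or $(\tfrac12 m,-\tfrac12 m,\delta(m),1)$, the two choices producing the same kernel in view of \eqref{1f: G m (s) = G 1 G delta(m)} and Lemma~\ref{1lem: complex and real gamma factors}.

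Next I would substitute $\mu=it$, $\mu=t$ and $\mu=\tfrac12 m$ into the formulas from Example~\ref{3ex: Bessel n=1, R}. For instance, for $\pi^+(it)$ the identity
\[
J_{(\mu,-\mu,0,0)}(x)=-\frac{\pi}{\sin(\pi\mu)}\bigl(J_{2\mu}(4\pi\sqrt x)-J_{-2\mu}(4\pi\sqrt x)\bigr)
\]
together with $\sin(\pi it)=i\sinh(\pi t)$ gives the stated formula after multiplication by $\sqrt x$; similarly $\cos(\pi it)=\cosh(\pi t)$ and Example~\ref{3ex: Bessel n=1, R} yield $\EJ_{\pi^+(it)}(-x)=4\cosh(\pi t)\sqrt x\,K_{2it}(4\pi\sqrt x)$. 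The cases $\pi^-(it)$ (where $\delta=1$) and $\pi(t)$ (where $t$ is real and the identity $-\pi/\sin(\pi t)$ appears directly) are handled identically; the sign in $\EJ_{\pi^-(it)}(-x)=4\sinh(\pi t)\sqrt x\,K_{2it}(4\pi\sqrt x)$ comes from $-4i\sin(\pi it)=4\sinh(\pi t)$.

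The discrete-series case $\sigma(m)$ is already essentially written out in Example~\ref{3ex: Bessel n=1, R}: the kernel $J_{(\frac12 m,-\frac12 m,\delta(m)+1,0)}$ equals $2\pi i^{m+1}J_m(4\pi\sqrt x)$ on $\BR_+$ and vanishes on $-\BR_+$, so multiplication by $\sqrt x$ gives the proposition. There is no real obstacle; the only point requiring a sentence of justification is the well-definedness of the kernel for $\sigma(m)$ under the two admissible parameters, which follows from \eqref{1f: G m (s) = G 1 G delta(m)}. Everything else is a bookkeeping exercise converting trigonometric functions of imaginary argument into hyperbolic functions.
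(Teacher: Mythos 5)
Your proposal is correct and follows exactly the route the paper takes: the paper introduces this proposition with the single sentence ``As a consequence of Example~\ref{3ex: Bessel n=1, R}, we have the following proposition,'' and your derivation—specialising \eqref{7eq: EJ} to $r=1$ so that $\EJ_\pi(x)=\sqrt{|x|}\,J_\pi(x)$, reading off the Langlands parameters $(it,-it,0,0)$, $(it,-it,1,0)$, $(t,-t,0,0)$, $(\tfrac12 m,-\tfrac12 m,\delta(m)+1,0)$ from the classification list, and substituting into Example~\ref{3ex: Bessel n=1, R} with $\sin(\pi it)=i\sinh(\pi t)$ and $\cos(\pi it)=\cosh(\pi t)$—is precisely the intended computation. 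The remark about the two equivalent parameters for $\sigma(m)$ via \eqref{1f: G m (s) = G 1 G delta(m)} is also the right justification.
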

\begin{rem}
$\eta^{ \epsilon} \otimes \pi^+ (i t)$, $\eta^{ \epsilon} \otimes \pi (t)$ and $\sigma (2d-1)$ exhaust all the infinite dimensional irreducible unitary representations of $\PGL_2( \BR) $. Their Bessel functions are also given in \cite[Proposition 6.1]{CPS}.
\end{rem}

\subsection{Bessel functions for $\GL_2(\BC)$}

Under the Langlands correspondence, we have the following classification of infinite dimensional irreducible unitary representations  of $\GL_2( \BC)/\mathrm Z_2 (\BR_+)$.
\begin{itemize}
\item[-] (principal series) $\chiup _{(i t, k + d + \delta)} \oplus \chiup _{(- it, k - d)}$, with $t \in \BR $, $k, d \in \BZ $ and $\delta \in \BZT = \{0, 1\}$,
\item[-] (complementary series) $\chiup _{(t, k+d)} \oplus \chiup _{(- t, k-d)}$, with $t \in 
\lp 0, \frac 1 2\rp$, $k \in\BZ $ and $d \in \BZ$.
\end{itemize}
We   write the corresponding   representations 
as
$\eta^{k} \otimes \pi_{d}^+ (i t)$ if $\delta = 0$, $\eta^{k} \otimes \pi_{d}^- (i t)$ if $\delta = 1$ and $\eta^{k} \otimes \pi_{d} ( t)$, respectively.
We have
\begin{equation}
\omega_{ \pi_{d}^+ (i t) } = 1, \ \ \omega_{ \pi_{d}^- (i t) } = \eta, \ \ 
\omega_{ \pi_{d} ( t) } = 1.
\end{equation}
Furthermore, we have the equivalences $ \pi_{d}^+ (i t) \cong \pi_{- d}^+ (- i t) $, $ \pi_{d}^- (i t) \cong \pi_{- d - 1}^- (- i t) $. 

According to Example \ref{3prop: n=2, C}, we have the following proposition.

\begin{prop} \label{7prop: Bessel, C} 
	Recall the definitions {\rm(\ref{7def: J mu m (z), n=2, C}, \ref{7def: H (1, 2) mu m (z), n=2, C})} of $J_{\mu, m} (z)$ and $H^{(1, 2) }_{\mu, m} (z)$ in Example {\rm \ref{3prop: n=2, C}}. 

{\rm (1).} Let $t \in \BR $ and $d \in \BZ$. We have for $z \in \BCx$
\begin{align*}
\EJ_{\pi_{d}^+ (i t)} (z) & = - \frac {2 \pi^2 i} {\sinh (2\pi t)}   |z| \lp J_{i t, 2 d} (4 \pi  \sqrt z) -  J_{-i t, - 2 d} (4 \pi  \sqrt z) \rp\\
& = \pi^2 i  |z|  \lp e^{- 2 \pi   t} H^{(1)}_{i t, 2 d} \lp 4 \pi  \sqrt z \rp - e^{  2 \pi  t} H^{(2)}_{it, 2d} \lp 4 \pi   \sqrt z \rp \rp,\\
\EJ_{\pi_{d}^- (i t)} (z) & = \frac {2 \pi^2  i } {\cosh (2\pi t)}   {\sqrt {  |z | \, \overline z }} \lp J_{i t, 2 d + 1} (4 \pi  \sqrt z) + J_{-i t, - 2 d - 1} (4 \pi  \sqrt z) \rp \\
& = \pi^2  i \sqrt {  |z | \, \overline z }  \lp e^{- 2 \pi t} H^{(1)}_{i t, 2d+1} \lp 4 \pi  \sqrt z \rp +  e^{ 2 \pi t} H^{(2)}_{it, 2d+1} \lp 4 \pi  \sqrt z \rp \rp.
\end{align*}
 
{\rm (2).} Let $t \in 
\lp 0, \frac 1 2\rp$ and $d \in  \BZ$. We have for $z \in \BCx$
\begin{align*}
\EJ_{\pi_{d} ( t)} (z) & =  \frac {2 \pi^2 } {\sin (2\pi t)}  |z| \lp J_{ t, 2 d} (4 \pi  \sqrt z) -  J_{- t, - 2 d} (4 \pi   \sqrt z) \rp \\
& =  \pi^2 i  |z|  \lp e^{ 2 \pi i  t} H^{(1)}_{ t, 2 d} \lp 4 \pi  \sqrt z \rp - e^{-  2 \pi i t} H^{(2)}_{t, 2d} \lp 4 \pi   \sqrt z \rp \rp.
\end{align*}
\end{prop}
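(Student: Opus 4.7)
The plan is to deduce each identity directly from Example~\ref{3prop: n=2, C}, which expresses $J_{(\mu, -\mu, m, 0)}(z)$ explicitly in terms of the $J_{\mu, m}$ and $H^{(1,2)}_{\mu, m}$ defined in (\ref{7def: J mu m (z), n=2, C}, \ref{7def: H (1, 2) mu m (z), n=2, C}). The first step is to read off the Langlands parameters of each representation from the classification in \S\ref{sec: Hankel transform over C}: $\pi^+_d(it)$ corresponds to $(\umu, \um) = (it, -it, d, -d)$, $\pi^-_d(it)$ corresponds to $(it, -it, d+1, -d)$, and $\pi_d(t)$ corresponds to $(t, -t, d, -d)$. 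Each of these must be normalized to the symmetric form $(\mu, -\mu, m, 0)$ required by Example~\ref{3prop: n=2, C}, which is carried out via Proposition~\ref{3prop: properties of J, C}(1): applying it with the shift $m_0 = -d$ (and $\mu_0 = 0$) yields the identity $J_{(\umu, \um)}(z) = [z]^d\, J_{(\umu, \um + d\, \ue^2)}(z)$, reducing the three computations to $J_{(it, -it, 2d, 0)}(z)$, $J_{(it, -it, 2d+1, 0)}(z)$, and $J_{(t, -t, 2d, 0)}(z)$ respectively.

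Next, I would apply Example~\ref{3prop: n=2, C} to these symmetric indices, distinguishing the parities of the second index: the even cases ($\pi^+_d$ with $m = 2d$ and $\pi_d$ with $m = 2d$) produce $\sin(2\pi\mu)$ in the denominator, while the odd case ($\pi^-_d$ with $m = 2d+1$) produces $\cos(2\pi\mu)$. For the unitary principal series, where $\mu = it$, the elementary identities $\sin(2\pi i t) = i\sinh(2\pi t)$ and $\cos(2\pi i t) = \cosh(2\pi t)$ convert these denominators into the hyperbolic functions appearing in the statement, while for the complementary series ($\mu = t$ real) no such conversion is needed. The combined prefactor $[z]^d [\sqrt z]^{-m}$ arising from the shift and from Example~\ref{3prop: n=2, C} simplifies in polar coordinates: for even $m = 2d$ it equals $1$ (since $[\sqrt z]^{2d} = [z]^d$), while for odd $m = 2d + 1$ it reduces to $[\sqrt z]^{-1}$. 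In the latter case, combining with the factor $|z|$ coming from $\sqrt{\|z\|}$ one invokes the polar-coordinate identity $|z|\, [\sqrt z]^{-1} = \sqrt{|z|\,\overline z}$ to produce the asymmetric factor appearing only in the $\pi^-_d(it)$ formula.

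The final step is to pass from $J_\pi(z)$ to $\EJ_\pi(z)$ via the definition~\eqref{7eq: EJ}. Since $r = 1$ and $\omega_\pi(1) = 1$ for each of the three families, this amounts to the multiplication $\EJ_\pi(z) = \sqrt{\|z\|}\, J_\pi(z) = |z|\, J_\pi(z)$, producing the leading factor $|z|$ (or $\sqrt{|z|\,\overline z}$ after the simplification above). The $H^{(1,2)}$ versions of the identities are obtained in parallel from the second formulation in Example~\ref{3prop: n=2, C}, using $e^{\pm 2\pi i(it)} = e^{\mp 2\pi t}$ together with the sign $(-)^{m+1}$, which equals $-1$ when $m = 2d$ is even and $+1$ when $m = 2d+1$ is odd; this precisely accounts for the differing relative signs of the $H^{(2)}$ contributions among the three formulas. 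The entire argument is essentially bookkeeping; the only mild subtlety is the branch simplification of $[\sqrt z]$ in the odd case, which is handled once and for all by the polar identity noted above.
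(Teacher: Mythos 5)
Your proposal is correct and follows exactly the route the paper intends (which it leaves implicit by saying only ``According to Example~\ref{3prop: n=2, C}, we have the following proposition''). Reading off the parameters $(it,-it,d,-d)$, $(it,-it,d+1,-d)$, $(t,-t,d,-d)$ from the classification, normalizing via Proposition~\ref{3prop: properties of J, C}~(1) with the shift $[z]^d$, substituting into Example~\ref{3prop: n=2, C}, and simplifying $[z]^d[\sqrt z]^{-m}$ and $\sin(2\pi it)=i\sinh(2\pi t)$, $\cos(2\pi it)=\cosh(2\pi t)$ is precisely the required bookkeeping; the polar identity $|z|\,[\sqrt z]^{-1}=\sqrt{|z|\,\overline z}$ you invoke for the odd case is the one genuine subtlety and you handle it correctly.
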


In view of Corollary \ref{6cor: d=1, J mu m}, we have the following integral representations of $\EJ_{\pi } \lp x e^{i\phi} \rp$ except for $\pi =  {\pi_{d} ( t)}$ and $ t  \in \left[ \frac 3 8, \frac 1 2 \rp $.

\begin{prop} \label{7prop: Bessel, C, integral} \  
	
	{\rm (1).} Let $t \in \BR $ and $d \in \BZ$. We have for $x \in \BR_+$ and $\phi \in {\BR/2\pi \BZ}$
	\begin{align*}
	\EJ_{\pi_{d}^+ (i t)} \lp x e^{i\phi} \rp &  =   4 \pi (-1)^d x e^{i d \phi} \int_0^\infty y^{4 i t - 1} \left[ y\- +  y e^{ i \phi} \right]^{-2d} J_{2 d} \lp 4 \pi \sqrt x \left|y\-  +  y e^{ i \phi}\right| \rp d y, \\
	\EJ_{\pi_{d}^- (i t)} \lp x e^{i\phi} \rp &  =   4 \pi i (-1)^d x e^{i d \phi} \int_0^\infty y^{4 i t - 1} \left[ y\- +  y e^{ i \phi} \right]^{-2d - 1} J_{2 d + 1} \lp 4 \pi \sqrt x \left|y\-  +  y e^{ i \phi}\right| \rp d y.
	\end{align*}
	
	{\rm (2).} Let $t \in 
	\lp 0, \frac 3 8\rp$ and $d \in  \BZ$. We have for $x \in \BR_+$ and $\phi \in {\BR/2\pi \BZ}$
	\begin{equation*}
	\EJ_{\pi_{d} ( t)} \lp x e^{i\phi} \rp  =  4 \pi (-1)^d x e^{i d \phi} \int_0^\infty y^{4 t - 1} \left[ y\- +  y e^{ i \phi} \right]^{-2d} J_{2 d} \lp 4 \pi \sqrt x \left|y\-  +  y e^{ i \phi}\right| \rp d y.
	\end{equation*}
	The integral on the right hand side converges  absolutely only for $ t \in \lp 0, \frac 1 8\rp$.
\end{prop}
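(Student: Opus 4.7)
\smallskip

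The proof is essentially a bookkeeping exercise that combines three ingredients already developed: the definition \eqref{7eq: EJ} of $\EJ_{\pi,\psi_1}$, the normalization property of $J_{(\umu,\um)}$ from Proposition \ref{3prop: properties of J, C} (1), and the rank-two integral representation from Corollary \ref{6cor: d=1, J mu m}. The plan is to match each unitary representation to a rank-two Bessel kernel $J_{(\mu,-\mu,m,0)}$ and then invoke the corollary directly.

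First I would unwind the definitions. Since we have normalized $\pi$ to be trivial on $\mathrm Z_2(\BR_+)$, the factor $\omega_\pi(1)$ in \eqref{7eq: EJ} equals $1$; and since $\BF=\BC$ gives $\sqrt{\|z\|}=|z|$, we have $\EJ_\pi(xe^{i\phi}) = x\,J_\pi(xe^{i\phi})$. From the Langlands classification recalled just before the statement, reading off the characters $\chiup_{(it,d)}\oplus\chiup_{(-it,-d)}$ etc., the parameters $(\umu,\um)\in\BC^2\times\BZ^2$ attached to $\pi_d^+(it),\pi_d^-(it),\pi_d(t)$ are
\[
\bigl((it,-it),(d,-d)\bigr),\quad \bigl((it,-it),(d+1,-d)\bigr),\quad \bigl((t,-t),(d,-d)\bigr),
\]
respectively, so $J_\pi=J_{(it,-it,d,-d)}$, $J_{(it,-it,d+1,-d)}$, $J_{(t,-t,d,-d)}$.

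Next I would normalize to the form required by Corollary \ref{6cor: d=1, J mu m}. Applying Proposition \ref{3prop: properties of J, C} (1) with $\mu=0$ and $m=-d$ yields
\[
J_{(it,-it,d,-d)}(z)=[z]^d\,J_{(it,-it,2d,0)}(z),\qquad J_{(it,-it,d+1,-d)}(z)=[z]^d\,J_{(it,-it,2d+1,0)}(z),
\]
and analogously for $\pi_d(t)$ with $it$ replaced by $t$. For $z=xe^{i\phi}$ this gives the factor $e^{id\phi}$ outside each integral. Then Corollary \ref{6cor: d=1, J mu m}, applied with $(\mu,m)=(it,2d)$, $(it,2d+1)$, $(t,2d)$ respectively, produces the required integrals once one simplifies $i^{2d}=(-1)^d$ and $i^{2d+1}=i(-1)^d$; multiplying by $x$ from $\EJ_\pi(z)=x\,J_\pi(z)$ yields the three displayed formulae.

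Finally I would dispense with the convergence claim for $\pi_d(t)$. Since the relevant index in Corollary \ref{6cor: d=1, J mu m} is $\mu=t$, the corollary guarantees convergence for $|\Re\mu|<3/8$ and absolute convergence iff $|\Re\mu|<1/8$, which is exactly the range $t\in(0,3/8)$, respectively $t\in(0,1/8)$, asserted in the statement. The representations $\pi_d^{\pm}(it)$ have $\Re\mu=0$, which is safely inside the absolute convergence region, so no further remark is needed there. There is no real obstacle to the argument; the only place where one must be slightly careful is in keeping track of the two different conventions for discrete vs.\ half-integral parity of $|\um|$, which corresponds to whether the exponent in Corollary \ref{6cor: d=1, J mu m} is $2d$ or $2d+1$, and this is precisely dictated by the parity of the central character.
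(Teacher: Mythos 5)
Your proposal is correct and follows exactly the route the paper takes: the paper gives no separate proof beyond the sentence ``In view of Corollary~\ref{6cor: d=1, J mu m}, we have the following integral representations\ldots'', and your bookkeeping (matching $\pi$ to $(\umu,\um)$, normalizing to $J_{(\mu,-\mu,m,0)}$ via Proposition~\ref{3prop: properties of J, C}~(1), then invoking Corollary~\ref{6cor: d=1, J mu m}) is precisely what is meant. One trivial aside: $\omega_\pi(1)=1$ holds because any character is $1$ at the identity, not because of the normalization of $\pi$ on $\mathrm Z_2(\BR_+)$; this does not affect your argument.
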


\begin{rem}
$\pi^+_d (i t)$ and $\pi_d (t)$ exhaust all the infinite dimensional irreducible unitary representations of $\PGL_2( \BC) $. Proposition {\rm \ref{7prop: Bessel, C}} shows that the Bessel function for $\pi^+_d (i t)$ actually coincide with that given in \cite{B-Mo}. More precisely, we have the equality $\EJ_{\pi_{d}^+ (i t)} (z) =  {2 \pi^2 } |z| \EuScript K_{2 i t, - d} (4 \pi  \sqrt z),$ with $\EuScript K_{\nu, p}$ given by \cite[(6.21), (7.21)]{B-Mo}. Furthermore, the integral representation of $\EJ_{\pi_{d}^+ (i t)}$ in Proposition {\rm \ref{7prop: Bessel, C, integral}  (1)} is tantamount to \cite[Theorem 12.1]{B-Mo}. We have similar relations between the Bessel function for $\pi^-_d (i t)$ and that given in \cite{B-Mo2}.
\end{rem}

\delete{ 

In \cite{Kuznetsov}, Kuznetsov proved his formula for $\PSL_2(\BZ)\backslash \BH^2 \cong \PSL_2(\BZ)\backslash \PSL_2(\BR)/K$, where $\BH^2$ denotes the hyperbolic upper half-plane and $K = \SO(2)/\{\pm 1\}$. In the framework of representation theory, Cogdell and Piatetski-Shapiro \cite{CPS} prove this formula for an arbitrary Fuchsian group of the first kind $\Gamma \subset   \PGL_2(\BR)$ in an elegant way. Their computations use the Whittaker and Kirillov models of an irreducible unitary representation of $\PGL_2(\BR) $. They observe that the Bessel function occurring in the Kuznetsov trace formula  should be identified with the Bessel function for an irreducible unitary representations of $\PGL_2(\BR) $ given in \cite[Theorem 4.1]{CPS}. Note that the approach to   Bessel functions for $\GL_2(\BR)$ using    local functional equations for $\GL_2 \times \GL_1$-Rankin-Selberg zeta integrals over $\BR$ is already shown in \cite[\S 8]{CPS}.

The Kuznetsov trace formula is derived in \cite{CPS} from computing the Fourier coefficients of a ({\it single}) Poincar\'e series $P_f (g)$  in two different ways, first unfolding  $P_f (g)$ to obtain a weighted sum of Kloosterman sums, and second writing $P_f (g)$ in terms of its spectral expansion in $L^2 (\Gamma \backslash G)$ and then compute the Fourier coefficients. On the other hand, many authors, including Kuznetsov, approach this through a formula for the inner product of {\it two} Poincar\'e series.
Moreover, in the  literature other than \cite{CPS} the pair of Poicar\'e series is chosen and spectrally decomposed in the space of a given $K$-type (Kuznetsov considered the spherical case), the Poincar\'e series in \cite{CPS} however arise from a very simple type of functions that are supported on the Bruhat open cell of $G$ and split in the Bruhat coordinates. In other words, \cite{CPS} suggests that, instead of the Iwasawa coordinates, it would be more pleasant to study the Kuznetsov trace formula using the Bruhat coordinates. For this, \cite{CPS} works with the full spectral theorem rather than a version that is restricted to a given $K$-type.

The Kuznetsov trace formula for $\PSL_2( \BZ[i])\backslash \PSL_2( \BC)$ was given in \cite{B-Mo}. Let $K =  \SU(2)/ \{\pm 1\}$ 
and let $\BH^3$ denote the three dimensional hyperbolic upper half space. Their analysis is on the space $\BH^3 \times K$, which is isomorphic to $\PSL_2(  \BC)$ due to the Iwasawa decomposition. 
Similar to \cite{Kuznetsov}, the approach of  \cite{B-Mo} is also from considering the inner product of {\it two}  certain sophistically chosen Poincar\'e series of a given $K$-type. The analysis executed in \cite{B-Mo} is rather hard. Moreover, it is remarked without proof in \cite[\S 15]{B-Mo} that their Bessel kernel should be interpreted as the Bessel function of an irreducible unitary representation of $\PSL_2(  \BC)$. 

Our observation is that, since \cite[Theorem 4.1]{CPS} remains valid for an irreducible unitary representation of $G = \PGL_2(\BC)$ in view of  \eqref{7eq: Weyl element action}, one may follow the same lines in \cite{CPS}  to obtain the Kuznetsov trace formula for $\Gamma \backslash G$, with $\Gamma$ an {arbitrary} discrete group in $ G$ that is cofnite but not cocompact.  In this way, we can avoid the very difficult and complicated analysis in \cite{B-Mo}. This will be presented in the following.



}

\delete{

Suppose $n \geq 3$. Let $ N_n$ denote the unipotent subgroup of $\GL_n$ consisting of upper triangular matrices with unity on diagonals, $A$ the group of diagonal matrices, and
\begin{equation*}
N_{n, 1}^+ = \left\{ \begin{pmatrix}
1 & x \\
  & I_{n-1}
\end{pmatrix} \right \}, \hskip 10 pt N_{n, 1}^- = \left\{ \begin{pmatrix}
1 & \\
  & u
\end{pmatrix} : u \in N_{n-1} \right \}.
\end{equation*} 
In view of (\ref{4eq: tilde w for n > 2}), the Bessel functions of rank $n$ over $\BF$ should be interpreted as some kind of degeneration of the Bessel functions associated with the Weyl element $\varw_n \varw_{n, 1}$, or to the closed Bruhat cell $\varw_n N_n A  \varw_n\cdot  \varw_n \varw_{n, 1} N_n = \varw_n N_{n, 1}^+ N_{n, 1}^- A \varw_{n, 1} N_{n, 1}^-$, from the viewpoint of relative trace formula for $\GL_n$. We have integrated out one unipotent part on the left.
\marginpar{\footnotesize More explanations? Reference?}

}

\bibliographystyle{alphanum}
\bibliography{references}

\delete{

}

\end{document}